\newtheorem{theorem}{Theorem}[section]
\newtheorem{lemma}[theorem]{Lemma}
\newtheorem{proposition}[theorem]{Proposition}
\theoremstyle{remark}
\newtheorem{remark}[theorem]{Remark}
\theoremstyle{definition}
\newtheorem{definition}[theorem]{Definition}
\newcommand{\N}{\mathbb{N}}
\newcommand{\Z}{\mathbb{Z}}
\newcommand{\R}{\mathbb{R}}
\newcommand{\C}{\mathbb{C}}
\newcommand{\D}{\mathbb{D}}
\newcommand{\scs}{\mathcal{J}(\lambda)}
\newcommand{\tu}{\tilde{u}}
\newcommand{\tv}{\tilde{v}}
\newcommand{\tw}{\tilde{w}}
\newcommand{\tj}{\tilde{J}}
\DeclareMathOperator{\ind}{ind}
\DeclareMathOperator{\wind}{wind}
\DeclareMathOperator{\sli}{sl}
\DeclareMathOperator{\expe}{exp}
\title{$3-2-1$ foliations for Reeb flows on the tight 3-sphere}
\author{Carolina Lemos de Oliveira}
\thanks{This study was financed by grant \texttt{\#}2016/10466-5, São Paulo Research Foundation (FAPESP), by the Coordenação de Aperfeiçoamento de Pessoal de Nível Superior - Brasil (CAPES) - Finance Code 001 and by Serrapilheira Institute through a grant awarded to Prof. Vinicius Ramos.}
\address{Instituto de Matemática e Estatística - Universidade do Estado do Rio de Janeiro,
	Rio de Janeiro - RJ, Brasil}
\email{carolina.lemos@ime.uerj.br}
\begin{document}
	
\subjclass[2020]{Primary 53D35; Secondary 37J46, 37J55}
\keywords{Hamiltonian dynamics, pseudo-holomorphic curves, Reeb flows}

\begin{abstract}
	We study the existence of $3-2-1$ foliations adapted to Reeb flows on the tight $3$-sphere. These foliations admit precisely three binding orbits whose Conley-Zehnder indices are $3$, $2$, and $1$, respectively. All regular leaves are disks and annuli asymptotic to the binding orbits. 
	Our main results provide sufficient conditions for the existence of $3-2-1$ foliations with prescribed binding orbits.
	We also exhibit a concrete Hamiltonian on $\R^4$ admitting $3-2-1$ foliations when restricted to suitable energy levels.
\end{abstract}

\maketitle

\tableofcontents

\section{Introduction}

In this paper, we use the theory of pseudo-holomorphic curves in symplectizations to study the existence of transverse foliations for Reeb flows on the tight $3$-sphere. These flows are equivalent to Hamiltonian flows on star-shaped energy surfaces in $\mathbb{R}^4$.

Some of the relevant results on the existence of global surfaces of section for Reeb flows on the tight $3$-sphere follow from the theory of pseudo-holomorphic curves in symplectizations, initiated by Hofer in \cite{hofer1993pseudoholomorphic}.
A global surface of section for a $3$-dimensional Reeb flow is an embedded surface with boundary whose interior is transverse to the flow and whose boundary consists of periodic orbits. The surface intersects every orbit, and a first return map describes the qualitative properties of the flow.
Hofer, Wysocki, and Zehnder showed in \cite{hwz1998} that a dynamically convex Reeb flow on the $3$-sphere admits disk-like global surface of section.
The disk is part of an $S^1$-family of global surfaces of section, forming an open book decomposition.
This result applies to Hamiltonian flows on strictly convex energy surfaces in $\R^4$.

A generalization of global surfaces of section and open books adapted to Reeb flows are the transverse foliations. They consist of a singular foliation whose singular set is a finite set of periodic orbits, called binding orbits. The regular leaves are surfaces asymptotic to the binding orbits and transverse to the flow.
Such foliations may imply the existence of other periodic orbits and homoclinics to a hyperbolic binding orbit.
One may obtain transverse foliations as projections of finite energy foliations in the symplectization. In \cite{hwz2003}, the authors generalize the results in \cite{hwz1998} for generic star-shaped energy surfaces in $\R^4$. They prove that a Hamiltonian flow restricted to a generic star-shaped energy surface admits a finite energy foliation on its symplectization.
There are several possible configurations of transverse foliations in \cite{hwz2003}. In any case, the binding orbits have Conley-Zehnder indices $3$, $2$, or $1$, and the regular leaves are punctured spheres. If a single periodic orbit forms the binding, then it has Conley-Zehnder index $3$, and the transverse foliation determines
an open book decomposition with disk-like pages, which are global surfaces of section for the flow.

In \cite{dePS2013, depaulo2019}, de Paulo and Salom\~ao provide concrete examples of Reeb flows on the tight $3$-sphere that admit transverse foliations other than open books. They study Hamiltonian flows close to a critical energy surface containing two strictly convex sphere-like subsets that touch each other at a saddle-center equilibrium point. For energies slightly above the critical value, they show that the flow admits a transverse foliation, called a $3-2-3$ foliation, with three binding orbits. The Lyapunoff orbit near the equilibrium is one of the binding orbits, and the other binding orbits have Conley-Zehnder index $3$. See also \cite{bramham2015periodic, colin2020existence, fish2018connected, wendl2008finite} for interesting results on the existence of transverse foliations.

A natural question that arises from the results in \cite{hwz1998} and \cite{hwz2003} is to find necessary and sufficient conditions for a finite set of closed orbits to be the binding of a transverse foliation.
Some recent studies answer this question in the case of disk-like global surfaces of section.
Characterization of the closed orbits that bound a disk-like global surface of section, assuming that the Reeb flow on $S^3$ is dynamically convex, is given in \cite{hryniewicz2012fast} and \cite{hryniewicz2014systems}. A simple orbit bounds a disk-like global surface of section if and only if it is unknotted and has self-linking number $-1$.
In \cite{hs2011}, the authors characterize the closed orbits that bound a disk-like global surface of section for a nondegenerate Reeb flow on the tight $3$-sphere. More precisely, a simple closed orbit $P$ bounds a disk-like global surface of section if and only if it is unknotted, has Conley-Zehnder index $\geq 3$, has self-linking number $-1$, and all index-$2$ orbits link with $P$.
In both cases, the global surface of section is a page of
an open book decomposition.

In the present work, we study the existence of transverse foliations adapted to Reeb flows on the tight $3$-sphere that admit a binding orbit with Conley-Zehnder index $1$. More precisely, we study a particular type of transverse foliation, called $3-2-1$ foliation, which has three binding orbits $P_3$, $P_2$, and $P_1$, whose Conley-Zehnder indices are $3$, $2$, and $1$, respectively. The rigid leaves are formed by a plane asymptotic to $P_2$, a pair of cylinders connecting $P_3$ to $P_2$ forming a $2$-torus, and a cylinder connecting $P_2$ to $P_1$. There is a family of planes asymptotic to $P_3$, breaking at each end into a cylinder connecting $P_3$ and $P_2$ and the plane asymptotic to $P_2$. There is also a family of cylinders connecting $P_3$ to $P_1$, which breaks at each end into a cylinder connecting $P_3$ and $P_2$ and the cylinder connecting $P_2$ and $P_1$.
Our main result provides sufficient conditions for a set of three closed orbits to be the binding orbits of a $3-2-1$ foliation.
We also exhibit a Hamiltonian on $\R^4$ whose flow restricted to suitable energy surfaces admits a $3-2-1$ foliation.

The $3-2-1$ foliations are one of the possible transverse foliations established in \cite{hwz2003}. We expect to use our methods to find concrete Hamiltonians admitting more general transverse foliations.

\subsection{Main results}
{Our objective is to study the existence of transverse foliations for Reeb flows associated to tight contact forms on $S^3$.} 
A \textit{transverse foliation} for a flow $\varphi^t$ on a closed oriented $3$-manifold $M$ consists of
	\begin{itemize}
		\item A finite set $\mathcal{P}$ of simple periodic orbits of $\varphi^t$, called \textit{binding orbits};
		\item A smooth foliation of $M\setminus \cup_{P\in \mathcal{P}}P$ by properly embedded surfaces. 
		Every leaf is transverse to $\varphi^t$ and has an orientation induced by $\varphi^t$ and $M$. 
		For every leaf $\dot{\Sigma}$ there exists a compact embedded surface $\Sigma\ \hookrightarrow M$ so that $\dot{\Sigma} = \Sigma\setminus \partial \Sigma$ and $\partial \Sigma$ is a union of connected components of $\cup_{P\in \mathcal{P}}P$. An end $z$ of $\dot{\Sigma}$ is called a
		puncture. 
		To each puncture $z$ there is an associated component $P_z\in \mathcal{P}$ of $\partial \Sigma$, called the asymptotic limit of $\dot{\Sigma}$ at $z$. A puncture $z$ of $\dot{\Sigma}$ is called positive if the orientation on $P_z$ induced by $\Sigma$ coincides with the orientation induced by $\varphi^t$. Otherwise, $z$ is called negative.
	\end{itemize}
This definition follows \cite{Hryniewicz2018} and is based on the finite energy foliations from \cite{hwz2003}.

Let $\lambda$ be a contact form on $S^3$, that is, $\lambda\wedge d\lambda$ is a volume form.
The \textit{Reeb vector field} $R_\lambda$ associated to $\lambda$ is uniquely determined by 
\begin{equation}
	i_{R_\lambda}d\lambda\equiv 0,~~~~~~i_{R_\lambda}\lambda\equiv 1.
\end{equation}
The flow $\{\varphi^t\}$ of $R_\lambda$ is called the \textit{Reeb flow} of $\lambda$.
The contact structure associated to $\lambda$ is the $2$-plane distribution 
$	\xi=\ker\lambda.$ 
We denote by 
$\pi:TS^3\to \xi$ 
the projection onto $\xi$ uniquely determined by $\ker \pi=\R R_\lambda$. 

An embedded disk $D\subset S^3$ satisfying $T\partial D\subset \xi \text{ and } T_pD\neq \xi_p,~\forall p\in \partial D$
is called an \textit{overtwisted disk}. 
The contact form $\lambda$ is \textit{tight} if the contact structure $\xi=\ker \lambda$ does not admit an overtwisted disk. 
Consider $\R^4$ with coordinates $(x_1,x_2,y_1,y_2)$. The Liouville $1$-form 
\begin{equation}\label{eq:liouville-form}
	\lambda_0=\frac{1}{2}\sum_{i=1}^{2}x_idy_i-y_idx_i
\end{equation}
restricts to a contact form on $S^3$.
By results of Bennequim \cite{bennequin-entrelacements} and Eliashberg \cite{eliashberg1992contact}, 
we know that, up to diffeomorphism, any tight contact form on $S^3$ is of the form $f\lambda_0|_{S^3}$ for some smooth function $f:S^3\to \R^+$. 
If $E:=\{z\sqrt{f(z)}|z\in S^3\}$ is a regular energy level of a Hamiltonian function on $(\R^4,\omega_0:=d\lambda_0)$, then the associated Hamiltonian flow restricted to $E$ is equivalent to the Reeb flow of $f\lambda_0|_{S^3}$.

We call a pair $P=(x,T)$, where $x:\R\to S^3$ is a periodic trajectory of $\dot{x}(t)=R_\lambda(x(t))$ and $T>0$ is a period of $x$, a \textit{Reeb orbit}. 
We identify $P=(x,T)$ with the element of $\dfrac{C^\infty(\R/\Z,S^3)}{\R/\Z}$ induced by the loop 
	$x_T:{\R}/{\Z}\to S^3,~~~x_T(t)=x(Tt),$ 
where the quotient is relative to the translations $t\mapsto x_T(t+c)$.
{By abuse of notation we sometimes write $x(\R)=P$.}
If $T$ is the minimal positive period of $x$, we call $P$ \textit{simple}. If $m\geq 1$ is an integer, the $m^{th}$ iterate of $P$ will be denoted by $P^m:=(x,mT)$.
We denote the set of Reeb orbits by $\mathcal{P}(\lambda)$.

The Reeb flow $\varphi^t$ preserves the contact structure $\xi$ and the maps $d\varphi^t:\xi_{x(0)}\to \xi_{x(t)}$ are $d\lambda$-symplectic.
We call the orbit $P=(x,T)$ \textit{nondegenerate} if $1$ is not an eigenvalue of $d\varphi^T|_{\xi_{x(0)}}$. If every orbit $P\in \mathcal{P}(\lambda)$ is nondegenerate, then the contact form $\lambda$ is called \textit{nondegenerate}.

A simple orbit $P=(x,T)\in \mathcal{P}(\lambda)$ is called \textit{unknotted} if $x(\R)$ is an unknot.
We say that a set of simple orbits $\cup_{i=1}^n P_i=(x_i,T_i)$ is an unlink if $\cup_{i=1}{x_i}(\R)$ is an unlink.
We say that two orbits $P=(x,T)$ and $\bar{P}=(\bar{x},\bar{T})$ are \textit{linked} if the linking number
${\rm lk}(x(\R),\bar{x}(\R))$ 
is nonzero.

Before stating our main results, we give some necessary definitions.

\begin{definition}[Strong transverse section]\label{de:strong-transverse-section}
	Let $\lambda$ be a contact form on $S^3$.
	Let $\Sigma\hookrightarrow S^3$ be a compact embedded surface such that $\dot{\Sigma}=\Sigma\setminus \partial \Sigma$ is transverse to the Reeb vector field $R_\lambda$ and $\partial\Sigma$ consists of a finite number of simple orbits in $\mathcal{P}(\lambda)$. 
	$\Sigma$ is called a \textit{strong transverse section} if every connected component of $\partial \Sigma$ 
	associated to an orbit $P=(x,T)$ has a neighborhood on $\Sigma$ parametrized by $\psi:(r_0,1]\times \R/\Z\to \Sigma$
	such that $\psi(1,t)=x_T(t)$, $\forall t\in \R/\Z$, and the section of $x_T^*\xi$ defined by 
	\begin{equation}\label{eq:equacao-defi-strong}
		\eta(t)=\pi\frac{\partial}{\partial r}\psi(r,t)\big|_{r=1}
	\end{equation}
	satisfies
	$$d\lambda(\eta(t),\mathcal{L}_{R_\lambda}\eta(t))\neq 0, ~\forall t\in \R/ \Z ,$$
	where $\mathcal{L}_{R_\lambda}$ is the Lie derivative with respect to $R_\lambda$. 
	See Remark \ref{rm:strong-transverse-section}.
\end{definition}

\begin{definition}[$3-2-1$ foliation]\label{de:3-2-1-foliation}
	Let $\lambda$ be a contact form on $S^3$. 
	A \textit{$3-2-1$ foliation} 
	adapted to $\lambda$ is a transverse foliation for the associated Reeb flow satisfying the following properties.
	The set $\mathcal{P}$ of binding orbits consists of three simple orbits $P_1$, $P_2$, and $P_3$ with Conley-Zehnder indices $1$, $2$, and $3$ respectively, self-linking number $-1$, and such that $P_1\cup P_2\cup P_3$ is an unlink. 
	The foliation $\mathcal{F}$ of $S^3\setminus \cup_{P\in \mathcal{P}}P$ is as follows:
	\begin{itemize}
		\item $\mathcal{F}$ contains a pair of cylinders $U_1$ and $U_2$, both asymptotic to $P_3$ at their positive punctures and $P_2$ at their negative punctures. $T:=U_1\cup U_2\cup P_2\cup P_3$ is homeomorphic to a torus and $T\setminus P_3$ is $C^1$-embedded. $T$ divides $S^3$ into two closed regions $\mathcal{R}_1$ and $\mathcal{R}_2$.
		\item $\mathcal{F}$ contains a disk $D\subset \mathcal{R}_1$ asymptotic to $P_2$ at its positive puncture and a cylinder $V \subset \mathcal{R}_2$ asymptotic to $P_2$ at its positive puncture and $P_1$ at its negative puncture. $D\cup P_2\cup V$ is a $C^1$-embedded disk with boundary $P_1$ and transverse to $T$.
		\item $\mathcal{F}$ contains a one-parameter family of disks $F_\tau\subset \mathcal{R}_1$, $\tau\in(0,1)$, all of them asymptotic to $P_3$ at their positive punctures, such that $\{F_\tau\}_{\tau\in (0,1)}\cup U_1\cup U_2\cup D$ foliate $\mathcal{R}_1\setminus(P_2\cup P_3)$.
		\item $\mathcal{F}$ contains a one-parameter family of cylinders $C_\tau\subset \mathcal{R}_2$, $\tau\in(0,1)$, all of them asymptotic to $P_3$ at their positive punctures and $P_1$ at their negative punctures, such that $\{C_\tau\}_{\tau\in(0,1)}\cup U_1\cup U_2\cup V$ foliate $\mathcal{R}_2\setminus (P_1\cup P_2\cup P_3)$.
		\item The closure of every leaf of $\mathcal{F}$ is a strong transverse section.
	\end{itemize}
	See figure \ref{fi:3-2-1-foliation}. The Conley-Zehnder index and the self-linking number are discussed in Section \ref{se:preliminaries}.
\end{definition}

\begin{figure}
	\centering
	\includegraphics[scale=0.9]{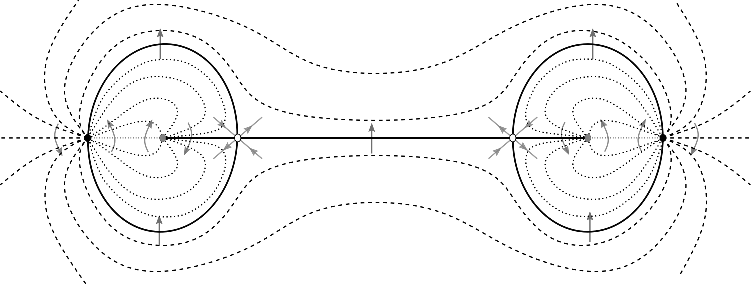}
	\caption{A section of a $3-2-1$ foliation. The black dots represent the orbit $P_3$ with Conley-Zehnder index $3$, the white dots represent the hyperbolic orbit $P_2$ with Conley-Zehnder index $2$, and the gray dots represent the orbit $P_1$ with Conley-Zehnder index $1$. 
	The bold curves represent the two rigid cylinders connecting $P_2$ and $P_3$, the rigid cylinder connecting $P_1$ and $P_2$, and the rigid disk with boundary $P_2$. The dashed curves represent the family of disks with boundary $P_3$. The dotted curves represent the family of cylinders connecting $P_1$ and $P_3$. The arrows indicate the direction of the Reeb vector field. The $3$-sphere is viewed as $\R^3\cup \{\infty\}$.}
	\label{fi:3-2-1-foliation}
\end{figure} 

A complex structure $J$ on $\xi$ is $d\lambda$-compatible if the bilinear form $d\lambda(\cdot,J\cdot)$ is a positive inner product on $\xi_p$, $\forall p\in S^3$. 
The space of $d\lambda$-compatible structures is nonempty and will be denoted by $\mathcal{J}(\lambda)$. 
For each $J\in \mathcal{J}(\lambda)$, the pair $(\lambda,J)$ determines a natural almost complex structure $\tj$ in the symplectization $\R\times S^3$ of $S^3$, given by \eqref{eq:J-til}. 
We consider $\tj$-holomorphic curves $\tu:S^2\setminus \Gamma\to \R\times S^3$, where the domain is the Riemann sphere with a finite set $\Gamma$ of punctures removed. 
Due to results of \cite{hofer1993pseudoholomorphic} and \cite{hofer1996properties1}, if $\tu$ satisfies an energy condition, it approaches closed orbits of $R_\lambda$ near the punctures. 
We postpone the precise definitions and statements to Section \ref{se:preliminaries}.
In what follows we use the notation $\tj=(\lambda,J)$.

Now we state the main results of this paper.
\begin{theorem}\label{theo:main-theorem}
	Let $\lambda$ be a tight contact form on $S^3$.
	Assume that there exist Reeb orbits $P_1=(x_1,T_1),~P_2=(x_2,T_2),~P_3=(x_3,T_3)\in \mathcal{P}(\lambda)$ that are nondegenerate, simple, and have Conley-Zehnder indices respectively $1$, $2$, and $3$. 
	Assume further that the orbits $P_1$, $P_2$, and $P_3$ are unknotted, $P_i$ and $P_j$ are not linked for $i\neq j$, $i,j\in\{1,2,3\}$, and the following conditions hold: 
	\begin{enumerate}[label=(\roman*)]
		\item Every orbit in $\mathcal{P}(\lambda)$ having period $\leq T_3$ is nondegenerate;
		\item $P_2$ is the unique orbit in $\mathcal{P}(\lambda)$ with Conley-Zehnder index $2$ and period less than $T_3$ that is not linked to $P_3$; 
		\item $P_1$ is the unique orbit in $\mathcal{P}(\lambda)$ with Conley-Zehnder index $1$ and period less than $T_2$ that is not linked to $P_2$; 
		\item There exists $J\in\mathcal{J}(\lambda)$ such that the almost complex structure $\tj=(\lambda,J)$ admits a finite energy plane $\tu:\C\to \R\times S^3$ asymptotic to $P_3$ at it positive puncture $z=\infty$;
		\item There is no $C^1$-embedding $\Psi:S^2\subset \R^3\to S^3$ such that $\Psi({S^1\times \{0\}})=x_2(\R)$ and 
		each hemisphere is a strong transverse section.
	\end{enumerate}	
	Then there exists a $3-2-1$ foliation adapted to $\lambda$ with binding orbits $P_1$, $P_2$, and $P_3$. Consequently, there exists at least one homoclinic orbit to $P_2$. 
\end{theorem}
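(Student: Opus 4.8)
The plan is to construct the $3-2-1$ foliation by a bubbling-off analysis applied to a suitable one-parameter family of pseudo-holomorphic curves, following the strategy pioneered in \cite{hwz1998, hwz2003} and refined in \cite{hs2011, depaulo2019}. First I would use hypothesis (iv) together with the characterization results of \cite{hryniewicz2012fast, hryniewicz2014systems, hs2011}: since $P_3$ is unknotted, nondegenerate, has Conley-Zehnder index $3$ and self-linking number $-1$, and all index-$2$ orbits of period $\le T_3$ other than $P_2$ link $P_3$ by (ii), the finite energy plane asymptotic to $P_3$ given by (iv) should be embedded and should belong to a maximal one-parameter family of finite energy planes $\tilde{F}_\tau$ asymptotic to $P_3$, whose projections foliate an open region of $S^3$. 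The key point is to understand the \emph{breaking} of this family at the boundary of its interval of existence. By the compactness theory for finite energy foliations (SFT-type compactness), the family degenerates into a holomorphic building; hypothesis (ii) — that $P_2$ is the \emph{unique} non-linked index-$2$ orbit of period $<T_3$ — forces the breaking orbit to be $P_2$, so that one end of the family breaks into a rigid cylinder $\tilde{U}_1$ from $P_3$ to $P_2$ together with a rigid plane $\tilde{D}$ asymptotic to $P_2$.

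Next I would exploit hypothesis (v): the obstruction to $x_2(\R)$ bounding a pair of hemispheres that are strong transverse sections is precisely what prevents the $P_3$-plane family from closing up into an open book over $P_3$ alone, and instead forces the appearance of a \emph{second} rigid cylinder $\tilde{U}_2$ from $P_3$ to $P_2$ at the other end of the family. The union $U_1 \cup U_2 \cup P_2 \cup P_3$ then forms the torus $T$ separating $S^3$ into $\mathcal{R}_1$ and $\mathcal{R}_2$; the family $F_\tau$ together with $U_1, U_2, D$ foliates $\mathcal{R}_1 \setminus (P_2\cup P_3)$. One then repeats the analysis on the other side: starting from the rigid plane $D$ asymptotic to $P_2$ inside $\mathcal{R}_1$, one runs the analogous argument for $P_2$ using hypothesis (iii), which plays for $P_2$ the role that (ii) played for $P_3$. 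The index-$2$ orbit $P_2$ has self-linking number $-1$ and is unknotted, so it bounds a disk-like global surface of section on the $\mathcal{R}_1$-side (the disk $D$), and on the $\mathcal{R}_2$-side the corresponding family of cylinders $C_\tau$ from $P_3$ to $P_1$ must break at $P_1$ — forced to be $P_1$ by (iii) — into $U_i \cup V$, where $V$ is the rigid cylinder from $P_2$ to $P_1$. Throughout, hypothesis (i) (nondegeneracy up to period $T_3$) guarantees that all relevant orbits appearing as asymptotic limits or in breakings are nondegenerate, so the automatic-transversality and intersection-theoretic arguments of \cite{wendl2008finite, hryniewicz2012fast, hs2011} apply and the leaves are embedded with the stated topology; that the closure of each leaf is a strong transverse section follows from the asymptotic analysis of \cite{hofer1996properties1, hwz1998}. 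Finally, the existence of a homoclinic to $P_2$ follows from the standard argument: the rigid cylinder $V$ together with $U_i$ and the return dynamics on the leaves $C_\tau$ (or on $D \cup F_\tau$) force an orbit in the stable manifold of $P_2$ to be backward-asymptotic to $P_2$ as well, exactly as in \cite[\S]{depaulo2019, dePS2013}.

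The main obstacle, I expect, is the compactness and \emph{uniqueness of the limit building} step: one must rule out all other possible degenerations of the $P_3$-family — in particular, breaking along iterated covers, along orbits of period $\ge T_3$, or into buildings with more than two levels or with extra nodal components — and must show the breaking orbit has Conley-Zehnder index exactly $2$ (not $1$ or $3$) so that hypotheses (ii) and (iii) can be invoked. This requires careful control of the Conley-Zehnder/Fredholm index budget along the family (each rigid cylinder must have index $1$, each family of planes index $2$), combined with intersection positivity to guarantee the leaves of the putative foliation are disjoint and embedded. A secondary difficulty is showing that the two ends of the $P_3$-family break along genuinely \emph{distinct} cylinders $U_1 \ne U_2$ rather than the same one; this is exactly where hypothesis (v) is essential, and making that implication precise — relating the non-existence of the embedded $S^2$ with hemispherical strong transverse sections to the non-closure of the open book — is the technical heart of the argument.
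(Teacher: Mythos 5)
Your outline of the first half (the maximal family of planes asymptotic to $P_3$, its breaking into a cylinder from $P_3$ to $P_2$ plus a plane asymptotic to $P_2$, forced by hypothesis (ii) and index/area estimates, and the resulting foliation of $\mathcal{R}_1$) matches the paper's argument. The genuine gap is in how you produce the rigid cylinder $V$ from $P_2$ to $P_1$ and the family of cylinders from $P_3$ to $P_1$. You propose to ``repeat the analysis on the other side,'' letting a family based at $P_2$ (or the putative family $C_\tau$) break at $P_1$. This cannot work inside the symplectization of $\lambda$: since $\mu(P_1)=1$, there is no finite energy $\tj$-holomorphic plane asymptotic to $P_1$ (for such a plane one would need $\wind_\infty\le 0$ by Lemma \ref{le:wind-infty-estimate} and formula \eqref{eq:conley-zehnder-wind}, contradicting $\wind_\pi\ge 0$ via \eqref{eq:wind_pi-wind_infty}), so an SFT degeneration of a plane family asymptotic to $P_2$ in $\R\times S^3$ cannot terminate with a level capping $P_1$. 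Moreover, your family $C_\tau$ of cylinders from $P_3$ to $P_1$ is not available at this stage: in the paper it is obtained only \emph{after} $V$ exists, by gluing $\tu_r$ (the $P_3$-to-$P_2$ cylinder) with $\tv_r$ (the $P_2$-to-$P_1$ cylinder) via Theorem \ref{th:gluing}, and then extending to a maximal family whose breaking at both ends is analyzed to foliate $\mathcal{R}_2$; invoking its breaking to construct $V$ is circular. The paper's actual mechanism, which your proposal is missing, is a symplectic cobordism (stretching) between $\lambda$ and a dynamically convex irrational-ellipsoid form $\lambda_E$: one studies the moduli space of generalized $\bar J$-holomorphic planes asymptotic to $P_2$, and since every closed $\lambda_E$-orbit has index $\ge 3$, the Fredholm and index bookkeeping (Claims I and II in Proposition \ref{prop:arvore-p1-p2}) forces the top level of the limiting building to be a $\tilde J$-holomorphic cylinder from $P_2$ to an index-$1$ orbit not linked to $P_2$ — identified as $P_1$ by hypothesis (iii) — while the index-$1$ orbit is capped by a plane living in the cobordism level, not in the symplectization. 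Without this (or an equivalent device), the existence of $V$ does not follow from your argument.

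A secondary inaccuracy concerns the role of hypothesis (v). In the paper, (v) is not what forces $U_1\neq U_2$; the disjointness $u_r(\C\setminus\{0\})\cap u_r'(\C\setminus\{0\})=\emptyset$ comes from the fact that $P_1$ is not linked to $P_3$ (if the two limit cylinders had equal image, the planes, the cylinder and the $P_2$-plane would fill all of $S^3$ transversally to the flow, forcing every orbit to link $P_3$). Hypothesis (v) is used instead to prove uniqueness of the plane asymptotic to $P_2$ (so the two ends of the plane family yield the same $\tu_q$, which is also needed later to control the cobordism family $\Theta'$) and, together with Siefring's intersection theory, that the rigid curves approach $P_2$ in opposite directions, which is what makes $T\setminus P_3$ and $D\cup V$ genuinely $C^1$-embedded. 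You should also separate the roles of (ii) and (iii): (ii) governs the breaking of the $P_3$-plane family, while (iii) is consumed in the cobordism limit identifying the negative asymptotic orbit of $\tv_r$ as $P_1$, not in a breaking of a family you already possess.
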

{\begin{remark}
		We expect to weaken hypothesis \textit{(iv)} in Theorem \ref{theo:main-theorem} by assuming the existence of a disk with boundary $P_3$ and interior transverse to the Reeb vector field.
		It is expected that an analysis similar to that of \cite{depaulo2022,depaulo2019} will show that real-analytic flows admitting a $3-2-1$ foliation have positive topological entropy if no two branches of the stable and unstable manifold of $P_2$ coincide.
\end{remark}}

Condition \textit{(v)} in Theorem \ref{theo:main-theorem} is necessary to the existence of a $3-2-1$ foliation. This is the content of Proposition \ref{TH:NECESSIDADE-3} below.

\begin{proposition}\label{TH:NECESSIDADE-3}
	Assume that there exists a $3-2-1$ foliation adapted to the contact form $\lambda$ on $S^3$ and let $P_2=(x_2,T_2)$ be the binding orbit with Conley-Zehnder index $2$, as in Definition \ref{de:3-2-1-foliation}. Then there is no $C^1$-embedding $\psi:S^2\to S^3$ such that $\psi({S^1\times \{0\}})=x_2(\R)$ 
	and each hemisphere is a strong transverse section.
\end{proposition}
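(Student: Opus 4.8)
The plan is to argue by contradiction. Suppose a $C^1$-embedding $\psi:S^2\to S^3$ as in the statement exists; set $S=\psi(S^2)$ and let $H_+,H_-\subset S$ be the two closed hemispheres, so that $\partial H_\pm=\psi(S^1\times\{0\})=x_2(\R)$ and each $H_\pm$ is a strong transverse section. First I would record the local dynamics at $P_2$: since its Conley--Zehnder index is the even number $2$ and $P_2$ is nondegenerate, the linearized return map $d\varphi^{T_2}|_{\xi_{x_2(0)}}$ has two real positive eigenvalues different from $1$, hence $P_2$ is positive hyperbolic, with two transverse eigendirections along $x_2(\R)$ and local invariant cylinders $W^s_{\mathrm{loc}}$, $W^u_{\mathrm{loc}}$ meeting transversally along $x_2(\R)$.

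Next I would split according to the crossing signs of the Reeb field. The vector field $R_\lambda$ is transverse to $S\setminus x_2(\R)=\mathrm{int}\,H_+\sqcup\mathrm{int}\,H_-$; on each of these two connected open disks it has a definite crossing sign $\epsilon_\pm\in\{\pm1\}$ relative to a fixed orientation of $S$. If $\epsilon_+=\epsilon_-$, then $d\lambda$ restricts to an area form of constant sign on $S\setminus x_2(\R)$; since $x_2(\R)$ has measure zero in $S$ this forces $\int_S d\lambda\neq 0$, contradicting Stokes' theorem because $\partial S=\emptyset$. This is the usual obstruction to having a closed surface transverse to a Reeb flow and it disposes of this case immediately. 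One is therefore left with $\epsilon_+=-\epsilon_-$: after relabelling, the flow enters the closed ball $B$ bounded by $S$ only through $\mathrm{int}\,H_-$ and leaves only through $\mathrm{int}\,H_+$, so $B$ behaves like a flow box with the hyperbolic orbit $P_2$ sitting on the seam $\partial H_+\cap\partial H_-$.

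To rule this case out I would use the given $3-2-1$ foliation. Because $\psi$ is $C^1$, the hemispheres $H_+$ and $H_-$ are $C^1$-tangent along $x_2(\R)$, so their asymptotic approach directions $\eta_{H_\pm}\in\Gamma(x_2^*\xi)$ are antipodal; by the strong transverse section condition together with $\mu_{CZ}(P_2)=2$ and $\sli(P_2)=-1$, the winding of $\eta_{H_\pm}$ in the symplectic trivialization of $\xi|_{x_2(\R)}$ is pinned down, which in turn fixes, near $P_2$, the position of the two hemispheres of $S$ relative to $W^s_{\mathrm{loc}}\cup W^u_{\mathrm{loc}}$. The same asymptotic analysis applies to the leaves of the $3-2-1$ foliation asymptotic to $P_2$: the rigid cylinders $U_1,U_2\subset T$ (which are $C^1$-glued along $x_2(\R)$ inside $T$), the rigid disk $D\subset\mathcal{R}_1$, and the rigid cylinder $V\subset\mathcal{R}_2$. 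These four strong transverse sections approach $P_2$ through the four sectors cut out by $W^s_{\mathrm{loc}}\cup W^u_{\mathrm{loc}}$ in a fixed cyclic order, with $\{U_1,U_2\}$ in one antipodal pair of sectors and $\{D,V\}$ in the other, and they organise a tubular neighbourhood of $x_2(\R)$ compatibly with the splitting $S^3=\overline{\mathcal{R}_1}\cup\overline{\mathcal{R}_2}$ ($D$ dividing $\mathcal{R}_1$ near $P_2$, $V$ dividing $\mathcal{R}_2$). The remaining step is to show that no embedded $2$-sphere with equator $x_2(\R)$, transverse to the flow off its equator and fitting this rigid local picture, can coexist with this global configuration: tracking $S$ against the torus $T$ and against the $C^1$ disk $\Delta:=D\cup V$ (with $\partial\Delta=P_1$, $x_2(\R)\subset\mathrm{int}\,\Delta$, $\Delta$ transverse to $T$), and using, if needed, the homoclinic orbit to $P_2$ that a $3-2-1$ foliation always carries (cf.\ Theorem~\ref{theo:main-theorem}), one is forced to produce an illegal intersection---of two rigid leaves, or of $S$ with a rigid leaf---violating embeddedness or the foliation property, the desired contradiction.

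The hard part will be this last step. Ruling out the coherent case $\epsilon_+=\epsilon_-$ is elementary; all the difficulty is in the non-coherent case, and it is twofold. First, one must make precise the asymptotic behaviour of a strong transverse section near the positive hyperbolic orbit $P_2$---its winding number and its location relative to $W^s_{\mathrm{loc}}$ and $W^u_{\mathrm{loc}}$---which I would extract from the asymptotic formulas recalled in Section~\ref{se:preliminaries}. Second, one must convert the resulting rigid local model near $P_2$, combined with the global data of the $3-2-1$ foliation (the separating torus $T$, the disk $\Delta$, and the homoclinic to $P_2$), into a genuine topological contradiction with the fact that $S$ is an embedded sphere; this is where I expect the real work to lie.
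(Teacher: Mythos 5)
Your local analysis near $P_2$ is on the right track and essentially matches the paper's: one shows that a section $\eta$ spanning $d\psi(TS^2)$ along $x_2$ together with $R_\lambda$ has winding $1$ and satisfies $d\lambda(\eta,\mathcal{L}_{R_\lambda}\eta)<0$ (the action has a local maximum across the equator inside $\psi(S^2)$), while a section $\eta'$ spanning $T$ along $x_2$ satisfies $d\lambda(\eta',\mathcal{L}_{R_\lambda}\eta')>0$ (since $P_2$ is a negative asymptotic limit of $U_1,U_2$), and a normal-form computation at the hyperbolic orbit (Proposition \ref{pr:secoes-fortes-wind-1}) places these sections in complementary quadrant pairs cut out by $W^{\pm}_{loc}(P_2)$, so $\psi(S^2)$ is transverse to the torus $T$ along $P_2$. (Your preliminary dichotomy on crossing signs is harmless but vacuous: Stokes applied to each hemisphere with the boundary-induced orientation already forces $\int_{H_\pm}\psi^*d\lambda=T_2>0$, so the flow automatically enters through one hemisphere and exits through the other.)

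The genuine gap is the final step, which you yourself flag as ``where the real work lies'' but never supply, and the mechanism you propose for it does not work. There is nothing ``illegal'' about $\psi(S^2)$ intersecting leaves of the foliation: both are merely surfaces transverse to the Reeb flow, and two such surfaces may intersect without violating embeddedness or the foliation property; the homoclinic orbit is likewise irrelevant here. The contradiction in the paper is quantitative, not incidence-theoretic. After arranging (by replacing $\psi$ with $\varphi^t\circ\psi$ for a generic $t$, Lemma \ref{le:transversal-ao-toro}) that $\psi$ is transverse to $U_1\cup U_2$ away from the equator, and combining the transversality along $P_2$ with the fact that $x_2$ generates $H_1(\mathcal{R}_2,\Z)$ while being contractible in $\mathcal{R}_1$, one finds that $\psi(S^2)$ must cross $T$ away from $P_2$ in a closed $1$-manifold, one component $\psi(S)$ of which is homologous to $P_2$ in $\mathcal{R}_2$ and has class $(1,0)$ in $H_1(T,\Z)$; since $\psi(S^2)\cap P_3=\emptyset$ (transversality to $P_3$ plus ${\rm lk}(P_2,P_3)=0$), the curves $\psi(S)$ and $P_2$ bound an annular region $A$ contained in a single cylinder $U_i$. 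The punchline is the two-sided area estimate: the region $B\subset S^2$ between the equator and $S$ gives $0<\int_B\psi^*d\lambda=T_2-\int_{\psi(S)}\lambda$, while $0<\int_A d\lambda=\int_{\psi(S)}\lambda-T_2$ because $P_2$ is a negative asymptotic limit of $U_i$, and these are incompatible. Without this (or an equivalent quantitative argument) your outline does not produce a contradiction.
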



The following theorem gives another set of sufficient conditions for the existence of a $3-2-1$ foliation. Some hypotheses are more restrictive than the hypotheses of Theorem \ref{theo:main-theorem}, but we do not assume any non-degeneracy condition for the contact form.
\begin{theorem}\label{pr:example}
	Let $\lambda$ be a tight contact form on $S^3$.
	Assume that there exist Reeb orbits $P_1=(x_1,T_1),~P_2=(x_2,T_2),~P_3=(x_3,T_3)\in \mathcal{P}(\lambda)$ that are nondegenerate, simple, and have Conley-Zehnder indices respectively $1$, $2$, and $3$. 
	Assume further that the orbits $P_1$, $P_2$, and $P_3$ are unknotted, $P_i$ and $P_j$ are not linked for $i\neq j$, $i,j\in\{1,2,3\}$, and the following conditions hold: 
	\begin{enumerate}[label=(\roman*)]
		\item $T_1<T_2<T_3<2T_1$;
		\item If $P=(x,T)\in\mathcal{P}(\lambda)$ satisfies $P\neq P_3,~T\leq T_3$ \text{ and } ${\rm lk}(P,P_3)=0$, then $P\in\{P_1,P_2\}$.
		\item There exists $J\in\mathcal{J}(\lambda)$ such that the almost complex structure $\tj=(\lambda,J)$ admits a finite energy plane $\tu:\C\to \R\times S^3$ asymptotic to $P_3$ at its positive puncture $z=\infty$ and a finite energy cylinder 
		$\tw:\C\setminus \{0\}\to \R\times S^3$ asymptotic to $P_3$ at its positive puncture $z=\infty$ and $P_1$ at its negative puncture $z=0$;
		\item There exists no $C^1$-embedding $\Psi:S^2\subset \R^3\to S^3$ such that $\Psi({S^1\times \{0\}})=x_2(\R)$ and 
		each hemisphere is a strong transverse section.
	\end{enumerate}	
	Then there exists a $3-2-1$ foliation adapted to $\lambda$ with binding orbits $P_1$, $P_2$, and $P_3$. Consequently, there exists at least one homoclinic orbit to $P_2$. 
\end{theorem}	

	An interesting application of Theorem \ref{pr:example} is in the study of bifurcations of finite energy foliations. 
Consider a one-parameter family of Reeb flows on the tight $3$-sphere admitting adapted open books with disk-like pages. Hypotheses (i) and (ii) can be checked if the orbits $P_1$, $P_2$, and $P_3$ bifurcate from the binding orbit at some parameter value. 
See Remark \ref{re:example-bifrucation} for an example of this phenomenon.
The study of more general bifurcations of finite energy foliations is a work in progress of the author, P. Salomão, and A. Schneider. When one of the binding orbits of a finite energy foliation bifurcates into a finite set of binding orbits, it is expected that the transverse foliation bifurcates accordingly and the new Reeb orbits become part of the binding set.

 
\subsection{An example of Reeb flow admitting a $3-2-1$ foliation}

Consider $\R^4$ with coordinates $(x_1,y_1,x_2,y_2)$ and equipped with the canonical symplectic form 
$\omega_0=\sum_{i=1}^2dx_i\wedge dy_i.$
Consider the Hamiltonian function $H=H_\epsilon:\R^4\to \R$ defined by
 \begin{equation}\label{eq:hamiltoniana}
 	H(x_1,y_1,x_2,y_2)=H_1(x_1,y_1)+H_2(x_2,y_2),
 \end{equation} 
\begin{equation}
	\begin{aligned}
		H_1(x_1,y_1)&=\frac{x_1^2+y_1^2}{2}\\
		H_2(x_2,y_2)&=(x_2^2+y_2^2)^2-\epsilon(x^2_2+y_2^2)x_2-\frac{\epsilon}{2}(x_2^2+y_2^2)
	\end{aligned}
\end{equation}
If $\epsilon>0$, then $H_2$ has exactly three critical points: a saddle $(p_2,0)$, a local maximum $(p_1,0)$ and a minimum $(p_3,0)$, where $p_2<p_1=0<p_3$. See Figure \ref{fi:niveis-de-energia}.

\begin{figure}
	\centering
	\includegraphics[width=0.65\textwidth]{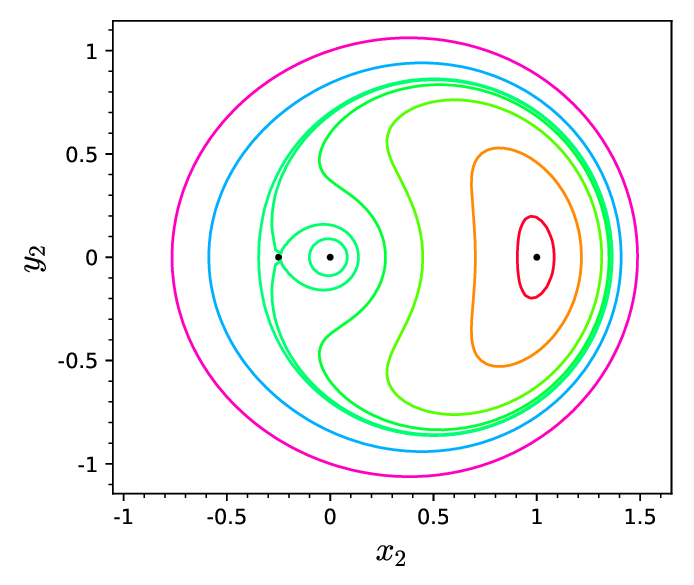}
	\caption{Energy levels of $H_2$ for $\epsilon=1$.} 
	\label{fi:niveis-de-energia}
\end{figure}

The energy level $S:=H^{-1}\left(\frac{1}{2}\right)$ is star-shaped:  
If $z=(x_1,y_1,x_2,y_2)\in S$ and $Y$ is the radial vector field $Y(z)=\frac{1}{2}z$, then, for sufficiently small $\epsilon$, we have
\begin{equation*}
	\begin{aligned}
		dH_z(Y)&=\frac{x_1^2+y_1^2}{2}+2(x_2^2+y_2^2)^2-\epsilon(x_2^2+y_2^2)x-\frac{\epsilon}{2}(x_2^2+y_2^2)-\epsilon(x_2^2+y_2^2)x_2\\
		&=H(z)+(x_2^2+y_2^2)^2-\epsilon(x_2^2+y_2^2)x_2\\
		&>0
	\end{aligned}
\end{equation*}
It follows that $S$ is diffeomorphic to $S^3$ and $\lambda:=\lambda_0|_S$ is a contact form on $S$. 
The Hamiltonian vector field associated to $H:\R^4\to \R$ is given by 
\begin{equation}\label{eq:hamiltonian-vf}
	X_{H}(x_1,y_1,x_2,y_2)=(-y_1,x_1,-P(x_2,y_2),Q(x_2,y_2)),
\end{equation}
where 
\begin{equation}
	\begin{aligned}
		Q(x_2,y_2)&=\dfrac{\partial H_2(x_2,y_2)}{\partial x_2}=4x_2(x_2^2+y_2^2)-3\epsilon x_2^2-\epsilon y_2^2 -\epsilon x_2 \\
		P(x_2,y_2)&=\dfrac{\partial H_2(x_2,y_2)}{\partial y_2 }=4y_2(x_2^2+y_2^2)-2\epsilon x_2y_2-\epsilon y_2.
	\end{aligned}
\end{equation}
The Reeb vector field $R_\lambda$ is given by 
$	R_\lambda(z)=h(z)X_{H}(z),~~\text{for }z=(x_1,y_1,x_2,y_2)\in S,$ 
where
\begin{equation}
	h(z)=\lambda_z(X_{H}(z))^{-1}=2(x_1^2+y_1^2+x_2Q(x_2,y_2)+y_2P(x_2,y_2))^{-1}.
\end{equation}	
Consider the Reeb orbits $P_1=(\gamma_1, T_1)$, $P_2=(\gamma_2, T_2)$ and $P_3=(\gamma_3, T_3)$ where 
\begin{equation}\label{eq:defi-orbitas}
	\gamma_i(t)=\left(r_i\cos\left(\frac{2}{r_i^2}t\right),r_i\sin\left(\frac{2}{r_i^2}t\right),p_i,0\right), ~~\text{for }	r_i=\sqrt{1-2H_2(p_i,0)},
\end{equation}
 
\begin{equation}\label{eq:periodo-exemplo}
	T_i=\pi r_i^2.
\end{equation}
Note that $H_2(p_3,0)<H_2(p_2,0)<H_2(p_1,0)$, which implies that
\begin{equation}\label{eq:t1-t2-t3}
	T_1<T_2<T_3.
\end{equation}
Since $H_2(p_i,0)={O}(\epsilon^2)$, for each $i=1,2,3$, we have 
\begin{equation}\label{eq:t3menort1}
	T_3<2T_1,
\end{equation}
for sufficiently small $\epsilon$.
It is easy to see that $P_1\cup P_2 \cup P_3$ is a trivial link.

\begin{proposition}\label{pr:exemplo-prova}
	For sufficiently small $\epsilon$, the contact form $\lambda=\lambda_0|_S$, where $S=H^{-1}(\frac{1}{2})$, and the Reeb orbits $P_1$, $P_2$, and $P_3$, defined by \eqref{eq:defi-orbitas}-\eqref{eq:periodo-exemplo}, satisfy the hypotheses of Theorem \ref{pr:example}. Therefore, there exists a $3-2-1$ foliation adapted to $\lambda$ with binding orbits $P_1$, $P_2$, and $P_3$. 
\end{proposition}

\begin{figure}
	\centering
	\includegraphics[width=0.65\textwidth]{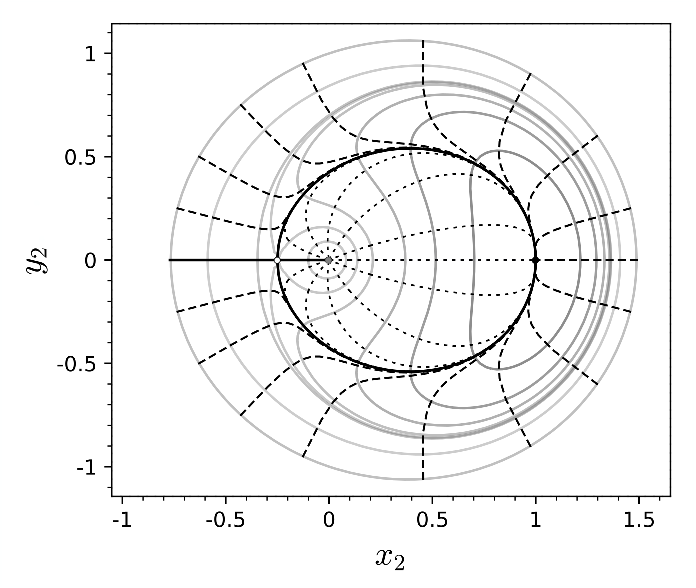}
	\caption{Projection of a $3-2-1$ foliation onto the plane $(x_2,y_2)$.}
\end{figure}

\begin{remark}\label{re:example-bifrucation}
	If $\epsilon<0$ and $|\epsilon|$ is sufficiently small, $S=H^{-1}\left(\frac{1}{2}\right)$ is strictly convex, which implies (see \cite{hwz1998}) that $\lambda=\lambda_0|_S$ is dynamically convex. 
	The simple Reeb orbit $P$ with image $H_1^{-1}\left(\frac{1}{2}\right)\times \{0\}$ has Conley-Zehnder index $3$, self-linking number $-1$, and is unknotted. 
	It follows from the main statement of \cite{hryniewicz2014systems} that $P$ is binding of an open book decomposition with disk-like pages adapted to $\lambda$, where each page is a global surface of section for the associated Reeb flow.
	The orbits $P_1$, $P_2$ and $P_3$ bifurcate from $P$ when $\epsilon=0$.
\end{remark}


\subsection{Outline of the main arguments}

The paper is organized as follows. 
In Section \ref{se:preliminaries}, we review some facts about contact geometry, Conley-Zehnder indices, and pseudo-holomorphic curves in symplectizations, which will be used throughout the paper.
The proof of 
Theorem \ref{theo:main-theorem} is split into Propositions \ref{pr:foliation-solid-torus}, \ref{pr:a-cylinder-asymptotic-p1-p2}, \ref{pr:a-family-of-cylinders}, and \ref{pr:self-linking-1}, proved in Sections \ref{se:foliating-gamma1=0}, \ref{se:cilindro-p2-p1}, and \ref{se:familia-cilindros-folheacao}, respectively. Proposition \ref{TH:NECESSIDADE-3} is proved in Section \ref{se:prova-necessidade}, Theorem \ref{pr:example} in Section \ref{se:proposition-pre-exemple}, and Proposition \ref{pr:exemplo-prova} in Section \ref{se:proof-prop-exemplo}. 

In the following, we sketch the main steps of the proof of Theorems \ref{theo:main-theorem} and \ref{pr:example}.
By hypothesis \textit{(iv)} of Theorem \ref{theo:main-theorem}, there exists an $\R$-invariant almost complex structure $\tj=(\lambda,J)$ admitting a finite energy plane 
asymptotic to $P_3$. 
By the results of \cite{hofer1995properties2} and the Fredholm theory developed in \cite{hofer1999properties3}, after a quotient by the natural $\R$-action, the plane lives in a one-dimensional family 
$\tu_\tau=(a_\tau,u_\tau):\C\to \R\times S^3,~~\tau\in (0,1)$,
where each projection $u_\tau:\C\to S^3$ is an embedding transverse to the Reeb vector field. 
  In Proposition \ref{pr:foliation-solid-torus}, using the linking hypotheses and bubbling-off analysis, we show that the family breaks in both ends into two-level holomorphic buildings, each consisting of a rigid cylinder from $P_3$ to $P_2$ and a rigid plane asymptotic to $P_2$. 
We use hypothesis \textit{(v)} and the intersection theory developed in \cite{siefring2011intersection} to show that the rigid plane to $P_2$ is common to the two ends. 
\begin{figure}
	\centering
	\includegraphics[scale=0.6]{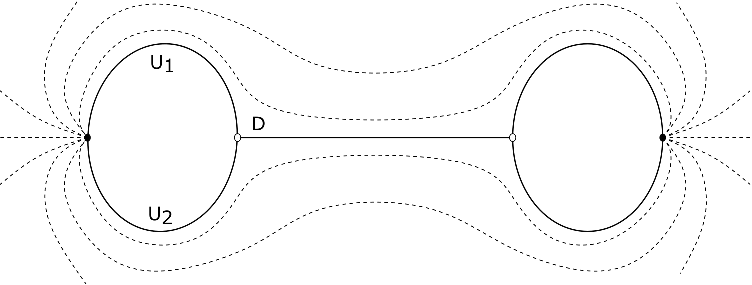}
	\caption{
	The pair of rigid cylinders $U_1=u_r(\C\setminus\{0\})$, $U_2=u_r'(\C\setminus \{0\})$, the rigid disk $D=u_q(\C)$ and the family of disks $F_\tau=u_\tau(\C)$, $\tau\in(0,1)$, foliating the region $\mathcal{R}_1$.}
\label{fi:solid-torus}
\end{figure}
The projection of the one-parameter family of planes $\tu_\tau$, the two rigid cylinders $\tu_r=(a_r,u_r),\tu'_r=(a'_r,u'_r):\C\setminus \{0\}\to \R\times S^3$, and the rigid plane $\tu_q=(a_q,u_q):\C\to \R\times S^3$ to $S^3$ determine a foliation of a closed region $\mathcal{R}_1$ homeomorphic to a solid torus. See figure \ref{fi:solid-torus}.

The next step of the proof is Proposition \ref{pr:a-cylinder-asymptotic-p1-p2}, where we obtain a rigid cylinder $\tv_r=(b_r,v_r):\C\setminus\{0\}\to \R\times S^3$ from $P_2$ to $P_1$.
We consider a non-cylindrical symplectic cobordism between $(S^3,\lambda)$ and $(S^3,\lambda_E)$, where $\lambda_E$ is a dynamically convex contact form, adapting the ideas of \cite{hwz1998}. 
We define an non $\R$-invariant almost complex structure $\bar{J}$ on the cobordism such that $\bar{J}=\tilde{J}$ on $[2,+\infty)\times S^3$.
After an $\R$-translation, the plane $\tu_q$ is $\bar{J}$-holomorphic. 
By the Fredholm theory of \cite{hofer1999properties3}, the plane lives in a one-dimensional family of $\bar{J}$-holomorphic planes asymptotic to $P_2$.
Using hypothesis \textit{(v)} and the SFT compactness theorem, we show that the family 
breaks into a two-level holomorphic building, where the first level is pseudo-holomorphic for the original $\R$-invariant almost complex structure. Using the linking hypotheses and intersection theory, we show that the first level is a cylinder from $P_2$ to $P_1$ and projects onto an embedded cylinder in the complement of the region $\mathcal{R}_1$.

\begin{figure}
	\centering
	\includegraphics[scale=0.65]{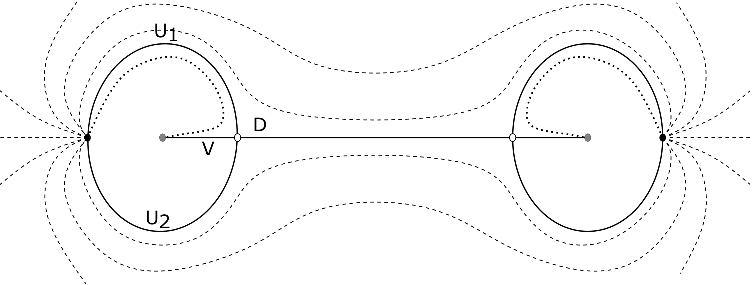}
	\caption{The foliation of $\mathcal{R}_1$, the cylinder $V=v_r(\C\setminus\{0\})$ connecting the orbits $P_2$ and $P_1$, and a cylinder $C_\tau=w_\tau(\C\setminus\{0\})$ connecting $P_3$ and $P_1$.}
\end{figure}

Next, we glue the cylinders $\tu_r$ and $\tv_r$ to obtain an one-dimensional family $\tw_\tau=(c_\tau,w_\tau):\C\setminus\{0\}\to \R\times S^3$ of $\tj$-holomorphic cylinders from $P_3$ to $P_1$. Using hypotheses \textit{(ii)} we prove that this family breaks into a two-level building formed by the cylinders $\tu'_r$ and $\tv_r$. In Proposition \ref{pr:a-family-of-cylinders} we prove that the cylinders $C_\tau=w_\tau(\C\setminus\{0\})$ complete the foliation. 


We show that the orbits $P_1$, $P_2$ and $P_3$ have self-linking number $-1$ in Proposition \ref{pr:self-linking-1}.  This follows for the orbits $P_2$ and $P_3$ since these orbits bound disks transverse to the Reeb flow. For the orbit $P_1$ we produce a disk gluing the cylinder $\bar{V}$ and the disk $\bar{D}$.

The existence of the $3-2-1$ foliation and the arguments in {\cite[Proposition 7.5]{hwz2003}} imply the existence of at least one homoclinic orbit to $P_2$. 
For completeness, we sketch a proof of the existence of a homoclinic here. Our argument follows \cite[\S 2]{dePS2013} and \cite[\S 4]{depaulo2019}.

Consider the one-parameter family of disks $\{F_\tau\}$, $\tau\in (0,1)$, the one-parameter family of cylinders $\{C_\tau\}$, $\tau\in (0,1)$, and the cylinders $U_1,U_2$ as in Definition \ref{de:3-2-1-foliation}. The local unstable manifold $W_{loc}^-(P_2)$ intersects $F_\tau$ transversally, for $\tau$ close to $0$, in an embedded circle 
bounding an embedded closed disk $B_{F,\tau,0}\subset F_\tau$ with $d\lambda$-area $T_2$. 
The local stable manifold $W^+_{loc}(P_2)$ intersects $F_{\tau'}$ transversally, for $\tau'$ close to $1$, in an embedded circle bounding an embedded closed disk $B^+_{\tau'}\subset F_{\tau'}$ with $d\lambda$-area $T_2$. All points in $F_{\tau'}\setminus B^+_{\tau'}$ correspond to trajectories that exit $\mathcal{R}_1$ through the cylinder $U_2$.
In the same way, $W^+_{loc}(P_2)$ intersects $C_{\tau'}$, for $\tau'$ close to $1$, in an embedded circle $S^+_{\tau'}$ such that $P_1\cup S^+_{\tau'}$ is the boundary of a closed region $R^+_{\tau'}$ with $d\lambda$-area $T_2-T_1>0$. All points in $C_{\tau'}\setminus R^+_{\tau'}$ correspond to trajectories entering $\mathcal{R}_1$ through the cylinder $U_1$.

The existence of a $3-2-1$ adapted to $\lambda$ implies that the forward flow sends $B_{F,\tau,0}$, into a disk $B_{F,1}$ inside $F_{\tau'}$, for $\tau'$ close to $1$. If $B_{F,1}$ intersects $B^+_{\tau'}$, then, since both disks have the same area, their boundaries also intersect, and a homoclinic to $P_2$ exists. Otherwise, $B_{F,1}\subset \left(F_{\tau'}\setminus B^+_{\tau'}\right)$ and the forward flow sends $B_{F,1}$ 
into a disk $B_{C,\tau,1}\subset C_\tau$, for $\tau$ close to $0$, with $d\lambda$-area $T_2$.
The forward flow sends $B_{C,\tau, 1}\subset C_{\tau}$ into a disk $B_{C,2}$ inside $C_{\tau'}$. If $B_{C,2}$ intersects $R_{\tau'}^+$, then, since the area of $R_{\tau'}^+$ is $T_2-T_1<T_2$, their boundaries also intersect and there exists a homoclinic to $P_2$. 
Otherwise, $B_{C,2}$ is contained in $C_{\tau'}\setminus R_{\tau'}^+$ and the forward flow sends $B_{C,2}$ into a 
disk $B_{F,\tau,2}\subset F_\tau$, which has area $T_2$ and is disjoint from $B_{F,\tau,0}$.
Proceeding this way, we construct  disks $B_{F,\tau,2n}\subset F_\tau$, $B_{C,\tau,2n+1}\subset C_\tau$, for $\tau$ close to $0$, and $B_{F,2n+1}\subset F_{\tau'}$, $B_{C,2n+2}\subset C_{\tau'}$, for $\tau'$ close to $1$. 
This procedure must end at some point since the disks $B_{F,\tau,2n}$ and  $B_{C,\tau,2n+1}$ are disjoint and have the same area $T_2$, while the areas of $F_\tau$ and $C_\tau$ are $T_3$ and $T_3-T_1$, respectively. 
This implies that either $B_{F,2n+1}$ intersects $B^+_{\tau'}$ or $B_{C,2n+2}$ intersects $R_{\tau'}^+$, for some integer $n\geq 0$. In any case, a homoclinic to $P_2$ must exist. 

To prove Theorem \ref{pr:example}, we use hypothesis \textit{(iii)} to obtain a one-dimensional family of pseudo-holomorphic cylinders from $P_3$ to $P_1$. Using bubbling-off analysis and hypotheses  \textit{(i)} and \textit{(ii)}, we show that this family breaks in both ends into a two-level holomorphic building consisting of a rigid cylinder from $P_3$ to $P_2$ and a rigid cylinder from $P_2$ to $P_1$.

\section{Preliminaries}\label{se:preliminaries}

In this section,  $\lambda$ is a contact form on a $3$-manifold $M$, and $\xi=\ker \lambda$ is the associated contact structure.

\subsection{The Conley-Zehnder index}\label{se:Conley-Zehnder-index}

Let $\varphi^t$ be the Reeb flow associated to the contact form $\lambda$.
The bilinear form $d\lambda$ turns $\xi=\ker \lambda$ into a symplectic vector bundle and the linearized flow 
$d\varphi^t_x:\xi_x\to \xi_{\varphi^t(x)}$ is symplectic with respect to $d\lambda$.

Let $P=(x,T)\in \mathcal{P}(\lambda)$ be a nondegenerate Reeb orbit. 
Let $\Psi:x_T^*\xi\to\R/\Z\times \R^2$ be a trivialization of the symplectic vector bundle $(x_T^*\xi,d\lambda)$ and consider the arc of symplectic matrices $\Phi\in \C^\infty([0,1],Sp(1))$ defined by 
$\Phi(t)=\Psi_t\circ d\varphi^{Tt}|_{\xi_{x(0)}}\circ \Psi_0^{-1}~.$ 
Let $z\in \C\setminus \{0\}$ and let $\theta:[0,1]\to \R$ be a continuous argument for $z(t):=\Phi(t)z$, that is, $e^{2\pi\theta(t)}=\frac{z(t)}{|z(t)|},~\forall ~0\leq t\leq 1$.
Define the \textit{winding number} of $z(t)=\Phi(t)z$ by 
\begin{equation}\label{eq:winding-cz}
	\Delta(z)=\theta(1)-\theta(0)\in \R
\end{equation}
and the \textit{winding interval} of the arc $\Phi$ by 
$I(\Phi)=\{\Delta(z)|z\in \C\setminus \{0\}\}~.$
The interval $I(\Phi)$ is compact and its length is strictly smaller than $\frac{1}{2}$. Since $P$ is nondegenerate, we have $\partial I(\Phi)\cap \Z=\emptyset$, see {\cite[\S 2]{hryniewicz2014systems}}. 
{Thus, the winding interval either lies between two consecutive integers or contains precisely one integer. 
The Conley-Zehnder index of the orbit $P$ relative to the trivialization $\Psi$ is defined by
\begin{equation}\label{eq:defi-cz-index-nondeg}
	\mu(P,\Psi)=\left\{
	\begin{array}{lr}
		2k+1, \text{ if }I(\Phi)\subset (k,k+1)\\
		2k, \text{ if } k\in I(\Phi)
	\end{array}
	\right.
\end{equation}
This index only depends on the homotopy class
of the trivialization $\Psi$.

Throughout the paper, we only deal with the tight contact structure on $S^3$, which is a trivial symplectic bundle. 
In this case, the Conley-Zehnder index is independent of the choice of global symplectic trivialization of $\xi$. 
We define the Conley-Zehnder index of the Reeb orbit $P$ by 
\begin{equation}
	\mu(P):=\mu(P,\Psi),
\end{equation}
for any global symplectic trivialization $\Psi:\xi\to S^3\times \R^2$.


	We say that 
	the Reeb orbit $P=(x,T)$ is  \textit{positive hyperbolic} if $\sigma(\Phi(1))\subset (0,\infty)\setminus \{1\}$, \textit{negative hyperbolic} if $\sigma(\Phi(1))\subset (-\infty,0)\setminus \{-1\}$, or \textit{elliptic} if the eigenvalues are in $S^1\setminus \{1\}$. 
	An orbit is positive hyperbolic if and only if it has even Conley-Zehnder index. 


The following lemma is a consequence of the properties of the Conley-Zehnder index proved in \cite{hwz2003}.

\begin{lemma}\label{le:properties-cz-index}
	Let $\lambda$ be a tight contact form on $S^3$ and let $P=(x,T)$ be a nondegenerate Reeb orbit. Let $k\geq 1$ be an integer such that for every $l\in \{1,\cdots, k\}$, the orbit $P^l=(x,lT)$ is nondegenerate. The following assertions hold.
	\begin{itemize}
		\item [(1)]If $\mu(P^k)=1$, then $\mu(P^l)=1$, $\forall l\in \{1,\cdots, k\}$;
		\item [(2)] $\mu(P^k)\leq 0$ $\iff$ $\mu(P^l)\leq 0$, $\forall l\in \{1,\cdots, k\}$; 
		\item [(3)] If $P$ is a hyperbolic orbit, then $\mu(P^l)=l\mu(P)$, $\forall l\in \{1,\cdots, k\}$;
		\item[(4)]If $\mu(P^k)=2$, then $k\in \{1,2\}$ and $P$ is hyperbolic. If $k=2$, then $\mu(P)=1$.
	\end{itemize}
\end{lemma}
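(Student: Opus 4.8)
The plan is to establish each of the four assertions by analyzing how the winding interval $I(\Phi)$ behaves under iteration, using the subadditivity and monotonicity properties of winding numbers established in \cite{hwz2003}. Recall that for the $k$-th iterate $P^k$, the relevant arc of symplectic matrices is $\Phi_k(t) = \Phi(kt)$ (after reparametrization), so each winding number satisfies $\Delta_{P^k}(z) = \sum_{j=0}^{k-1}\Delta_{P}(\Phi(j)z) \geq k\,\rho_{\min}$, where $\rho_{\min} = \min I(\Phi)$; more precisely the rotation number $\rho$ of $\Phi$ (the common limit $\Delta_{P^k}(z)/k$) lies in $I(\Phi)$, and $I(\Phi_k) \subset [k\rho - 1/2, k\rho + 1/2]$ with the usual refinement that the length is $< 1/2$. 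The key dictionary is: $\mu(P^l) = 2\lfloor l\rho\rfloor + 1$ if $l\rho \notin \Z$, and $\mu(P^l) = 2 l\rho$ if $l\rho \in \Z$ (the latter only possible when $P$ is not elliptic-irrational).

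First I would prove (3): if $P$ is hyperbolic, then $\Phi(1)$ has real eigenvalues, the eigendirections are $\Phi$-invariant lines, and along each of them the argument changes by an integer multiple of $1/2$ over one period; hence $\rho \in \tfrac12\Z$ and $I(\Phi)$ is a degenerate-in-the-relevant-sense interval forcing $\Delta_{P^l}(z) = l\Delta_P(z)$ on the eigendirections. For positive hyperbolic $\rho \in \Z$ and $\mu(P) = 2\rho$, $\mu(P^l) = 2l\rho = l\mu(P)$; for negative hyperbolic $\rho \in \Z + \tfrac12$ and $\mu(P) = 2\rho$ (an odd integer), and again $\mu(P^l) = 2l\rho = l\mu(P)$. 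Next, (2): $\mu(P^k) \leq 0$ is equivalent to $k\rho < 1/2$ — indeed if $k\rho \geq 1/2$ then either $k\rho \geq 1$ giving $\mu(P^k) \geq 2$, or $k\rho \in [1/2,1)$; in that last sub-case a careful look using that $I(\Phi_k)$ has length $<1/2$ and doesn't straddle an integer on its boundary shows $\mu(P^k) = 1 > 0$, and conversely. Since $\rho$ has a fixed sign, $k\rho < 1/2 \iff l\rho < 1/2$ for $1 \leq l \leq k$ precisely when $\rho \leq 0$, which is exactly the content of the equivalence. For (1), $\mu(P^k) = 1$ means $k\rho \in (0,1)$ with $I(\Phi_k) \subset (0,1)$; since $\rho > 0$ this forces $0 < l\rho \leq k\rho < 1$ for all $l \leq k$, and the winding-interval length bound gives $I(\Phi_l) \subset (0,1)$, hence $\mu(P^l) = 1$. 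Finally (4): $\mu(P^k) = 2$ forces $k\rho \in \Z_{>0}$, so $P$ is not elliptic with irrational rotation number; ruling out elliptic-rational is done by the standard argument that a rational elliptic orbit has a nearby iterate with index $\leq 0$ or $\geq 4$ violating the nondegeneracy/index pattern, leaving $P$ hyperbolic; then by (3) $\mu(P^k) = k\mu(P) = 2$ forces $(k,\mu(P)) \in \{(1,2),(2,1)\}$.

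The main obstacle I expect is the boundary-case bookkeeping in the half-open intervals — specifically handling the situation $k\rho \in [1/2, 1)$ in assertion (2) and making rigorous why the winding interval $I(\Phi_k)$, despite having length strictly less than $1/2$, cannot sneak an endpoint onto an integer (which is where nondegeneracy of $P^k$ enters via $\partial I(\Phi_k) \cap \Z = \emptyset$), and in assertion (4) carefully excluding the elliptic case with rational rotation number using only the hypothesis that $P^1, \dots, P^k$ are nondegenerate. All of these are formalizations of the geometry of $Sp(1)$ and the continuity/monotonicity of winding under iteration, so the proof is essentially a matter of invoking the correct lemmas from \cite{hwz2003} and assembling the casework; no genuinely new idea is needed.
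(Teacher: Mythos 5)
The paper gives no proof of this lemma at all — it is quoted as a direct consequence of the Conley--Zehnder index properties in \cite{hwz2003} — so the comparison is with those standard facts, and your rotation-number formulation (the transverse rotation number $\rho\in I(\Phi)$, $\rho(P^l)=l\rho$, $\mu(P^l)=2\lfloor l\rho\rfloor+1$ when $l\rho\notin\Z$, $\mu(P^l)=2l\rho$ when $l\rho\in\Z$, and even index equivalent to positive hyperbolic) is exactly the right machinery; items (1), (3) and the final dichotomy in (4) are essentially correct as sketched.

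Two intermediate steps, however, would fail as written. In (2), the claimed equivalence ``$\mu(P^k)\le 0 \iff k\rho<1/2$'' is false: an elliptic orbit with $k\rho\in(0,1/2)$ has $\mu(P^k)=2\lfloor k\rho\rfloor+1=1>0$. The correct dividing line is the sign of $\rho$: for $l\rho\notin\Z$ one has $\mu(P^l)\le 0\iff l\rho<0$, and for $l\rho\in\Z$ (positive hyperbolic) $\mu(P^l)=2l\rho\le 0\iff \rho\le 0$; since the sign of $\rho$ does not depend on the iterate, (2) follows — the conclusion survives, but your stated criterion and the sentence deducing the equivalence from it must be replaced. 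In (1), the inference ``the winding-interval length bound gives $I(\Phi_l)\subset(0,1)$'' is too quick: an interval of length $<1/2$ containing $l\rho\in(0,1)$ could a priori still contain $0$ or $1$ in its interior; to exclude this you need the parity fact — if an integer lay in $I(\Phi_l)$ then $P^l$ would be positive hyperbolic and $l\rho$ would equal that integer, contradicting $l\rho\in(0,1)$. Finally, in (4) the exclusion of the elliptic case is both simpler and cleaner than your ``nearby iterate with index $\le 0$ or $\ge 4$'' argument: if $P$ were elliptic with eigenvalues $e^{\pm 2\pi i\theta}$, then $P^k$ is either elliptic (hence of odd index) or has eigenvalue $1$ (hence degenerate), and both contradict $\mu(P^k)=2$ together with the nondegeneracy hypothesis; hyperbolicity of $P$ then lets (3) give $k\mu(P)=2$, so $k\in\{1,2\}$ with $\mu(P)=1$ when $k=2$. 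All three repairs are routine within your framework, so the defects are local slips rather than a wrong approach.
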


\subsection{The asymptotic operator}\label{ch:asymptotic operator}


Fix $P=(x,T)\in \mathcal{P}(\lambda)$  and let $h:S^1\to TM$ be a vector field along $x_T:S^1=\R/\Z\to M$. 
The Lie derivative $\mathcal{L}_{R_\lambda}h$ of $h$ with respect to $R_\lambda$ is defined by
\begin{equation}\label{eq:lei-derivative-section}
\mathcal{L}_{R_\lambda}h(t)=\dfrac{d}{ds}\bigg|_{s=0}d\varphi^{-s}(x(Tt+s))h\left(t+\frac{s}{T}\right)~,
\end{equation}
where $\varphi^t$ is the flow of $R_\lambda$.
Let $\nabla$ be a symmetric connection on $TM$. We can use $dx_T(t)\partial _t=TR_\lambda(x_T(t))$ to write
\begin{equation}\label{eq:asymptotic-operator-independent}
T\mathcal{L}_{R_\lambda}h=\mathcal{L}_{TR_\lambda}h=\nabla_{TR_\lambda}h-\nabla_hTR_\lambda=\nabla_th-T\nabla_hR_\lambda,
\end{equation}
where 
$\nabla_t$ is the covariant derivative along $x_T$.
We conclude that the differential operator $\nabla_t\cdot - T \nabla_{\cdot}R_\lambda$ maps sections of $x_T^*\xi$ to sections of $x_T^*\xi$ and is independent of the choice of symmetric connection.


Choosing a $d\lambda$-compatible complex structure $J\in \mathcal{J}(\lambda)$, we associate to the orbit $P=(x,T)$ the unbounded differential operator
\begin{equation}\label{eq:operador-assintotico-extensao}
\begin{aligned}
A_{P,J}:\mathcal{D}(A_{P,J})=W^{1,2}(S^1,x^*_T\xi)\subset L^2(S^1,x_T^*\xi)&\to L^2(S^1,x_T^*\xi)\\
\eta&\mapsto -J(\nabla_t\eta - T\nabla_{\eta}R_\lambda)~.
\end{aligned}
\end{equation}
	The operator $A_{P,J}$ defined by \eqref{eq:operador-assintotico-extensao}
	is called the \textit{asymptotic operator} associated to the orbit $P$ and the complex structure $J$.
	In any unitary  trivialization $\Psi$ of $(x_T^*\xi,d\lambda,J)$, the operator $A_{P,J}$ takes the form 
	$L_S:=-J_0\frac{d}{dt}-S(t),$ 
	where $S(t)$ is a path of symmetric matrices given by $S(t)=-J_0\dot{\Phi}(t)\Phi(t)^{-1}$ and $\Phi(t)$ is the linearized flow restricted to $\xi$ in the trivialization $\Psi$.
If $\eta(t)$ is an eigensection of $A_{P,J}$ with corresponding eigenvalue $\lambda\in\R$,
then $n(t):=\Psi\circ \eta(t)$ 
satisfies $n(t)\neq 0$ for all $t\in S^1$.
It follows that $\eta(t)$ has a well defined winding number given by 
\begin{equation}\label{rm:winding-number def}
\wind(\eta,\Psi):=\deg \left(t\mapsto\frac{n(t)}{\|n(t)\|}\right).
\end{equation}
This definition just depends on the homotopy class of the trivialization $\Psi$. 
The following properties about this winding number are proved in \cite{hofer1995properties2}.
\begin{proposition}\cite{hofer1995properties2}\label{pr:properties-asymptotic-operator}
	The unbounded operator $A_{P,J}$ has discrete real spectrum accumulating only at $\pm \infty$. 
	Let $\Psi$ be a unitary trivialization of $x_T^*\xi$. Then
	\begin{itemize}
		\item Given nonzero eigensections $\eta_1(t)$ and $\eta_2(t)$ associated to the same eigenvalue $\lambda\in \sigma(A_{P,J})$, we have 
		$\wind(\eta_1,\Psi)=\wind(\eta_2,\Psi)$, 
		so that we can define $\wind(\lambda,\Psi)=\wind(\eta,\Psi)$, for any eigensection $\eta$ associated to $\lambda$. 
		\item If $\lambda\neq \mu\in \sigma(A_{P,J})$ satisfy $\wind(\lambda,\Psi)=\wind(\mu,\Psi)$ and $\eta_\lambda(t), \eta_\mu(t)$ are non-vanishing $\lambda, \mu$-eigensections, respectively, then $\eta_\lambda(t),\eta_\mu(t)$ are pointwise linearly independent.
		\item Given $k\in \Z$, there exists precisely two eigenvalues $\lambda, \mu\in \sigma(A_{P,J})$, counting multiplicities, such that $\wind(\lambda,\Psi)=\wind(\mu,\Psi)=k$
		\item If  $\lambda, \mu\in \sigma(A_{P,J})$ and $\lambda\leq \mu$, then $\wind(\lambda,\Psi)\leq \wind(\mu,\Psi)$.
		\item $0\notin\sigma(A_{P,J})$ if and only if the orbit $P=(x,T)$ is nondegenerate.
	\end{itemize}
\end{proposition}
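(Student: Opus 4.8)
The plan is to fix a unitary trivialization $\Psi$ of $(x_T^*\xi,d\lambda,J)$ and to work throughout with the normal form $L_S=-J_0\frac{d}{dt}-S(t)$ of \eqref{eq:defi-ls}, regarded as an unbounded self-adjoint operator on $L^2(S^1,\R^2)$ with domain $W^{1,2}(S^1,\R^2)$; since each assertion is invariant under a change of homotopy class of trivialization, nothing is lost. The first assertion is then soft: $-J_0\frac{d}{dt}$ is self-adjoint with compact resolvent (its spectrum is $\{2\pi k:k\in\Z\}$) and $S(\cdot)$ acts as a bounded symmetric multiplication operator, so $L_S$ is self-adjoint with compact resolvent, whence $\sigma(A_{P,J})$ is a discrete subset of $\R$ consisting of eigenvalues of finite multiplicity with no accumulation point in $\R$. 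For the last assertion, note that $A_{P,J}h=0$ is equivalent to $\mathcal L_{R_\lambda}h=0$, i.e.\ to $h(t)=d\varphi^{Tt}h(0)$; a nonzero such $h$ exists precisely when $d\varphi^{T}|_{\xi_{x(0)}}$ has $1$ as an eigenvalue, that is, precisely when $P$ is degenerate.

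For the remaining items the central fact is that a nonzero eigensection never vanishes: if $A_{P,J}\eta=\lambda\eta$ then $n:=\Psi\circ\eta$ solves the linear ODE $\dot n=J_0(S(t)+\lambda)n$, so by uniqueness $n(t)\ne0$ for all $t$, which is exactly what makes $\wind(\eta,\Psi)=\deg(t\mapsto n/|n|)$ meaningful. Writing $n=|n|e^{i\theta}$ under the identification $\R^2\cong\C$, $J_0=i$, and using that a real symmetric matrix acts on $\C$ as $z\mapsto\alpha z+\beta\bar z$ with $\alpha\in\R$, $\beta\in\C$, one finds that the argument obeys the scalar equation
\begin{equation}\label{eq:angular-proposal}
\dot\theta=\big(\alpha(t)+\lambda\big)+\mathrm{Re}\big(\beta(t)e^{-2i\theta}\big),
\end{equation}
which is $\pi$-periodic in $\theta$ — it is the lift of the flow induced on $\R P^1$ by the fundamental solution $\Phi_\lambda$ of $\dot n=J_0(S+\lambda)n$ — and whose right-hand side is strictly increasing in $\lambda$.

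From \eqref{eq:angular-proposal} a comparison argument delivers all items except the ``precisely two'' count. Two trajectories of the $\lambda$-flow on $\R P^1$ can never cross, and for $\mu>\lambda$ the $\mu$-flow strictly outruns the $\lambda$-flow from any common starting point (at a would-be later coincidence the $\mu$-derivative would have to be no larger, yet it exceeds the $\lambda$-derivative by $\mu-\lambda$). Combining these two facts over one period gives: (a) if $\lambda\le\mu$ then $\wind(\lambda)\le\wind(\mu)$; and (b) if $\lambda<\mu$ and $\wind(\lambda)=\wind(\mu)$ then a $\lambda$-eigensection and a $\mu$-eigensection cannot agree in $\R P^1$ at any point, i.e.\ they are pointwise linearly independent. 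For two eigensections with the \emph{same} eigenvalue $\lambda$: the eigenspace is $\ker(\Phi_\lambda(1)-I)$; if it is one-dimensional they are constant multiples of one another, and if it is two-dimensional then $\Phi_\lambda(1)=I$, so the time-$1$ map induced on $\R P^1$ is the identity and every trajectory of \eqref{eq:angular-proposal} has the same projective, hence the same honest, winding number.

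It remains to prove that for each $k\in\Z$ exactly two eigenvalues, counted with multiplicity, have winding $k$. The plan is a rotation-number sweep: as $\lambda$ runs over $\R$, the winding interval $I(\Phi_\lambda)$ (in the sense of Section~\ref{se:Conley-Zehnder-index}) slides monotonically by the monotonicity in \eqref{eq:angular-proposal}, running off to $-\infty$ as $\lambda\to-\infty$ and to $+\infty$ as $\lambda\to+\infty$; $\lambda$ is an eigenvalue of $A_{P,J}$ precisely when $\Phi_\lambda(1)$ has $1$ as an eigenvalue, the coincident (double) case being $\Phi_\lambda(1)=I$, at which $I(\Phi_\lambda)$ collapses to a single integer; and the winding of the eigensections at such a $\lambda$ is read off from the position of $I(\Phi_\lambda)$ relative to $\Z$. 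Counting these events and the winding they carry as $\lambda$ increases — the Sturm-oscillation analysis for a first-order operator on $S^1$ carried out in \cite{hofer1995properties2} — shows that each integer is the winding of exactly two eigenvalues with multiplicity. I expect this count, and in particular the multiplicity bookkeeping at the parameter values where $\Phi_\lambda(1)$ is a nontrivial unipotent, to be the main obstacle; everything else is standard self-adjoint operator theory together with the comparison principle for \eqref{eq:angular-proposal}.
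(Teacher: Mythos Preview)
The paper does not prove this proposition at all: it is stated with the citation \cite{hofer1995properties2} and used as a black box from the literature, so there is no ``paper's own proof'' to compare against. Your sketch is essentially the argument given in the cited HWZ paper (self-adjointness with compact resolvent, non-vanishing of eigensections via ODE uniqueness, the angular equation $\dot\theta=(\alpha+\lambda)+\mathrm{Re}(\beta e^{-2i\theta})$ with its strict monotonicity in $\lambda$, and a rotation-number sweep for the multiplicity-two count), and it is correct in outline; but for the purposes of this paper no proof is expected or supplied.
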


Define
\begin{equation}\label{eq:mair-autovalor-neg}
\nu^{neg}_P=\max\{\nu<0|\nu \text{ is an eigenvalue of }A_P\}
\end{equation}
\begin{equation}\label{eq:menos-autovalor-pos}
\nu^{pos}_P=\min\{\nu\geq0|\nu \text{ is an eigenvalue of }A_P\}
\end{equation}
and given a trivialization $\Psi$ of $x_T^*\xi$, define
$p=\wind(\nu^{pos}_P,\Psi)-\wind(\nu^{neg}_P,\Psi)\in \{0,1\}.$ 
\begin{definition}
	The (generalized) Conley-Zehnder index of the orbit $P$ relative to the unitary trivialization $\Psi$ is defined by
	\begin{equation}\label{eq:conley-zehnder-wind}
	\tilde{\mu}(P,\Psi)=2\wind(\nu^{neg}_P,\Psi)+p.
	\end{equation}
\end{definition}
It is proved in {\cite[Theorem 3.10]{hofer1995properties2}} that for any nondegenerate orbit $P\in \mathcal{P}(\lambda)$,
$\tilde{\mu}(P,\Psi)=\mu(P,\Psi)~,$
where $\mu(P,\Psi)$ is the Conley-Zehnder index defined by \eqref{eq:defi-cz-index-nondeg}.

\subsection{Self-linking number}

The \textit{self-linking number} $\sli(L)$ of a trivial knot $L\subset M$ transverse to $\xi$ is defined as follows. Consider $M$ oriented by $\lambda\wedge d\lambda$.  
Let $D\subset M$ be an embedded disk satisfying $\partial D=L$
and let $Z$ be a smooth nonvanishing section of $\xi|_D$. The section $Z$ is used to slightly perturb $L$ to another knot $L_\epsilon =\{\expe_x(\epsilon Z_x)|x\in L\}$ transverse to $\xi$ and $D$, where $\expe$ is any exponential map. A choice of orientation for $L$ induces orientations of $D$ and $L_\epsilon$.
The self-linking number  of $L$ is defined by
	$L_\epsilon\cdot D\in \Z,$ 
where $L_\epsilon\cdot D$ is the oriented intersection number of $L_\epsilon$ and $D$.
If, for instance, $\xi$ is trivial, this definition is independent of the choices of  $Z$ and $D$. 
If $P=(x,T)$ is an unknotted Reeb orbit, we define its self-linking number by $\sli(P)=\sli(x(\R))$. 


\subsection[Finite energy surfaces]{Finite energy surfaces}\label{ch:finite-energy-surfaces}

The \textit{symplectization} of the contact manifold $(M,\lambda)$ is the symplectic manifold $(\R\times S^3, d(e^a\lambda))$, where $a$ is the coordinate on $\R$. 
Given a complex structure $J\in \mathcal{J}(\lambda)$, we consider the almost-complex structure $\tj=(\lambda,J)$ on $\R\times M$ defined by
\begin{equation}\label{eq:J-til}
\tilde{J} \partial_a =R_\lambda,~~~\tilde{J}|_\xi=J,
\end{equation}
where we see $R_\lambda$ and $\xi$ as $\R$-invariant objects on $\R\times M$.
It is easy to check that the almost complex structure $\tilde{J}$ defined by \eqref{eq:J-til} is $d(e^a\lambda)$-compatible.

Let $(S,j)$ be a closed Riemann surface and let $\Gamma\subset S$ be a finite set.
Let $\tu:S\setminus \Gamma\to \R\times M$ be a $\tj$-holomorphic map, that is, $\tu$ is smooth and satisfies the Cauchy-Riemann equation 
$$\bar{\partial}_{\tj}(\tu)=\frac{1}{2}\left(\tu+\tj(\tu)\circ d\tu\circ j\right)=0.$$
The \textit{Hofer energy} $E(\tu)$ of $\tu$ is defined by
$$E(\tu)=\sup_{\phi\in \Sigma}\int_{S\setminus \Gamma}\tilde{u}^*d(\phi\lambda)~,$$
where $\Sigma=\{\phi\in C^\infty(\R,[0,1])|\phi'\geq 0)\}$.

\begin{definition}
	The $\tj$-holomorphic map $\tilde{u}:S\setminus \Gamma\to \R\times M$ is called a \textit{finite energy surface} if it satisfies 
	$0<E(\tilde{u})<+\infty$.
\end{definition}
The elements of $\Gamma$ are called \textit{punctures}.
 Let $z\in \Gamma$ be a puncture and take a holomorphic chart $\varphi:(U,0)\to (\varphi(U),z)$ centered at $z$. We call $(s,t)\simeq \varphi(e^{-2\pi(s+it)})$ \textit{positive exponential coordinates} and $(s,t)\simeq \varphi(e^{2\pi(s+it)})$  \textit{negative exponential coordinates} around $z$. 
Set $\tu(s,t)=\tu\circ \varphi(e^{-2\pi(s+it)})$, for $s>>1$.
Write $\tu=(a,u)$.
Using Stokes Theorem, one can prove that the limit
\begin{equation}\label{eq:mass}
m(z)=\lim_{s\to +\infty} \int_{\{s\}\times S^1}u^*\lambda 
\end{equation}
exists.
The puncture $z$ is called \textit{removable} if $m=0$, \textit{positive} if $m>0$ and \textit{negative} if $m<0$. By an application of Gromov's removable singularity theorem \cite{gromov85}, one can prove that $\tu$ can be smoothly extended to a removable puncture.
Thus, in the following we assume that all punctures are positive or negative and use the notation $\Gamma=\Gamma^+\cup\Gamma^-$  to distinguish positive and negative punctures. 
If $\tu:S\setminus \Gamma \to \R\times M$ is a finite energy surface, then $\Gamma^+\neq \emptyset$. 

\begin{theorem}\cite{hofer1993pseudoholomorphic,hofer1996properties1}\label{th:asymptotic-limit}
	Let $\tu=(a,u):S\setminus \Gamma\to \R\times M$ be a finite energy surface. Assume $z$ is non removable and let $\epsilon=\pm 1$ be the sign $z$. Fix a sequence $s_n\to +\infty$. Then there exists a nonconstant trajectory of the Reeb flow $x:\R\to M$ with period $T>0$ and a subsequence $s_{n_k}$ 
	such that 
	$$\lim_{k\to +\infty}\{t\mapsto u(s_{n_k},t)\}=\{t\mapsto x(\epsilon Tt)\}$$
	in the $C^\infty(S^1,M)$ topology. 
	If the orbit $(x,T)$ is nondegenerate, then
	$$\lim_{s\to +\infty}\{t\mapsto u(s,t)\}=\{t\mapsto x(\epsilon Tt)\}~.$$
\end{theorem}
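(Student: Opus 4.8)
The plan is to run Hofer's bubbling-off analysis at the puncture, followed by the asymptotic estimates of Hofer--Wysocki--Zehnder. Work in positive exponential coordinates $(s,t)\in[s_0,\infty)\times S^1$ around $z$, assuming first that $z$ is positive so $\epsilon=+1$ (the negative case follows by replacing $s$ with $-s$). Write $\tu(s,t)=(a(s,t),u(s,t))$; the Cauchy--Riemann equation for $\tj=(\lambda,J)$ reads $\pi\circ u_s+J(u)\,\pi\circ u_t=0$, $a_s=\lambda(u_t)$, $a_t=-\lambda(u_s)$, and one has the pointwise inequality $u^*d\lambda\geq0$. Since $m(z)=\lim_s\int_{\{s\}\times S^1}u^*\lambda$ exists and equals $\int_{\{s_0\}\times S^1}u^*\lambda+\int_{[s_0,s]\times S^1}u^*d\lambda$, the total $d\lambda$-area of the end is finite, so the tail area $\int_{[R,\infty)\times S^1}u^*d\lambda\to0$ as $R\to\infty$.

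The technically central step is the uniform gradient bound $\sup_{[s_0,\infty)\times S^1}|\nabla\tu|<\infty$, proved by contradiction. If $|\nabla\tu(z_n)|\to\infty$ for $z_n$ escaping into the end, Hofer's lemma produces rescaling radii along which, after translating in the $\R$-factor, one extracts a nonconstant finite-energy plane $\tv:\C\to\R\times M$. Because the rescaled domains eventually lie inside annuli of vanishing $d\lambda$-area, $\tv$ has zero $d\lambda$-energy, hence its image is contained in a single Reeb trajectory and $\tv$ becomes a nonconstant finite-energy holomorphic plane into $\R\times S^1$ (or $\R\times\R$); this is ruled out by the energy-quantization/nonexistence results for such planes in symplectizations, a contradiction. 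With $|\nabla\tu|$ bounded, elliptic bootstrapping yields uniform $C^k$-bounds for $\tu$ on $[s_0,\infty)\times S^1$ for every $k$.

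Given $s_n\to\infty$, the shifted maps $\tu_n(s,t)=(a(s_n+s,t)-a(s_n,0),\,u(s_n+s,t))$ subconverge in $C^\infty_{\mathrm{loc}}(\R\times S^1)$ to a finite-energy cylinder $\tv=(b,v)$. By the tail estimate $\int v^*d\lambda=0$, so $\pi\circ dv\equiv0$; reinserting this into the CR equations shows $b$ is harmonic with bounded gradient and $v_s,v_t\in\R R_\lambda$, whence $b(s,t)=\alpha s+\beta$ with $\alpha=|m(z)|=:T>0$ and $v(s,t)=x(\epsilon Tt+t_0)$ for a closed Reeb orbit $(x,T)$. Evaluating at $s=0$ gives $u(s_{n_k},\cdot)\to x(\epsilon T\cdot)$ in $C^\infty(S^1,M)$, the first assertion. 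For the second, suppose $(x,T)$ is nondegenerate: the collection of orbits arising as such subsequential limits is a connected subset of the set of $|m(z)|$-periodic orbits with image near $u([s_0,\infty)\times S^1)$, which nondegeneracy makes discrete, so it reduces to the single orbit $P=(x,T)$ and $u(s,\cdot)\to x(\epsilon T\cdot)$ uniformly. To upgrade to $C^\infty(S^1,M)$-convergence one writes $\tu$ near the puncture as an exponentially small perturbation of the trivial cylinder over $P$: in a suitable unitary frame of $\xi$ along $x_T$ the perturbation $\zeta(s,t)\in\xi$ solves an equation of the form $\zeta_s+A_{P,J}\zeta+(\text{lower order})=0$, and since $P$ nondegenerate forces $0\notin\sigma(A_{P,J})$ (Proposition \ref{pr:properties-asymptotic-operator}), the spectral gap at $0$ gives $\|\zeta(s,\cdot)\|_{C^k}\leq Ce^{-\delta s}$, hence the claimed convergence.

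The main obstacle is the uniform gradient estimate of the second paragraph, where the genuine interplay between the Hofer energy, the bubbling mechanism, and the absence of small finite-energy planes enters; everything downstream is comparatively routine elliptic regularity and spectral theory. A secondary difficulty is converting subsequential into honest convergence in the nondegenerate case, which rests on $A_{P,J}$ being invertible.
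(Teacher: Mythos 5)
The paper offers no proof of this statement: it is imported verbatim from \cite{hofer1993pseudoholomorphic} and \cite{hofer1996properties1}, so there is nothing internal to compare against. Your outline is, in substance, the standard argument of those references: uniform gradient bounds near the puncture via bubbling-off (a bubble would be a nonconstant finite energy plane with vanishing $d\lambda$-area, which is impossible since by Stokes its area equals the period of its asymptotic limit), then $C^\infty_{loc}$-subconvergence of the shifted maps to a zero-area cylinder, which must be a trivial cylinder over a closed orbit of period $|m(z)|$, giving the subsequential claim.

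One soft spot: in the nondegenerate case, the ``connected set of limits plus discreteness'' step only pins down the orbit as a subset of $M$, not the parametrized loop, because the possible limits $t\mapsto x(\epsilon Tt+c)$ form a connected $S^1$-family; discreteness of nondegenerate orbits does not by itself exclude drifting of the phase $c$ as $s\to\infty$. What actually secures the full (phase-fixed, $C^\infty$) convergence is exactly your final step --- writing the end as a perturbation of the trivial cylinder and using the spectral gap of the invertible asymptotic operator $A_{P,J}$ to get exponential decay --- which is also how \cite{hofer1996properties1} proceeds. So the argument is correct as long as you let that asymptotic estimate carry the convergence statement, with the earlier discreteness remark only serving to localize the end near the orbit $P$.
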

We say that the periodic orbit $P=(x,T)$ given by Theorem \ref{th:asymptotic-limit} is \textit{an asymptotic limit} of $\tu$ at the puncture $z$. 
If $P=(x,T)$ is nondegenerate, it is called \textit{the asymptotic limit} of $\tu$ at $z$.


The \textit{$d\lambda$-area} of  a $\tilde{J}$-holomorphic curve $\tu:S\setminus \Gamma \to \R\times M$ is given by the formula
$$A(\tu)=\int_{S\setminus\Gamma}\tu^*d\lambda=\int_{S\setminus \Gamma}u^*d\lambda~.$$
One can check that $A(\tu)\geq 0$ and $A(\tu)=0$ if and only if $\pi\cdot du\equiv0$.
{The following theorem concerning $\tj$-holomorphic curves with vanishing $d\lambda$-area 
will be useful throughout the paper.}
\begin{theorem}{\cite[Theorem 6.11]{hofer1995properties2}}\label{theo:vanishing-dlambda-energy}
	Let $\tu=(a,u):\C\setminus \Gamma\to \R\times M$ be a finite energy sphere, where $\Gamma\subset \C$ is the finite set of negative punctures and $\infty$ is the unique positive puncture. If $\pi\cdot du\equiv 0$, then there exists a nonconstant polynomial $p:\C\to \C$ and a Reeb orbit $P=(x,T)\in \mathcal{P}(\lambda)$ such that 
	$p^{-1}(0)=\Gamma$  and $\tu=F_P\circ p$ , 
	where $F_P:\C\setminus \{0\}\to\R\times M$ is defined by $F_P(z=e^{2\pi(s+it)})=(Ts,x(Tt))$.
\end{theorem}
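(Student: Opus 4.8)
The plan is to exploit the rigidity forced by the hypothesis $\pi\cdot du\equiv 0$: it confines the image of $u$ to a single closed Reeb orbit, and then an elementary complex-analysis argument identifies $\tu$ with a branched cover of the trivial cylinder over that orbit. Write $\tu=(a,u)$. In a conformal coordinate $z=s+it$ the equation $\bar\partial_{\tj}\tu=0$ together with \eqref{eq:J-til} gives $a_s=\lambda(u_t)$, $a_t=-\lambda(u_s)$ and $\pi\cdot u_t=J(u)\,\pi\cdot u_s$; the hypothesis makes the last equation vacuous and forces $u_s,u_t\in\R R_\lambda(u)$ everywhere, so $du$ takes values in the line field spanned by $R_\lambda$. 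Since $\C\setminus\Gamma$ is connected, $u(\C\setminus\Gamma)$ therefore lies on a single orbit $\mathcal{O}$ of the Reeb flow, and writing $u(z)=\varphi^{\tau(z)}(u(z_0))$ for the flow time $\tau$ we get $u^*\lambda=\lambda(R_\lambda)\,d\tau=d\tau$. If $\mathcal{O}$ were not periodic, $r\mapsto\varphi^r(u(z_0))$ would be injective, $\tau$ would be single-valued on $\C\setminus\Gamma$ and $u^*\lambda=d\tau$ exact, forcing $m(z)=\lim_s\int_{\{s\}\times S^1}u^*\lambda=\lim_s\oint d\tau=0$ at every puncture and contradicting $m(\infty)>0$. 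Hence $\mathcal{O}$ is a closed orbit; let $P=(x,T)$ be the underlying \emph{simple} orbit, so that $\mathcal{O}=x(\R)$, $u=x\circ\tau$ with $\tau\colon\C\setminus\Gamma\to\R/T\Z$ smooth, and $u^*\lambda=d\tau$ is closed.

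\textbf{Associated polynomial.} Since $u^*\lambda=\star\,da$ (read off from $a_s=\lambda(u_t)$, $a_t=-\lambda(u_s)$) we get $d\tau=\star\,da$; thus $a$ is harmonic with local harmonic conjugate $\tau$, and $g:=e^{(2\pi/T)(a+i\tau)}$ is a well-defined single-valued holomorphic map $\C\setminus\Gamma\to\C^{*}$ with $|g|=e^{2\pi a/T}$. At each negative puncture $z_j$ the symplectization coordinate escapes downward, $a\to-\infty$ (by Theorem \ref{th:asymptotic-limit}, since $a_s=\lambda(u_t)$ converges to a negative multiple of the period of the asymptotic orbit there), so $|g|\to0$ and $z_j$ is a removable zero of $g$; therefore $g$ extends to an entire function vanishing exactly on $\Gamma$. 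At $\infty$, with $T_\infty:=m(\infty)>0$, the asymptotic orbit has image $x(\R)$, hence equals $P^{d}$ for $d:=T_\infty/T\in\Z_{>0}$, and $a(s,t)=T_\infty s+o(s)$ in positive exponential coordinates yields $|g(z)|=|z|^{\,d+o(1)}$ as $z\to\infty$. An entire function of polynomial growth is a polynomial, so $g=:p$ is a polynomial of degree $d$ with $p^{-1}(0)=\Gamma$, the order of the zero at $z_j$ being the multiplicity with which $P$ occurs in the asymptotic orbit at $z_j$.

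\textbf{Identification with $F_P\circ p$.} We have obtained $a=\tfrac{T}{2\pi}\log|p|$ and $\tau\equiv\tfrac{T}{2\pi}\arg p\pmod T$; any residual additive constant in $\tau$ — equivalently a multiplicative constant in $p$ — is absorbed by replacing $p$ with $c\,p$, still a polynomial with the same zero set. Setting $w=p(z)$ in $F_P(w=e^{2\pi(s+it)})=(Ts,x(Tt))$ gives $F_P(p(z))=\big(\tfrac{T}{2\pi}\log|p(z)|,\,x(\tfrac{T}{2\pi}\arg p(z))\big)=(a(z),u(z))=\tu(z)$, so $\tu=F_P\circ p$ with $p$ nonconstant (as $d\ge 1$) and $p^{-1}(0)=\Gamma$, as claimed.

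The one genuinely delicate ingredient is the asymptotic control used above — that $a$ truly escapes to $-\infty$ at the negative punctures and grows like $\tfrac{T_\infty}{2\pi}\log|z|$ at $\infty$, which is what rules out an essential singularity of $g$ at a puncture. This is precisely what the asymptotic analysis of finite energy surfaces near their punctures (Theorem \ref{th:asymptotic-limit} and its refinements) supplies; everything else is elementary once the observation that $u$ maps onto a single closed orbit is in hand.
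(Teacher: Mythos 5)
The paper does not prove this statement at all: it is quoted verbatim as Theorem 6.11 of \cite{hofer1995properties2}, so there is no internal proof to compare against. Your argument is essentially the classical one behind that theorem — observe that $\pi\cdot du\equiv 0$ confines $u$ to a single flow line, use the non-vanishing of the mass at $\infty$ to see that this flow line is a closed orbit $P=(x,T)$, note that $a+i\tau$ satisfies the Cauchy--Riemann equations so that $g=e^{2\pi(a+i\tau)/T}$ is holomorphic into $\C^*$, and then identify $g$ with a polynomial — and it is correct in outline. The one place where you lean on machinery that, as stated in this paper, does not quite apply is the asymptotic control of $a$: Theorem \ref{th:asymptotic-limit} says nothing about the behavior of $a$ at the punctures, and Theorem \ref{theo:asymptotic behavior} assumes nondegenerate asymptotic limits, which is not a hypothesis here; so "by Theorem \ref{th:asymptotic-limit}, $a\to-\infty$" and "$a(s,t)=T_\infty s+o(s)$" need either a citation to the degeneracy-free asymptotics of Hofer's original paper \cite{hofer1993pseudoholomorphic} and \cite{hofer1996properties1}, or a direct argument. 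In this degenerate situation a direct argument is in fact cheap and worth recording: since $u^*\lambda$ is closed, the circle averages $\bar a(s)=\int_{S^1}a(s,t)\,dt$ satisfy $\bar a'(s)=\int_{\{s\}\times S^1}u^*\lambda$, which is \emph{constant} on each cylindrical end and equal to the mass there, so $\bar a$ is exactly affine with slope $T_\infty$ at $\infty$ and negative slope at each $z_j$; moreover $u^*d\lambda\equiv 0$ rules out bubbling, giving a uniform gradient bound on the ends and hence $\sup_t|a(s,t)-\bar a(s)|\le C$. This yields $|g(z)|\le C'|z|^{d}$ at infinity and $|g|\to 0$ at each $z_j$ without invoking any nondegeneracy, after which your removable-singularity and polynomial-growth conclusions (and the bookkeeping $p^{-1}(0)=\Gamma$, $\tu=F_P\circ p$ up to a multiplicative constant absorbed into $p$) go through as you wrote them. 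Note also that ruling out an essential singularity genuinely requires this geometric input — a nonvanishing holomorphic function with prescribed winding can still have an essential singularity — so the caveat you flag at the end is exactly the right one to address.
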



\subsection{Asymptotic behavior}

	A \textit{Martinet's tube} for a simple orbit $P=(x,T)\in \mathcal{P}(\lambda)$ is a pair $(U,\psi)$, where $U$ is a neighborhood of $x(\R)$ in $M$ and $\psi:U\to S^1\times B$ is a diffeomorphism (here $B\subset \R^2$ is an open ball centered at the origin) satisfying
	\begin{itemize}
		\item There exists $f:S^1\times B\to \R^+$ such that $f|_{S^1\times \{0\}}\equiv T$, $df|_{S^1\times\{0\}}\equiv 0$ and $\psi^*(f(d\theta+x_1dx_2))=\lambda$, where $\theta$ is the coordinate on $S^1$ and $(x_1,x_2)$ are coordinates on $\R^2$;
		\item $\psi(x_T(t))=(t,0,0)$.
	\end{itemize}
	The coordinates $(\theta,x_1,x_2)$ are called \textit{Martinet's coordinates}.
The existence of such Martinet's tubes is proved in \cite{hofer1996properties1} for any simple orbit $P\in \mathcal{P}(\lambda)$.


A more precise description of the asymptotic behavior of a finite energy surface is given in \cite{hofer1996properties1}. 
Let $\tu=(a,u):S\setminus \Gamma\to \R\times M$ be a finite energy surface asymptotic to a nondegenerate orbit $P=(x,T)$ at the positive puncture $z_0\in \Gamma$ and let  $(s,t)$ be positive exponential coordinates near $z_0$.
Let $k$ be a positive integer such that $T=kT_{min}$, where $T_{min}$ is the least positive period of $x$ and let $(\theta,z)=(\theta,x_1,x_2)$ be Martinet's coordinates in a neighborhood $U\subset M$ of $P_{min}=(x,T_{min})$. 
Then there exists $s_0\in \R^+$ such that, for $(s,t)\in [s_0,+\infty)\times \R$, the map $u(s,t)$ can be represented in Martinet's coordinates by 
$$ u(s,t)=(\theta(s,t),z(s,t))\in \R\times \R^2,$$
where $\theta$ is seen as a map on the universal cover $\R$ of $S^1=\R/\Z$ satisfying $\theta(s,t+1)=\theta(s,t)+k$ and $z$ is $1$-periodic in $t$.
\begin{theorem}[\cite{hofer1996properties1}]\label{theo:asymptotic behavior} 
	If $A(\tu)>0$, there are constants $A_{ij}\in \R^+$, $a_0,\theta_0\in\R$, a function $R:\R\times S^1\to \R^2$ and 
	an eigensection $\eta(t)$ of the asymptotic operator $A_{P,J}$ \eqref{eq:operador-assintotico-extensao}, associated to a negative eigenvalue $\alpha\in \sigma(A_{P,J})$, such that
	\begin{equation}
	\begin{aligned}
	|\partial_s^i\partial_t^j(a(s,t)-(Ts+a_0))|\leq A_{ij}e^{-r_0s}\\
	|\partial_s^i\partial_t^j(\theta(s,t)-(kt+\theta_0)|\leq A_{ij}e^{-r_0s}\\
	z(s,t)=e^{\int_{s_0}^s\alpha(r)dr}(e(t)+R(s,t))\\
	|\partial_s^i\partial_t^jR(s,t)|,|\partial_s^i\partial_t^j(\alpha(s)-\alpha)|\leq A_{ij}e^{-r_0s}
	\end{aligned}
	\end{equation}
	for all large $s$ and $i,j\in \N$. Here 
	$e:S^1\to \R^2$ represents the eigensection $\eta(t)$ in the coordinates induced by $\psi$, and $\alpha:[s_0,\infty)\to \R$ is a smooth function such that $\alpha(s)\to \alpha, \text{as } s\to \infty$.
	
	A similar statement holds if $z_0$ is a negative puncture. In this case, we use negative exponential coordinates near $z_0$, $e^{-r_0s}$ is replaced by $e^{r_0s}$ and the eigenvalue $\alpha$ of $A_{P,J}$ is positive.
\end{theorem}
The eigenvalue $\alpha$ and the eigensection $\eta(t)$, as in Theorem \ref{theo:asymptotic behavior}, will be referred to as the \textit{asymptotic eigenvalue} and \textit{asymptotic eigensection} of $\tu$ at the puncture $z_0$.

\begin{remark} \label{rm:strong-transverse-section}
	Let $\tu=(a,u):S\setminus \Gamma\to \R\times S^3$ be a finite energy surface that is asymptotic to nondegenerate simple orbits at all of its punctures and such that $\pi\cdot du$ is not identically zero. By Theorem \ref{theo:asymptotic behavior}, the surface $\Sigma=\overline{u(S\setminus \Gamma)}$ is a strong transverse section according to Definition \ref{de:strong-transverse-section}. 
	Indeed, for every puncture $z\in \Gamma$, the asymptotic eigensection $\eta(t)$ of $\tu$ at $z$ satisfies
		$$d\lambda(\eta(t),\mathcal{L}_{R_\lambda}\eta(t))=\frac{1}{T}d\lambda(\eta(t),JA_{P,J}\eta(t))=\frac{1}{T}\alpha d\lambda(\eta(t),J\eta(t))\neq 0, ~\forall t\in \R/\Z.$$
\end{remark}

\subsection{Algebraic invariants}
Let $\tu=(a,u):S\setminus \Gamma\to \R\times M$ be a $\tilde{J}$-holomorphic finite energy surface. Assume that $\pi\cdot du$ is not identically zero and that 
$\tu$ has nondegenerate asymptotic limits at all of its punctures. 
In \cite{hofer1995properties2}, it is proved that the set where $\pi\cdot du$ vanishes is finite, and it is defined a local degree associated to each zero of $\pi\cdot du$, which is always positive. 
The integer 
\begin{equation}
\wind_\pi(\tu)\geq 0
\end{equation}
is defined as the sum of such local degrees over all zeros of $\pi\cdot du$.


Consider a unitary trivialization $\Psi:(u^*\xi,d\lambda, J)\to (S\setminus\Gamma)\times \R^2$. 
For $z\in \Gamma$, fix positive cylindrical coordinates $(s,t)$ at $z$ and define
\begin{equation}\label{eq:definicao-wind-infty}
\wind_\infty(\tu,z,\Psi)=\lim_{s\to +\infty}\wind \left(t\mapsto \pi\cdot \partial_s u(s,\epsilon_z t),\Psi|_{u(s,\epsilon_z\cdot)^*\xi}\right)\in \Z
\end{equation}
where $\epsilon_z$ is the sign of the puncture $z$.
The winding number on the right is defined as in \eqref{rm:winding-number def}, and is independent of the choice of $J$ and of the holomorphic chart.
This limit is well defined since $\pi \cdot \partial_su(s,t)$ does not vanish for $s$ sufficiently large.
The asymptotic winding number of $\tu$ is defined by 
\begin{equation}\label{eq:definition-wind-infty}
\wind_\infty(\tu)=\sum_{z\in \Gamma^+}\wind_\infty(\tu,z,\Psi)-\sum_{z\in \Gamma^-}\wind_\infty(\tu,z,\Psi)~.
\end{equation}
It is proved in \cite{hofer1995properties2} that this sum does not depend on the chosen trivialization $\Psi$.  

%

\begin{remark}\label{rm:trivializacao-global-fixada}
	Throughout the paper we will only deal with the tight contact structure on $S^3$, which is a trivial symplectic bundle. 
	Consider a global symplectic trivialization
	$\Psi:\xi\to S^3\times \R^2~.$
	Then $\wind_\infty(\tu,z,\Psi)$ is the winding number of the asymptotic eigensection given by Theorem \ref{theo:asymptotic behavior}, with respect to the trivialization $\Psi$. 	
	Moreover, $\wind_\infty(\tu,z,\Psi)$ does not depend on the chosen global symplectic trivialization
	and we denote $\wind_\infty(\tu,z,\Psi)$ by $\wind_\infty(\tu,z)$.
\end{remark}
It is also proved in \cite{hofer1995properties2} that the invariants $\wind_\pi(\tu)$ and $\wind_\infty(\tu)$ satisfy 
\begin{equation}\label{eq:wind_pi-wind_infty}
	\wind_\pi(\tu)=\wind_\infty(\tu)-\chi(S)+\#\Gamma,
\end{equation}
where $\chi(S)$ is the Euler characteristic of $S$.

The following lemma is a consequence of Proposition \ref{pr:properties-asymptotic-operator} and Remark \ref{rm:trivializacao-global-fixada}. 
\begin{lemma}\label{le:wind-infty-estimate}
	Let $\tu:S\setminus \Gamma\to \R\times M$ be a finite energy surface and let $z\in \Gamma$ be a puncture with asymptotic limit $P=(x,T)$. Assume that $P$ is nondegenerate and define $\nu^{pos}_P$ and $\nu^{neg}_P$ by \eqref{eq:mair-autovalor-neg} and \eqref{eq:menos-autovalor-pos} respectively. If $\pi\cdot du$ does not vanish identically, then
	\begin{itemize}
		\item[(1)] $\wind_\infty(\tu,z)\leq \wind(\nu^{neg}_P)$ if $z$ is a positive puncture.
		\item[(2)] $\wind_\infty(\tu,z)\geq \wind(\nu^{pos}_P)$ if $z$ is a negative puncture.
	\end{itemize}	
\end{lemma}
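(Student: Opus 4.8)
The plan is to prove Lemma \ref{le:wind-infty-estimate} as a direct consequence of the asymptotic description in Theorem \ref{theo:asymptotic behavior} together with the spectral properties of the asymptotic operator in Proposition \ref{pr:properties-asymptotic-operator}. First I would fix a global symplectic trivialization $\Psi:\xi\to S^3\times\R^2$, so that by Remark \ref{rm:trivializacao-global-fixada} the quantity $\wind_\infty(\tu,z)=\wind_\infty(\tu,z,\Psi)$ is well defined and equals the winding number of the asymptotic eigensection at $z$. Since $\pi\cdot du$ does not vanish identically, the alternative $(x_1(s,t),x_2(s,t))\equiv 0$ in Theorem \ref{theo:asymptotic behavior} is excluded at any puncture (a puncture at which the curve decays trivially in the normal directions would force $\pi\cdot du$ to vanish identically on a neighborhood, hence everywhere by unique continuation / the fact that zeros of $\pi\cdot du$ are isolated). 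Therefore at $z$ there is a genuine asymptotic eigensection $v(t)$ of $A_{P,J}$ with associated eigenvalue $\alpha$, and $\wind_\infty(\tu,z)=\wind(\alpha,\Psi)$.

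Next I would invoke the sign constraint on $\alpha$ coming from Theorem \ref{theo:asymptotic behavior}: if $z$ is a positive puncture then $\alpha<0$, and if $z$ is a negative puncture then $\alpha>0$. In the positive case $\alpha$ is a negative eigenvalue of $A_{P,J}$, so $\alpha\leq\nu^{neg}_P$ where $\nu^{neg}_P=\max\{\nu<0:\nu\in\sigma(A_{P,J})\}$ as in \eqref{eq:mair-autovalor-neg}; by monotonicity of the winding number (Proposition \ref{pr:properties-asymptotic-operator}, last bullet) we get $\wind(\alpha,\Psi)\leq\wind(\nu^{neg}_P,\Psi)$, which is exactly $\wind_\infty(\tu,z)\leq\wind(\nu^{neg}_P)$. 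In the negative case $\alpha$ is a positive eigenvalue, hence $\alpha\geq\nu^{pos}_P=\min\{\nu\geq 0:\nu\in\sigma(A_{P,J})\}$ (and since $P$ is nondegenerate, $0\notin\sigma(A_{P,J})$ so $\nu^{pos}_P>0$), and monotonicity again yields $\wind(\alpha,\Psi)\geq\wind(\nu^{pos}_P,\Psi)$, i.e. $\wind_\infty(\tu,z)\geq\wind(\nu^{pos}_P)$. This gives both assertions.

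The only subtle point — and the one I would treat most carefully — is justifying that the trivial decay alternative cannot occur at the puncture under consideration; everything else is bookkeeping with Proposition \ref{pr:properties-asymptotic-operator}. The cleanest way to handle this is to note that if $(x_1(s,t),x_2(s,t))\equiv 0$ for large $s$ near $z$, then $u$ traces the orbit $x(\R)$ near $z$ and one can show $\pi\cdot du$ vanishes on an open set, hence identically by the isolatedness of its zeros established in \cite{hofer1995properties2}, contradicting the hypothesis. I expect this step — pinning down that local-to-global vanishing argument — to be the main (and essentially only) obstacle; the rest of the proof is a mechanical combination of the asymptotic formula and the ordering properties of eigensection winding numbers, so the lemma follows.
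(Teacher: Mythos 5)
Your proof is correct and follows essentially the same route the paper intends: the paper itself derives this lemma directly from the definition \eqref{eq:definicao-wind-infty}, Remark \ref{rm:trivializacao-global-fixada} (identifying $\wind_\infty(\tu,z)$ with the winding of the asymptotic eigensection from Theorem \ref{theo:asymptotic behavior}), the sign of the asymptotic eigenvalue at positive/negative punctures, and the monotonicity of $\wind(\cdot,\Psi)$ in Proposition \ref{pr:properties-asymptotic-operator}. Your extra care in ruling out the trivial-decay alternative when $\pi\cdot du\not\equiv 0$ is sound and consistent with the framework of \cite{hofer1995properties2} that the paper relies on.
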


%


\subsection{Fredholm theory}
	Let $\tu=(a,u):S\setminus \Gamma\to \R\times M$ be a $\tilde{J}$-holomorphic finite energy surface and assume that $\tu$ has nondegenerate asymptotic limits at all of its punctures.
	Let $\Psi:(u^*\xi,d\lambda)\to (S\setminus \Gamma)\times \R^2$ be a symplectic trivialization.
	Fix a puncture $z\in \Gamma$ and let $P_z=(x,T)$ be the asymptotic limit of $\tu$ at $z$.
	The trivialization $\Psi$ induces a homotopy class of oriented trivializations $[\Psi_z]$ of $x_T^*\xi$. 
		The Conley-Zehnder index of $\tu$ is defined by
		\begin{equation}
		\mu(\tilde{u})=\sum_{z\in \Gamma^+}\mu(P_z,\Psi_z)-\sum_{z\in \Gamma^-}\mu(P_z,\Psi_z)~.
		\end{equation}
		It is proved in \cite{hofer1995properties2} that this sum does not depend on the chosen trivialization $\Psi$. 

	An \textit{unparametrized} finite energy surface is an equivalence class $[(\tu,(S,j),\Gamma)]$, where $\tu:(S\setminus \Gamma,j)\to (\R\times M,\tj)$ is a $\tj$-holomorphic finite energy surface and $\Gamma$ is an ordered set. 
	The equivalence class is defined as follows: $(\tu',(S',j'),\Gamma')$ is equivalent to $(\tu,(S,j),\Gamma)$ if there exists a biholomorphism $\phi:(S,j)\to (S',j')$ such that $\tu'=\tu\circ \phi$, $\phi(\Gamma)=\Gamma'$ and $\phi$ preserves the ordering of $\Gamma$ and $\Gamma'$.
	To shorten notation we usually denote an unparametrized surface by $[\tu]$. 
		
	In 
	\cite{dragnev2004fredholm} it is proved that the set of unparametrized finite energy surfaces in the neighborhood
	of $[(\tu,(S,j),\Gamma)]$, where $\tu$ is a somewhere injective finite energy surface, 
	is described by a nonlinear Fredholm
	equation having Fredholm index equal to
	\begin{equation}\label{eq:formula-indice-fredholm}
	\ind(\tilde{u}):=\mu(\tilde{u})-\chi(S)+\#\Gamma.
	\end{equation}
	This generalizes the result for embedded finite energy surfaces proved in \cite{hofer1999properties3}.

	\begin{theorem}\label{theo:fredholm-estimate}\cite{dragnev2004fredholm}
		There exists a dense subset $\mathcal{J}_{reg}\subset \{\tj|J\in \mathcal{J}(\lambda)\}$ such that, if $\tilde{u}=(a,u):S\setminus \Gamma\to \R\times M$ is a somewhere injective finite energy surface which is pseudo-holomorphic with respect to  ${\tj}\in \mathcal{J}_{reg}$ and has nondegenerate asymptotic limits at all of its punctures, then 
		$$0\leq \ind(\tilde{u})=\mu(\tilde{u})-\chi(S)+\#\Gamma~.$$
		If $\pi\circ du$ is not identically zero, then
		$$1\leq \ind(\tilde{u})=\mu(\tilde{u})-\chi(S)+\#\Gamma~.$$
	\end{theorem}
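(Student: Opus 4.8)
The plan is to establish this as a transversality statement in the spirit of Dragnev, via the Sard--Smale theorem applied to a universal moduli space. First I would fix all the discrete data of $\tilde u$ --- the genus of $S$, the partition $\Gamma=\Gamma^+\cup\Gamma^-$, the nondegenerate asymptotic limit $P_z=(x_z,T_z)$ attached to each puncture $z$, and the relative homology class --- and assemble a separable Banach manifold of maps $\tilde u:S\setminus\Gamma\to\R\times M$ carrying that data, using exponential weights at the punctures compatible with the asymptotic description in Theorem \ref{theo:asymptotic behavior}; over this manifold, times the space $\{\tj\mid J\in\mathcal{J}(\lambda)\}$, one has the Banach bundle of $(0,1)$-forms with values in $\tilde u^*T(\R\times M)$ together with the Cauchy--Riemann section $(\tilde u,\tj)\mapsto\bar\partial_{\tj}(\tilde u)$. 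The vertical linearization $D_{\tilde u}$ in the $\tilde u$-direction at a solution is a Cauchy--Riemann type operator on the punctured surface whose behaviour at each $z$ is governed by the asymptotic operator $A_{P_z,J}$ of \eqref{eq:operador-assintotico-extensao}; since these operators are invertible by nondegeneracy (Proposition \ref{pr:properties-asymptotic-operator}), $D_{\tilde u}$ is Fredholm between suitable weighted Sobolev completions, and a Riemann--Roch computation of its index in terms of the winding data --- i.e. the Conley--Zehnder indices \eqref{eq:conley-zehnder-wind} of the asymptotic limits --- recovers, for the unparametrized moduli space, the formula \eqref{eq:formula-indice-fredholm}, $\ind(\tilde u)=\mu(\tilde u)-\chi(S)+\#\Gamma$.

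The heart of the proof is transversality: the universal linearization, allowing $J$ to vary in $\mathcal{J}(\lambda)$, is surjective at every somewhere injective solution with $\pi\circ du\not\equiv0$. (When $\pi\circ du\equiv0$, Theorem \ref{theo:vanishing-dlambda-energy} forces a somewhere injective $\tilde u$ to be a trivial cylinder over a nondegenerate orbit, whose linearization is onto by a direct computation; such curves have $\ind(\tilde u)=0$, consistent with the first inequality.) The key input --- this is where somewhere injectivity is used, equivalently that $\tilde u$ does not factor through a nontrivial holomorphic branched covering --- is that such a $\tilde u$ admits a good point $z_0$ with $\pi\circ du(z_0)\neq0$ near whose image a perturbation of $J$ inside $\mathcal{J}(\lambda)$, supported in a small neighbourhood of $u(z_0)$ in $S^3$, produces an arbitrary prescribed perturbation of $\bar\partial_{\tj}(\tilde u)$ near $z_0$ without disturbing the equation elsewhere. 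If the cokernel of the universal operator were nonzero, an element of it would solve the formal adjoint Cauchy--Riemann equation and vanish on a nonempty open set, hence vanish identically by Aronszajn's unique continuation theorem --- a contradiction. Verifying that perturbations constrained to stay within the compatible complex structures on the two-plane bundle $\xi\subset TS^3$ are still rich enough to carry this out is the delicate point, and the step I expect to be the main obstacle. Granting universal surjectivity, the universal moduli space is a Banach manifold; applying Sard--Smale to its projection onto $\{\tj\}$ gives a residual, hence dense, set of regular values, and intersecting the countably many such sets over all choices of discrete data yields $\mathcal{J}_{reg}$. For $\tj\in\mathcal{J}_{reg}$ and $\tilde u$ somewhere injective, $D_{\tilde u}$ is onto, so the moduli space near $[\tilde u]$ is a manifold of dimension $\ind(\tilde u)$; as it is nonempty, $\ind(\tilde u)\ge0$.

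Finally, for the improvement $\ind(\tilde u)\ge1$ when $\pi\circ du\not\equiv0$, I would use the $\R$-translation symmetry $\tilde u=(a,u)\mapsto\tilde u_c:=(a+c,u)$, which preserves the set of $\tj$-holomorphic curves, and show its induced action on the unparametrized moduli space is locally free at $[\tilde u]$. Otherwise the orbit of $[\tilde u]$ would be $0$-dimensional, which forces $[\tilde u_c]=[\tilde u]$ for all $c$ and hence a smooth family $\phi_c\in\mathrm{Aut}(S,j,\Gamma)$ with $\phi_0=\mathrm{id}$, $a\circ\phi_c=a+c$, and $u\circ\phi_c=u$; differentiating at $c=0$ produces a holomorphic vector field $v$ on $S$ with $du(v)\equiv0$ and $da(v)\equiv1$. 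But at any point where $\pi\circ du\neq0$, the Cauchy--Riemann equation forces the image of $\pi\circ du$ there to be a full complex line in $\xi$ (spanned by some $w$ and $Jw$), so $du$ is injective there; thus $v$ vanishes on a nonempty open set and, being holomorphic, vanishes identically, contradicting $da(v)\equiv1$. Therefore the $\R$-orbit through $[\tilde u]$ is a one-dimensional submanifold of the moduli space, so $\ind(\tilde u)=\dim_{[\tilde u]}\mathcal M\ge1$, completing the proof.
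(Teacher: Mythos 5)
The paper itself offers no proof of this statement---it is quoted verbatim from \cite{dragnev2004fredholm}---so the comparison is with the proof in that reference, and your outline follows the same standard route (weighted Sobolev setup governed by the asymptotic operators $A_{P_z,J}$, index formula $\ind(\tu)=\mu(\tu)-\chi(S)+\#\Gamma$, universal moduli space plus Sard--Smale, and the $\R$-action for the extra $+1$). The genuine gap sits exactly where you place your ``main obstacle'', and it is not a routine verification but the heart of the theorem. Your transversality step needs a point $z_0$ such that a variation of $J$ supported near $u(z_0)$ in $M$ changes $\bar\partial_{\tj}(\tu)$ only near $z_0$. But $\tj=(\lambda,J)$ is $\R$-invariant, so a perturbation of $J$ cannot be localized in the $\R$-coordinate: it alters the Cauchy--Riemann operator at \emph{every} $z'$ with $u(z')$ close to $u(z_0)$, i.e.\ on every sheet of the projected curve passing through that region of $M$. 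Somewhere injectivity of $\tu$ in $\R\times M$ does not by itself provide a point where the projection $u$ is injective (distinct points of the domain may have equal $M$-components at different $\R$-heights), so the clause ``without disturbing the equation elsewhere'' is unjustified as stated, and with it the unique-continuation argument killing the cokernel. What must be proved---and this is precisely Dragnev's contribution, see also \cite{wendl2016lectures}---is that a somewhere injective $\tu$ with $\pi\circ du\not\equiv0$ and nondegenerate asymptotic limits has an open dense set of points $z_0$ with $\pi\circ du(z_0)\neq0$, $u(z_0)$ off the asymptotic orbits, and $u^{-1}(u(z_0))=\{z_0\}$. The proof of that lemma is where the real work lies: failure on an open set produces, via Carleman's similarity principle and unique continuation, either an open coincidence of $\tu$ with one of its $\R$-translates $\tu_c$, $c\neq0$ (hence equality of their images), or an open set of tangencies to the Reeb direction; an image invariant under a nontrivial translation, given the nondegenerate asymptotics, forces $\pi\circ du\equiv0$, a contradiction. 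Without this lemma the Sard--Smale scheme never gets started for the restricted class $\scs$.

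Two smaller points. For $\ind(\tu)\geq1$ you differentiate a family $\phi_c$ of reparametrizations at $c=0$, but you must first justify that $c\mapsto\phi_c$ is differentiable: uniqueness of $\phi_c$ (from somewhere injectivity) makes it a homomorphism of $\R$ into the automorphism group of $(S,j,\Gamma)$, and continuity/smoothness then needs either elliptic estimates or the Lie-group structure of that automorphism group; alternatively, bypass this by noting that $[\tu_c]=[\tu]$ for all $c$ makes the image of $\tu$ invariant under all $\R$-translations, which again forces $\pi\circ du\equiv0$ by the same argument as above. Also, Theorem \ref{theo:vanishing-dlambda-energy} as stated in the paper covers only spheres with a single positive puncture, so for general $(S,\Gamma)$ your treatment of the case $\pi\circ du\equiv0$ needs the general classification of finite energy curves with vanishing $d\lambda$-area as branched covers of orbit cylinders; for somewhere injective curves this does yield a trivial cylinder, which has index $0$ and is Fredholm regular, so the conclusion you draw there is correct.
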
	
	

\subsection[Intersection Theory]{Intersection theory of punctured pseudo-holomorphic curves}

In this section we state some results concerning the intersection theory of punctured pseudo-holomorphic curves from \cite{siefring2011intersection}. 
A nice exposition of these results is given in \cite{fish2018connected}. 
Here we use the same convention as \cite{fish2018connected} for computing Conley-Zehnder indices and the asymptotic winding number $\wind_\infty$, which differs from that of \cite{siefring2011intersection}. 
}
We assume that all the finite energy curves considered have nondegenerate asymptotic limits. 

In the following, $\tu=(a,u):(S\setminus \Gamma,j)\to \R\times M$ and $\tv=(b,v):(S'\setminus \Gamma',j')\to \R\times M$ are finite energy $\tj$-holomorphic curves 
and  $\Psi$ denotes a choice of trivialization of the contact structure along all simple periodic orbits with covers appearing as asymptotic limits of $\tu$ or $\tv$. 
\begin{theorem}{\cite[Theorem 2.4]{siefring2011intersection}}\label{th:2.4-siefring}
	Assume that no component of $\tu$ or $\tv$ lies in an orbit cylinder and that the projected curves $u$ and $v$ do not have identical image on any component of their domains. Then the following are equivalent:
	\begin{itemize}
		\item[(1)] The projected curves $u$ and $v$ do not intersect;
		\item[(2)] All of the following hold:
		\begin{itemize}
			\item The map $u$ does not intersect any of the positive asymptotic limits of $\tv$;
			\item The map $v$ does not intersect any of the negative asymptotic limits of $\tu$;
			\item If $P$ is a periodic orbit so that, at $z\in\Gamma$, $\tu$ is asymptotic $P^{m_z}$ and, at $w\in \Gamma'$, $\tv$ is asymptotic to $P^{m_w}$, then: If $z$ and $w$ are both positive or both negative punctures, we have 
			$\frac{\wind_\infty(\tu,z,\Psi)}{m_z}\geq \frac{\wind_\infty(\tv,w,\Psi)}{m_w}~.$
			If $z$ is a negative puncture and $w$ is a positive puncture, we have
			$$\frac{\wind_\infty(\tu,z,\Psi)}{m_z}=\frac{\lfloor\mu(P^{m_z},\Psi)/2\rfloor}{m_z}=\frac{-\lfloor-\mu(P^{m_w},\Psi)/2\rfloor}{m_w}=\frac{\wind_\infty(\tv,w,\Psi)}{m_w}$$
		\end{itemize} 
		\item [(3)]All of the following hold:
		\begin{itemize}
			\item The map $u$ does not intersect any of the asymptotic limits of $\tv$.
			\item The map $v$ does not intersect any of the asymptotic limits of $\tu$.
			\item If $P$ is a periodic orbit so that, at $z\in \Gamma$, $\tu$ is asymptotic to $P^{m_z}$ and, at $w\in \Gamma'$, $\tv$ is asymptotic to $P^{m_w}$, then $\frac{\wind_\infty(\tu,z,\Psi)}{m_z}=\frac{\wind_\infty(\tv,w,\Psi)}{m_w}$.
		\end{itemize}
	\end{itemize}  
\end{theorem}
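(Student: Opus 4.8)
The plan is to argue as in Siefring's intersection theory of punctured curves, combining the homotopy invariance of the generalized intersection number $[\tu]*[\tv]$ with a relative version of the asymptotic description in Theorem \ref{theo:asymptotic behavior}. I would first pass to the symplectization: a point of $M$ lies on both $u$ and $v$ exactly when $\tu(p)=\tv_c(q)$ for some $\R$-translate $\tv_c=(b+c,v)$, $c\in\R$, with $c=a(p)-b(q)$, so the intersection points of $u$ and $v$ are in bijection with pairs consisting of an intersection point of $\tu$ with some $\tv_c$ and the corresponding $c$, and $u,v$ are disjoint iff $\tu$ misses every $\tv_c$. Each $\tv_c$ is $\tj$-holomorphic in the $4$-manifold $\R\times M$, and since by hypothesis $u,v$ share no image component and no component lies in an orbit cylinder, neither do $\tu$ and $\tv_c$; hence by positivity of intersections $\tu\cdot\tv_c\ge 0$, with equality iff the images are disjoint, and interior intersections are isolated by the similarity principle. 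The remaining issue — whether intersections can accumulate at a puncture — is precisely what the winding conditions in (2) and (3) rule out.

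The analytic heart will be a relative asymptotic formula refining Theorem \ref{theo:asymptotic behavior}. If $\tu$ is asymptotic to $P^{m_z}$ at $z\in\Gamma$ and $\tv$ is asymptotic to $P^{m_w}$ at $w\in\Gamma'$ for a common simple orbit $P$, then working in Martinet coordinates around $P$ and lifting both projections to a common cover, either the two projections agree near the punctures — impossible by hypothesis — or the difference of their coordinate representations has the form $e^{\int\alpha}(e(t)+o(1))$, where $e$ is an eigensection of the asymptotic operator attached to $P$ (multiplicities tracked separately) and $\alpha$ is negative at positive punctures, positive at negative ones. Consequently this difference has a well-defined winding number at infinity, lying in an interval whose endpoints are computed from $\wind_\infty(\tu,z)$, $\wind_\infty(\tv,w)$, $m_z,m_w$ and the Conley--Zehnder indices of $P^{m_z},P^{m_w}$; whether intersections escape into the puncture, and the sign of each local contribution, is read off by comparing these. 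The bookkeeping uses Proposition \ref{pr:properties-asymptotic-operator} and Lemma \ref{le:wind-infty-estimate}, together with $\wind(\nu^{neg}_P)=\lfloor\mu(P)/2\rfloor$ and $\wind(\nu^{pos}_P)=-\lfloor-\mu(P)/2\rfloor$ (from \eqref{eq:conley-zehnder-wind} and $\tilde\mu=\mu$), and Lemma \ref{le:properties-cz-index} for indices of iterates.

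With these ingredients I would establish the three conditions equivalent in a cycle. For (1)$\Rightarrow$(3): if $u$ met a positive or negative asymptotic limit of $v$, then near the corresponding puncture of $\tv$ the projection $v$ spirals into that embedded orbit while $u$ crosses it, which forces an intersection (via intersection theory with the orbit cylinder $F_P$ of Theorem \ref{theo:vanishing-dlambda-energy}); exchanging the roles of $\tu$ and $\tv$ shows $v$ also misses every asymptotic limit of $u$, and then the relative asymptotic formula forces $\wind_\infty(\tu,z)/m_z=\wind_\infty(\tv,w)/m_w$ at every pair of punctures over a common orbit, since a strict inequality would make the difference of $u$ and $v$ in these coordinates vanish infinitely often toward that puncture, contradicting disjointness. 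For (3)$\Rightarrow$(2): the equalities in (3) give the same-sign inequalities of (2) at once, and for a negative/positive pair one combines the equality with the bounds $\wind_\infty(\tu,z)\ge\wind(\nu^{pos}_{P^{m_z}})$ and $\wind_\infty(\tv,w)\le\wind(\nu^{neg}_{P^{m_w}})$ to force $P^{m_z},P^{m_w}$ to be even orbits and recover the displayed chain of equalities, using Lemma \ref{le:properties-cz-index} for the indices of the iterates. For (2)$\Rightarrow$(1): the first two bullets of (2) rule out interior intersections of $u$ with $v$ via positivity of intersections applied to the translates $\tv_c$, and the winding conditions make the asymptotic contribution to the intersection count vanish at every pair of punctures over a common orbit, so no intersection is hidden at infinity; hence $u$ and $v$ are disjoint. (One could instead try to invoke Theorem \ref{co:5.9-siefring}, but its conditions are strictly stronger than non-intersection of the projections, so the direct argument is needed.)

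The step I expect to be the main obstacle is the relative asymptotic analysis at pairs of punctures over a common orbit with different covering multiplicities, and especially the negative hyperbolic case, where the parities of $m_z$ and $m_w$ enter and all comparisons of winding numbers must be carried out on the least common cover; one must also verify that the eigensections governing the approaches of $u$ and $v$ at such a pair cannot be everywhere pointwise parallel unless the projections coincide — this nondegeneracy is exactly what prevents an intersection from being absorbed into the puncture. Getting the Conley--Zehnder bookkeeping for iterates right via Lemma \ref{le:properties-cz-index}, and checking the short counting argument that forces the inequalities in (2) to be equalities so that (2) and (3) match, completes the proof; the rest is formal manipulation of the invariants set up in Section \ref{se:preliminaries}.
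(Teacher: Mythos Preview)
The paper does not prove this theorem at all: it is quoted from Siefring's work \cite{siefring2011intersection} (and the exposition in \cite{fish2018connected}) and used as a black box. So there is no ``paper's own proof'' to compare against; the paper simply cites the result and applies it.

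Your proposal is a reasonable outline of Siefring's original argument --- translating intersections of the projections into intersections with $\R$-translates, invoking positivity of intersections, and controlling asymptotic intersections via the relative asymptotic formula --- but this reconstruction is not required by the present paper and lies outside its scope. If your goal is to supply a self-contained proof, you would need to actually carry out the relative asymptotic analysis (Siefring's main technical contribution), not just gesture at it; in particular, the step you flag as the main obstacle, comparing eigensections on a common cover when the multiplicities differ, is genuinely delicate and is the heart of \cite{siefring2011intersection}. For the purposes of this paper, though, a citation suffices.
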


A set of necessary and sufficient conditions for the projection of a curve to be embedded are given in Theorem 2.6 of \cite{siefring2011intersection}.
The following statement is a direct consequence of {\cite[Theorem 2.6]{siefring2011intersection}}  that is enough for our purposes. 

\begin{theorem}{\cite{siefring2011intersection}}\label{th:siefring-2.6}
	Assume that $\tu$ is somewhere injective, connected 
	and does not have image contained in an orbit cylinder. 
	Assume further that $\tu$ is not asymptotic to a covering of the same simple orbit at two distinct punctures. 
	Then the following are equivalent:
	\begin{enumerate}
		\item The projected map $u:S\setminus \Gamma\to M$ is an embedding.
		\item $u$ does not intersect any of its asymptotic limits.
	\item All of the following hold:
	\begin{itemize}
		\item The map $\tu$ is an embedding.
		\item The projected map $u$ is an immersion which is everywhere transverse to the Reeb vector field.
		\item For each $z\in \Gamma$, we have  $\gcd(m_z,\wind_\infty(\tu,z,\Psi))=1$. 
	\end{itemize} 
\end{enumerate}
\end{theorem}

Next we recall the notion of two pseudo-holomorphic curves approaching an orbit in the same (or opposite) direction. 
The definition given here follows \cite{fish2018connected}. It applies to any nondegenerate orbit and is stricter than the definition from \cite{siefring2011intersection}.

\begin{definition}[\cite{siefring2011intersection}, \cite{fish2018connected}]\label{de:opposite-directions}
	Assume that $\tu$ and $\tv$ are asymptotic to the same Reeb orbit $P$ at certain punctures $z\in \Gamma$ and $w\in\Gamma'$ with the same sign. We say that  $\tu$ and $\tv$ approach $P$ in the \textit{same direction} at these punctures if $\eta_u=c\eta_v$ for some $c>0$ , where $\eta_u$ and $\eta_v$ are the asymptotic eigensections of $\tu$ at $z$  and of $\tv$ at $w$ respectively. In case $\eta_u=c\eta_v$ for $c<0$, we say that $\tu$ and $\tv$ approach $P$ in \textit{opposite directions}.
\end{definition}

The following theorem of \cite{fish2018connected} is a consequence of results on \cite{siefring2011intersection} concerning the intersection properties of pseudo-holomorphic curves approaching an even orbit in the same direction. 

\begin{theorem}{\cite[Theorem 3.16]{fish2018connected}}\label{th:2.5-siefring}
	Assume that $\tu$ and $\tv$ are asymptotic to an even Reeb orbit $P$ at punctures $z\in\Gamma$ and $w\in \Gamma'$ with the same sign. Assume that $\tu$ and $\tv$ have extremal winding number, that is, 
	$$\wind_\infty(\tu,z,\Psi)=\wind_\infty(\tv,w,\Psi)=\dfrac{\mu(P,\Psi)}{2}$$
	and that $\tu$ and $\tv$ approach $P$ in the same direction. Then the projections $u$ and $v$ intersect. 
\end{theorem}

\section[Foliating a solid torus]{Foliating a solid torus}\label{se:foliating-gamma1=0}

In this section, we start the proof of Theorem \ref{theo:main-theorem} with the following statement.
\begin{proposition}\label{pr:foliation-solid-torus}
	Let $\lambda$ be a tight contact form on $S^3$ satisfying the hypotheses of Theorem \ref{theo:main-theorem}.
	Then there exists a family of $\tj$-holomorphic planes $\{\tu_\tau=(a_\tau,u_\tau):\C\to \R\times S^3\}_{\tau\in (0,1)}$, all of them asymptotic to $P_3$, a pair of $\tj$-holomorphic cylinders $\tu_r=(a_r,u_r),\tu_r'=(a_r',v_r'):\C\setminus \{0\}\to \R\times S^3$, both asymptotic to $P_3$ at their positive punctures $z=+\infty$ and $P_2$ at their negative punctures $z=0$, and a finite energy plane $\tu_q=(a_q,u_q):\C\to \R\times S^3$ asymptotic to $P_2$. The projections $u_\tau, u_r, u_r', u_q$ are embeddings transverse to the Reeb vector field. 
	Define $F_\tau=u_\tau(\C)$, $U_1=u_r(\C\setminus \{0\})$, $U_2=u_r'(\C\setminus \{0\})$ and $D=u_q(\C)$. Then the surface $T:=P_2\cup P_3\cup U_1\cup U_2$ is homeomorphic to a torus and $T\setminus P_3$  is $C^1$-embedded. The union of the family $\{F_\tau\}_{\tau\in(0,1)}$ with $U_1$, $U_2$ and $D$ determine a smooth foliation of $\mathcal{R}_1\setminus (P_2\cup P_3)$, where $\mathcal{R}_1\subset S^3$ is a closed region  with boundary $T$ and homeomorphic to a solid torus. 
\end{proposition}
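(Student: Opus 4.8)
The plan is to mimic the Hofer--Wysocki--Zehnder bubbling-off analysis (compare \cite{hwz1998,hwz2003} and the torus constructions of \cite{dePS2013,depaulo2019}): feed in the single finite energy plane provided by hypothesis \textit{(iv)}, study the connected moduli component it generates, and read the foliation off the boundary of its compactification. First, let $\tu_0=(a_0,u_0)$ be the plane from \textit{(iv)}, asymptotic to $P_3$. Since $\mu(P_3)=3$ is odd, Proposition~\ref{pr:properties-asymptotic-operator} gives $\wind(\nu^{neg}_{P_3})=1$, so Lemma~\ref{le:wind-infty-estimate} together with \eqref{eq:wind_pi-wind_infty} forces $\wind_\infty(\tu_0)=1$ and $\wind_\pi(\tu_0)=0$; hence $u_0$ is an immersion everywhere transverse to $R_\lambda$, and Theorem~\ref{th:siefring-2.6} upgrades this to: $u_0$ is an embedding and $u_0(\C)\cup P_3$ is a $C^1$ embedded disk. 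Because $P_3$ is unlinked with $P_1$ and $P_2$, Theorem~\ref{th:2.4-siefring} gives $u_0(\C)\cap(P_1\cup P_2)=\emptyset$. The Fredholm index is $\mu(\tu_0)-\chi(S^2)+1=3-2+1=2$; since $\wind_\pi=0$, automatic transversality makes the connected component $\mathcal M$ of $[\tu_0]$, in the space of unparametrised $\R$-reduced finite energy planes asymptotic to $P_3$, a smooth connected $1$-manifold whose members all share these properties (no element is multiply covered, as a cover of a plane asymptotic to the simple orbit $P_3$ is impossible). Writing $F_\tau:=u_\tau(\C)$ for $[\tu_\tau]\in\mathcal M$, Theorem~\ref{th:2.4-siefring} also yields that distinct $F_\tau$ are disjoint.

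Next I would analyse the compactification $\overline{\mathcal M}$, which is the heart of the argument. Every curve occurring here has $d\lambda$-action $\le T_3$, so by hypothesis \textit{(i)} all asymptotic limits in sight are nondegenerate and SFT compactness applies: $\overline{\mathcal M}$ is a compact $1$-manifold whose boundary points are genus-$0$ holomorphic buildings with a single end, positive and asymptotic to $P_3$. If $\mathcal M$ were compact, $\{u_\tau(\C)\cup P_3\}$ would be an open book foliating $S^3\setminus P_3$, leaving no room for the Reeb orbit $P_2$ (disjoint from every $F_\tau$ and, being a Reeb orbit, not contained in any leaf); hence $\mathcal M\cong(0,1)$ and breaks at both ends. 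Bookkeeping of Fredholm indices (additivity; each nonconstant level somewhere injective, since a multiply covered plane or cylinder asymptotic to the simple orbits $P_3,P_2$ is impossible; $\wind_\pi\ge 0$ forcing index $\ge 1$ on each such level; total index $2$) together with genus $0$ shows that each boundary building has exactly two levels: a top cylinder from $P_3$ to an orbit $Q$ with $\mu(Q)=2$, action $<T_3$ and $\wind_\pi=0$, of index $1$, and a bottom finite energy plane asymptotic to $Q$, also of index $1$. Homotopy invariance of the intersection number with the cylinder over $Q$ gives ${\rm lk}(Q,P_3)=0$, so hypothesis \textit{(ii)} forces $Q=P_2$. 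The bottom plane is rigid modulo $\R$; were there two distinct ones, their projections would be embedded disks with common boundary $P_2$, disjoint by Theorem~\ref{th:2.4-siefring} and meeting $P_2$ from opposite sides by Theorem~\ref{th:2.5-siefring} (as $P_2$ is hyperbolic), hence fitting into a $C^1$ embedded sphere with equator $x_2(\R)$ whose hemispheres are strong transverse sections (Remark~\ref{rm:strong-transverse-section}), contradicting \textit{(v)}. So there is a unique such plane $\tu_q$; set $D:=u_q(\C)$. Thus $\overline{\mathcal M}\cong[0,1]$ with boundary buildings $\overline{U_1}\cup\overline{D}$ and $\overline{U_2}\cup\overline{D}$, where $U_1=u_r(\C\setminus\{0\})$ and $U_2=u_r'(\C\setminus\{0\})$ are the two limiting cylinders from $P_3$ to $P_2$; moreover $U_1\ne U_2$, since otherwise the single broken leaf together with $\{F_\tau\}$ would again foliate $S^3\setminus P_3$, leaving no room for $P_1$.

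Finally I would assemble the geometric picture. The compactified cylinders glue along $P_2$ and $P_3$ into $T=U_1\cup U_2\cup P_2\cup P_3$, an embedded topological torus; by Theorem~\ref{theo:asymptotic behavior} the match along $P_2$ is $C^1$ (the cylinders arrive from opposite sides), so $T\setminus P_3$ is $C^1$-embedded, whereas along $P_3$, which is odd, the two cylinders may approach in the same direction and $T$ is only $C^0$ there. The non-separating curve $P_3\subset T$ bounds the embedded disk $u_\tau(\C)\cup P_3$ on the side carrying the family, so that side, $\mathcal R_1$, is a solid torus with meridian $P_3$ and $\partial\mathcal R_1=T$. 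The leaves $\{F_\tau\}_{\tau\in(0,1)}$, $U_1$, $U_2$, $D$ are pairwise disjoint and disjoint from $P_2\cup P_3$, all contained in $\mathcal R_1$; as $\tau\to 0^+$ and $\tau\to 1^-$, $F_\tau$ converges in $C^\infty_{\mathrm{loc}}$ to $\overline{U_1}\cup\overline{D}$, respectively $\overline{U_2}\cup\overline{D}$, which accounts for a one-sided neighbourhood of $T\cup D$ in $\mathcal R_1$. An open--closed argument — openness from transversality and the implicit function theorem, closedness from the compactness above — then shows that these leaves foliate $\mathcal R_1\setminus(P_2\cup P_3)$, and, since $\mathcal R_1$ is swept by the disks $\overline{F_\tau}$, that $\mathcal R_1$ is homeomorphic to a solid torus. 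Embeddedness and transversality of the projections $u_\tau,u_r,u_r',u_q$ were recorded along the way.

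I expect the main obstacle to be the compactness step: classifying all degenerations of the family and excluding everything except the two-level (cylinder)$\cup$(plane) breaking, which requires combining the index bookkeeping, the genus-$0$ constraint, the action bound with hypothesis \textit{(i)}, and the linking hypothesis \textit{(ii)}. Hypothesis \textit{(v)} then enters precisely to pin down the bottom plane $D$ uniquely, keeping the configuration in the $3{-}2{-}1$ regime rather than the $3{-}2{-}3$ one.
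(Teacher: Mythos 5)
Your proposal is correct and follows essentially the same architecture as the paper: generate the family of planes from hypothesis \textit{(iv)} (the paper does this via Theorem~\ref{th:family-of-cylinders-wendl} after perturbing to $\tj\in\mathcal{J}_{reg}$), analyse the breaking at the two ends of the family to obtain the cylinders $U_1,U_2$ from $P_3$ to $P_2$ and the plane $D$ at $P_2$, use Siefring's intersection theory plus hypothesis \textit{(v)} to show the two limit planes approach $P_2$ in opposite directions and hence coincide, and then run the ``no room for $P_1$'' and open--closed arguments to assemble the torus $T$, the solid torus $\mathcal{R}_1$ and the foliation. The one genuine difference is the compactness step: you invoke SFT compactness and index bookkeeping for the degenerating family, whereas the paper performs a self-contained bubbling-off analysis (germinating sequences and soft rescaling, Propositions~\ref{pr:limit-germinating-sequence}--\ref{rmk:either-or} together with Lemma~\ref{le:indice-maior-igual-a-2}) culminating in Proposition~\ref{theo:bubbling-off-two-vertices}, and it extracts the uniform approach of the leaves to the broken configuration -- which your open--closed argument also needs -- from Lemma~\ref{le:cylinders-with-small-area} in Proposition~\ref{pr:familia-de-planos-se-aproxima-limite}; your route buys brevity, but the case analysis you defer (excluding extra levels, multiply covered components and $d\lambda$-area-zero pieces) is precisely what the paper's Lemma~\ref{le:indice-maior-igual-a-2} and Theorem~\ref{theo:vanishing-dlambda-energy} accomplish. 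Two points to tighten: disjointness of distinct leaves via Theorem~\ref{th:2.4-siefring} first requires excluding identical projected images (the paper's open-book contradiction handles this), and the $C^1$-embeddedness of $T\setminus P_3$ needs the opposite-direction argument for the cylinders $\tu_r,\tu_r'$ at $P_2$ (Proposition~\ref{pr:unicos-cilindros-p3p2-e-plano}), not merely Theorem~\ref{theo:asymptotic behavior}.
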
	
By hypothesis,  
there exists a finite energy $\tj$-holomorphic plane
\begin{equation}\label{eq:plano-util}
	\tu=(a,u):\C\to \R\times S^3
\end{equation}
asymptotic to the orbit $P_3$. 
Both the $\tj$-holomorphic plane $\tu$ and the projection $u:\C\to S^3$ are embeddings.
This is a consequence of the following result, which is a particular case of Theorem 1.3 of \cite{hofer1995properties2}.
\begin{theorem}[{\cite{hofer1995properties2}}]\label{th:theorem-1.3-hwz-prop2}
	Consider $S^3$ equipped with a tight contact form $\lambda$. 
	Assume $\tu=(a,u):\C\to \R\times S^3$ is a finite energy plane asymptotic to an unknotted orbit $P$. 
	If $\mu(P)\leq 3$, then $u(\C)\cap P=\emptyset$ and $u:\C\to S^3\setminus P$ is an embedding.
\end{theorem}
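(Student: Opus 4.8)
The plan is to prove the statement in three stages: a winding‑number computation forcing the plane to be ``fast'', i.e. $\wind_\pi(\tu)=0$; a positivity‑of‑intersections argument ruling out intersections of $u$ with $P$; and an appeal to the intersection theory recalled above to upgrade $u$ to an embedding. Fix throughout a global symplectic trivialization $\Psi:\xi\to S^3\times\R^2$ (it exists because $\xi$ is tight, hence trivial); all winding numbers and Conley--Zehnder indices below are computed with respect to $\Psi$, so that $\wind_\infty(\tu,\infty)$, $\mu(P)$ and $\wind(\nu^{neg}_P)$ are unambiguous (here $P$ is nondegenerate, so these are defined, and $\infty$ is the necessarily positive puncture of $\tu$).

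\emph{The winding computation.} First I would rule out $\pi\cdot du\equiv 0$: by Theorem~\ref{theo:vanishing-dlambda-energy}, applied to $\tu$ (which has no negative punctures), this would produce a \emph{nonconstant} polynomial with empty zero set, which is absurd. Hence $\wind_\pi(\tu)\geq 0$, and since $\chi(S^2)=2$ and $\#\Gamma=1$, identity \eqref{eq:wind_pi-wind_infty} gives $\wind_\infty(\tu,\infty)=\wind_\pi(\tu)+1\geq 1$. Lemma~\ref{le:wind-infty-estimate}(1) gives $\wind_\infty(\tu,\infty)\leq\wind(\nu^{neg}_P)$, while \eqref{eq:conley-zehnder-wind} gives $\mu(P)=2\wind(\nu^{neg}_P)+p$ with $p\in\{0,1\}$; since $\mu(P)\leq 3$ this forces $\wind(\nu^{neg}_P)\leq 1$. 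The inequalities now collapse: $\wind_\infty(\tu,\infty)=\wind(\nu^{neg}_P)=1$ and $\wind_\pi(\tu)=0$. In particular $\pi\cdot du$ never vanishes, so $u$ is an immersion everywhere transverse to $R_\lambda$ and $\tu$ is an immersion; and $\tu$ is somewhere injective, since $P$ is simple (a branched cover of degree $\geq 2$ would be asymptotic to a proper iterate).

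\emph{Disjointness from $P$.} This is the core of the argument. Using the asymptotic description of Theorem~\ref{theo:asymptotic behavior}, I would fix a Martinet tube around $P$ in which, for $\rho$ large, the loop $\gamma_\rho:=u(\{|z|=\rho\})$ is embedded, disjoint from $x(\R)$, and isotopic inside the tube to the push‑off of $x(\R)$ along the asymptotic eigensection $v$ of $\tu$ at $\infty$; moreover $u$ is injective on $\{|z|>\rho_0\}$ and misses $x(\R)$ there, so the set $Z:=\{z\in\C: u(z)\in x(\R)\}$ lies in a compact region and is finite, being the discrete intersection of the distinct $\tj$‑holomorphic curves $\tu$ and the orbit cylinder $\R\times x(\R)$ (distinct since $\pi\cdot du\not\equiv 0$). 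Let $N$ be the number of points of $Z$ counted with the local intersection multiplicities of $u$ with $x(\R)$; each such multiplicity equals the local intersection index at $\tu(z_0)$ of $\tu$ with $\R\times x(\R)$, hence is positive by positivity of intersections, and since $u|_{\{|z|\leq\rho\}}$ is a singular $2$‑chain bounding $\gamma_\rho$ we get $N=\operatorname{lk}(\gamma_\rho,x(\R))\geq 0$. On the other hand, the standard computation of the linking number of the eigensection push‑off — this is where unknottedness of $P$ is used, to normalize the comparison of $\Psi$ with a spanning‑disk framing of $x(\R)$ — gives $\operatorname{lk}(\gamma_\rho,x(\R))=\sli(P)+\wind_\infty(\tu,\infty)=\sli(P)+1$, and Bennequin's inequality \cite{bennequin-entrelacements} for the unknotted transverse knot $x(\R)$ in the tight $S^3$ gives $\sli(P)\leq -1$. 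Hence $0\leq N=\sli(P)+1\leq 0$, so $N=0$: $u(\C)\cap P=\emptyset$ (and $\sli(P)=-1$).

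\emph{Embeddedness, and the main difficulty.} Finally, $\tu$ is somewhere injective, connected and not contained in an orbit cylinder, and for a curve with a single puncture the winding condition in part~(3) of Theorem~\ref{th:siefring-2.6} is vacuous; that theorem then reduces the embeddedness of $u$ to the condition $u(\C)\cap P=\emptyset$ already proved, so $u:\C\to S^3\setminus P$ is an embedding, completing the proof. I expect the genuine difficulty to be the disjointness step: making rigorous the identification of the asymptotics of $\tu$ near $\infty$ with a push‑off of $x(\R)$ along the eigensection and the resulting identity $\operatorname{lk}(\gamma_\rho,x(\R))=\sli(P)+\wind_\infty(\tu,\infty)$ (which genuinely needs $P$ to be unknotted), and then combining positivity of intersections against the orbit cylinder — valid even though $\tj$ need not be generic — with Bennequin's inequality to pin $N$ down to $0$. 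The winding computation is routine, but it must be carried out carefully: it is precisely the hypothesis $\mu(P)\leq 3$ that yields $\wind_\infty(\tu,\infty)=1$, and the conclusion of the theorem fails when $\mu(P)\geq 4$.
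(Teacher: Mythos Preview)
Your argument is correct. Note first that the paper does not itself prove this theorem: it is quoted from \cite{hofer1995properties2}, and the closest the paper comes to a proof is the sketch given for the related Lemma~\ref{pr:cilindro-nao-intersecta-orbita}, which follows the original HWZ argument. Comparing your route to that one:

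The winding computation is identical. For the disjointness step, the HWZ argument (as sketched in the paper) introduces the invariant $c(u)=2e^+-1$, where $e^+\geq 1$ counts positive elliptic singularities of the characteristic foliation on a spanning disk for the unknot $P$, and obtains $int(u)=\wind_\infty(\tu,\infty)-c(u)=2-2e^+$; combined with $int(u)\geq 0$ this forces $int(u)=0$. Your version replaces this by the identity $N=\sli(P)+\wind_\infty(\tu,\infty)$ and Bennequin's inequality $\sli(P)\leq -1$. These are the same computation in disguise (indeed $\sli(P)=-c(u)$ and the inequality $e^+\geq 1$ \emph{is} Bennequin for the unknot), so the disjointness step is essentially the HWZ argument, only phrased more invariantly. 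For the embedding step you invoke Siefring's Theorem~\ref{th:siefring-2.6}, which is cleaner than, and postdates, the direct argument in \cite{hofer1995properties2}; since the paper states Siefring's criterion anyway and uses it elsewhere (e.g.\ Propositions~\ref{le:the-cylinders-do-not-intersect-orbits} and \ref{pr:v_r-mergulho-nao intersecta-limites}), this shortcut is entirely in keeping with the paper's toolkit.
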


	The finite energy plane $\tu$ is automatically Fredholm regular. If $J'\in \mathcal{J}_{reg}$ is a $C^\infty$-small perturbation of $J$, where $\mathcal{J}_{reg}$ is the dense set obtained by Theorem \ref{theo:fredholm-estimate}, then we can find a $\tilde{J}'$-holomorphic plane $\tu'$ asymptotic to $P_3$ as a $C^\infty$ small perturbation of $\tu$. It follows that we can revert the notation back to $\tu$ and $\tj$ and assume that $\tj\in \mathcal{J}_{reg}$. This assumption will be necessary since we will use Theorem \ref{theo:fredholm-estimate} in the proof of Theorem \ref{theo:main-theorem}.

\subsection{A family of planes asymptotic to $\boldsymbol{P_3}$}\label{se:a-family-of-planes}
{The following is a particular case of Theorem 4.5.44 of \cite{wendl2005thesis}. 
It generalizes Theorem 1.5 of \cite{hofer1999properties3}.
}
\begin{theorem}[\cite{wendl2005thesis}]\label{th:family-of-cylinders-wendl}
	Let $\lambda$ be a contact form on a closed $3$-manifold $M$ and let $J\in\mathcal{J}(\xi=\ker\lambda,d\lambda)$.
	Let $\tu=(a,u):S^2\setminus \Gamma\to \R\times M$ is an embedded $\tilde{J}$-holomorphic finite energy sphere such that  every asymptotic limit is nondegenerate, simple and has odd Conley-Zehnder index. Suppose that  $\ind(\tu)=2$. Then there exists a number $\delta>0$ and an embedding 
	\begin{equation*}
	\begin{aligned}
	\tilde{F}:\R\times (-\delta,\delta)\times S^2\setminus \Gamma&\to \R\times M\\
	(\sigma,\tau,z)&\mapsto (a_\tau(z)+\sigma,u_\tau(z))
	\end{aligned}
	\end{equation*}
	such that
	\begin{itemize}
		\item For $\sigma\in \R$ and $\tau\in (-\delta,\delta)$, the maps $\tu_{(\sigma,\tau)}:=\tilde{F}(\sigma,\tau,\cdot)$ are (up to parametrization) embedded $\tilde{J}$-holomorphic finite energy spheres and $\tu_{(0,0)}=\tu$.
		\item The map $F(\tau,z)=u_\tau(z)$ is an embedding $(-\delta, \delta)\times S^2\setminus \Gamma\to M$ and its image never intersects the asymptotic limits.
		In particular, the maps $u_\tau:S^2\setminus \Gamma\to M$ are embeddings for each $\tau\in (-\delta, \delta)$, with mutually disjoint  images which do not intersect their asymptotic limits.
		\item  For any sequence $\tv_k:S^2\setminus \Gamma\to \R\times M$ such that for each puncture in $\Gamma$, $\tv_k$ has the same asymptotic limit as $\tu$, with the same sign, and $\tv_k\to \tu$ in $C^\infty_{loc}(S^2\setminus \Gamma)$, there is a sequence $(\sigma_k,\tau_k)\to (0,0)\in \R\times (-\delta,\delta)$ such that $\tv_k=\tu_{(\sigma_k,\tau_k)}\circ \varphi_k$ for some sequence of biholomorphisms  $\varphi_k:S^2\to S^2$ and $k$ sufficiently large.
	\end{itemize}
\end{theorem}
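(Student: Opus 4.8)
The plan is to realise the family $\{\tu_{(\sigma,\tau)}\}$ as a neighbourhood of $[\tu]$ in the moduli space of unparametrized $\tj$-holomorphic finite energy spheres carrying the prescribed asymptotic data, quotiented by the $\R$-action, and to read off embeddedness and the foliation property from the intersection theory of \cite{siefring2011intersection}, following the strategy of \cite{hofer1999properties3, wendl2005thesis}. Since $\tu$ is embedded it is somewhere injective, connected, and not contained in an orbit cylinder, so Theorem \ref{th:siefring-2.6} applies: its projection $u:S^2\setminus\Gamma\to M$ is an immersion everywhere transverse to $R_\lambda$ (hence $\wind_\pi(\tu)=0$ by \eqref{eq:wind_pi-wind_infty}), it does not meet any of its asymptotic limits, and $\tu=(a,u)$ is itself an immersion. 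The first real step is Fredholm regularity of $\tu$ \emph{without} a genericity hypothesis on $J$: because $\tu$ has genus $0$ and, by assumption, no asymptotic orbit with even Conley--Zehnder index, the set $\Gamma_0$ of punctures whose asymptotic orbit is even is empty, so $2g(\tu)+\#\Gamma_0(\tu)=0$ and Wendl's automatic transversality criterion \cite{wendl2005thesis} gives surjectivity of the normal Cauchy--Riemann operator $D_N$ of $\tu$. Combined with $\ind(\tu)=2$ and the Fredholm setup of \cite{dragnev2004fredholm}, the space $\mathcal M$ of unparametrized $\tj$-holomorphic finite energy spheres near $[\tu]$ that are asymptotic at each puncture to the same orbit with the same sign is a smooth $2$-manifold with $T_{[\tu]}\mathcal M\cong\ker D_N$ and $\dim\ker D_N=2$.

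Next I would exploit the $\R$-action on $\R\times M$ by translation, $(a,u)\mapsto(a+\sigma,u)$. Since $\tu$ is not an orbit cylinder this action is free near $[\tu]$ and its orbits are $1$-dimensional submanifolds of $\mathcal M$; a complementary line in $\ker D_N$ is spanned by a section $\eta$ of the normal bundle of $\tu$ representing the infinitesimal deformation $\partial_\tau u_\tau|_{\tau=0}$. The zeros of any nonzero element of $\ker D_N$ are isolated and finite in number, each counting positively, and their total equals the normal Chern number $c_N(\tu)=\tfrac12\bigl(\ind(\tu)-2+2g(\tu)+\#\Gamma_0(\tu)\bigr)=0$; hence $\eta$ is nowhere zero. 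Quotienting $\mathcal M$ by the free $\R$-action gives a smooth $1$-manifold, and choosing a smooth slice $\tau\mapsto\tu_\tau=(a_\tau,u_\tau)$ through $\tu_0=\tu$, for $\tau$ in a sufficiently small interval $(-\delta,\delta)$, one defines $\tilde F(\sigma,\tau,z)=(a_\tau(z)+\sigma,u_\tau(z))$. This map is an immersion: $\partial_\sigma\tilde F=\partial_a$, the $TM$-components of $\partial_\tau\tilde F$ and $\partial_z\tilde F$ are $\partial_\tau u_\tau$ and $\partial_z u_\tau$ which are linearly independent since each $\tu_{(\sigma,\tau)}$ is an immersion and the $\tau$-variation is transverse to the leaf (as $\eta\neq0$); after shrinking $\delta$ it is injective and proper onto its image, hence an embedding. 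Disjointness of distinct leaves follows from $\eta$ being nowhere zero near the curve, and near the punctures from the exponential asymptotics of Theorem \ref{theo:asymptotic behavior} together with nondegeneracy of the common asymptotic orbits (as in \cite{hofer1999properties3}), which also shows $F(\tau,z)=u_\tau(z)$ is injective on $(-\delta,\delta)\times(S^2\setminus\Gamma)$ and misses the asymptotic limits. Finally each $u_\tau$ is individually an embedding, since embeddedness is characterised in Theorem \ref{th:siefring-2.6} by vanishing of the integer homotopy invariants given by $u_\tau$ being disjoint from its asymptotic limits and by the equality $\wind_\infty(\tu_\tau,z,\Psi)/m_z=\wind_\infty(\tu_\tau,w,\Psi)/m_w$ along common orbits, both of which are locally constant on the connected family $\mathcal M$; thus $\tilde F$ and $F$ have all the asserted properties.

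For the last clause, given $\tv_k\to\tu$ in $C^\infty_{\mathrm{loc}}(S^2\setminus\Gamma)$ with each $\tv_k$ asymptotic at every puncture to the same orbit as $\tu$, with matching signs, the exponential decay of Theorem \ref{theo:asymptotic behavior} upgrades this to convergence in the weighted Sobolev completion used to set up the Fredholm problem near $\Gamma$; hence for $k$ large $[\tv_k]$ lies in the $\tilde F$-chart of $\mathcal M$ around $[\tu]$, i.e.\ $[\tv_k]=[\tu_{(\sigma_k,\tau_k)}]$ with $(\sigma_k,\tau_k)\to(0,0)$, and passing to parametrized representatives produces biholomorphisms $\varphi_k:S^2\to S^2$ with $\tv_k=\tu_{(\sigma_k,\tau_k)}\circ\varphi_k$. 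I expect the main obstacle to be the Fredholm regularity in the first step: with $J$ fixed one cannot invoke the generic transversality of Theorem \ref{theo:fredholm-estimate}, so one must run the automatic transversality argument, which in turn requires bounding the cokernel of $D_N$ via the winding estimates of Proposition \ref{pr:properties-asymptotic-operator} and Lemma \ref{le:wind-infty-estimate} — precisely where the hypothesis that no asymptotic orbit has even index is indispensable. A close second is the asymptotic bookkeeping behind the disjointness and uniqueness claims, where one must keep precise track of the sign and winding data of the asymptotic operators at each puncture.
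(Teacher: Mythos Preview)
The paper does not give its own proof of this theorem: it is quoted as a particular case of Theorem~4.5.44 of \cite{wendl2005thesis} (generalizing Theorem~1.5 of \cite{hofer1999properties3}), and the only additional comment is the remark that $\tj\in\mathcal J_{reg}$ is not required. So there is no in-paper argument to compare against.

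That said, your sketch is a faithful outline of the proof one finds in the cited references. The essential ingredients are exactly the ones you identify: automatic transversality in genus zero with all asymptotic orbits of odd Conley--Zehnder index (so $\Gamma_0=\emptyset$ and $2g+\#\Gamma_0=0<\ind(\tu)$), yielding a smooth $2$-dimensional moduli space without any genericity assumption on $J$; the free $\R$-action accounting for one direction; the normal Chern number computation $c_N(\tu)=0$ forcing nonzero elements of $\ker D_N$ to be nowhere vanishing, which gives the local foliation; and the asymptotic analysis of Theorem~\ref{theo:asymptotic behavior} to control behaviour near the punctures. Your appeal to Theorem~\ref{th:siefring-2.6} for persistence of embeddedness is a legitimate shortcut, though the original argument in \cite{wendl2005thesis} predates \cite{siefring2011intersection} and proceeds more directly via the normal bundle and positivity of intersections; either route works. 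The final clause is indeed handled by upgrading $C^\infty_{\mathrm{loc}}$-convergence to convergence in the Fredholm setup via the exponential decay estimates, as you indicate.
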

\begin{remark}
	In Theorem \ref{th:family-of-cylinders-wendl}, we do not require $\tj\in\mathcal{J}_{reg}$.
\end{remark}

%

Applying Theorem \ref{th:family-of-cylinders-wendl} to the finite energy plane \eqref{eq:plano-util} 
we obtain a maximal one-parameter family of finite energy planes 
\begin{equation}\label{eq:familia-de-planos}
	\tilde{u}_\tau=(a_\tau, u_\tau):\C\to \R\times S^3,~~\tau\in (\tau_-,\tau_+)
\end{equation}
asymptotic to the orbit $P_3$.
The family \eqref{eq:familia-de-planos} satisfies 
	$u_{\tau_1}(\C)\cap u_{\tau_2}(\C)=\emptyset, ~\forall \tau_1\neq\tau_2.$ 
Indeed, suppose that there exist $\tau_1\neq \tau_2$ such that $u_{\tau_1}(\C)\cap u_{\tau_2}(\C)\neq \emptyset$. Then $u_{\tau_1}(\C)= u_{\tau_2}(\C)$ (see Theorem \ref{th:2.4-siefring} or {\cite[Theorem 1.4]{hofer1995properties2}}) and we would obtain an $S^1$-family of embedded planes that provides an open book decomposition
of $S^3$, with binding $P_3$ and disk-like pages.
It follows from equation \eqref{eq:wind_pi-wind_infty}, Lemma \ref{le:wind-infty-estimate} and formula \eqref{eq:conley-zehnder-wind} that  
$\wind_\pi(\tu_\tau)=0, ~\forall \tau\in (\tau_-,\tau_+).$
As a consequence of the definition of $\wind_\pi$, we conclude that 
$u_\tau$ is transverse to $R_\lambda$, $\forall \tau\in (\tau_-,\tau_+)$. 
This implies that every Reeb orbit in $S^3\setminus P_3$ is linked to $P_3$, which contradicts the existence of $P_1$ and $P_2$. 

Now we describe how the family $\{\tu_\tau\}$ breaks as $\tau \to \tau_{\pm}$. We assume that $\tau_-=0$ and $\tau_+=1$ and that $\tau$ strictly increases in the direction of $R_\lambda$.


\begin{proposition}\label{theo:bubbling-off-two-vertices}
	Consider a sequence $\tilde{u}_n:=\tilde{u}_{\tau_n}$ in the family \eqref{eq:familia-de-planos} satisfying $\tau_n\to 0^+$. 
	Then there exists a $\tilde{J}$-holomorphic finite energy cylinder $\tu_r:\C\setminus \{0\}\to \R\times S^3$, which is asymptotic to $P_3$ at the positive puncture $z=\infty$ and to $P_2$ at the negative puncture $z=0$, and a finite energy $\tilde{J}$-holomorphic plane $\tu_q:\C\to\R\times S^3$ asymptotic to $P_2$ at $z=\infty$, such that,	after suitable reparametrizations and $\R$-translations of $\tu_n$, the following hold
	\begin{enumerate}[label=(\roman*)]
		\item up to a subsequence, 
		$\tu_n\to \tu_r$ in $C^\infty_{loc}(\C\setminus \{0\})$ as $n\to \infty$.
		\item There exist sequences $\delta_n\to 0^+$, $z_n\in \C$ and $c_n\in \R$ such that, up to a subsequence,
		$\tu_n(z_n+\delta_n \cdot)+c_n\to \tu_q$ 	in $C^\infty_{loc}(\C)$ as $n\to \infty$.
	\end{enumerate} 
 Here $(a,x)+c:=(a+c,x),~\forall (a,x)\in \R\times S^3,~c\in \R$.
{A similar statement holds for any sequence $\tu_{\tau_n}$ satisfying $\tau_n\to 1^-$. In this case we change the notation from $\tu_r$ and $\tu_q$ to $\tu_r'$ and $\tu_q'$ respectively. }
\end{proposition}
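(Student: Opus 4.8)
The plan is to analyze the degeneration of the family $\{\tu_\tau\}$ as $\tau\to 0^+$ via the $SFT$ compactness theorem, and then identify the components of the limiting holomorphic building using the index and winding constraints coming from the hypotheses of Theorem~\ref{theo:main-theorem}. First I would recall that the planes $\tu_n=\tu_{\tau_n}$ have uniformly bounded Hofer energy (all asymptotic to $P_3$, hence $E(\tu_n)$ is controlled by $T_3$) and $d\lambda$-area bounded by $T_3$ as well. By Gromov-type compactness for punctured finite-energy curves in symplectizations, after passing to a subsequence, reparametrizing and applying $\R$-translations, $\tu_n$ converges to a connected holomorphic building $\tilde{\mathbf u}$ whose top level has positive puncture asymptotic to $P_3$. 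Because the maximal family $\{\tu_\tau\}$ does \emph{not} foliate an open book (this was shown just above: there exist orbits not linked to $P_3$, so $\tau_-=0$ is a genuine boundary of the parameter interval), the limit building must be nontrivial, i.e.\ it has at least two levels or the limit level has nodes.

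Next I would pin down the combinatorics of the building. The key inputs are: (a) the index formula \eqref{eq:formula-indice-fredholm} together with Theorem~\ref{theo:fredholm-estimate} (forcing $\ind\geq 1$ for each non-trivial somewhere-injective component, and $\ind\geq 2$ for the piece carrying the $\R$-translation family by Theorem~\ref{th:family-of-cylinders-wendl}); (b) the total index is additive across the building and equals $\ind(\tu_\tau)=\mu(P_3)-1+1=3$; (c) the Conley-Zehnder indices of the possible asymptotic limits are constrained by Lemma~\ref{le:properties-cz-index} and by hypotheses \textit{(i)}–\textit{(iii)}, which say that the only index-$2$ orbit of period $\le T_3$ unlinked from $P_3$ is $P_2$, and the only index-$1$ orbit of period $\le T_2$ unlinked from $P_2$ is $P_1$; (d) all intermediate orbits have period $\le T_3$, hence nondegenerate by \textit{(i)}, and the curves have $\wind_\pi\ge 0$, restricting $\wind_\infty$ at each puncture via Lemma~\ref{le:wind-infty-estimate}. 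Running the arithmetic, the only configuration consistent with these constraints is a two-level building whose top level is a cylinder $\tu_r$ from $P_3$ (positive) to $P_2$ (negative) with $\ind(\tu_r)=2$, glued along $P_2$ to a bottom-level plane $\tu_q$ asymptotic to $P_2$ with $\ind(\tu_q)=1$; no orbit cylinders or extra components can appear because that would violate either additivity of the index or the uniqueness clauses \textit{(ii)}, \textit{(iii)}. In particular both $\tu_r$ and $\tu_q$ are somewhere injective (being limits of embedded planes with pairwise disjoint images, they cannot be multiply covered), and $P_2$ must be hyperbolic by Lemma~\ref{le:properties-cz-index}(4), which is consistent with its being a negative asymptotic limit attached to a single gluing node. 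This yields the existence of $\tu_r$ and $\tu_q$ and the convergence statements \textit{(i)}, \textit{(ii)}: the soft rescaling at the bubble points $z_n$ with $\delta_n\to 0^+$, $c_n\in\R$ recovers the plane $\tu_q$, and away from the bubbling locus $\tu_n\to\tu_r$ in $C^\infty_{loc}(\C\setminus\{0\})$ after normalizing the node to $z=0$.

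The main obstacle I expect is ruling out all the ``bad'' limiting configurations — in particular showing that the limit building has exactly one breaking orbit and no nodal (bubbled-off) sphere components carrying nontrivial $d\lambda$-area beyond the single plane $\tu_q$, and that the breaking orbit is exactly $P_2$ rather than some other index-$1$ or index-$2$ orbit or an iterate. This is where hypotheses \textit{(ii)} and \textit{(iii)} do the real work: any alternative breaking orbit would be an index-$\le 2$ orbit of period $<T_3$, and the linking/unknottedness bookkeeping (combined with the fact that the pieces project to embedded surfaces by Theorem~\ref{th:siefring-2.6} and with the intersection-positivity results of Theorems~\ref{co:5.9-siefring}–\ref{th:2.5-siefring}) forces it to coincide with $P_2$, resp.\ the terminal orbit of $\tu_q$ to be forbidden unless the plane caps $P_2$ directly. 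Handling multiple covers and the possibility that a ghost component absorbs the translation family also requires care; here the embeddedness of each $\tu_n$, the relation \eqref{eq:wind_pi-wind_infty}, and the index bound $\ind(\tu_r)=2$ from the fact that $\tu_r$ must itself admit the Wendl one-parameter family together close the gap. Once the building is identified, the smoothness of $\tu_r$, $\tu_q$ and the $C^\infty_{loc}$ convergence are standard consequences of the asymptotic analysis of Theorem~\ref{theo:asymptotic behavior} and elliptic regularity. The statement for $\tau\to 1^-$ is identical after reversing the role of the ends, producing $\tu_r'$ and $\tu_q'$.
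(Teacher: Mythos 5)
Your overall structural guess (the family breaks into a cylinder from $P_3$ to $P_2$ stacked over a plane asymptotic to $P_2$) is the right one, but the argument you give for it has concrete problems. First, the index arithmetic is wrong and is load-bearing: by \eqref{eq:formula-indice-fredholm} the planes $\tu_\tau$ have $\ind(\tu_\tau)=\mu(P_3)-\chi(S^2)+1=3-2+1=2$, not $3$; the limit cylinder has $\ind(\tu_r)=(3-2)-2+2=1$ and the limit plane $\ind(\tu_q)=2-2+1=1$. In particular your claim that $\tu_r$ has index $2$ ``because it must itself admit the Wendl one-parameter family'' is false --- the cylinders $\tu_r,\tu_r'$ are rigid (index $1$), and Theorem \ref{th:family-of-cylinders-wendl} is never applicable to them; your two errors cancel so that $2+1=3$, but with the correct numbers your ``running the arithmetic'' step is not actually carried out, and it is exactly here that the real work lies: one must exclude a limit with two negative punctures (done in the paper via Theorem \ref{theo:fredholm-estimate} plus Lemma \ref{le:indice-maior-igual-a-2}, forcing $\mu=2$ at each negative puncture and hence vanishing $d\lambda$-area, then Theorem \ref{theo:vanishing-dlambda-energy} and simplicity of $P_3$ give a contradiction), exclude an unbroken limit plane (maximality of the family via Theorem \ref{th:family-of-cylinders-wendl}, not merely the non-existence of an open book), and identify the breaking orbit as $P_2$ (its index is pinned to $2$ by $\ind(\tu_r)\geq 1$ together with $\mu\geq 2$ from Lemma \ref{le:indice-maior-igual-a-2}, and then hypothesis (ii) applies because the orbit has period $<T_3$ and is unlinked from $P_3$, the latter coming from the embeddedness and transversality of the $u_n$). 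You gesture at these inputs but never assemble them, and with your index values the assembly would not close.

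Second, two of your technical shortcuts are not available as stated. The blanket appeal to SFT compactness is delicate here because $\lambda$ is not assumed nondegenerate --- only orbits of period $\leq T_3$ are --- which is precisely why the paper avoids the compactness theorem at this stage and instead runs a hands-on bubbling analysis: it reparametrizes $\tu_n$ into a germinating sequence normalized by $\sigma(T_3)$ as in \eqref{eq:u_n-e-seq-germinante}, identifies the asymptotic limits using the small-area cylinder Lemma \ref{le:cylinders-with-small-area} and Proposition \ref{pr:same-orbits-tree}, and produces the plane $\tu_q$ by the soft rescaling of \S\ref{se:soft-rescaling} together with Proposition \ref{rmk:either-or}. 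Also, somewhere injectivity of $\tu_r$ and $\tu_q$ does not follow from their being ``limits of embedded planes'' (limits of embedded curves can be multiply covered); it follows from the simplicity of the asymptotic orbits $P_3$ and $P_2$. So the proposal needs correct index bookkeeping, a compactness scheme valid under the paper's weak nondegeneracy hypothesis, and an explicit elimination of the alternative breaking configurations before it can be accepted.
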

The proof of proposition \ref{theo:bubbling-off-two-vertices} is left to Subsection \ref{se:proof-bubbling-planos}. Subsection \ref{se:bubbling-planos} below consists of preliminary results. 

\subsection{Bubbling-off analysis}\label{se:bubbling-planos}

By hypotheses, every orbit in $\mathcal{P}(\lambda)$ having period $\leq T_3$ is nondegenerate.
Consequently, there exists only a finite number of such orbits. 
Define 
$\sigma(T_3)$ as any real number satisfying
\begin{equation}\label{eq:definicao-sigmac}
	0<\sigma(T_3)<\min \{T, |T-T'|: T\neq T'\text{ periods }, T,T' \leq T_3\}~.
\end{equation}

Most of the material in \S\ref{se:germinating-seuqences} and \S\ref{se:soft-rescaling} is adapted from \cite{hls2015}.

\subsubsection{Germinating sequences}\label{se:germinating-seuqences}
%
Now  we fix  $J\in \mathcal{J}(\lambda)$ and consider a sequence of $\tilde{J}$-holomorphic curves
$\tv_n=(b_n, v_n):B_{R_n}(0)\subset \C\to \R\times S^3$
satisfying
\begin{align}\label{eq:geminating-sequence-def1}
	R_n\to \infty ,~~ R_n\in (0,+\infty]\\
	E(\tv_n)\leq T_3,~\forall n \label{eq:geminating-sequence-def2}\\
	\int_{B_{R_n}(0)\setminus\D}v_n^*d\lambda\leq \sigma(T_3),~\forall n \label{eq:germinating-sequence-def3}\\
	\{b_n(2)\} \text{ is uniformily bounded}\label{eq:germinating-sequence-def4}.
\end{align}

\begin{definition}\label{de:germinating-sequence}
	A sequence $\tv_n$ of $\tj$-holomorphic curves satisfying \eqref{eq:geminating-sequence-def1}-\eqref{eq:germinating-sequence-def4} is called a \textit{germinating sequence}.
\end{definition}

\begin{proposition}\label{pr:limit-germinating-sequence}
	There exists a finite set $\Gamma\subset \D$, a $\tj$-holomorphic map $\tv=(b,v):\C\setminus \Gamma\to \R\times S^3$ and a subsequence of $\tv_n$, still denoted by $\tv_n$, such that 
	$\tv_n\to \tv~~\text{in } C^\infty_{loc}(\C\setminus \Gamma,\R\times S^3)$
	and $E(\tv)\leq T_3$.
\end{proposition}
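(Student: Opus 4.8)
The plan is to establish a uniform $C^\infty_{loc}$ gradient bound for $\tv_n$ away from a finite ``bubbling'' set $\Gamma$, and then apply elliptic bootstrapping together with the Arzelà--Ascoli theorem to extract a convergent subsequence. First I would consider the set of points where the gradients $|\nabla \tv_n|$ blow up. Precisely, say $z\in\D$ is a \emph{bad point} if there exist a subsequence $\tv_{n_k}$ and points $z_k\to z$ with $|\nabla\tv_{n_k}(z_k)|\to\infty$. The first step is to show that the set $\Gamma$ of bad points is finite: at each bad point one performs the standard Hofer bubbling-off rescaling (using Hofer's lemma on the sequence of almost-maximal gradients) to produce, in the limit, either a nonconstant finite energy plane or a nonconstant finite energy sphere in $\R\times S^3$ (after the usual alternative: the rescaled sequence either has a nonremovable puncture at $\infty$ or converges to a plane). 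By the energy quantization inherent in the definition of $\sigma(T_3)$ in \eqref{eq:definicao-sigmac} and the period gap it encodes, each such bubble carries at least a definite quantum of $d\lambda$-area/Hofer energy bounded below in terms of $\sigma(T_3)$; since $E(\tv_n)\le T_3$ by \eqref{eq:geminating-sequence-def2}, only finitely many bubbles can form, so $\#\Gamma<\infty$. Here the constraint \eqref{eq:germinating-sequence-def3} on the area outside $\D$ guarantees no bubbling occurs in $\C\setminus\D$, so indeed $\Gamma\subset\D$.

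Away from $\Gamma$ the gradients $|\nabla\tv_n|$ are then uniformly bounded on compact subsets of $\C\setminus\Gamma$ by definition of $\Gamma$. The second step is to upgrade this to uniform $C^\infty_{loc}$ bounds: the maps $\tv_n$ satisfy a first-order elliptic PDE (the Cauchy--Riemann equation for $\tj$) with smooth coefficients, so a $C^0_{loc}$ bound on $\tv_n$ together with the gradient bound yields, by standard elliptic regularity (interior Schauder / $W^{k,p}$ estimates bootstrapped), uniform bounds on all higher derivatives on compact subsets of $\C\setminus\Gamma$. The $C^0_{loc}$ bound on the $S^3$-component is automatic since $S^3$ is compact; for the $\R$-component $b_n$ one uses the gradient bound to propagate the normalization \eqref{eq:germinating-sequence-def4} (that $\{b_n(2)\}$ is bounded) to a uniform bound on $b_n$ over any compact set not containing $\Gamma$ — this is where the explicit marked point $z=2\notin\D$ and the assumption $\Gamma\subset\D$ are used. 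By Arzelà--Ascoli and a diagonal argument over an exhaustion of $\C\setminus\Gamma$ by compact sets, we extract a subsequence converging in $C^\infty_{loc}(\C\setminus\Gamma)$ to a smooth map $\tv=(b,v)$, which is automatically $\tj$-holomorphic.

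The final step is the energy bound $E(\tv)\le T_3$. This follows from lower semicontinuity of the Hofer energy under $C^\infty_{loc}$ convergence: for any $\phi\in\Sigma$ and any compact $K\subset\C\setminus\Gamma$ one has $\int_K\tv^*d(\phi\lambda)=\lim_n\int_K\tv_n^*d(\phi\lambda)\le\sup_n E(\tv_n)\le T_3$, and taking the supremum over $K$ and over $\phi$ gives $E(\tv)\le T_3$.

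The main obstacle I anticipate is the finiteness of $\Gamma$, i.e.\ making the bubbling-off argument at a bad point rigorous: one must carefully run the Hofer rescaling lemma, rule out the degenerate ``nothing bubbles'' scenario, identify the limiting bubble as a genuine nonconstant finite energy curve with a nonremovable puncture, and extract from it the quantum of energy bounded below by (a constant depending on) $\sigma(T_3)$. The rest — elliptic bootstrapping, Arzelà--Ascoli, lower semicontinuity of energy — is routine once the bad set is controlled. Since much of this is ``adapted from \cite{hls2015}'' as noted in \S\ref{se:germinating-seuqences}, I would cite the corresponding statements there for the bubbling analysis and focus the written proof on the points specific to the present normalization \eqref{eq:geminating-sequence-def1}--\eqref{eq:germinating-sequence-def4}.
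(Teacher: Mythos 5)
Your plan is correct and follows essentially the same route as the paper: detect the blow-up points, use the energy/period quantization at each such point (the paper cites the concentration results of \cite{hofer1993pseudoholomorphic} rather than redoing Hofer's rescaling) together with the bound $E(\tv_n)\leq T_3$ to conclude $\Gamma$ is finite and, via \eqref{eq:germinating-sequence-def3}, contained in $\D$, then apply elliptic estimates and Arzel\`a--Ascoli away from $\Gamma$ (anchored by the normalization $b_n(2)$ bounded), and obtain $E(\tv)\leq T_3$ by Fatou-type lower semicontinuity. No substantive difference from the paper's proof.
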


\begin{proof}
	Let $\Gamma_0\subset \C$ be the set of points $z\in \C$ such that there exists a subsequence $\tv_{n_j}$ and a sequence $z_j\in B_{R_{n_j}}(0)$ with $z_j\to z$ and
	$|d\tv_{n_j}(z_j)|\to \infty,~j\to \infty~.$ 
	If $\Gamma_0=\emptyset$, then by \eqref{eq:germinating-sequence-def4} and usual elliptic estimates, see {\cite[Chapter 4]{mcduff2004j}}, we find a $\tj$-holomorphic map $\tv:\C\to \R\times S^3$ such  that, up to a subsequence, $\tv_n\to \tv$ in $C^\infty_{loc}(\C,\R\times S^3)$. In this case, $\Gamma=\emptyset$.
	
	Now assume $\Gamma_0\neq\emptyset$ and let $z_0\in \Gamma_0$. It follows from results in {\cite[\S3.2]{hofer1993pseudoholomorphic}} that 
	there exists a period $0<T_0\leq T_3$ and sequences $r_j^0\to 0^+$, $n_j^0\to \infty$ and $z_j^0\to z_0$ such that 
	$$\lim_{j\to \infty} \int_{B_{r_j}(z_j^0)} v_{n_j^0}^*d\lambda\geq T_0~.$$

	Consider $\tv_{n^0_j}$ as the new sequence $\tv_n$. 
	Now let $\Gamma_1\subset \C\setminus \{z_0\}$ be the set of points $z_1\neq z_0$ such that there exists a subsequence $\tv_{n_j}$ and sequence $z_j\in B_{R_{n_j}}(0)$ with  
	$z \to z_1$ and $|d\tv_{n_j}(z_j)|\to \infty$.
	As before, if $\Gamma_1=\emptyset$, we have a $\tj$-holomorphic map $\tv:\C\setminus \{z_0\}\to \R\times S^3$ such that, up to a subsequence, $\tv_n\to \tv$ in $C^\infty_{loc}(\C\setminus \{z_0\},\R\times S^3)$. 
	In this case, we define $\Gamma=\Gamma_0=\{z_0\}$. 
	If $\Gamma_1\neq \emptyset$ and $z_1\in \Gamma_1$, there exist a period $0<T_1\leq T_3$ and sequences $r_j^1\to 0$, $n_j^1\to \infty$ and $z_j^1\to z_1$ such that
	$$\lim_{j\to \infty} \int_{B_{r_j}(z_j^1)} v_{n^1_j}^*d\lambda\geq T_1~.$$ 
	Considering $\tv_{n_j^1}$ as the new sequence $\tv_n$, define $\Gamma_2\subset \C\setminus\{z_0, z_1\}$ as before.
	Repeating this argument, let $z_i\in \Gamma_i\subset \C\setminus\{z_0,\dots, z_{i-1}\}$.
	Note that 
	\begin{equation*}
		\begin{aligned}
			C&\geq \lim_{n\to \infty}E(\tv_n)\geq \lim_{n\to \infty}\int_{\C}v_n^*d\lambda
			&\geq \sum_{l=0}^{i}\lim_{j\to \infty} \int_{B_{r_j^l}(z_j^l)}v_{n_j^l}^*d\lambda\geq T_0+\dots+T_i.	
		\end{aligned}
	\end{equation*}
	It follows that there exists $i_0$ such that $\Gamma_{i_0}\neq0$ and $\Gamma_i=\emptyset$ for $i>i_0$.
	We end up with a finite set $\Gamma=\{z_0,\dots,z_{i_0}\}$ and a $\tj$-holomorphic map 
	$\tv:\C\setminus \Gamma\to \R\times S^3$ 
	such that, up to a subsequence,
	$\tv_n\to \tv \text{ in }C^\infty_{loc}~.$ 
	 
	It follows from \eqref{eq:germinating-sequence-def3} that $\Gamma\subset \D$.
	The inequality $E(\tv)\leq T_3$ follows from \eqref{eq:geminating-sequence-def2} and Fatou's Lemma.
\end{proof}

\begin{definition}\label{de:limit-germinating-sequence}
	A $\tj$-holomorphic map $\tv:\C\setminus\Gamma\to \R\times S^3$ as in Proposition \ref{pr:limit-germinating-sequence} is called a limit of the germinating sequence $\tv_n$.
\end{definition}

If $\Gamma\neq \emptyset$, then $\tv$ is nonconstant. In this case, all the punctures $z=z_i\in \Gamma$ are negative and $\infty$ is a positive puncture. 
To prove this, define, for any $\epsilon>0$,
\begin{equation}\label{eq:def-mepisilonz}
	m_\epsilon(z):=\int_{\partial B_\epsilon(z)}v^*\lambda~,
\end{equation}
where $\partial B_\epsilon(z)$ is oriented counterclockwise. Then
$$m_\epsilon(z)=\int_{\partial B_\epsilon(z)} v^*\lambda=\lim_{n\to \infty}\int_{\partial B_\epsilon(z)}v_n^*\lambda=\lim_{n\to \infty}\int_{B_\epsilon(z)}v_n^*d\lambda~.$$
For $j$ large, $B_{r_j^i}(z_j^i)$, defined as in the proof of  Proposition \ref{pr:limit-germinating-sequence}, is contained in $B_\epsilon(z)$. 
It follows that 
$$m_\epsilon(z)=\lim_{n\to \infty}\int_{B_\epsilon(z)}v_n^*d\lambda \geq \lim_{j\to \infty}\int_{B_{r_j}(z_j^i)} v_{n_j^i}^*d\lambda\geq T_i>0~.$$
This implies that $\tv$ is nonconstant and the puncture $z$ is negative.
Moreover, as a consequence of $0<E(\tv)<\infty$, we know that $\tv$ has at least one positive puncture. Thus, $\infty$ is  a positive puncture. 

\subsubsection{Soft-rescaling near a negative puncture}\label{se:soft-rescaling}
Assume $\Gamma\neq \emptyset$ and let $\tv=(b,v):\C\setminus \Gamma\to \R \times M$ be  a limit of the germinating sequence $\tv_n=(b_n,v_n)$. 
Let $z\in \Gamma$. 
We define the mass $m(z)$ of $z$ by
\begin{equation}\label{eq:int-maior-sigma-c}
	m(z)=\lim_{\epsilon\to 0^+} m_\epsilon(z)=\lim_{\epsilon\to 0^+}\int_{\partial B_\epsilon(z)} v^*\lambda =T_z>\sigma(T_3)>0~, 
\end{equation}
where $T_z$ is the period of the asymptotic limit of $\tv$ at $z$.
Since $m_\epsilon(z)$ is a nondecreasing function of $\epsilon$, we can fix $\epsilon$ small enough so that 
\begin{equation}\label{eq:m-mepsilon}
	0\leq m_\epsilon(z)-m(z)\leq\frac{\sigma(T_3)}{2}.
\end{equation}
Choose sequences $z_n\in \overline{B_\epsilon(z)}$ and $0<\delta_n<\epsilon,~\forall n$, so that 
\begin{align}
	b_n(z_n)\leq b_n(\zeta),~\forall \zeta \in B_\epsilon(z) \label{eq:min-bn},\\
	\int_{B_\epsilon(z)\setminus B_{\delta_n}(z_n)} v_n^*d\lambda =\sigma(T_3) \label{eq:int-sigma-c}.
\end{align}
{Since $z$ is a negative puncture, \eqref{eq:min-bn} implies that $z_n\to z$. Hence the existence of $\delta_n$ as in \eqref{eq:int-sigma-c} follows from \eqref{eq:int-maior-sigma-c}.}
We claim that $\liminf \delta_n=0$. Otherwise, we choose $0<\epsilon'<\liminf \delta_n\leq \epsilon$. 
From \eqref{eq:m-mepsilon}, we get the contradiction 
\begin{equation*}
	\begin{aligned}
		\frac{\sigma(T_3)}{2}&\geq m_\epsilon(z) -m(z)\geq m_\epsilon(z)-m_{\epsilon'}(z)\\
		& =\lim_{n\to \infty} \int_{B_\epsilon(z)\setminus B_{\epsilon'}(z)}v_n^* d\lambda \\
		&\geq\lim_{n\to \infty}\int_{B_\epsilon(z)\setminus B_{\delta_n}(z_n)}v_n^*d\lambda=\sigma(T_3)~.
	\end{aligned}
\end{equation*}
Thus, we can assume that $\delta_n\to 0$.

Now take any sequence $R_n\to +\infty$ satisfying
$\delta_nR_n<\frac{\epsilon}{2}~$
and define the sequence of $\tj$-holomorphic maps $\tilde{w}_n=(c_n,w_n):B_{R_n}(0)\to \R\times S^3$ by
\begin{equation}\label{eq:definition-wn}
	\tilde{w}_n(\zeta)=(b_n(z_n+\delta_n\zeta)-b_n(z_n+2\delta_n), v_n(z_n+\delta_n\zeta))~.
\end{equation}
It follows from \eqref{eq:int-sigma-c} that
$$ \int_{B_{R_n}(0)\setminus \D}w_n^*d\lambda\leq \sigma(T_3),~\forall n~.$$
Moreover, by the definition of $\tilde{w}_n$, $E(\tilde{w}_n)\leq E(\tv_n)\leq T_3$ and $\tilde{w}_n(2)\in \{0\}\times M$. Thus, $\tilde{w}_n$ is a germinating sequence.

Let $\tilde{w}=(c,w):\C\setminus \Gamma'\to \R\times S^3$ be a limit of $\tilde{w}_n$, as in Proposition \ref{pr:limit-germinating-sequence}. If $\Gamma'\neq \emptyset$, then $\tilde{w}$ is not constant. 
If $\Gamma'=\emptyset$, then
\begin{equation}\label{eq:int-disco-w}
	\begin{aligned}
		\int_\D w^*d\lambda&=\lim_{n\to\infty}\int_\D w_n^*d\lambda= \lim_{n\to \infty}\int_{B_{\delta_n}(z_n)}v_n^*d\lambda\\
		&=\lim_{n\to \infty}\left(\int_{B_\epsilon(z)}v_n^*d\lambda-\int_{B_\epsilon(z)\setminus B_{\delta_n}(z_n)}v_n^*d\lambda\right)\\
		&=m_\epsilon(z)-\sigma(T_3)\\
		&\geq T_z-\sigma(T_3)>0~.
	\end{aligned}
\end{equation}
Thus $\tilde{w}$ is nonconstant as well. 
From Fatou's Lemma we get $0<E(\tilde{w})\leq T_3$.
This also implies that the periods of the asymptotic limits of $\tilde{w}$ are bounded by $T_3$.  


\begin{proposition}\label{pr:same-orbits-tree}
	The asymptotic limit $P_\infty$ of $\tilde{w}$ at $\infty$ coincides with the asymptotic limit $P_z$ of $\tilde{v}$ at the negative puncture $z\in \Gamma$.
\end{proposition}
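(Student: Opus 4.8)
The plan is to track where the ``soft-rescaled'' curves concentrate the $d\lambda$-area near the negative puncture $z$ and to rule out any loss of area between the end of $\tilde{w}$ at $+\infty$ and the puncture $z$. First I would compute the mass of $\tilde{w}$ at $\infty$ directly from the rescaling \eqref{eq:definition-wn}: for $\epsilon$ small and $n$ large one has, by the choice of $\delta_n$ in \eqref{eq:int-sigma-c},
\begin{equation*}
\int_{\partial B_{R}(0)} w_n^*\lambda = \int_{\partial B_{\delta_n R}(z_n)} v_n^*\lambda
= m_\epsilon(z) - \int_{B_\epsilon(z)\setminus B_{\delta_n R}(z_n)} v_n^*d\lambda,
\end{equation*}
so letting $n\to\infty$ and then $R\to\infty$, together with \eqref{eq:m-mepsilon} and \eqref{eq:int-sigma-c}, shows that the mass $m(\infty)$ of $\tilde w$ at $\infty$ lies in the window $[\,m(z)-\sigma(T_3),\, m(z)\,]$ up to the controlled error $\sigma(T_3)/2$; in particular $0 < m(\infty) \le m(z) + \tfrac12\sigma(T_3) < T_z + \sigma(T_3)$. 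Since the period of the asymptotic limit $P_\infty$ of $\tilde w$ at $\infty$ is exactly $m(\infty)$, and all periods involved are $\le T_3$, the gap estimate \eqref{eq:definicao-sigmac} forces the period of $P_\infty$ to equal $T_z$: no other period can lie within $\sigma(T_3)$ of $T_z$.

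Next I would show that the orbits themselves — not merely their periods — coincide. The idea is a standard no-area-loss / long-cylinder argument: because $m_\epsilon(z)-m(z) \le \sigma(T_3)/2$ and $\int_{B_\epsilon(z)\setminus B_{\delta_n}(z_n)} v_n^*d\lambda = \sigma(T_3)$, the total $d\lambda$-area of $v_n$ on the annular region $B_\epsilon(z)\setminus B_{\delta_n R_n}(z_n)$ between the ``bubble scale'' $\delta_n R_n$ and the fixed scale $\epsilon$ tends to $0$ as first $n\to\infty$ and then $R_n$ is taken along the relevant subsequence. Combined with the uniform energy bound $E(\tv_n)\le T_3$ and the a priori gradient bounds away from $\Gamma$, a Hofer-type lemma on long almost-cylinders of small $d\lambda$-area (as in \cite{hofer1993pseudoholomorphic}, \S3, and the adaptation in \cite{hls2015}) shows that on this neck region the maps $v_n$ converge, after reparametrization, uniformly to a cylinder over a single periodic orbit. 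That orbit is simultaneously the asymptotic limit of $\tilde w$ at $\infty$ (obtained by reading the neck at the small end, i.e.\ after the rescaling by $\delta_n$) and the asymptotic limit of $\tilde v$ at $z$ (obtained by reading the neck at the large end, i.e.\ at the fixed scale $\epsilon$ and then letting $\epsilon\to 0$, using \eqref{eq:int-maior-sigma-c} and Theorem \ref{th:asymptotic-limit}). Hence $P_\infty = P_z$.

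The main obstacle is making the neck-convergence step genuinely rigorous without area loss: one must exclude the possibility that additional bubbling occurs in the intermediate scales between $\delta_n$ and $\epsilon$, which would split the neck into several cylinders over possibly different orbits, and one must exploit nondegeneracy of all orbits of period $\le T_3$ (hypothesis \textit{(i)} of Theorem \ref{theo:main-theorem}) together with the spectral gap $\sigma(T_3)$ to pin the limit to one orbit. This is exactly the content of the soft-rescaling machinery: the choice \eqref{eq:int-sigma-c} guarantees that on $B_\epsilon(z)\setminus B_{\delta_n}(z_n)$ the area is a fixed small number $\sigma(T_3)$, below the threshold at which a new nonconstant bubble can form, so no further concentration is possible and the neck is a single embedded almost-cylinder. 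Once this is in place, matching the two asymptotic readings of the neck — one for $\tilde v$ at $z$ via \eqref{eq:int-maior-sigma-c}, one for $\tilde w$ at $\infty$ via \eqref{eq:int-disco-w} and the computation of $m(\infty)$ above — is routine, and the proposition follows.
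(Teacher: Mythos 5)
Your proposal is correct and its decisive step is exactly the paper's argument: apply the small-$d\lambda$-area long-cylinder lemma (Lemma \ref{le:cylinders-with-small-area}) to the neck $B_\epsilon(z)\setminus B_{\delta_nR_0}(z_n)$, using \eqref{eq:int-sigma-c} for the area bound and the positive action bound at the inner circle, and then use that each component of $\mathcal{W}$ contains at most one orbit to match the limit read at the two ends. Your preliminary period-matching step via the mass window and the gap $\sigma(T_3)$ is harmless but redundant (and the neck area need not tend to $0$ --- only the bound by $\sigma(T_3)$ is used), so the proof is essentially the same as the paper's.
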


To prove Proposition \ref{pr:same-orbits-tree}, we need the following lemma, which is a restatement of Lemma 4.9 from \cite{hwz2003}.

\begin{lemma}{\cite{hwz2003}}\label{le:cylinders-with-small-area}
	Consider a constant $e>0$ and let $\sigma(T_3)$ be defined by \eqref{eq:definicao-sigmac}.
	Identifying $S^1=\R/\Z$, let $\mathcal{W}\subset C^\infty(S^1,S^3)$ be an open neighborhood of the set of periodic orbits $P=(x,T)\in \mathcal{P}(\lambda)$ with $T\leq T_3$, viewed as maps $x_T:S^1\to S^3$, $x_T(t)=x(Tt)$. We assume that $\mathcal{W}$ is $S^1$-invariant, meaning that $y(\cdot+c)\in \mathcal{W}\Leftrightarrow y\in \mathcal{W}, \forall c\in S^1$, and that each of the connected components of $\mathcal{W}$ contains at most one periodic orbit modulo $S^1$-reparametrizations.  	
	Then there exists a constant $h>0$ such that the following holds. 
	If $\tu=(a,u):[r,R]\times S^1 \to \R\times S^3$ is a $\tj$-holomorphic cylinder satisfying 
	$$E(\tu)\leq T_3,~~~\int_{[r,R]\times S^1}u^*d\lambda\leq \sigma(T_3),~~~~\int_{\{r\}\times S^1}u^*\lambda\geq e~~~~\text{and}~~ r+h\leq R-h,$$
	then each loop $t\in S^1\to u(s,t)$ is contained in $\mathcal{W}$ for all $s\in [r+h,R-h]$.
\end{lemma}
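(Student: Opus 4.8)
\emph{Proof proposal.}
I would argue by contradiction, running a bubbling-off and compactness analysis for cylinders analogous to the one behind Proposition~\ref{pr:limit-germinating-sequence}. Fix $e>0$ and the neighborhood $\mathcal{W}$. If no such $h$ existed, then for every $n\in\N$ there would be a $\tj$-holomorphic cylinder $\tu_n=(a_n,u_n):[r_n,R_n]\times S^1\to\R\times S^3$ with $E(\tu_n)\le T_3$, $\int_{[r_n,R_n]\times S^1}u_n^*d\lambda\le\sigma(T_3)$, $\int_{\{r_n\}\times S^1}u_n^*\lambda\ge e$, $r_n+n\le R_n-n$, and with some $s_n\in[r_n+n,R_n-n]$ for which $t\mapsto u_n(s_n,t)\notin\mathcal{W}$. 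After translating the $s$-variable so that $s_n=0$ and translating in the $\R$-direction so that $a_n(0,0)=0$, I may assume each $\tu_n$ is defined on $I_n\times S^1$ with $[-n,n]\subset I_n$, so the domains exhaust $\R\times S^1$; since $u_n^*d\lambda\ge 0$, the loop charge $s\mapsto\int_{\{s\}\times S^1}u_n^*\lambda$ is nondecreasing, hence $\ge e$ throughout $I_n$.

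The first step is to exclude interior bubbling. If $|d\tu_n|$ were unbounded on a compact subset of $\R\times S^1$, Hofer's rescaling would produce a nonconstant finite-energy plane $\tilde h=(b,h):\C\to\R\times S^3$; by Stokes its $d\lambda$-area equals the period $T_0>0$ of its asymptotic limit, while by conformal invariance of the $d\lambda$-area and Fatou's lemma $T_0=\int_{\C}h^*d\lambda\le\liminf_n\int u_n^*d\lambda\le\sigma(T_3)$. Thus $0<T_0<T_3$, so $T_0$ is the period of a closed orbit of period $\le T_3$; but by the choice \eqref{eq:definicao-sigmac} every such period exceeds $\sigma(T_3)$, a contradiction. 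Hence $|d\tu_n|$ is uniformly bounded on compacta, and elliptic bootstrapping together with the normalization $a_n(0,0)=0$ yields a subsequence converging in $C^\infty_{loc}(\R\times S^1)$ to a $\tj$-holomorphic map $\tu=(a,u):\R\times S^1\to\R\times S^3$. The bound $\int_{\{s\}\times S^1}u^*\lambda\ge e$ passes to the limit, so $u$ is nonconstant; combined with lower semicontinuity of the Hofer energy this gives $0<E(\tu)\le T_3$, so $\tu$ is a finite-energy cylinder.

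Next I would identify $\tu$ with a cover of a trivial cylinder. Both punctures of $\tu$ are non-removable (the loop charge stays $\ge e$), so by Theorem~\ref{th:asymptotic-limit}, with $\infty$ the positive and $0$ the negative puncture, $\tu$ is asymptotic at its ends to closed orbits $P_+$ and $P_-$ of periods $T_+,T_-\le T_3$ (periods of asymptotic limits are bounded by $E(\tu)$). By Stokes, $0\le\int_{\R\times S^1}u^*d\lambda=T_+-T_-\le\sigma(T_3)$, so \eqref{eq:definicao-sigmac} forces $T_+=T_-$ and $\int_{\R\times S^1}u^*d\lambda=0$, i.e. $\pi\cdot du\equiv 0$. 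By Theorem~\ref{theo:vanishing-dlambda-energy}, applied to $\tu$ on $\C\setminus\{0\}$ whose unique positive puncture is $\infty$, one gets $\tu=F_P\circ p$ for a closed orbit $P=(x,T)$ and a monomial $p(z)=cz^k$ with $c\neq 0$, $k\ge 1$; then $P_\pm=P^k$ has period $kT\le T_3$, and every loop $t\mapsto u(s,t)$ equals, up to reparametrization, the iterate $x_{kT}(\cdot+\mathrm{const})$, which lies in $\mathcal{W}$ because $\mathcal{W}$ is $S^1$-invariant and contains every closed orbit of period $\le T_3$. Since the loops $t\mapsto u_n(0,t)$ converge to $t\mapsto u(0,t)$ in $C^\infty(S^1,S^3)$ and $\mathcal{W}$ is open, $t\mapsto u_n(0,t)\in\mathcal{W}$ for $n$ large, contradicting the choice $s_n=0$. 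This contradiction proves the lemma.

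The main obstacle is the no-bubbling step: one must be certain that any bubble absorbs at least a full period's worth of $d\lambda$-area, which the hypothesis $\int u_n^*d\lambda\le\sigma(T_3)$ then forbids by the defining property \eqref{eq:definicao-sigmac} of $\sigma(T_3)$. The remaining ingredients — cylinder compactness, the classification of zero-$d\lambda$-area spheres (Theorem~\ref{theo:vanishing-dlambda-energy}), and the openness of $\mathcal{W}$ — are standard; the only other care needed is the bookkeeping of the $\R$-translations and $s$-shifts that keep the limit nonconstant and defined on the full cylinder.
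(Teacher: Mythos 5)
Your proposal is correct and takes essentially the same route as the source: the paper quotes this statement from Lemma 4.9 of \cite{hwz2003}, and its own proof of the adapted version (Lemma \ref{le:cylinders-small-area-exemplo}) is exactly your scheme — argue by contradiction with $h=n$, translate so that $s_n=0$ and normalize the $\R$-coordinate, rule out gradient blow-up because a bubble would carry $d\lambda$-area at least a period while $\int u_n^*d\lambda\leq \sigma(T_3)$ is below every period $\leq T_3$, pass to a $C^\infty_{loc}$-limit finite energy cylinder with loop charge $\geq e$ and $d\lambda$-area $\leq\sigma(T_3)$, and use the definition of $\sigma(T_3)$ together with Theorem \ref{theo:vanishing-dlambda-energy} to conclude it is a (cover of a) trivial cylinder over an orbit of period $\leq T_3$, contradicting $u_n(0,\cdot)\notin\mathcal{W}$ by openness and $S^1$-invariance of $\mathcal{W}$. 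I see no gaps.
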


\begin{proof}[Proof of Proposition \ref{pr:same-orbits-tree}]
	Let  $\mathcal{W}\subset C^\infty(S^1,S^3)$ be 
	as in the statement of Lemma \ref{le:cylinders-with-small-area}. 
	Let $\mathcal{W}_\infty$ and $\mathcal{W}_z$ be connected components of $\mathcal{W}$ containing $P_\infty$ and $P_z$ respectively.
	Since $\tv_n \to \tv$, we can choose $0<\epsilon_0<\epsilon$ small enough so that, if $0<\rho\leq \epsilon_0$ is fixed, then the loop
	$$t\in S^1\mapsto v_n(z_n+\rho e^{i2\pi t})$$
	belongs to $\mathcal{W}_z$ for large $n$.
	Since $\tw_n\to \tw$, we can choose $R_0>1$ large enough so that, if $R\geq R_0$ is fixed, then  the loop
	$$t\in S^1\mapsto w_n(Re^{i2\pi t})=v_n(z_n+\delta_nRe^{i2\pi t})$$
	belongs to $\mathcal{W}_\infty$ for large $n$.
	By \eqref{eq:int-maior-sigma-c} and \eqref{eq:int-sigma-c}, we can show that
	\begin{equation}\label{eq:def-e-cylinders}
		e:=\liminf \int_{\partial B_{\delta_n R_0(z_n)}}v_n^*\lambda>0.
	\end{equation}
	Consider, for each $n$, the $\tj$-holomorphic cylinder $\tilde{C}_n:\left[\frac{\ln R_o\delta_n}{2\pi},\frac{\ln \epsilon_0}{2\pi}\right]\times S^1\to \R\times S^3$, defined by $\tilde{C}_n(s,t)=\tv_n(z_n+e^{2\pi(s+it)})$.
	It follows from \eqref{eq:int-sigma-c} that 
	\begin{equation}\label{eq:leq-sigma-cylinders}
		\int_{\left[\frac{\ln R_o\delta_n}{2\pi},\frac{\ln \epsilon_0}{2\pi}\right]\times S^1}C_n^*d\lambda\leq \sigma(T_3)
	\end{equation}
	for large $n$.
	Using \eqref{eq:def-e-cylinders} and \eqref{eq:leq-sigma-cylinders} and applying Lemma \ref{le:cylinders-with-small-area}, we find $h>0$ so that the loop
	$$t\mapsto C_n(s,t)$$
	is contained in $\mathcal{W}$ for all $s\in \left[\frac{\ln R_0\delta_n}{2\pi}+h,\frac{\ln \epsilon_0}{2\pi}-h\right]$ and large $n$.
	But 
	$$C_n\left(\frac{\ln \epsilon_0}{2\pi}-h,t\right)=v_n(z_n+\epsilon_0 e^{-2\pi h}e^{2\pi it})\in \mathcal{W}_z,~\text{ for all }n\text{ large, and}$$
	$$C_n\left(\frac{\ln R_0\delta_n}{2\pi}+h,t\right)=v_n(z_n+R_0\delta_ne^{2\pi h}e^{2\pi t})\in \mathcal{W}_\infty,~\text{for all } n \text{ large}.$$ 
	Thus, $\mathcal{W}_\infty=\mathcal{W}_z$ and $P_\infty=P_z$.
\end{proof}

\begin{proposition}\label{rmk:either-or}
	Either  
	\begin{itemize}
		\item $\int_{\C\setminus \Gamma'}w^*d\lambda >0$ or
		\item $\int_{\C\setminus \Gamma'}w^*d\lambda =0$ and $\#\Gamma' \geq 2$.
	\end{itemize}	
\end{proposition}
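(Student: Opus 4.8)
The plan is to prove the contrapositive: assuming $\int_{\C\setminus\Gamma'}w^*d\lambda=0$ and $\#\Gamma'\le 1$, I will reach a contradiction. Since $w^*d\lambda\ge 0$ pointwise, the hypothesis forces $\int_{\Omega}w^*d\lambda=0$ for every measurable $\Omega\subset\C\setminus\Gamma'$; in particular $\pi\cdot dw\equiv 0$. I will use that $\tilde w$ is non-constant, that $\infty$ is a positive puncture while every puncture of $\Gamma'$ is negative (this is the discussion preceding Definition \ref{de:limit-germinating-sequence}, applied to the germinating sequence $\tilde w_n$), and that $m(z)=T_z$ by \eqref{eq:int-maior-sigma-c}, where $T_z\le T_3$ is the period of the asymptotic limit of $\tilde v$ at the negative puncture $z\in\Gamma$.

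First, the case $\Gamma'=\emptyset$: this is exactly the situation of \eqref{eq:int-disco-w}, which gives $\int_\D w^*d\lambda\ge T_z-\sigma(T_3)>0$ (here $\sigma(T_3)<T_z$ by \eqref{eq:definicao-sigmac}, since $T_z$ is a period $\le T_3$), contradicting $\int_\C w^*d\lambda=0$. So from now on $\Gamma'=\{z'\}$, with $z'\in\D$ by Proposition \ref{pr:limit-germinating-sequence}; let $T_{z'}>0$ be the period of the asymptotic limit of $\tilde w$ at the negative puncture $z'$ (a genuine period $\le T_3$, by Theorem \ref{th:asymptotic-limit} together with the nondegeneracy of all orbits of period $\le T_3$).

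The heart of the argument is to compute $\lim_{n\to\infty}\int_\D w_n^*d\lambda$ in two ways. On one hand, reproducing the computation in \eqref{eq:int-disco-w} but \emph{without} passing to the limit curve over $\D$ (which is no longer allowed since $z'\in\D$): by the definition \eqref{eq:definition-wn} of $\tilde w_n$ together with \eqref{eq:int-sigma-c} and $\int_{B_\epsilon(z)}v_n^*d\lambda=\int_{\partial B_\epsilon(z)}v_n^*\lambda\to m_\epsilon(z)$, one gets $\int_\D w_n^*d\lambda\to m_\epsilon(z)-\sigma(T_3)$. On the other hand, fix a small $r>0$ with $\overline{B_r(z')}\subset\D$ and split $\int_\D w_n^*d\lambda=\int_{\D\setminus B_r(z')}w_n^*d\lambda+\int_{\partial B_r(z')}w_n^*\lambda$ via Stokes; since $\tilde w_n\to\tilde w$ in $C^\infty_{loc}(\C\setminus\{z'\})$ and $\int_{\D\setminus B_r(z')}w^*d\lambda=0$, the first term tends to $0$ and the second to $\int_{\partial B_r(z')}w^*\lambda$, which tends to $T_{z'}$ as $r\to 0$. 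Comparing, $m_\epsilon(z)-\sigma(T_3)=T_{z'}$.

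It remains to identify $T_{z'}$. Since $\pi\cdot dw\equiv 0$, Stokes on $\{r\le|\zeta-z'|\}\cap\{|\zeta|\le R\}$ and the passage $r\to 0$, $R\to\infty$ give $0=\int_{\C\setminus\{z'\}}w^*d\lambda=T_\infty-T_{z'}$, where $T_\infty$ is the period of the asymptotic limit of $\tilde w$ at $\infty$; by Proposition \ref{pr:same-orbits-tree} this limit is $P_z$, so $T_\infty=T_z$ and hence $T_{z'}=T_z$. (Equivalently, Theorem \ref{theo:vanishing-dlambda-energy} writes $\tilde w=F_P\circ p$ with $p^{-1}(0)=\{z'\}$, so $p(\zeta)=a(\zeta-z')^k$ and $\tilde w$ is a $k$-fold orbit cylinder over $P$, asymptotic to $P^k$ at both $\infty$ and $z'$; with Proposition \ref{pr:same-orbits-tree} this forces $P^k=P_z$ and $T_{z'}=kT=T_z$.) Combining $m_\epsilon(z)-\sigma(T_3)=T_{z'}=T_z$ with \eqref{eq:m-mepsilon}, which gives $m_\epsilon(z)\le m(z)+\tfrac{1}{2}\sigma(T_3)=T_z+\tfrac{1}{2}\sigma(T_3)$, yields $T_z\le T_z+\tfrac{1}{2}\sigma(T_3)-\sigma(T_3)=T_z-\tfrac{1}{2}\sigma(T_3)$, absurd. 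I expect the only genuinely delicate step to be the orientation bookkeeping in the two Stokes computations near the punctures, so that the masses enter with the correct sign; conceptually the point is that a bubble with vanishing $d\lambda$-area and a single puncture would be an iterated orbit cylinder over $P_z$, forcing $m_\epsilon(z)=T_z+\sigma(T_3)$, which the soft-rescaling estimate \eqref{eq:m-mepsilon} forbids.
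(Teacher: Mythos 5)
Your argument is correct and follows essentially the same route as the paper's proof: assuming zero $d\lambda$-area and a single puncture, you identify $\tw$ as a (covered) orbit cylinder over $P_z$ via Theorem \ref{theo:vanishing-dlambda-energy} and Proposition \ref{pr:same-orbits-tree}, and derive the same contradiction $T_z\leq m(z)-\tfrac{\sigma(T_3)}{2}$ from \eqref{eq:m-mepsilon} and the normalization \eqref{eq:int-sigma-c}. The only differences are cosmetic: you treat the case $\Gamma'=\emptyset$ explicitly via \eqref{eq:int-disco-w} and evaluate $\lim_n\int_\D w_n^*d\lambda$ by splitting near $z'$, whereas the paper notes $\Gamma'=\{0\}$ (since $c_n(0)=\inf c_n$) and computes $\int_{\partial\D}w^*\lambda$ directly.
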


\begin{proof}
	If $\Gamma'\neq \emptyset$, then $0\in \Gamma'$. This fact follows from $c_n(0)=\inf c_n(B_{R_n})$ and the fact that the punctures in $\Gamma'$ are negative.
	Arguing by contradiction, assume
	$$\int_{\C\setminus \Gamma'}w^*d\lambda=0 \text{ and } \#\Gamma'=1~.$$
	Thus, $\Gamma'=\{0\}$. 
	Using Theorem \ref{theo:vanishing-dlambda-energy}, we conclude that 
	$w(\zeta=e^{2\pi(s+it)})=x_z(T_zt)$, 
	where $P_z=(x_z,T_z)$ is the asymptotic limit of $\tv$ at $z$. 
	Thus, we have the contradiction
	\begin{equation*}
		\begin{aligned}
			m(z)&=T_z=\int_{\partial \D}w^*\lambda=\lim_{n\to \infty}\int_{\partial\D}w_n^*\lambda=\lim_{n\to \infty}\int_{\partial B_\epsilon(z)}v_n^*\lambda-\sigma(T_3)\\
			&=\int_{\partial B_\epsilon(z)}v^*\lambda-\sigma(T_3) \leq m(z)-\frac{\sigma(T_3)}{2}.
		\end{aligned}
	\end{equation*} 
	Here we have used \eqref{eq:m-mepsilon}, \eqref{eq:int-sigma-c} and \eqref{eq:int-disco-w}.
\end{proof}

\subsubsection{Some index estimates}
\begin{lemma}\label{le:wind=1,0}
	Let $\tu=(a,u):\C\setminus\Gamma \to \R\times S^3$ be a finite energy surface such that every puncture in $\Gamma$ is negative, $\pi\circ du$ does not vanish identically and for every asymptotic limit $P$ of $\tu$, $P$ is nondegenerate and $\mu(P)\in \{1,2,3\}$. Then $\wind_\pi(\tu)=0$ and for all $z\in \Gamma\cup\{\infty\}$, $\wind_\infty(\tu,z)=1$.
\end{lemma}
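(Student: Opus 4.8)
The plan is to pin down $\wind_\pi(\tu)$ by squeezing it between $0$ and $0$, using the index identity \eqref{eq:wind_pi-wind_infty} together with the winding estimates of Lemma \ref{le:wind-infty-estimate}. Set $n:=\#\Gamma$. Since $\tu$ is a finite energy surface it has a nonempty set of positive punctures, and since every puncture in $\Gamma$ is negative, the puncture at $\infty$ must be the unique positive puncture; thus, viewing the domain as $S^2$ minus the $n+1$ punctures $\Gamma\cup\{\infty\}$, we have $\chi(S^2)=2$, and combining \eqref{eq:wind_pi-wind_infty} with \eqref{eq:definition-wind-infty},
\[
\wind_\pi(\tu)=\wind_\infty(\tu,\infty)-\sum_{z\in\Gamma}\wind_\infty(\tu,z)-2+(n+1)=\wind_\infty(\tu,\infty)-\sum_{z\in\Gamma}\wind_\infty(\tu,z)+n-1.
\]

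Next I would record the winding numbers of the extremal eigenvalues for a nondegenerate orbit $P$ with $\mu(P)\in\{1,2,3\}$. Writing $\mu(P)=2\wind(\nu^{neg}_P)+p$ with $p=\wind(\nu^{pos}_P)-\wind(\nu^{neg}_P)\in\{0,1\}$ as in \eqref{eq:conley-zehnder-wind}, a short case analysis gives $\wind(\nu^{neg}_P)\in\{0,1\}$ and $\wind(\nu^{pos}_P)\in\{1,2\}$; in particular $\wind(\nu^{neg}_P)\le 1$ and $\wind(\nu^{pos}_P)\ge 1$ in all three cases. Since $\pi\circ du\not\equiv 0$, Lemma \ref{le:wind-infty-estimate} applies and yields $\wind_\infty(\tu,\infty)\le\wind(\nu^{neg}_{P_\infty})\le 1$ at the positive puncture and $\wind_\infty(\tu,z)\ge\wind(\nu^{pos}_{P_z})\ge 1$ at every negative puncture $z\in\Gamma$.

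Substituting these bounds into the displayed identity gives $\wind_\pi(\tu)\le 1-n+n-1=0$, and since $\wind_\pi(\tu)\ge 0$ always, $\wind_\pi(\tu)=0$. Feeding this back, $\wind_\infty(\tu,\infty)-\sum_{z\in\Gamma}\wind_\infty(\tu,z)=1-n$; since the left-hand side is $\le 1-n$ with equality precisely when each of the $n+1$ estimates from Lemma \ref{le:wind-infty-estimate} is an equality, we conclude $\wind_\infty(\tu,z)=1$ for all $z\in\Gamma\cup\{\infty\}$. The argument is essentially bookkeeping and I do not foresee a genuine obstacle; the only point requiring care is matching the signs of the punctures in \eqref{eq:definition-wind-infty} with the inequalities of Lemma \ref{le:wind-infty-estimate}, and noting that the case $\Gamma=\emptyset$ is covered verbatim with $n=0$.
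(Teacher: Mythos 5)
Your argument is correct and is essentially the paper's own proof: the same case analysis of $\wind(\nu^{neg}_P),\wind(\nu^{pos}_P)$ from \eqref{eq:conley-zehnder-wind} for $\mu(P)\in\{1,2,3\}$, combined with Lemma \ref{le:wind-infty-estimate} and the identity \eqref{eq:wind_pi-wind_infty}, squeezing $\wind_\pi(\tu)$ to $0$ and then forcing equality in each asymptotic winding estimate. The bookkeeping with $\chi(S^2)=2$ and the $n+1$ punctures $\Gamma\cup\{\infty\}$ matches the paper's computation exactly.
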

\begin{proof}
	We follow \cite[Prop 5.9]{hwz2003}.
	By equation \eqref{eq:conley-zehnder-wind}, for every asymptotic limit $P$ of $\tu$, one of the following options hold
	\begin{equation}
		\begin{aligned}
			\mu(P)&=1~\Rightarrow\wind(\nu^{neg}_P)=0,~\wind(\nu_P^{pos})=1\\
			\mu(P)&=2~\Rightarrow\wind(\nu^{neg}_P)=1,~\wind(\nu_P^{pos})=1\\
			\mu(P)&=3~\Rightarrow\wind(\nu^{neg}_P)=1,~\wind(\nu_P^{pos})=2.
		\end{aligned}
	\end{equation}
	Then, by Lemma \ref{le:wind-infty-estimate}, if $P$ is the asymptotic limit of $\tu$ at $z$ we have
	\begin{equation}\label{eq:estimativa-wind-lema}
		\begin{aligned}
			\wind_\infty(\tu,z)&\leq \wind(\nu^{neg}_P)\leq 1, ~\text{if }z=\infty,\\
			\wind_\infty(\tu,z)&\geq \wind(\nu^{pos}_P))\geq 1,~\text{if }z\in \Gamma.
		\end{aligned}
	\end{equation}
	It follows that $\wind_\infty(\tu)\leq 1-\#\Gamma$. Using equation \eqref{eq:wind_pi-wind_infty}, we obtain
	$$0\leq \wind_\pi(\tu)= \wind_\infty(\tu)-1+\#\Gamma\leq0~.$$
	Thus $\wind_\pi=0$ and $\wind_\infty(\tu)=1-\#\Gamma$.
	Using \eqref{eq:estimativa-wind-lema}, we conclude
	$$0\geq \wind_\infty(\tu,\infty)-1=\sum_{z\in \Gamma}\wind_\infty(\tu,z)-\#\Gamma=\sum_{z\in \Gamma}\wind_\infty(\tu,z)-1\geq 0~.$$
	We conclude that $\wind_\infty(\tu,z)=1$, for all $z\in \Gamma\cup \{\infty\}$.
\end{proof}

The following estimate is proved in \cite{hs2011}.

\begin{lemma}{\cite[Lemma 3.9]{hs2011}}\label{le:indice-maior-igual-a-2}
	Let $\tv=(b,v)$ be a nonconstant limit of a sequence $\{\tv_n=(b_n,v_n)\}$ satisfying \eqref{eq:geminating-sequence-def1}-\eqref{eq:germinating-sequence-def4}.
	Then the asymptotic limit $P_\infty$ at the (unique) positive puncture of $\tv$ satisfies $\mu(P_\infty)\geq 2$.	
\end{lemma}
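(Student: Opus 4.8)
The plan is to argue by contradiction, using the winding identity \eqref{eq:wind_pi-wind_infty} together with Lemma \ref{le:wind-infty-estimate}, and to control the finitely many bubbles that may appear below $\tv$ by descending the bubbling tree. Suppose there is a germinating sequence with a nonconstant limit whose positive asymptotic limit has Conley--Zehnder index $\le 1$. All orbits involved have period $\le T_3$, and by the standing hypotheses there are only finitely many such orbits; so among all counterexamples we may choose one for which the period $T_\infty$ of the positive asymptotic limit $P_\infty$ is minimal, with limit curve $\tv=(b,v)\colon\C\setminus\Gamma\to\R\times S^3$.

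First I would reduce the goal to an estimate on $\wind_\infty(\tv,\infty)$. By \eqref{eq:conley-zehnder-wind}, $\mu(P_\infty)\ge 2$ is equivalent to $\wind(\nu^{neg}_{P_\infty})\ge 1$, and when $\pi\cdot dv\not\equiv 0$ Lemma \ref{le:wind-infty-estimate}(1) gives $\wind_\infty(\tv,\infty)\le\wind(\nu^{neg}_{P_\infty})$; so it suffices to prove $\wind_\infty(\tv,\infty)\ge 1$. Rearranging \eqref{eq:wind_pi-wind_infty} for the sphere $S^2$ with punctures $\Gamma\cup\{\infty\}$ yields
\[
\wind_\infty(\tv,\infty)=\wind_\pi(\tv)+\sum_{z\in\Gamma}\bigl(\wind_\infty(\tv,z)-1\bigr)+1,
\]
and since $\wind_\pi(\tv)\ge 0$ and, by Lemma \ref{le:wind-infty-estimate}(2), $\wind_\infty(\tv,z)\ge\wind(\nu^{pos}_{P_z})$ at every negative puncture, the estimate follows once we know $\mu(P_z)\ge 1$ — equivalently $\wind(\nu^{pos}_{P_z})\ge 1$ — for each asymptotic limit $P_z$ at a negative puncture of $\tv$. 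If $\Gamma=\emptyset$ then $\tv$ is a finite energy plane with $\pi\cdot dv\not\equiv 0$ and the estimate $\wind_\infty(\tv,\infty)\ge 1$ is immediate; this is the base case.

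To handle the negative punctures I would soft-rescale $\tv$ at each $z\in\Gamma$ as in \S\ref{se:soft-rescaling}, producing a germinating sequence whose limit $\tw$ is nonconstant (Proposition \ref{rmk:either-or} together with the area bound \eqref{eq:int-disco-w}) and has $P_z$ as its positive asymptotic limit (Proposition \ref{pr:same-orbits-tree}). When $\pi\cdot dv\not\equiv 0$ one has $\int_{\C\setminus\Gamma}v^*d\lambda>0$, so Stokes' theorem gives $T_\infty=\int_{\C\setminus\Gamma}v^*d\lambda+\sum_{z\in\Gamma}T_z$ and hence $T_z<T_\infty$ for every $z$; by the minimality of $T_\infty$ this rescaled curve is not a counterexample, so $\mu(P_z)\ge 2$, which closes the winding argument and yields $\mu(P_\infty)\ge 2$, a contradiction. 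When $\pi\cdot dv\equiv 0$, Theorem \ref{theo:vanishing-dlambda-energy} gives $\tv=F_Q\circ p$ for a nonconstant polynomial $p$ and an orbit $Q$, so $P_\infty=Q^{\deg p}$ while the negative asymptotic limits are iterates $Q^{m}$ with $m\le\deg p$; using minimality on those with $m<\deg p$ (which exist as soon as $p$ has two distinct roots) and propagating $\mu\le 1$ through all iterates by Lemma \ref{le:properties-cz-index}(1)--(2) again forces a contradiction.

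The one configuration this does not immediately dispose of is when $\tv$ is the orbit cylinder over $P_\infty$, i.e. $\pi\cdot dv\equiv 0$ with $p$ having a single root, since then the unique negative limit is again $P_\infty$ and minimality in $T_\infty$ does not bite. Here I would descend: the soft-rescaled curve over that puncture has positive limit $P_\infty$ and sits strictly lower in the bubbling tree, which is finite by Gromov--Hofer compactness, so iterating along this branch must reach a curve that is not an orbit cylinder — either a plane, or one with $\pi\cdot dv\not\equiv 0$ whose negative limits have period $<T_\infty$ — where the earlier arguments apply. Making this descent into a clean well-founded induction, so that the argument stays a statement about a single germinating sequence while still exhausting the bubbling tree, is the main obstacle; the rest is bookkeeping with winding numbers and Lemma \ref{le:properties-cz-index}.
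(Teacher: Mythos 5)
The paper itself does not prove this lemma; it only quotes \cite[Lemma 3.9]{hs2011}, where the argument is organized as an induction over the finite soft-rescaling (bubbling-off) tree: the leaves are nonconstant finite energy planes, for which $\wind_\pi\ge 0$ forces $\wind_\infty\ge 1$ and hence $\mu\ge 2$, and the bound is then propagated upwards through the winding identity and the iteration properties of the Conley--Zehnder index. Your proof reaches the same conclusion by a different organization: instead of inducting over the tree you take a counterexample whose positive asymptotic period $T_\infty$ is minimal (legitimate here, since the standing hypothesis that every orbit of period $\le T_3$ is nondegenerate leaves only finitely many admissible periods), and you use exactly the same two computational ingredients, namely \eqref{eq:wind_pi-wind_infty} together with Lemma \ref{le:wind-infty-estimate}, and Lemma \ref{le:properties-cz-index} in the zero-area case. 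The extremal formulation spares you from setting up the tree formally; the tree induction of \cite{hs2011} spares one from treating the orbit-cylinder configuration as a separate case, since it is absorbed into the inductive structure. Your reduction to $\wind_\infty(\tv,\infty)\ge 1$, the Stokes argument giving $T_z<T_\infty$ when $\pi\cdot dv\not\equiv 0$, and the iteration argument when $\pi\cdot dv\equiv 0$ with two distinct roots are all correct.

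The step you flag as the main obstacle is in fact already closed by results you quote, and no further well-foundedness argument is needed beyond the minimal-period choice. Suppose $\pi\cdot dv\equiv 0$ and the polynomial has a single root, so $\tv$ is a cover of an orbit cylinder and its unique negative asymptotic limit is again $P_\infty$. Soft-rescale once at that puncture: by Proposition \ref{pr:same-orbits-tree} the new limit $\tw$ still has $P_\infty$ as positive asymptotic limit, and Proposition \ref{rmk:either-or} --- which is proved precisely for limits arising from soft rescaling --- guarantees that $\tw$ either has positive $d\lambda$-area, or has vanishing area and at least two negative punctures. In either situation every negative asymptotic limit of $\tw$ has period strictly smaller than $T_\infty$: by Stokes in the first case, and in the second because the covering multiplicities at the roots are strictly smaller than the degree of the new polynomial. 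One further soft rescaling exhibits each of these orbits as the positive asymptotic limit of a nonconstant limit of a germinating sequence, so your minimality assumption gives $\mu\ge 2$ for all of them, and then your winding computation (respectively the Lemma \ref{le:properties-cz-index} propagation) applied to $\tw$ yields $\mu(P_\infty)\ge 2$, the desired contradiction. Thus the descent terminates after at most one extra level, and your proof is complete once this observation is inserted.
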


\subsection{Proof of Proposition \ref{theo:bubbling-off-two-vertices}}\label{se:proof-bubbling-planos}
	After a reparametrization of $\tu_n$ and an translation in the $\R$-direction, we can assume that $\{\tu_n\}$ is a germinating sequence as defined in \eqref{eq:geminating-sequence-def1}-\eqref{eq:germinating-sequence-def4}.
	Indeed, we can take  sequences $z_n\in \C$ and  $0<\delta_n$, such that $a_n(z_n)=\inf a_n(\C)$ and $\int_{\C\setminus B_{\delta_n(z_n)}}u_n^*d\lambda=\sigma(T_3)$, and define
	$$\tv_n(z)=(b_n(z),v_n(z))=(a_n(z_n+\delta_nz)-a_n(z_n+2\delta_n),u_n(z_n+\delta_nz))~.$$
	Then $b_n(2)=0, \forall n$, and 
	\begin{equation}\label{eq:u_n-e-seq-germinante}
	\int_{\C\setminus \D}v_n^*d\lambda=\int_{\C\setminus B_{\delta_n}(z_n)}u_n^*d\lambda=\sigma(T_3).
	\end{equation}

	Let $\tu_r=(a_r,u_r):	\C\setminus \Gamma_r\to \R\times S^3$ be a limit of $\tu_n$ as defined in \ref{de:limit-germinating-sequence}. 
	We claim that $P_3$ is the asymptotic limit of $\tu_r$ at $\infty$. 
	Indeed, let $\mathcal{W}\subset C^\infty(S^1,S^3)$ be as in the statement of Lemma \ref{le:cylinders-with-small-area}.
	Using the normalization condition \eqref{eq:germinating-sequence-def3}, we can apply Lemma \ref{le:cylinders-with-small-area} and find $R_0>1$ such that for $R\geq R_0$, the loops $t\mapsto u_r(Re^{i2\pi t})$ and $\{t\mapsto u_n(Re^{i2\pi t})\}, n\in \N$ belong to $\mathcal{W}$. 
	For fixed $R$, the sequence of loops $t\mapsto u_n(Re^{i2\pi t})$ converges to $t\mapsto u_r(Re^{i2\pi t})$ in $C^\infty(S^1,S^3)$, so that for large $n$ and $R\ge R_0$, $t\mapsto u_r(Re^{i2\pi t})$ and $t\mapsto u_n(Re^{i2\pi t})$ belong to the same connected component of $\mathcal{W}$.
	This implies that $P_3$ is the asymptotic limit of $\tu_r$ at $\infty$.
	
	Note that $\#\Gamma_r\neq 0$ and $0\in \Gamma_r$. 
	Indeed, if $\Gamma_r=\emptyset$, then $\tu_r:\C\to \R\times S^3$ would satisfy the hypotheses of Theorem \ref{th:family-of-cylinders-wendl}, which contradicts the fact that the family \eqref{eq:familia-de-planos} is maximal.
	Since $a_n(0)=\inf a_n(\C)$ and the punctures in $\Gamma_r$ are negative, we have $0\in \Gamma_r$.

	
	We claim that $\int_{\C\setminus \{\Gamma_r\}} u_r^*d\lambda>0$.  Suppose, by contradiction, that $\pi\cdot du_r\equiv 0$. 
	Using Theorem \ref{theo:vanishing-dlambda-energy} and the fact that $\tu_r$ is asymptotic to the simple orbit $P_3$ at its positive puncture $P_3$, we conclude that $\Gamma_r=\{0\}$ and $u_r(\zeta=e^{2\pi(s+it)})=x_3(T_3t)$.
	This leads to the contradiction
	\begin{equation*}
		T_3=\int_{\partial \D}u_r^*\lambda=\lim_{n\to \infty}\int_{\D}u_n^*d\lambda=T_3-\sigma(T_3).
	\end{equation*} 
	Here we have used \eqref{eq:u_n-e-seq-germinante}.
	
	Now we prove that $\Gamma_r=\{0\}$. 
	Since $P_3$ is simple, we know that $\tu_r$ is somewhere injective. By Theorem \ref{theo:fredholm-estimate}, we have the estimate
	$$1\leq \ind(\tu_r)=3-\sum_{z\in \Gamma_r}\mu(P_z)-2+\#\Gamma_r+1,$$
	where $P_z$ is the asymptotic limit of $\tu_r$ at $z\in \Gamma_r$.
	By Lemma \ref{le:indice-maior-igual-a-2}, we have $\mu(P_z)\geq 2$, $\forall z\in \Gamma_r$. Thus,  the only possibility is $\Gamma=\{0\}$ and $\mu(P_0)=2$, where $P_0=(x_0,T_0)$ be the asymptotic limit of $\tu_r$ at $0$.

	Our next claim is that $\tu_r$ is asymptotic to $P_2$ at $0$. 
	Since, for each $n$, $u_n$ is an embedding whose image does not intersect $P_3$, it follows that the image of any loop under $u_n$ is not linked to $P_3$. This implies that the image of any loop under $u_r$ is not linked to $P_3$ and consequently that $P_0$ is not linked to $P_3$. Moreover, 
	$T_3-T_0=\int_{\C\setminus \{0\}} u_r^*d\lambda>0$. 
	We conclude that $P_0=P_2$.
	
	As proved in \S\ref{se:soft-rescaling}, there exist sequences $\delta_n\to 0^+$, $z_n\to 0$ and $R_n\to \infty$
	such that  the sequence of $\tj$-holomorphic maps $\tilde{w}_n=(c_n,w_n):B_{R_n}(0)\to \R\times S^3$ defined by
	$$\tw_n:=(a_n(z_n+\delta_n\cdot)-a_n(z_n+2\delta_n), u_n(z_n+\delta_n\cdot)).$$
	is a germinating sequence.
	Let $\tu_q=(a_q,u_q):\C\setminus \Gamma_q\to \R\times S^3$ be a limit of the sequence $\tw_n$.
	Then $\tu_q$ is asymptotic to $P_2$ at $z=\infty$. By Lemma \ref{le:indice-maior-igual-a-2}, we conclude that  
		$\mu(P_z)\geq2, ~\forall z\in \Gamma_q,$ 
	where $P_z$ is the asymptotic limit of $\tu_q$ at $z$. 
	Since $P_2$ is simple, we know that $\tu_q$ is somewhere injective. We can apply Theorem \ref{theo:fredholm-estimate} to $\tu_q$ and obtain
	$$0\leq \ind(\tu_q)=2-2\#\Gamma_q-2+\#\Gamma_q+1~,$$
	which implies $\#\Gamma_q\leq 1$. By Proposition \ref{rmk:either-or}, we have $\int_{\C\setminus \Gamma_q}u^*_q d\lambda>0$. Again by Theorem \ref{theo:fredholm-estimate}, we have $\ind(\tu_q)\geq 1$ and consequently $\#\Gamma_q=0$.
	We conclude that $\tu_q:\C\to \R\times S^3$ is a finite energy plane asymptotic to the orbit $P_2$.
	This finishes the proof of Proposition \ref{theo:bubbling-off-two-vertices}.

\subsection{The foliation}

\begin{proposition}\label{pr:familia-de-planos-se-aproxima-limite}
	Consider a sequence $\tilde{u}_n:=\tilde{u}_{\tau_n}$ in the family \eqref{eq:familia-de-planos} satisfying $\tau_n\to 0^+$. 
	Let $\tu_r=(a_r,u_r):\C\setminus \{0\}\to \R\times S^3$ and $\tu_q=(a_q,u_q):\C\to \R\times S^3$ be the finite energy spheres obtained in Proposition \ref{theo:bubbling-off-two-vertices}.
	\begin{enumerate}[label=(\roman*)]
		\item Given an $S^1$-invariant neighborhood $\mathcal{W}_3\subset C^\infty(\R/\Z,S^3)$ of the loop $t\mapsto x_3(T_3t)$, there exists $R_0>>1$ such that, for $R\geq R_0$ and large $n$, the loop $t\mapsto u_n(Re^{2\pi it})$ belongs to $\mathcal{W}_3$.
		\item Given an $S^1$-invariant neighborhood $\mathcal{W}_2 \subset C^\infty(\R/\Z,S^3)$ of the loop $t\mapsto x_2(T_2t)$, there exist $\epsilon_1>0$ and $R_1>>0$ such that, for  $R_1\delta_n\leq R\leq \epsilon_1$ and large $n$,  the loop $t\mapsto u_n(z_n+Re^{2\pi it})$ belongs to $\mathcal{W}_2$. 
		\item Given any neighborhood $\mathcal{V}$ of $u_r(\C\setminus\{0\})\cup u_q(\C)\cup P_2\cup P_3$, we have $u_{n}(\C)\subset \mathcal{V}$ for large $n$.
	\end{enumerate}	
		A similar statement works for any sequence $\tau_n\to 1^-$ with $u_q$ and $u_r$ replaced by $u'_q$ and $u'_r$ respectively. 
\end{proposition}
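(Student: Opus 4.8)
The plan is to prove (i), (ii) and (iii) in turn, the main tool being Lemma~\ref{le:cylinders-with-small-area} combined with the $C^\infty_{loc}$-convergences produced in Proposition~\ref{theo:bubbling-off-two-vertices} and its proof. I keep the notation $\tu_n=(a_n,u_n)$ for the germinating sequence obtained from $\tu_{\tau_n}$ in the proof of that proposition, so that $E(\tu_n)\le T_3$, $\int_{\C\setminus\D}u_n^*d\lambda=\sigma(T_3)$ by \eqref{eq:u_n-e-seq-germinante}, $\tu_n\to\tu_r$ in $C^\infty_{loc}(\C\setminus\{0\})$, $w_n:=u_n(z_n+\delta_n\,\cdot\,)\to u_q$ in $C^\infty_{loc}(\C)$, with $\delta_n\to 0^+$ and $z_n\to 0$. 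Fix an $S^1$-invariant open set $\mathcal{W}\subset C^\infty(S^1,S^3)$ as in Lemma~\ref{le:cylinders-with-small-area}; shrinking its finitely many components, I may assume the component $\mathcal{W}_3'$ of $\mathcal{W}$ containing $t\mapsto x_3(T_3t)$ lies in $\mathcal{W}_3$ and the component $\mathcal{W}_2'$ containing $t\mapsto x_2(T_2t)$ lies in $\mathcal{W}_2$. I will use repeatedly that the loops $s\mapsto\bigl(t\mapsto u(c+e^{2\pi(s+it)})\bigr)$ vary continuously in $C^\infty(S^1,S^3)$, so that a connected family of such loops contained in $\mathcal{W}$ must lie in one component of $\mathcal{W}$.

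For (i): since $\tu_r$ is a cylinder with positive puncture at $\infty$ asymptotic to $P_3$ and negative puncture at $0$ asymptotic to $P_2$, and $A(\tu_r)=T_3-T_2$, Stokes' theorem shows that the $\lambda$-flux over the circle $\{s\}\times S^1=\{|z|=e^{2\pi s}\}$ is non-decreasing in $s$ and interpolates between $T_2$ and $T_3$, hence $\int_{\{s\}\times S^1}u_r^*\lambda\ge T_2>0$ for all $s\in\R$. Fixing $s_0\ge 0$, local convergence then gives $\int_{\{s_0\}\times S^1}u_n^*\lambda\ge T_2/2$ for all large $n$, while $\int_{\{|z|\ge e^{2\pi s_0}\}}u_n^*d\lambda\le\sigma(T_3)$. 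Applying Lemma~\ref{le:cylinders-with-small-area} to the cylinders $(s,t)\mapsto\tu_n(e^{2\pi(s+it)})$ on $[s_0,R]\times S^1$ and letting $R\to\infty$ produces $h>0$ with $t\mapsto u_n(e^{2\pi(s+it)})\in\mathcal{W}$ for every $s\ge s_0+h$ and every large $n$. This family is connected and, for each fixed $n$, converges to $t\mapsto x_3(T_3t)$ as $s\to\infty$ (Theorem~\ref{th:asymptotic-limit}), so it lies in $\mathcal{W}_3'\subset\mathcal{W}_3$; taking $R_0:=e^{2\pi(s_0+h)}$ proves (i).

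For (ii), the neck estimate, I follow the scheme of the proof of Proposition~\ref{pr:same-orbits-tree}. Using the asymptotic behavior of $\tu_q$ at its positive puncture $\infty$, fix $R_1^{*}\ge 1$ so large that $t\mapsto u_q(Re^{2\pi it})\in\mathcal{W}_2'$ and $\int_{\partial B_R(0)}u_q^*\lambda\ge T_2/2$ for all $R\ge R_1^{*}$, and fix $\epsilon_1^{*}\in(0,\epsilon)$ with $\epsilon$ as in \eqref{eq:int-sigma-c}. Since $w_n\to u_q$ and $z_n\to 0$, one has $\int_{\partial B_{R_1^{*}\delta_n}(z_n)}u_n^*\lambda\to\int_{\partial B_{R_1^{*}}(0)}u_q^*\lambda\ge T_2/2$, and for large $n$, \eqref{eq:int-sigma-c} bounds the $d\lambda$-area of the cylinder $(s,t)\mapsto\tu_n(z_n+e^{2\pi(s+it)})$, $s\in\bigl[\tfrac{1}{2\pi}\ln(R_1^{*}\delta_n),\tfrac{1}{2\pi}\ln\epsilon_1^{*}\bigr]$, by $\sigma(T_3)$. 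Lemma~\ref{le:cylinders-with-small-area}, with $e=T_2/2$, then produces $h>0$ such that $t\mapsto u_n(z_n+Re^{2\pi it})\in\mathcal{W}$ whenever $R_1^{*}\delta_ne^{2\pi h}\le R\le\epsilon_1^{*}e^{-2\pi h}$ and $n$ is large. This connected family contains, at $R=R_1^{*}\delta_ne^{2\pi h}$, a loop $C^\infty$-close to $t\mapsto u_q\bigl((R_1^{*}e^{2\pi h})e^{2\pi it}\bigr)\in\mathcal{W}_2'$, so it lies entirely in $\mathcal{W}_2'\subset\mathcal{W}_2$. Setting $R_1:=R_1^{*}e^{2\pi h}$ and $\epsilon_1:=\epsilon_1^{*}e^{-2\pi h}$ gives (ii).

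Finally, (iii) follows by combining (i), (ii) and uniform convergence on compacta. Given a neighborhood $\mathcal{V}$ of the compact set $K:=u_r(\C\setminus\{0\})\cup u_q(\C)\cup P_2\cup P_3$, I first choose $\mathcal{W}_3,\mathcal{W}_2$ so small that $\{y(t):y\in\mathcal{W}_j,\ t\in S^1\}\subset\mathcal{V}$ for $j=2,3$ (possible since $P_3,P_2\subset K$), and I let $R_0,R_1,\epsilon_1$ be as in (i) and (ii), with $\epsilon_1<R_0$. For $n$ large, $\C$ is the union of $\{|z|\ge R_0\}$, $\{R_1\delta_n\le|z-z_n|\le\epsilon_1\}$, $B_{R_1\delta_n}(z_n)$ and $\{|z|\le R_0\}\setminus B_{\epsilon_1}(z_n)$. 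On the first two sets, (i) and (ii) map $u_n$ into the chosen traces of $\mathcal{W}_3,\mathcal{W}_2$, hence into $\mathcal{V}$; on $B_{R_1\delta_n}(z_n)$ one has $u_n(z)=w_n\bigl(\delta_n^{-1}(z-z_n)\bigr)$ with $w_n\to u_q$ uniformly on $\{|\zeta|\le R_1\}$, so $u_n$ maps this disk into an arbitrarily small neighborhood of $u_q(\{|\zeta|\le R_1\})\subset K$, hence into $\mathcal{V}$; and $\{|z|\le R_0\}\setminus B_{\epsilon_1}(z_n)$ is, for $n$ large, contained in the fixed compact set $\{\epsilon_1/2\le|z|\le R_0\}\subset\C\setminus\{0\}$, on which $\tu_n\to\tu_r$ uniformly, so $u_n$ maps it near $u_r(\{\epsilon_1/2\le|z|\le R_0\})\subset K$, hence into $\mathcal{V}$. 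Therefore $u_n(\C)\subset\mathcal{V}$ for $n$ large. The case $\tau_n\to 1^-$ is handled verbatim with $\tu_q,\tu_r$ replaced by $\tu_q',\tu_r'$. The hard part is the neck estimate (ii): securing a lower bound on the $\lambda$-period over a boundary circle of the collapsing neck that is uniform in $n$ — so that the constant $h$ in Lemma~\ref{le:cylinders-with-small-area} does not depend on $n$ — and then pinning down the correct component of $\mathcal{W}$ using connectedness together with the behavior of $u_n$ at the bubble end; parts (i) and (iii) are then routine bookkeeping with Lemma~\ref{le:cylinders-with-small-area} and the asymptotic analysis of finite energy curves.
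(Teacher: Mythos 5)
Your proof is correct and follows essentially the same route as the paper: parts (i) and (ii) rest on Lemma \ref{le:cylinders-with-small-area} together with the $C^\infty_{loc}$-convergences from Proposition \ref{theo:bubbling-off-two-vertices} (the paper gets the uniform lower bound $e$ from the normalization \eqref{eq:u_n-e-seq-germinante} and the scheme of Proposition \ref{pr:same-orbits-tree}, while you extract it from the limits $\tu_r$ and $\tu_q$ — a cosmetic difference), and (iii) is the same covering argument the paper leaves implicit.
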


\begin{proof}
	We can assume that $\mathcal{W}_i,i=2,3$, contains only the periodic orbit $t\mapsto x_i(T_i\cdot)$ modulo $S^1$-reparametrizations.
	Let $\mathcal{W}$ be an $S^1$-invariant neighborhood of the set  of periodic orbits $P=(x,T)\in \mathcal{P}(\lambda)$ with $T\leq T_3$, viewed as maps $x_T:S^1\to S^3$, $x_T(t)=x(Tt)$, 
	such that $\mathcal{W}_2, \mathcal{W}_3\subset \mathcal{W}$.
	Using the normalization condition \eqref{eq:u_n-e-seq-germinante} and Lemma \ref{le:cylinders-with-small-area}, we find $R_0>>1$ such that, for $R\geq R_0$, the loops 
	$\{t\mapsto u_n(Re^{i2\pi t})\}, n\in \N$ belong to $\mathcal{W}$.
	By the asymptotic behavior of the planes $\tu_n$, we conclude that $\{t\mapsto u_n(Re^{i2\pi t})\}, n\in \N$ belong to $\mathcal{W}_3$ for $R\geq R_0$. This proves \textit{(i)}.
	
	Applying Lemma \ref{le:cylinders-with-small-area} as in the proof of Proposition \ref{pr:same-orbits-tree}, we find $\epsilon_1>0$ small and $R_1>>1$ such that for every $R$ satisfying  $\delta_nR_1\leq R\leq \epsilon_1$ and large $n$, the loop 
	$t\mapsto u_n(z_n+Re^{2\pi it})$ belongs to $\mathcal{W}_2$. This proves \textit{(ii)}.
	
	The proof of \textit{(iii)} follows from  \textit{(i)}, \textit{(ii)} and Proposition \ref{theo:bubbling-off-two-vertices}.
	\end{proof}


%



\begin{proposition}\label{le:the-cylinders-do-not-intersect-orbits}
	Let $\tu_r=(a_r,u_r),\tu_r=(a_r',u_r'):\C\setminus \{0\}\to \R\times S^3$ and $\tu_q=(a_q,u_q),\tu_q'=(a_q',u_q'):\C\to \R\times S^3$ be the finite energy spheres obtained in Proposition \ref{theo:bubbling-off-two-vertices}.
		Then the projected curves $u_r:\C\setminus \{0\}\to S^3$, $u_r':\C\setminus \{0\}\to S^3$, $u_q:\C\to S^3$ and $u_q':\C\to S^3$ 
		are embeddings which are transverse to the Reeb vector field and do not intersect $P_2\cup P_3$.
\end{proposition}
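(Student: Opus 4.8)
The plan is to treat each curve separately but via the same mechanism: every one of $\tu_r,\tu_r',\tu_q,\tu_q'$ is a somewhere injective finite energy curve (this was established in the proof of Proposition~\ref{theo:bubbling-off-two-vertices}, since $P_2$ and $P_3$ are simple) with nondegenerate asymptotic limits, so Theorem~\ref{th:siefring-2.6} applies: the projection is an embedding, transverse to $R_\lambda$, provided $(1)$ the projection does not hit any of its own asymptotic limits, and $(2)$ the ``$\wind_\infty/m_z$'' ratios agree at all punctures. First I would compute the relevant winding invariants. For $\tu_q$, a finite energy plane asymptotic to $P_2$ with $\mu(P_2)=2$, and for $\tu_r$, a finite energy cylinder with positive asymptotic limit $P_3$ ($\mu=3$) and negative asymptotic limit $P_2$ ($\mu=2$): in both cases $\pi\cdot du$ does not vanish identically (shown for $\tu_r$ in the previous proof; for $\tu_q$ it follows because a plane with $\pi\cdot du\equiv 0$ would, by Theorem~\ref{theo:vanishing-dlambda-energy}, be $F_{P}\circ p$ with $P_2$ simple, forcing area-zero and a trivial cylinder, contradicting that it carries a nonzero $d\lambda$-area as a bubble). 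Then Lemma~\ref{le:wind=1,0} applies verbatim — all these curves have only negative punctures besides $\infty$, all asymptotic limits have $\mu\in\{1,2,3\}$ — giving $\wind_\pi=0$ and $\wind_\infty(\cdot,z)=1$ at every puncture. Since all multiplicities $m_z$ equal $1$ (the asymptotic limits are the simple orbits $P_2,P_3$ themselves), condition $(3)(b)$ of Theorem~\ref{th:siefring-2.6} is automatic: all ratios $\wind_\infty/m_z$ equal $1$.

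The remaining point, and the main obstacle, is condition $(3)(a)$: showing each projected curve is disjoint from its own asymptotic limits, i.e. $u_r(\C\setminus\{0\})\cap(P_2\cup P_3)=\emptyset$ and likewise for the others, which also yields the last clause of the proposition. I would argue by positivity of intersections with the orbit cylinders $\R\times P_2$ and $\R\times P_3$. For $P_3$: we already know $P_2$ is not linked to $P_3$ (proved in the bubbling proposition), and every $u_n$ is an embedding with image disjoint from $P_3$; hence the $C^\infty_{loc}$-limit $u_r$ has image in the complement of $P_3$ unless it crosses $P_3$, but any such crossing would be a transverse positive intersection of $\tu_r$ with the orbit cylinder $\R\times P_3$ (by the local intersection theory, intersections with orbit cylinders over asymptotic limits are isolated and positive), which would force $u_n$ to intersect $P_3$ for large $n$ by a standard intersection-persistence/stability argument — a contradiction. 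For $P_2$: here I would use the homotopy invariance of the generalized intersection number together with the asymptotic data. The curve $\tu_r$ is a limit of the planes $\tu_n$, each of which is disjoint from $P_2$ as well? — if not directly available I would instead invoke Theorem~\ref{co:5.9-siefring} applied to $\tu_r$ against the orbit cylinder $\R\times P_2$: since $\tu_r$ is asymptotic to $P_2$ at its negative puncture with $\wind_\infty=1=\lfloor\mu(P_2)/2\rfloor$ and $P_2$ is even hyperbolic, case $(2)(c)(ii)$ shows the generalized intersection $[\tu_r]*[\R\times P_2]$ equals zero, which by positivity of intersections forces $u_r$ to be disjoint from $P_2$; if $u_r$ met $P_2$ at an interior point that would be a positive contribution, contradiction.

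Having established disjointness from the asymptotic limits, Theorem~\ref{th:siefring-2.6} delivers at once: $\tu_r$ (resp.\ $\tu_r',\tu_q,\tu_q'$) is an embedding, the projection $u_r$ is an immersion everywhere transverse to $R_\lambda$, and in fact an embedding. The transversality to $R_\lambda$ is exactly the statement $\wind_\pi=0$ translated through the definition of $\wind_\pi$ as the count of zeros of $\pi\cdot du$: $\wind_\pi(\tu_r)=0$ means $\pi\cdot du_r$ never vanishes, i.e. $du_r$ is never tangent to $R_\lambda$. I would run the identical argument for $\tu_r'$, $\tu_q'$ (the $\tau_n\to 1^-$ bubbles), noting the proof is symmetric. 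The one place to be careful is ensuring that the limit curves produced in Proposition~\ref{theo:bubbling-off-two-vertices} are genuinely somewhere injective and not multiply covered: this is guaranteed because their asymptotic limits $P_2,P_3$ are simple, so no iterate can appear, and a multiply covered curve over a simple base would be forced into an orbit cylinder (excluded since $\int u^*d\lambda>0$).
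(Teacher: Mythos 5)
Your overall strategy coincides with the paper's: compute $\wind_\pi=0$ and $\wind_\infty=1$ via Lemma \ref{le:wind=1,0}, deduce transversality to $R_\lambda$, establish disjointness from $P_2\cup P_3$ by positivity and stability of intersections against orbit cylinders (the $P_3$ case, contradicting Theorem \ref{th:family-of-cylinders-wendl}, is exactly the paper's argument), and conclude embeddedness from Theorem \ref{th:siefring-2.6}. However, there is a genuine gap in your treatment of $P_2$. Your primary route needs the statement $u_n(\C)\cap P_2=\emptyset$ for all $n$, which you flag as possibly ``not directly available'' and leave unproved; and your fallback is invalid. You propose to apply Theorem \ref{co:5.9-siefring} to $\tu_r$ against the orbit cylinder $\R\times P_2$, but that theorem explicitly assumes that no component of either curve lies in an orbit cylinder, so it cannot be invoked with $\tv=\R\times P_2$. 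Worse, even ignoring that hypothesis, the theorem is an equivalence between $[\tu]*[\tv]=0$ and the \emph{conjunction} of conditions (a), (b), (c); condition (a) already asserts that $u$ misses the positive asymptotic limit of $\tv$, i.e.\ exactly $u_r\cap P_2=\emptyset$. So you cannot derive vanishing of the intersection number from the winding condition (c) alone — the implication runs the wrong way, and the argument is circular.

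The missing step is elementary and is what the paper supplies: each $u_n$ is an embedded disk transverse to the Reeb vector field whose boundary is asymptotic to $P_3$; if the closed orbit $P_2$ met $u_n(\C)$, all intersections would be transverse and of the same sign, forcing ${\rm lk}(P_2,P_3)\neq 0$, contradicting the hypothesis that $P_2$ and $P_3$ are unlinked. Hence $u_n(\C)\cap P_2=\emptyset$ for every $n$, and then the same Carleman-similarity-plus-stability argument you used for $\R\times P_3$ applies verbatim to $\R\times P_2$, for $\tu_r$, $\tu_r'$, $\tu_q$ and $\tu_q'$ alike (note that for $\tu_q$ disjointness from $P_3$ is \emph{not} covered by condition (3)(a) of Theorem \ref{th:siefring-2.6}, since $P_3$ is not an asymptotic limit of $\tu_q$; it must come from this limit-and-stability argument, using that $\tu_q$ is a $C^\infty_{loc}$-limit of rescaled translates of $\tu_n$). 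With that repair your proof matches the paper's.
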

\begin{proof}
	
	We first prove that $u_r(\C\setminus \{0\})\cap P_3=\emptyset$. 
	Consider $F:\C\setminus \{0\}\to \R\times S^3$ defined by $F(e^{2\pi(s+it)})=(T_3s,x_3(T_3t))$. Note that $F$ is a finite energy $\tilde{J}$-holomorphic immersion.
	By Carleman's similarity principle, 
	the intersections of $\tu_r$ with $F$ are isolated.
	By positivity and stability of intersections of pseudo-holomorphic curves, 
	any such intersection implies intersection of $\tu_n$ with $F$ for large $n$, contradicting Theorem \ref{th:family-of-cylinders-wendl}.
%
	To prove that $u_r(\C\setminus \{0\})\cap P_2=\emptyset$, we proceed in the same way, noting that $u_{n}(\C)\cap P_2=\emptyset,~\forall n$. Indeed, $P_2$ and $P_3$ are not linked and $u_n$ is transverse to the Reeb vector field. 
	
	In the same way, we prove that $u_r'(\C\setminus \{0\})$, $u_q(\C)$ and $u_q'(\C)$ do not intersect $P_2\cup P_3$. 
	Theorem \ref{th:siefring-2.6} shows that $u_r$, $u_r'$, $u_q$ and $u_q'$ are embeddings which are transverse to the Reeb vector field.
\end{proof}	


\begin{proposition}\label{pr:unicos-cilindros-p3p2-e-plano}
	Let $\tu_r=(a_r,u_r),\tu_r'=(a_r',u_r'):\C\setminus \{0\}\to \R\times S^3$ and $\tu_q=(a_q,u_q),\tu_q'=(a_q',u_q'):\C\to \R\times S^3$ be the finite energy spheres obtained in Proposition \ref{theo:bubbling-off-two-vertices}.
	Then, up to reparametrization and $\R$-translation, $\tilde{u}_q=\tu'_q$, and $\tilde{u}_q$ is the unique finite energy $\tilde{J}$-holomorphic plane asymptotic to $P_2$.
	If $u_r(\C\setminus \{0\})\neq u_r'(\C\setminus \{0\})$, then (up to reparametrization and $\R$-translation) $\tu_r$
	and $\tu_r'$
	are the unique finite energy $\tilde{J}$-holomorphic cylinders asymptotic to $P_3$ and $P_2$ at $z=\infty$ and $z=0$ respectively that do not intersect $\R\times \left(P_2\cup P_3\right)$. Moreover, $\tu_r$ and $\tu_r'$ approach $P_2$ in opposite directions {according to Definition \ref{de:opposite-directions}.}
\end{proposition}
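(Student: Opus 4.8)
The plan is to treat the rigid plane and the rigid cylinders by the same mechanism: pin down the asymptotic winding numbers with Lemma \ref{le:wind=1,0}, run Siefring's intersection theory (Theorems \ref{co:5.9-siefring}, \ref{th:2.4-siefring}, \ref{th:2.5-siefring}, \ref{th:siefring-2.6}) to force a competing curve either to coincide with one of ours or to produce a forbidden object, and then use hypothesis \textit{(v)} of Theorem \ref{theo:main-theorem} to discard the forbidden object.

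\emph{Uniqueness of the plane asymptotic to $P_2$.} Let $\tv=(b,v):\C\to\R\times S^3$ be an arbitrary finite energy $\tj$-holomorphic plane asymptotic to $P_2$ at $\infty$. Since $P_2$ is simple, $\tv$ is somewhere injective, and $\pi\cdot dv$ does not vanish identically (otherwise Theorem \ref{theo:vanishing-dlambda-energy} would produce a nonconstant polynomial with no zeros). By Theorem \ref{th:theorem-1.3-hwz-prop2}, $v(\C)\cap P_2=\varnothing$ and $v$ is an embedding; by Lemma \ref{le:wind=1,0}, $\wind_\pi(\tv)=0$ and $\wind_\infty(\tv,\infty)=1=\lfloor\mu(P_2)/2\rfloor$, so $v$ is transverse to $R_\lambda$ and $\overline{v(\C)}=v(\C)\cup x_2(\R)$ is an embedded disk with boundary $x_2(\R)$ which is a strong transverse section (Remark \ref{rm:strong-transverse-section}); the same holds for $\tu_q$ (using Proposition \ref{le:the-cylinders-do-not-intersect-orbits}). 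These winding identities together with $u_q(\C)\cap P_2=\varnothing=v(\C)\cap P_2$ let us verify the hypotheses of Theorem \ref{co:5.9-siefring} and of Theorem \ref{th:2.4-siefring}, giving $[\tu_q]*[\tv]=0$ and, unless $u_q(\C)=v(\C)$, that $u_q$ and $v$ have disjoint images in $S^3$. Because $P_2$ is even and the asymptotic eigenvalue at $\infty$ is $\nu^{neg}_{P_2}$, a simple eigenvalue (since $\wind_\infty=1=\wind(\nu^{neg}_{P_2})$), the curves $\tu_q$ and $\tv$ approach $P_2$ either in the same direction or in opposite directions; the same direction is excluded by Theorem \ref{th:2.5-siefring}, which would give $[\tu_q]*[\tv]>0$. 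Hence, if $u_q(\C)\neq v(\C)$, the disks $\overline{u_q(\C)}$ and $\overline{v(\C)}$ are disjoint away from $x_2(\R)$ and approach $P_2$ in opposite directions, so their union is a $C^1$-embedded $2$-sphere with equator $x_2(\R)$ whose two hemispheres are strong transverse sections, contradicting hypothesis \textit{(v)}. Therefore $u_q(\C)=v(\C)$, and since both curves are embeddings, $\tv=\tu_q$ up to reparametrization and $\R$-translation; in particular $\tu_q'=\tu_q$ up to reparametrization and $\R$-translation.

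\emph{The cylinders.} Let $\tw=(c,w):\C\setminus\{0\}\to\R\times S^3$ be any finite energy $\tj$-holomorphic cylinder asymptotic to $P_3$ at $\infty$ and to $P_2$ at $0$ whose image is disjoint from $\R\times(P_2\cup P_3)$. Since $P_3$ is simple and the two ends of $\tw$ limit onto distinct orbits, $\tw$ is somewhere injective with $\pi\cdot dw\not\equiv 0$; Lemma \ref{le:wind=1,0} gives $\wind_\pi(\tw)=0$ and $\wind_\infty(\tw,\infty)=\wind_\infty(\tw,0)=1$, and Theorem \ref{th:siefring-2.6} shows that $w$ is an embedding transverse to $R_\lambda$. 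The winding conditions of Theorem \ref{co:5.9-siefring} are met at both shared orbits ($\lfloor\mu(P_3)/2\rfloor=1$ and $\lfloor-\mu(P_2)/2\rfloor=-1$, matching $\wind_\infty=1$), and none of $\tu_r,\tu_r',\tw$ meets $P_2\cup P_3$ (Proposition \ref{le:the-cylinders-do-not-intersect-orbits}), so $[\tu_r]*[\tu_r']=[\tw]*[\tu_r]=[\tw]*[\tu_r']=0$. At the negative puncture the asymptotic eigenvalue of each of $\tu_r,\tu_r',\tw$ is $\nu^{pos}_{P_2}$, a simple eigenvalue, so any two of these cylinders approach $P_2$ at $0$ either in the same or in opposite directions; in the same-direction case, combining Theorem \ref{th:2.5-siefring} with the vanishing of the corresponding intersection number forces their images to agree near $0$, hence everywhere, hence the two cylinders agree up to reparametrization and $\R$-translation. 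If $u_r(\C\setminus\{0\})\neq u_r'(\C\setminus\{0\})$, this shows that $\tu_r$ and $\tu_r'$ approach $P_2$ in opposite directions, and since $\tw$ approaches $P_2$ in the same direction as exactly one of $\tu_r,\tu_r'$, we conclude that $\tw$ equals that one up to reparametrization and $\R$-translation. Hence $\tu_r$ and $\tu_r'$ are the only finite energy cylinders with the prescribed asymptotics not intersecting $\R\times(P_2\cup P_3)$.

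\emph{Main obstacle.} The point requiring the most care is the $C^1$-gluing in the plane case: one must invoke the precise asymptotic expansion of Theorem \ref{theo:asymptotic behavior} and the simplicity of $\nu^{neg}_{P_2}$ to check that two disks limiting onto $P_2$ in opposite directions join $C^1$-smoothly across $x_2(\R)$, so that hypothesis \textit{(v)} is genuinely contradicted. By contrast, the intersection-number computations and the somewhere-injectivity assertions are routine once the winding numbers have been fixed by Lemma \ref{le:wind=1,0}.
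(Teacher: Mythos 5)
Your proposal is correct and follows essentially the same route as the paper: Lemma \ref{le:wind=1,0} pins the asymptotic windings to $1$, the simplicity of the eigenvalue $\nu^{neg}_{P_2}$ (resp. $\nu^{pos}_{P_2}$) reduces everything to a same/opposite direction dichotomy, Theorems \ref{co:5.9-siefring} and \ref{th:2.5-siefring} exclude the same-direction case when images differ, and hypothesis \textit{(v)} rules out the opposite-direction $C^1$-sphere for the planes. The only difference is organizational -- you treat an arbitrary competing plane (or cylinder) directly, while the paper first compares $\tu_q$ with $\tu_q'$ and then repeats the argument -- which does not change the substance.
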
	

\begin{proof}
	Our proof follows {\cite[Proposition C.1]{dePS2013}}.
	First we prove that $\tu_q$ and $\tu'_q$ coincide up to reparametrization and $\R$-translation.
	Following Theorem \ref{theo:asymptotic behavior}, let $\eta_q$ be the asymptotic eigensection of $\tu_q$ at $\infty$ and  $\eta_q'$ the asymptotic eigensection of $\tu'_q$ at $\infty$. 
	It follows from Lemma \ref{le:wind=1,0} that 
	\begin{equation}\label{eq:wind-sigma=1}
	\wind(\eta_q)=\wind_\infty(\tu_q,\infty)=1 \text{ and } \wind(\eta_q')=\wind_\infty(\tu_q',\infty)=1.
	\end{equation}
	Since, by Proposition \ref{pr:properties-asymptotic-operator} and formula \eqref{eq:conley-zehnder-wind}, $\nu_{P_2}^{neg}$ is the unique negative eigenvalue of $A_{P_2,J}$ with winding number equal to $1$, it follows that $\eta_q$ and $\eta_q'$ are $\nu^{neg}_{P_2}$-eigensections. 
	Since the eigenspace of $\nu_{P_2}^{neg}$ is one dimensional, we find a constant $c\neq0$ such that 
	$\eta_q=c\eta_q'~.$
	Suppose, contrary to our claim, that $u_q(\C)\neq u'_q(\C)$. Then, by \eqref{eq:wind-sigma=1}, Proposition \ref{le:the-cylinders-do-not-intersect-orbits} and Theorem \ref{th:2.4-siefring}, we have 
	\begin{equation}\label{eq:uq_cap_uq'=0}
		u_q(\C)\cap u'_q(\C)=\emptyset~.
	\end{equation}
	By \eqref{eq:wind-sigma=1}, \eqref{eq:uq_cap_uq'=0} and Theorem \ref{th:2.5-siefring}, we conclude that $c<0$, that is, $\tu_q$ and $\tu_q'$ approach $P_2$ in opposite directions.
	This implies that $u_q(\C)\cup u'_q(\C)\cup P_2$ is a $C^1$-embedded sphere, where each hemisphere is a strong transverse section (see Remark \ref{rm:strong-transverse-section}), which is a contradiction with the hypotheses of Theorem \ref{theo:main-theorem}.
	We have proved that $\tu_q$ and $\tu'_q$ coincide up to reparametrization and $\R$-translation. 
	
	Using the same arguments above and noting that, by Theorem \ref{th:theorem-1.3-hwz-prop2}, any $\tj$-holomorphic plane asymptotic to $P_2$ is embedded and does not intersect $P_2$, we prove that any finite energy $\tilde{J}$-holomorphic plane asymptotic to $P_2$ coincides with $\tu_q$ up to reparametrization and $\R$-translation. 
	
	Now we prove the assertions about the cylinders $\tu_r$ and $\tu_r'$. 
	Let $\eta_r$ and $\eta_r'$ be the asymptotic eigensections of $\tu_r$ and $\tu_r'$ at $0$, respectively. 
	By Lemma \ref{le:wind=1,0}, we have 
	\begin{equation}\label{eq:wind-eta=1}
	\wind_\infty(\tu_r,0)=\wind_\infty(\tu'_r,0)=1.
	\end{equation}
	Using Proposition \ref{pr:properties-asymptotic-operator} and formula \eqref{eq:conley-zehnder-wind}, we conclude that  $\eta_r$ and $\eta_r'$ are $\nu^{pos}_{P_2}$-eigensections. 
	Since the eigenspace of $\nu_{P_2}^{pos}$ is one dimensional, we conclude that there exists a nonzero constant $c$ such that $\eta_r'=c\eta_r$.
	Assume that $u_r(\C\setminus \{0\})\neq  u_r'(\C\setminus \{0\})$.
	Then, by \eqref{eq:wind-eta=1}, Proposition \ref{le:the-cylinders-do-not-intersect-orbits} and Theorem \ref{th:2.4-siefring}, we have 
	\begin{equation}\label{eq:cilindros-nao-intersectam}
		u_r(\C\setminus \{0\})\cap  u_r'(\C\setminus \{0\})=\emptyset.
	\end{equation}
	By \eqref{eq:wind-eta=1}, \eqref{eq:cilindros-nao-intersectam} and Theorem \ref{th:2.5-siefring}, we conclude that $c<0$, that is, $\tu_q$ and $\tu_q'$ approach $P_2$ in opposite directions.

	By the same arguments above, we conclude that $\tu_r$ and $\tu_r'$ are the unique cylinders with the properties given in the statement. 
\end{proof}


\begin{proposition}\label{pr:foliation}
	Let $\tu_r=(a_r,u_r),\tu_r'=(a_r',u_r'):\C\setminus \{0\}\to \R\times S^3$ and $\tu_q=(a_q,u_q):\C\to \R\times S^3$ be the finite energy spheres obtained in Proposition \ref{theo:bubbling-off-two-vertices}.
	Then the cylinders $u_r$ and $u_r'$ satisfy $u_r(\C\setminus \{0\})\cap u_r'(\C\setminus \{0\})=\emptyset$, the surface $T:=P_2\cup P_3\cup u_r(\C\setminus \{0\})\cup u'_r(\C\setminus \{0\})$ is homeomorphic to a torus and $T\setminus P_3$  is $C^1$-embedded. 
	The union of the image of the family $\{u_\tau:\C\to S^3\},~\tau\in(0,1)$, given by \eqref{eq:familia-de-planos} with the images of $u_q$, $u_r$ and $u'_r$ determine a smooth foliation 
	of $\mathcal{R}_1\setminus(P_2\cup P_3)$, where $\mathcal{R}_1\subset S^3$ is a closed region with boundary  $T$. 
	Moreover, $\mathcal{R}_1$ is homeomorphic to a solid torus. 
\end{proposition}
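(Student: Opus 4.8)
The plan is to build Proposition~\ref{pr:foliation} out of the curves of Proposition~\ref{theo:bubbling-off-two-vertices} using (a) the Siefring intersection theory recalled in Section~\ref{se:preliminaries}, (b) Propositions~\ref{pr:familia-de-planos-se-aproxima-limite}, \ref{le:the-cylinders-do-not-intersect-orbits} and \ref{pr:unicos-cilindros-p3p2-e-plano}, and (c) elementary $3$-manifold topology. Throughout one uses the numerical inputs $\wind_\pi(\tu_\tau)=\wind_\pi(\tu_r)=\wind_\pi(\tu_r')=\wind_\pi(\tu_q)=0$, so that every projection in sight is transverse to $R_\lambda$, together with the fact that at each puncture of each of these curves the asymptotic winding number equals $1$ (Lemma~\ref{le:wind=1,0} and \eqref{eq:wind_pi-wind_infty}); since $\lfloor\mu(P_2)/2\rfloor=\lfloor\mu(P_3)/2\rfloor=1$ and each $\wind_\infty$ equals $1$, the winding equalities appearing in Theorems~\ref{co:5.9-siefring}, \ref{th:2.4-siefring} and \ref{th:2.5-siefring} are automatically satisfied, and Proposition~\ref{le:the-cylinders-do-not-intersect-orbits} provides the hypothesis that none of these curves meets the asymptotic limits of the others.

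First I would show the two cylinders are disjoint, which requires ruling out $u_r(\C\setminus\{0\})=u_r'(\C\setminus\{0\})$. If the images coincide, then, both maps being somewhere injective embeddings of the cylinder with matching asymptotics, $\tu_r$ and $\tu_r'$ agree up to reparametrization and $\R$-translation, so the family \eqref{eq:familia-de-planos} degenerates at both ends to the \emph{same} broken curve. The intersection computation above also gives $u_r(\C\setminus\{0\})\cap u_q(\C)=\emptyset$, so $\overline{U_1}\cup\overline{D}:=u_r(\C\setminus\{0\})\cup P_2\cup u_q(\C)\cup P_3$ is an embedded disk with boundary $P_3$, transverse to $R_\lambda$ off $P_2\cup P_3$; together with the leaves $\overline{u_\tau(\C)}$ (disjoint from it and from each other by the same arguments and by the filling argument of the last step below) this produces an open book decomposition of $S^3$ with binding $P_3$ and disk pages transverse to the Reeb flow. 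But any closed orbit other than $P_3$ that is not contained in one of the transverse pieces must cross a page transversally and positively; $P_1$ is such an orbit, since it is distinct from $P_2$ and $P_3$ and cannot lie on the transverse surfaces $U_1$, $D$ or on any page. Hence $\mathrm{lk}(P_1,P_3)\geq 1$, contradicting the unlinkedness hypothesis. Therefore $u_r(\C\setminus\{0\})\neq u_r'(\C\setminus\{0\})$, Proposition~\ref{pr:unicos-cilindros-p3p2-e-plano} applies so that $\tu_r,\tu_r'$ approach $P_2$ in opposite directions, and Theorem~\ref{th:2.4-siefring} applied to $\tu_r$ and $\tu_r'$ then yields $u_r(\C\setminus\{0\})\cap u_r'(\C\setminus\{0\})=\emptyset$; the identical scheme gives $u_\tau(\C)\cap(U_1\cup U_2\cup D)=\emptyset$ for all $\tau$ and $U_i\cap D=\emptyset$.

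Next, each $\overline{U_i}=U_i\cup P_2\cup P_3$ is an embedded closed annulus: $u_r,u_r'$ are embeddings of the open cylinder by Theorem~\ref{th:siefring-2.6}, and Theorem~\ref{theo:asymptotic behavior} with $\gcd(1,1)=1$ shows each extends to a topological embedding of the closed annulus, of class $C^1$ near $P_2$. By the disjointness just established the two annuli meet exactly along $P_2\cup P_3$, so $T=P_2\cup P_3\cup U_1\cup U_2$ is an embedded torus (orientability excludes a Klein bottle), and since $\tu_r,\tu_r'$ approach $P_2$ from \emph{opposite} directions the two $C^1$ half-surfaces glue to a $C^1$ surface near each point of $P_2$, so $T\setminus P_3$ is $C^1$-embedded — I would not claim $C^1$-regularity at $P_3$, whose Conley--Zehnder index is odd. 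By Alexander duality $S^3\setminus T$ has two components; let $\mathcal{R}_1$ be the closure of the one containing $u_{1/2}(\C)$. Each plane $u_\tau(\C)$ is connected, disjoint from $T$, and its closure meets $T$ only along $P_3$, so by continuity in $\tau$ all of them lie in $\mathrm{int}\,\mathcal{R}_1$; so does $D=u_q(\C)$, by Proposition~\ref{pr:familia-de-planos-se-aproxima-limite}(iii) and disjointness from $T$. Setting $O=u_q(\C)\cup\bigcup_{\tau\in(0,1)}u_\tau(\C)$, the smooth dependence on $\tau$ (Theorem~\ref{th:family-of-cylinders-wendl}) and the convergence statements in Propositions~\ref{theo:bubbling-off-two-vertices} and \ref{pr:familia-de-planos-se-aproxima-limite} show $O$ is open; and $O$ is closed in $\mathrm{int}\,\mathcal{R}_1$ because a limit of leaf points with parameters $\tau_n\to\tau_\infty$ lies on $u_{\tau_\infty}(\C)$ if $\tau_\infty\in(0,1)$, and otherwise lies in $\overline{U_1}\cup\overline{D}$ or $\overline{U_2}\cup\overline{D}$, hence in $D$ after discarding the part on $T$. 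Connectedness of $\mathrm{int}\,\mathcal{R}_1$ gives $O=\mathrm{int}\,\mathcal{R}_1$, so $\{u_\tau(\C)\}_{\tau\in(0,1)}\cup U_1\cup U_2\cup D$ foliates $\mathcal{R}_1\setminus(P_2\cup P_3)$. Finally $\overline{D}=D\cup P_2$ is a properly embedded disk in $\mathcal{R}_1$ meeting $T$ exactly along $P_2$; $P_2$ is essential in $T$ (otherwise $P_3$, being parallel, would be inessential and the foliation above would be impossible), so cutting $\mathcal{R}_1$ along $\overline{D}$ yields a compact submanifold of $S^3$ with $S^2$ boundary, i.e. a ball, and $\mathcal{R}_1$ is this ball glued to itself along two disjoint disks in its boundary sphere — a solid torus.

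The main difficulty is the first step: the entire statement degenerates unless the family breaks into two genuinely distinct broken curves at its two ends, and the only mechanism I see to exclude $U_1=U_2$ is the open-book/linking contradiction above, which is precisely where the hypothesis that $P_1$ and $P_3$ are unlinked is used (rather than any hypothesis about $P_2$, which governs the uniqueness of $\tu_q$ instead). A secondary point that needs care is verifying that $O$ is open, i.e. that the family of leaves accumulates onto $U_1\cup P_2\cup D$ and $U_2\cup P_2\cup D$ with full neighborhoods and without spurious limits; this rests on the compactness analysis of Section~\ref{se:foliating-gamma1=0}.
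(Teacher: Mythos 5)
Your overall route is the same as the paper's: mutual disjointness of the projections via Proposition \ref{le:the-cylinders-do-not-intersect-orbits} and Theorem \ref{th:2.4-siefring}, exclusion of $u_r(\C\setminus\{0\})=u_r'(\C\setminus\{0\})$ by showing that in that degenerate scenario the planes together with the broken configuration would sweep out all of $S^3$ and force ${\rm lk}(P_1,P_3)\neq 0$, the torus $T$ with $C^1$-regularity of $T\setminus P_3$ from the opposite approach directions in Proposition \ref{pr:unicos-cilindros-p3p2-e-plano}, Jordan--Brouwer separation, a connectedness argument for the filling, and a Sch\"onflies/cut-along-the-meridian-disk argument for the solid torus. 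So the comparison reduces to the one place where you genuinely deviate: the filling argument, and there your version has a gap which you flag but do not close.

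Your set $O=u_q(\C)\cup\bigcup_{\tau\in(0,1)}u_\tau(\C)$ must be shown \emph{open} in $\mathrm{int}\,\mathcal{R}_1$, and openness at points of $D=u_q(\C)$ requires that the leaves $u_\tau$ with $\tau$ near $0$ (respectively near $1$) cover a full one-sided neighborhood of every point of $D$. This is a surjectivity statement near the broken building -- essentially a gluing-type statement -- and it does not follow from Proposition \ref{pr:familia-de-planos-se-aproxima-limite}, which only says that leaves near the ends are \emph{contained} in any prescribed neighborhood of $U_1\cup U_2\cup D\cup P_2\cup P_3$, nor from Theorem \ref{th:family-of-cylinders-wendl}, which gives local openness only along interior leaves. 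The same unproved covering is hidden in your first step: to promote the disk $u_r(\C\setminus\{0\})\cup P_2\cup u_q(\C)\cup P_3$ plus the family to an ``open book decomposition of $S^3$'' (so that $P_1$ must hit a page) you again need that the leaves fill everything, and you defer this to the same argument. The paper closes exactly this hole by a localization trick that you should adopt: given a point $p$ off the rigid pieces, choose a neighborhood $\mathcal{V}$ of the broken configuration with $p\notin\mathcal{V}$, pick $\tau_0$ near $0$ and $\sigma_0$ near $1$ with $u_{\tau_0}(\C),u_{\sigma_0}(\C)\subset\mathcal{V}$ (this \emph{is} what Proposition \ref{pr:familia-de-planos-se-aproxima-limite} gives), form the piecewise smooth embedded sphere $u_{\tau_0}(\C)\cup u_{\sigma_0}(\C)\cup P_3$, and run the open--closed argument only in the component of its complement containing $p$, swept by the compact subfamily $\tau\in[\tau_0,\sigma_0]$; there openness is needed only along interior leaves (Theorem \ref{th:family-of-cylinders-wendl}) and closedness follows since escaping points limit onto $P_3\subset$ the sphere. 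With that substitution, both your exclusion of $U_1=U_2$ and your foliation of $\mathcal{R}_1\setminus(P_2\cup P_3)$ go through, and the remaining steps (torus, $C^1$-embeddedness away from $P_3$, solid torus via cutting along $\overline{D}$ and Sch\"onflies) match the paper's proof.
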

\begin{proof}
	By Proposition \ref{le:the-cylinders-do-not-intersect-orbits} and Theorem \ref{th:2.4-siefring} we conclude that the images of the projected curves $u_r$, $u_q$ and $\{u_\tau\}, \tau\in (0,1)$, are mutually disjoint.
	Moreover, if $u_r(\C\setminus \{0\})\neq u_r'(\C\setminus \{0\})$, then the images of $u_r$, $u_r'$, $u_q$ and $\{u_\tau\}, \tau\in (0,1)$ are mutually disjoint.
	
	Now we prove that $u_r(\C\setminus \{0\})\cap u_r'(\C\setminus \{0\})=\emptyset$.
	Suppose, by contradiction, that the images of $u_r$ and $u_r'$ have a nonempty intersection.
	Then we have $u_r(\C\setminus \{0\})= u_r'(\C\setminus \{0\})$. 
	Let $p \in S^3\setminus \left( u_r(\C\setminus \{0\})\cup u_q(\C)\cup P_2\cup P_3\right)$ and consider a neighborhood $\mathcal{V}$ of $u_r(\C\setminus \{0\})\cup u_q(\C)\cup P_2\cup P_3$ such that $p\notin \mathcal{V}$. By Proposition \ref{pr:familia-de-planos-se-aproxima-limite}, we have $u_{\tau_0}(\C), u_{\sigma_0}(\C)\subset \mathcal{V}$ for some $\tau_0$ sufficiently close to $0$ and $\sigma_0$ sufficiently close to $1$. 
	The surface $S= u_{\tau_0}(\C)\cup u_{\sigma_0}(\C) \cup P_3$ is a piecewise smooth embedded sphere. By Jordan-Brouwer separation theorem, $S$ divides $S^3\setminus S$ into two disjoint regions $A_1$ and $A_2$ with boundary $S$. One of these regions, say $A_1$, contains $p$ and the other contains  $ u_r(\C\setminus \{0\})\cup u_q(\C)\cup P_2$. 
	The intersection of the image of the family $\{u_\tau\}_{\tau\in (0,1)}$ with ${A_1}$ is nonempty, open and closed in $A_1$.
	Thus $p$ is in the image of the family $\{u_\tau\}_{\tau\in (0,1)}$.
	We conclude that 
	$$S^3= u_r(\C\setminus \{0\})\cup u_q(\C)\cup P_2\cup P_3\cup\{u_\tau(\C)\}_{\tau\in (0,1)}.$$
	This contradicts the fact that the orbit $P_1$ is not linked to $P_3$ and the curves $u_r, u_q$ and $u_\tau$ are transverse to the Reeb vector field. 
	We have proved that  $u_r(\C\setminus \{0\})\cap u_r'(\C\setminus \{0\})=\emptyset.$
	Consequently, the surface $T=P_2\cup P_3\cup u_r(\C\setminus \{0\})\cup u'_r(\C\setminus \{0\})$ is a piecewise smooth embedded torus, and it follows from Proposition \ref{pr:unicos-cilindros-p3p2-e-plano} that 
	$T\setminus P_3$ is $C^1$-embedded.

	 By Jordan-Brouwer separation theorem, $T$ divides $S^3$ into two closed regions $\mathcal{R}_1$ and $\mathcal{R}_2$ with disjoint interiors and boundary $T$. 
	One of the regions, say $\mathcal{R}_1$, contains the image of the family $\{u_\tau\}$ and the plane $u_q(\C)$.
	Now we show that 
	$$\mathcal{R}_1= P_2\cup P_3\cup u_r(\C\setminus \{0\})\cup u'_r(\C\setminus \{0\})\cup u_q(\C)\cup\{u_\tau(\C)\}_{\tau\in (0,1)}.$$
	Let $p\in \mathcal{R}_1\setminus \left(P_2\cup P_3\cup u_r(\C\setminus \{0\})\cup u'_r(\C\setminus \{0\})\cup u_q(\C)\right)$ 
	and let $\mathcal{V}$ be a neighborhood of $P_2\cup P_3\cup u_r(\C\setminus \{0\})\cup u'_r(\C\setminus \{0\})\cup u_q(\C)$ such that
	$p\notin \mathcal{V}$. 
	By Proposition \ref{pr:familia-de-planos-se-aproxima-limite}, we have $u_{\tau_0}(\C), u_{\sigma_0}(\C)\subset \mathcal{V}$ for some $\tau_0$ sufficiently close to $0$ and $\sigma_0$ sufficiently close to $1$. 
	The surface $S= u_{\tau_0}(\C)\cup u_{\sigma_0}(\C) \cup P_3$ is a piecewise smooth embedded sphere. By Jordan-Brouwer separation theorem, $S$ divides $S^3\setminus S$ into two disjoint regions $A_1$ and $A_2$ with boundary $S$. One of these regions, say $A_1$ contains $p$ and is contained in $\mathcal{R}_1$. 
	Thus the intersection of the image of the family $\{u_\tau\}$ with $A_1$ is open, closed and nonempty in $A_1$. 
	This implies that $p$ is in the image of $u_\tau$, for some $\tau\in(0,1)$.


		To prove that $\mathcal{R}_1$ is homeomorphic to a solid torus, we 
		follow the proof of the Solid torus theorem in \cite{rolfsen2003knots}. 
		The curve $P_2$ is the boundary of an embedded 2-disk $\mathcal{D}$ contained in $\mathcal{R}_1$ such that $\mathring{\mathcal{D}}\subset \mathring{\mathcal{R}_1}$.
		Let $N$ be a bicollar neighborhood of $\mathcal{D}$ in $\mathcal{R}_1$, so that $N\cap T$ is an annular neighborhood of $P_2$ in $T$.
		The boundary of $\mathcal{R}_1\setminus N$ is the union of two disks on $\partial N$ and the set $T \setminus N$, which is an annulus.
		Thus $\mathcal{R}_1\setminus N$ is bounded by a piecewise smooth $2$-sphere in $S^3$. By the generalized Schönflies theorem, $\mathcal{R}_1\setminus N$ is homeomorphic to a closed $3$-ball.
		It follows that $\mathcal{R}_1$ is homeomorphic to a $3$-ball with a $D\times [0,1]$ attached, where $D$ is a closed 2-disk (by mapping $D\times\{0\}$ and $D\times \{1\}$ onto disjoint disks on the boundary of the $3$-ball). Since $\mathcal{R}_1$ is orientable, $\mathcal{R}_1$ is homeomorphic to $D\times S^1$.
	\end{proof}
The proof of Proposition \ref{pr:foliation-solid-torus} is complete.

\section{A cylinder asymptotic to $P_2$ and $P_1$}\label{se:cilindro-p2-p1}
In this section, we continue the proof of Theorem \ref{theo:main-theorem}. Recall that we have obtained a foliation of a closed region $\mathcal{R}_1\subset S^3$. Now we find a finite energy cylinder asymptotic to $P_2$ at its positive puncture and $P_1$ at its negative puncture, whose projection to $S^3$ is contained in the complement of $\mathcal{R}_1$. 

\begin{proposition}\label{pr:a-cylinder-asymptotic-p1-p2}
	Let $\lambda$ be a tight contact form on $S^3$ satisfying the hypotheses of Theorem \ref{theo:main-theorem}.
	Then there exists a finite energy cylinder $\tv_r=(b_r,v_r):\C\setminus \{0\}\to \R\times S^3$ asymptotic to $P_2$ at its positive puncture $z=\infty$ and $P_1$ at its negative puncture $z=0$.
	The projection $v_r$ is an embedding transverse to the Reeb vector field. Define $V=v_r(\C\setminus\{0\})$. Then $V\cap \mathcal{R}_1=\emptyset$ and $D\cup V\cup P_2$ is a $C^1$-embedded disk, where $D=u_q(\C)$ is given by Proposition \ref{pr:foliation-solid-torus}.
\end{proposition}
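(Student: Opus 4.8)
The plan is to reproduce, inside the solid torus $\mathcal{R}_2$, the scheme of Section~\ref{se:foliating-gamma1=0}: exhibit a one--parameter family of finite energy curves whose images lie in $\overline{\mathcal{R}_2}$, run the germinating--sequence and soft--rescaling analysis of \S\ref{se:bubbling-planos} on a degenerating sequence, extract the rigid cylinder $\tv_r$ from the limiting holomorphic building, and then verify embeddedness, disjointness from $\mathcal{R}_1$, and the $C^1$--gluing with $D$ using the winding estimates of \S\ref{ch:asymptotic operator}, Siefring's intersection theory, and Theorem~\ref{theo:asymptotic behavior}. First I would locate $P_1$: it is simple and distinct from $P_2,P_3$ and, like every leaf of the foliation of $\mathcal{R}_1$ from Proposition~\ref{pr:foliation-solid-torus}, transverse to $R_\lambda$, hence not contained in any leaf $F_\tau$, $U_1$, $U_2$, $D$ nor in $T$. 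If $P_1$ met $\mathcal{R}_1$ it would cross one of these leaves transversally: crossing a disk $F_\tau$ (boundary $P_3$) gives ${\rm lk}(P_1,P_3)\neq 0$, crossing $D\cup P_2$ (a disk bounding $P_2$) gives ${\rm lk}(P_1,P_2)\neq 0$, and $P_1$ cannot cross the embedded cylinders $U_1,U_2$ while being transverse to $R_\lambda$ along with them; each alternative contradicts the hypotheses of Theorem~\ref{theo:main-theorem}, so $P_1$ lies in $\operatorname{int}\mathcal{R}_2$.

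The new ingredient beyond Section~\ref{se:foliating-gamma1=0} is a supply of holomorphic curves in $\overline{\mathcal{R}_2}$. I would first produce one finite energy $\tj$--holomorphic cylinder $\tw=(c,w)\colon\C\setminus\{0\}\to\R\times S^3$ asymptotic to $P_3$ at $+\infty$ and to $P_1$ at $0$, with image in $\overline{\mathcal{R}_2}$ --- for instance through a Bishop--type continuation starting from a local family of holomorphic cylinders near the nondegenerate orbit $P_1\subset\operatorname{int}\mathcal{R}_2$, grown until it reaches $T$ or bubbles, where the Fredholm inequality of Theorem~\ref{theo:fredholm-estimate} (for a cylinder $P_3\to Q$ one has $\ind=3-\mu(Q)\geq 1$), the action bound $T_Q<T_3$, Lemma~\ref{le:indice-maior-igual-a-2} and hypothesis~(ii) of Theorem~\ref{theo:main-theorem} pin the positive asymptotic limit to $P_3$. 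Since $P_3$ and $P_1$ are nondegenerate, simple, have odd Conley--Zehnder index and $\ind(\tw)=2$, Wendl's Theorem~\ref{th:family-of-cylinders-wendl} yields a maximal one--parameter family $\{\tw_\sigma\}$ of such cylinders with mutually disjoint embedded projections; as in \S\ref{se:a-family-of-planes}, the existence of Reeb orbits not linked to $P_3$ prevents this family from foliating all of $\mathcal{R}_2$, so it degenerates at its ends, and the germinating--sequence analysis (Propositions~\ref{pr:limit-germinating-sequence}, \ref{pr:same-orbits-tree}, \ref{rmk:either-or}, Lemmas~\ref{le:wind=1,0}, \ref{le:indice-maior-igual-a-2}, Theorem~\ref{theo:fredholm-estimate}) together with hypotheses~(ii)--(iii) forces the only possible breaking to be a cylinder asymptotic to $P_3$ and $P_2$ glued to a cylinder asymptotic to $P_2$ and $P_1$. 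By Proposition~\ref{pr:unicos-cilindros-p3p2-e-plano} the first component is $U_1$ (or $U_2$); I take $\tv_r$ to be the second and set $V:=v_r(\C\setminus\{0\})$.

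For the remaining assertions: by Lemma~\ref{le:wind=1,0}, $\wind_\pi(\tv_r)=0$ and $\wind_\infty(\tv_r,z)=1$ at both punctures, so $v_r$ is an immersion transverse to $R_\lambda$; arguing as in Proposition~\ref{le:the-cylinders-do-not-intersect-orbits} (Carleman's similarity principle and positivity of intersections, using that the $w_\sigma$ avoid $P_1\cup P_2\cup P_3$) gives $v_r(\C\setminus\{0\})\cap(P_1\cup P_2\cup P_3)=\emptyset$, and Theorem~\ref{th:siefring-2.6} then shows $v_r$ is an embedding. Since the $w_\sigma$ lie in $\overline{\mathcal{R}_2}$ and miss $P_3$, Theorem~\ref{th:2.4-siefring} makes each $w_\sigma$ disjoint from all the $F_\tau$ and from $D$, whence $V\subset\overline{\mathcal{R}_2}$; as $V$ is transverse to $R_\lambda$, disjoint from $P_2\cup P_3$, and $T\setminus P_3$ is transverse to $R_\lambda$, this forces $V\cap\mathcal{R}_1=\emptyset$. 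Finally, $D=u_q(\C)$ and $V$ are both asymptotic to $P_2$ at positive punctures, so their asymptotic eigensections $\sigma,\sigma'$ both lie in the one--dimensional $\nu^{neg}_{P_2}$--eigenspace of $A_{P_2,J}$ (winding $1$, by Lemma~\ref{le:wind=1,0}); thus $\sigma'=c\sigma$, and if $c>0$ then Theorem~\ref{th:2.5-siefring} gives $[\tu_q]\ast[\tv_r]>0$, contradicting $[\tu_q]\ast[\tv_r]=0$ (Theorem~\ref{co:5.9-siefring}, since $u_q(\C)\subset\operatorname{int}\mathcal{R}_1$ and $v_r(\C\setminus\{0\})\subset\overline{\mathcal{R}_2}$ are disjoint and carry the required windings at $P_2$). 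Hence $c<0$: $D$ and $V$ approach $P_2$ in opposite directions and glue into a $C^1$--embedded disk $D\cup V\cup P_2$ with boundary $P_1$; its tangent plane along $P_2$ is spanned by $R_\lambda$ and the $\nu^{neg}_{P_2}$--direction, that of $T$ by $R_\lambda$ and the $\nu^{pos}_{P_2}$--direction (the $U_i$ being asymptotic to $P_2$ at negative punctures), and these two nonzero eigensections of equal winding are pointwise linearly independent by Proposition~\ref{pr:properties-asymptotic-operator}, so $D\cup V\cup P_2$ is transverse to $T$.

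The main obstacle is the second step: producing one holomorphic cylinder in $\overline{\mathcal{R}_2}$ at all, and carrying out the SFT--compactness bookkeeping that identifies its positive asymptotic limit as $P_3$ and rules out every intermediate broken configuration except $U_i\cup V$. Everything else reduces, as above, to the winding estimates of \S\ref{ch:asymptotic operator}, the intersection theory of \cite{siefring2011intersection}, and the $C^1$--matching at a positive hyperbolic orbit already exploited in \cite{hwz2003,dePS2013}.
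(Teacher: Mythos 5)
Your endgame (winding estimates, Siefring intersection theory, the $\nu^{neg}_{P_2}$/$\nu^{pos}_{P_2}$ eigensection argument for the $C^1$-matching of $D$ and $V$ along $P_2$, and the disjointness from $\mathcal{R}_1$) matches the paper's Propositions \ref{pr:v_r-mergulho-nao intersecta-limites} and \ref{pr:unico-cilinro-p1-p2} and is fine. The genuine gap is exactly the step you flag as ``the main obstacle'': under the hypotheses of Theorem \ref{theo:main-theorem} you have no holomorphic curve at all in $\mathcal{R}_2$ to start from. Hypothesis \textit{(iv)} only supplies a plane asymptotic to $P_3$; a cylinder from $P_3$ to $P_1$ is an \emph{extra assumption} that appears only in Proposition \ref{pr:example}\,(iii), and indeed your proposed scheme (take a maximal Wendl family of $P_3$--$P_1$ cylinders and analyze its breaking) is precisely what the paper carries out in Section \ref{se:proposition-pre-exemple} \emph{under that additional hypothesis}, together with the period restriction $T_1<T_2<T_3<2T_1$ used there to control the bubbling. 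The suggested replacement --- a ``Bishop-type continuation starting from a local family of holomorphic cylinders near the nondegenerate orbit $P_1$'' --- is not an available technique: a nondegenerate orbit carries no local family of nontrivial finite energy cylinders (only the rigid trivial cylinder $\R\times P_1$), and Bishop families emanate from elliptic singularities of a spanning surface and produce planes, not cylinders with a prescribed negative end at $P_1$. Worse, the argument is essentially circular with respect to the paper's logic: in the paper, cylinders from $P_3$ to $P_1$ are obtained only in Section \ref{se:familia-cilindros-folheacao} by \emph{gluing} $\tu_r$ with the very cylinder $\tv_r$ you are trying to construct (Theorem \ref{th:gluing}).

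What the paper does instead, and what your proposal is missing, is the cobordism trick of Section \ref{se:cilindro-p2-p1}: interpolate $\lambda$ with the contact form $\lambda_E$ of an irrational ellipsoid through an almost complex structure $\bar J\in\mathcal{J}(\lambda,J,\lambda_E,J_E)$, view the plane $\tu_q$ asymptotic to $P_2$ as a generalized $\bar J$-holomorphic plane, and use Theorem \ref{th:espaço-de-curvas-e-variedade} to get a one-dimensional moduli space $\Theta'$ of such planes. Because $\Theta'$ is an interval (it contains the ray of $\R$-translates of $\tu_q$) it must degenerate at its finite end, and SFT compactness (Theorem \ref{theo:sft-compactness-corollary}, Lemma \ref{le:sft-compactness-corollary}) produces a building whose top level is forced to be $\tilde J$-holomorphic: it cannot be $\bar J$-holomorphic because its negative ends would be $\lambda_E$-orbits of index $\ge 3$, violating the index count, and it cannot be a plane by the uniqueness of $\tu_q$ (Proposition \ref{pr:unicos-cilindros-p3p2-e-plano}) combined with the $\min b_n$ estimate and Lemma \ref{le:plane-near-orbit}. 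The index, linking and homological arguments of Proposition \ref{prop:arvore-p1-p2} then identify this top level as the desired cylinder from $P_2$ to $P_1$. Without this (or some substitute mechanism that manufactures a curve with a negative end under the hypotheses of Theorem \ref{theo:main-theorem} alone), your construction of $\tv_r$ does not get off the ground, even though everything you do after assuming its existence is sound.
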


The contact structure $\xi=\ker \lambda$ coincides, up to a diffeomorphism, with $\xi_0:=\ker f\lambda_0$, where $\lambda_0$ is defined by \eqref{eq:liouville-form} and $f:S^3\to (0,+\infty)$ is a smooth function. Thus, we can assume that $\lambda=f\lambda_0$ for some smooth function $f:S^3\to (0,+\infty)$ without loss of generality.
{Following \cite{hwz1998} we define a symplectic cobordism between $(S^3,\lambda=f\lambda_0)$ and $(S^3,\lambda_E)$, where $\lambda_E$ is a dynamically convex contact form on $S^3$.}
Given $0<r_1<r_2$, with $\frac{r_1^2}{r_2^2}$ irrational, let $\lambda_E=f_E\lambda_0$ be the contact form associated to the ellipsoid 
$$E=\left\{(x_1,y_1,x_2,y_2)\in \R^4|\frac{x_1^2+y_1^2}{r_1^2}+\frac{x_2^2+y_2^2}{r_2^2}=1\right\}~,$$
that is, $f_E(x,y)=\left(\frac{x_1^2+y_1^2}{r_1^2}+\frac{x_2^2+y_2^2}{r_2^2}\right)^{-1}$.
The Reeb vector field $R_E$ defined by $\lambda_E$ has precisely two simple periodic orbits $\bar{P}_0$ and $\bar{P}_1$. Both periodic orbits and its iterates are nondegenerate. Their Conley-Zehnder indices are $\mu(\bar{P}_0)=3$ and $\mu(\bar{P}_1)=2k+1\geq 5$ where $k\geq 2$ is determined by $k<1+\left(\frac{r_1^2}{r_2^2}\right)<k+1$. See {\cite[Lemma 1.6]{hofer1995characterization}} for a proof of these facts.

We choose $0<r_1<r_2$ small enough so that 
$f_E<f \text{ pointwise on } S^3$
and a smooth function $h:\R\times S^3\to \R^+$ satisfying 
\begin{equation}
\begin{aligned}
h(a,\cdot)&=f_E, \text{ if } a\leq -2,\\
h(a,\cdot)&=f, \text{ if } a\geq 2,
\end{aligned}
\end{equation}

\begin{equation}\label{eq:derivada-cobordismo}
\dfrac{\partial h}{\partial a}\geq 0 \text{ on }\R\times S^3\text{ and }\dfrac{\partial h}{\partial a} >\sigma >0\text{ on }[-1,1]\times S^3.
\end{equation}
In view of \eqref{eq:derivada-cobordismo}, the $2$-form $d(h\lambda_0)$ restricted to $[-1,1]\times S^3$ is a symplectic form.

We consider the family of contact forms $\{\lambda_a=h(a,\cdot)\lambda_0, a\in \R\}$. The contact structure $\xi=\ker \lambda_a$ does not depend on $a$.
Choose $J_E\in \mathcal{J}(\xi,d\lambda_E)$ and let $\{J_a\in \mathcal{J}(\xi,d\lambda_a),a\in \R\}$ be a smooth family of $d\lambda_a$-compatible complex structures on $\xi$ so that $J_a=J$ if $a\geq 2$ and $J_a=J_E$ if $a\leq -2$.
We consider smooth almost complex structures $\bar{J}$ on the symplectization $\R\times S^3$ with the following properties.
On $(\R\setminus [-1,1])\times S^3$, we consider 
$$\bar{J}|_\xi=J_a \text{ and } \bar{J}\partial_a=R_{\lambda_a}~.$$
On $[-1,1]\times S^3$ we only require $\bar{J}$ to be compatible with the symplectic form $d(h\lambda_0)$. The space of such almost complex structures on $\R\times S^3$ is nonempty and contractible in the $C^\infty$-topology and will be denoted by $\mathcal{J}(\lambda, J, \lambda_E,J_E)$. 


\subsection{Generalized finite energy surfaces}
\begin{definition}
	Let $(S,j)$ be a closed Riemann surface and let $\Gamma\subset S$ be a nonempty finite set. A nonconstant smooth map $\tilde{u}:S\setminus \Gamma\to \R\times S^3$ is called a \textit{generalized finite energy surface} if it is $\bar{J}$-holomorphic, that is, satisfies $d\tilde{u}\circ j =\bar{J}(\tilde{u})\circ d\tilde{u}$, for some $\bar{J}\in \mathcal{J}(\lambda, J, \lambda_E,J_E)$, as well as the energy condition 
	$$E(\tilde{u})<+\infty~,$$
	where the energy $E(\tu)$ is defined as follows. Let $\Sigma$ be the collection of smooth functions $\phi:\R\to [0,1]$ satisfying $\phi'\geq 0$ and $\phi=\frac{1}{2}$ on $[-1,1]$. 
	Given $\phi\in \Sigma$ we define the $1$-form $\tau_\phi$ on $\R\times S^3$ by 
	\begin{equation*}
	\tau_\phi(a,x)(h,k)=\phi(a)\lambda_a(x)(k),
	\end{equation*}
	for any $(a,x)\in \R\times S^3$ and $(h,k)\in T_{(a,x)}(\R\times S^3)$. 
	Then
	\begin{equation}
	E(\tilde{u})=\sup_{\phi\in \Sigma}\int_{S\setminus \Gamma}\tilde{u}^*d\tau_\phi.
	\end{equation}
\end{definition} 
Theorem \ref{theo:fredholm-estimate} is still valid for almost complex structures in $\mathcal{J}(\lambda,J,\lambda_E,J_E)$.

\begin{theorem}[\cite{dragnev2004fredholm}]\label{th:fredholm-estimates-generalized}
	There exists a dense subset $\mathcal{J}^E_{reg}\subset \mathcal{J}(\lambda,J,\lambda_E,J_E)$ such that if $\tilde{u}=(a,u):S\setminus \Gamma\to \R\times S^3$ is a somewhere injective generalized finite energy surface for $\bar{J}\in \mathcal{J}^E_{reg}$, and has nondegenerate asymptotic limits at all of its punctures, then 
	$$0\leq \ind(\tilde{u})=\mu(\tilde{u})-\chi({S})+\#\Gamma~.$$
\end{theorem}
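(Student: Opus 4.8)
The statement is the symplectic-cobordism counterpart of Theorem \ref{theo:fredholm-estimate}, and the plan is to run the functional-analytic scheme of Dragnev \cite{dragnev2004fredholm} (see also \cite{hofer1999properties3} for the embedded case), indicating only the modifications forced by the structure of $\mathcal{J}(\lambda,J,\lambda_E,J_E)$. First I would fix a closed Riemann surface $(S,j)$, a set of punctures $\Gamma=\Gamma^+\cup\Gamma^-$, and for each $z\in\Gamma$ a nondegenerate periodic orbit --- of $\lambda$ if $z$ is positive, of $\lambda_E$ if $z$ is negative --- together with a multiplicity. Since $\bar J$ is cylindrical on $\{a\geq 2\}$ and on $\{a\leq -2\}$, the asymptotic analysis of \cite{hofer1996properties1} (cf. Theorem \ref{theo:asymptotic behavior}) applies near every puncture; using Martinet's tubes and small exponential weights $\delta>0$, one builds a Banach manifold $\mathcal{B}$ of $W^{1,p,\delta}$-maps $S\setminus\Gamma\to\R\times S^3$ converging to the chosen orbits, together with a Banach manifold of almost complex structures in $\mathcal{J}(\lambda,J,\lambda_E,J_E)$, and realizes $\overline{\partial}_{\bar J}$ as a smooth section $\mathcal{S}$ of a Banach bundle over the product. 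The moduli space of generalized finite energy surfaces with the prescribed data is then $\mathcal S^{-1}(0)$ modulo reparametrization.

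Next I would compute the index. The vertical linearization of $\mathcal S$ at a $\bar J$-holomorphic $\tu$ is a real-linear Cauchy--Riemann type operator on $\tu^*T(\R\times S^3)$ that at each puncture is asymptotic to the weighted asymptotic operator of the corresponding orbit; nondegeneracy of all limits makes it Fredholm, and the Riemann--Roch formula for punctured spheres, with the Conley--Zehnder conventions fixed in Section \ref{se:preliminaries}, shows that its index equals $\mu(\tu)-\chi(S)+\#\Gamma=\ind(\tu)$, exactly as in \eqref{eq:formula-indice-fredholm}. This part is identical to the symplectization computation, since the index depends only on the asymptotic operators and on $\chi(S)$, which the interpolation over the compact region $[-1,1]\times S^3$ changes neither.

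To produce the dense set I would enlarge the domain to include the $\bar J$-variable and show that the full linearization $D\mathcal S$ at any $(\tu,\bar J)\in\mathcal S^{-1}(0)$ with $\tu$ somewhere injective is surjective whenever $\tu$ has an injective point $z_0$ with $\tu(z_0)\in(-1,1)\times S^3$. As usual, a nonzero cokernel element is a smooth section $\eta$ lying in the kernel of the formal adjoint, nowhere vanishing on a dense open set (unique continuation, Carleman's similarity principle); since $\bar J$ is unconstrained on $[-1,1]\times S^3$, a perturbation of $\bar J$ supported in a small ball about $\tu(z_0)$ can be chosen to pair nontrivially with $\eta$, a contradiction. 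Hence $\mathcal S^{-1}(0)$ is a Banach manifold near such points, and by the Sard--Smale theorem the set of regular values of the projection to $\mathcal{J}(\lambda,J,\lambda_E,J_E)$ is residual, hence dense; for $\bar J$ in it the moduli space near $[\tu]$ is a manifold of dimension $\ind(\tu)$, forcing $\ind(\tu)\geq 0$.

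The step I expect to be the main obstacle is transversality for somewhere injective curves none of whose injective points lie in the free region $[-1,1]\times S^3$: such a curve has all its injective points in $\{a>1\}$ or in $\{a<-1\}$, so (being connected) it is contained in one of these regions. A curve contained in $\{a<-1\}$ would have no positive puncture and hence cannot exist, so only $\{a>1\}$ matters; there $\bar J$ has the cylindrical form $(\lambda_a,J_a)$, and perturbing it near a point amounts to perturbing the complex structure on $\xi$ --- in particular the curve is a genuine finite energy surface in the symplectization of $(S^3,\lambda,J)$ once its image lies in $\{a\geq 2\}$ --- so regularity and $\ind\geq 0$ follow from Theorem \ref{theo:fredholm-estimate} applied to $\tj\in\mathcal J_{reg}$ (already arranged) together with a regular choice of $J_E$ and of the interpolating family $\{J_a\}$. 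Taking $\mathcal{J}^E_{reg}$ to be the intersection of this dense set with the one from the previous paragraph completes the proof; the delicate bookkeeping is to make a single dense set serve all somewhere injective curves simultaneously.
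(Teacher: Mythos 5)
This theorem is quoted in the paper as a black box from \cite{dragnev2004fredholm}; no proof is given there, so your sketch can only be measured against the standard argument in the literature (universal moduli space, Riemann--Roch index computation, Sard--Smale applied to perturbations supported where $\bar J$ is unconstrained, plus a separate treatment of curves living entirely in the cylindrical ends, cf.\ \cite[\S 10]{wendl2016lectures}). Your first three paragraphs follow that scheme correctly, up to routine bookkeeping (parametrized versus unparametrized index, compatibility of the perturbations with $d(h\lambda_0)$).

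The genuine gap is exactly at the step you flag as the main obstacle, and your resolution of it does not work as stated. If a somewhere injective curve has no injective point mapping into $(-1,1)\times S^3$, density of injective points only gives that its image avoids $(-1,1)\times S^3$, hence (using that a nonconstant curve must have a positive puncture) lies in $\{a\geq 1\}$. It need \emph{not} lie in $\{a\geq 2\}$: it can meet the collar $\{1\leq a\leq 2\}\times S^3$, where $\bar J$ is forced to equal the cylindrical-type structure built from the fixed interpolation data $(h,\{J_a\})$. On that collar the structure is neither perturbable inside $\mathcal{J}(\lambda,J,\lambda_E,J_E)$ (so the Sard--Smale argument says nothing about such curves) nor $\R$-invariant (so Theorem \ref{theo:fredholm-estimate} for $\tilde J\in\mathcal{J}_{reg}$ does not apply either). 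Your appeal to ``a regular choice of $J_E$ and of the interpolating family $\{J_a\}$'' is not available: $J_E$ is invisible to a curve in $\{a\geq 1\}$, while $\{J_a\}$ and $h$ are fixed data of the space in which density is being asserted, and $J$ itself is already pinned down by hypothesis (iv) of Theorem \ref{theo:main-theorem}; none of these can be re-chosen inside the statement you are proving. To close the argument one must either show that a somewhere injective curve avoiding the free region cannot meet the interpolation collar at all (e.g.\ by a monotonicity/maximum-principle argument exploiting $\partial_a h\geq 0$, which you do not supply and which is not automatic, since what is needed is control of interior \emph{minima} of $a\circ\tu$), or set up the cobordism so that the perturbable region is the entire non-$\R$-invariant part, or prove a ``generic in $U$'' statement and add genericity of the interpolation data to the quantifiers --- each of which changes the statement or requires an argument beyond what you wrote.
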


\subsection{A family of $\bar{J}$-holomorphic planes asymptotic to $P_2$}
We are interested in the space of finite energy generalized $\bar{J}$-holomorphic planes asymptotic to the orbit $P_2$, for fixed $\bar{J}\in \mathcal{J}^E_{reg}$.
The following theorem is a consequence of results from \cite{hofer1999properties3}.
\begin{theorem}[\cite{hofer1999properties3}]\label{th:espaço-de-curvas-e-variedade}
	Let $\tu_0:\C\to \R\times S^3$ be an embedded finite energy $\bar{J}$-holomorphic plane, asymptotic to a nondegenerate, simple Reeb orbit $P=(x,T)$ satisfying $\mu(P)=2$. Then there exists a smooth embedding 
	$$\tilde{\Phi}:\C\times (-\epsilon,\epsilon)\to \R\times S^3$$
	with the following properties:
	\begin{itemize}
		\item $\tilde{\Phi}(\cdot,0)=\tu_0$;
		\item For every $\tau\in (-\epsilon,\epsilon)$, the map $z\mapsto \tilde{\Phi}(z,\tau)$ is a generalized finite energy $\bar{J}$-holomorphic plane asymptotic to $P$;
		\item If $\tilde{u}_n$ is a sequence of finite energy $\bar{J}$-holomorphic planes asymptotic to $P$ satisfying $\tilde{u}_n\to \tilde{u}_0$ in $C^\infty_{loc}(\C)$ as $n\to +\infty$,
		then there exist sequences $A_n, B_n$ in $\C$ with $A_n\to 1$, $B_n\to 0$ and $\tau_n$ in $(-\epsilon,\epsilon)$ with $\tau_n\to0$ such that 
		$$\tilde{u}_n(z)=\tilde{\Phi}(A_nz+B_n,\tau_n)$$
		for sufficiently large $n$.
	\end{itemize}
\end{theorem}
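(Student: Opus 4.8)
The plan is to follow \cite{hofer1999properties3}: show that $\tu_0$ is automatically Fredholm regular, deduce from the implicit function theorem that the moduli space of generalized finite energy planes asymptotic to $P$ is a smooth $1$-manifold near $[\tu_0]$, and then upgrade this to the embedding statement using intersection theory. I begin with the numerology. Since $\tu_0$ is a plane, $S=S^2$, $\#\Gamma=1$, $\chi(S^2)=2$, so \eqref{eq:formula-indice-fredholm} gives $\ind(\tu_0)=\mu(P)-2+1=1$. From $\mu(P)=2$ one gets $\wind(\nu^{neg}_P)=\wind(\nu^{pos}_P)=1$, exactly as in the proof of Lemma~\ref{le:wind=1,0}; and $\pi\cdot du_0$ cannot vanish identically, since otherwise Theorem~\ref{theo:vanishing-dlambda-energy} would exhibit $\tu_0$ as a cover of an orbit cylinder, impossible for a plane. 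Hence Lemma~\ref{le:wind-infty-estimate} and \eqref{eq:wind_pi-wind_infty} force $\wind_\pi(\tu_0)=0$ and $\wind_\infty(\tu_0,\infty)=1$, so $u_0$ is an immersion everywhere transverse to $R_\lambda$ and $\tu_0$ is an immersion.

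Because $\tu_0$ is an embedded finite energy plane with $\wind_\pi(\tu_0)=0$, it is automatically Fredholm regular: this is the embedded-plane case of the analysis of \cite{hofer1999properties3} (the same automatic-transversality mechanism used for the plane asymptotic to $P_3$ in Section~\ref{se:foliating-gamma1=0}), and it persists for the cobordism almost complex structures $\bar J$ exactly as Theorem~\ref{th:fredholm-estimates-generalized} is the cobordism counterpart of Theorem~\ref{theo:fredholm-estimate}; no genericity of $\bar J$ is needed. Hence, by the implicit function theorem, the space of unparametrized generalized finite energy $\bar J$-holomorphic planes asymptotic to $P$ is, near $[\tu_0]$, a smooth $1$-manifold. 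Choosing a smooth family of representatives $\tu_\tau=(a_\tau,u_\tau):\C\to\R\times S^3$, $\tau\in(-\epsilon,\epsilon)$, with $\tu_0$ the given plane, and setting $\tilde\Phi(z,\tau)=(a_\tau(z),u_\tau(z))$, the deformation $\partial_\tau\tilde\Phi|_{\tau=0}$ represents a nonzero class in the kernel of the normal Cauchy--Riemann operator of $\tu_0$; by the automatic-transversality description of that kernel (winding number $1$, hence no zeros) it is nowhere vanishing, so after shrinking $\epsilon$ the map $\tilde\Phi$ is an immersion of $\C\times(-\epsilon,\epsilon)$ into $\R\times S^3$.

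It remains to see that $\tilde\Phi$ is an embedding, i.e.\ that the $\tu_\tau$ are individually embedded and have pairwise disjoint images, also near the puncture. By non-negativity and upper semicontinuity of $\wind_\pi$ we have $\wind_\pi(\tu_\tau)=0$ and $\wind_\infty(\tu_\tau,\infty)=1=\lfloor\mu(P)/2\rfloor$ for $\tau$ near $0$; as for $\tu_0$, this gives that each $u_\tau$ is an immersion transverse to $R_\lambda$, that $\tu_\tau$ is embedded, and (as in Theorem~\ref{th:theorem-1.3-hwz-prop2} and \cite{hofer1999properties3}) that $u_\tau$ misses $P$. The generalized intersection number $[\tu_\tau]*[\tu_{\tau'}]$ is a homotopy invariant, hence constant on the connected family; its value at $\tau=\tau'$ is the generalized self-intersection of the embedded curve $\tu_\tau$, which vanishes because $\wind_\infty(\tu_\tau,\infty)=\lfloor\mu(P)/2\rfloor$ is extremal (Theorem~\ref{co:5.9-siefring}). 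Thus $[\tu_\tau]*[\tu_{\tau'}]=0$; since $\tau\mapsto[\tu_\tau]$ is injective the projections $u_\tau,u_{\tau'}$ do not have identical image for $\tau\ne\tau'$, so by positivity of intersections (Theorem~\ref{th:2.4-siefring}) we get $u_\tau(\C)\cap u_{\tau'}(\C)=\emptyset$. Hence $\tilde\Phi$ is an injective immersion onto a $C^\infty$ hypersurface, i.e.\ an embedding.

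For the last assertion, let $\tu_n$ be finite energy $\bar J$-holomorphic planes asymptotic to $P$ with $\tu_n\to\tu_0$ in $C^\infty_{loc}(\C)$. The $d\lambda$-area $A(\tu_n)=\int_\C u_n^*d\lambda$ is determined by the fixed asymptotic orbit $P$, so it is independent of $n$ and equals $A(\tu_0)$; consequently no $d\lambda$-area can be lost to bubbling, and since $E(\tu_n)$ is uniformly bounded and the asymptotic limit is nondegenerate and simple, no breaking in the cobordism occurs either. By SFT-type compactness, $\tu_n$, reparametrized by automorphisms of $(\C,\infty)$, converges to $\tu_0$ in $C^\infty$; by the implicit function theorem description above this forces $[\tu_n]=[\tu_{\tau_n}]$ with $\tau_n\to0$, so $\tu_n(z)=\tilde\Phi(A_nz+B_n,\tau_n)$ for large $n$, and the convergence $\tu_n\to\tu_0$ together with $\tu_{\tau_n}\to\tu_0$ forces $A_n\to1$ and $B_n\to0$. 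The main difficulty I anticipate is the automatic-regularity step for a non-generic $\bar J$ — which is precisely what makes the theorem applicable below — together with the compactness input just used, where bubbling and breaking must be excluded via the $d\lambda$-area being pinned down by the asymptotic orbit.
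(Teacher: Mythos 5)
The paper does not actually prove this statement: it is imported wholesale from \cite{hofer1999properties3} (the sentence preceding it reads ``The following theorem is a consequence of results from \cite{hofer1999properties3}''), so there is no in-paper argument to compare with. Your overall strategy (automatic transversality, implicit function theorem for the normal Cauchy--Riemann operator with nowhere-vanishing kernel sections, disjointness, then a compactness/uniqueness step) is indeed the strategy of the cited reference, and the index numerology and the zero-count argument for kernel sections are correctly gestured at.

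The genuine problem is that you repeatedly argue as if $\tu_0$ and the nearby curves were $\tilde{J}$-holomorphic curves in a symplectization, while the theorem concerns $\bar{J}$-holomorphic curves in the cobordism, where $\bar{J}$ is only $d(h\lambda_0)$-compatible on $[-1,1]\times S^3$. There $\pi\cdot du$ is not a complex-antilinear-free section of a line bundle, so $\wind_\pi(\tu_0)$ is undefined, ``$u_0$ is transverse to $R_\lambda$'' is not meaningful, and Lemma \ref{le:wind-infty-estimate}, Theorem \ref{th:theorem-1.3-hwz-prop2} and the projection-based statements of Theorems \ref{co:5.9-siefring} and \ref{th:2.4-siefring} (which speak about the projected curves in $S^3$ and presuppose $\tilde J=(\lambda,J)$) do not literally apply. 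This is not merely cosmetic: your disjointness step rests on $[\tu_0]*[\tu_0]=0$, which is equivalent to the extremality $\wind_\infty(\tu_0,\infty)=\lfloor\mu(P)/2\rfloor=1$, and your only justification for that winding identity is the $\wind_\pi\geq 0$ mechanism, which is unavailable for $\bar J$-curves; in a cobordism a drop of asymptotic winding (a ``hidden intersection at infinity'') is not a priori excluded, even for embedded curves, since the $\R$-translation characterization of Theorem \ref{th:siefring-2.6} also fails there. The robust route — and the one behind the cited result — is to work with the normal operator along the embedded plane: automatic transversality ($\ind=1>c_N=0$ with one even puncture), kernel sections with zero count $\leq c_N=0$ hence nowhere vanishing and with extremal asymptotic winding, and then to build the family as exponential graphs of such sections, obtaining embeddedness, pairwise disjointness and the correct behavior near the puncture from Siefring's relative asymptotic formula rather than from the symplectization intersection statements; the local uniqueness (third bullet) then needs an argument that $C^\infty_{loc}$-convergence with the same simple asymptotic orbit upgrades to convergence in the functional-analytic neighborhood of the IFT (no mass loss at the puncture), which you assert but in terms (``$d\lambda$-area pinned by $P$'', ``no breaking'') that again borrow symplectization intuition without proof in the cobordism.
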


From now on we fix  $\bar{J}\in \mathcal{J}^{E}_{reg}$, where $\mathcal{J}^{E}_{reg}$ is given by Theorem \ref{th:fredholm-estimates-generalized}.
Let $\Theta$ be the space of generalized finite energy $\bar{J}$-holomorphic planes asymptotic to $P_2$, modulo holomorphic reparametrizations. By Theorem \ref{th:espaço-de-curvas-e-variedade}, $\Theta$ is a smooth $1$-dimensional manifold.

\begin{lemma}\label{le:theta-nao-vazio}
	The space $\Theta$ is nonempty. 
\end{lemma}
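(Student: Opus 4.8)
The plan is to produce a generalized finite energy $\bar J$-holomorphic plane asymptotic to $P_2$ by a stretch-the-neck / SFT-type compactness argument applied to the symplectic cobordism between $(S^3,\lambda)$ and the ellipsoid $(S^3,\lambda_E)$. First I would recall that we already have the finite energy $\tj$-holomorphic plane $\tilde u_q=(a_q,u_q):\C\to\R\times S^3$ asymptotic to $P_2$ constructed in Proposition \ref{theo:bubbling-off-two-vertices}; in fact what we want to produce is a plane asymptotic to $P_2$ for the cobordism almost complex structure $\bar J\in\mathcal J^E_{reg}$ living in $\R\times S^3$ with its neck region, and the natural candidate is obtained by pushing $\tilde u_q$ down through the cobordism. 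Concretely, for the cobordism structure $\bar J$ that equals $\tj=(\lambda,J)$ on $[2,\infty)\times S^3$, the plane $\tilde u_q$ (after an $\R$-translation moving its image into $\{a\ge 2\}$) is already $\bar J$-holomorphic and has finite energy in the cobordism sense, so it directly furnishes an element of $\Theta$. Thus the first and cheapest route is: observe $\tilde u_q$, translated upward, is a generalized finite energy $\bar J$-holomorphic plane asymptotic to $P_2$, hence $\Theta\neq\emptyset$. The only subtlety is that $\tilde u_q$ was produced for an arbitrary $\tj\in\mathcal J_{reg}$, while here $\bar J$ agrees with a specific $(\lambda,J)$ on the top end; but by the remark following Theorem \ref{th:theorem-1.3-hwz-prop2}, we are free to fix $J$ so that this $\tj$ is the one appearing in the hypotheses and in Proposition \ref{theo:bubbling-off-two-vertices}, and then perturb $\bar J$ on $[-1,1]\times S^3$ within $\mathcal J(\lambda,J,\lambda_E,J_E)$ to land in $\mathcal J^E_{reg}$ while keeping it fixed near $\{a\ge 2\}$; Theorem \ref{th:espaço-de-curvas-e-variedade} and Fredholm regularity then let us perturb $\tilde u_q$ itself to a genuine $\bar J$-holomorphic plane for the perturbed structure.

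If instead one wants (or needs) a plane whose image genuinely enters the neck — for instance because the lemma is invoked later to start a family that eventually breaks at the ellipsoid — then I would argue as follows. Start with the $\R$-translates $\tilde u_q^c=(a_q+c,u_q)$, which for $c$ large are $\bar J$-holomorphic planes asymptotic to $P_2$ lying in $\{a\ge 2\}$. Use Theorem \ref{th:espaço-de-curvas-e-variedade} to extend $\tilde u_q^c$ to a maximal connected $1$-parameter family $\{\tilde w_t\}$ of generalized finite energy $\bar J$-holomorphic planes asymptotic to $P_2$, parametrized so that $t$ decreasing corresponds to the image sinking in the $\R$-direction. By the Fredholm index formula $\ind(\tilde w_t)=\mu(\tilde w_t)-\chi(\C)+1=2-1+1=2$ (using $\mu(P_2)=2$, $\chi(\C)=1$, $\#\Gamma=1$), the family is two-dimensional after accounting for the $\R$-action and one-dimensional modulo reparametrization, matching the manifold structure of $\Theta$. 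The point is simply that this family is nonempty, which is what the lemma asserts; one does not need to analyze how it degenerates here.

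The main obstacle — and the reason the lemma is stated separately — is the matching of almost complex structures and the regularity requirement $\bar J\in\mathcal J^E_{reg}$: one must ensure that the plane $\tilde u_q$, constructed in an earlier section for a possibly different compatible $J$, survives the passage to the cobordism setting and to a regular $\bar J$. I would handle this by the perturbation argument sketched above: first fix $J\in\mathcal J(\lambda)$ so that the plane of hypothesis \textit{(iv)} in Theorem \ref{theo:main-theorem} and all of Section \ref{se:foliating-gamma1=0} use this $J$; then choose $\bar J\in\mathcal J(\lambda,J,\lambda_E,J_E)$ equal to $(\lambda,J)$ on $\{a\ge 2\}$; then, since $\tilde u_q$ is somewhere injective (as $P_2$ is simple) and automatically Fredholm regular as an embedded plane by Theorem \ref{th:espaço-de-curvas-e-variedade}/\cite{hofer1999properties3}, a $C^\infty$-small perturbation of $\bar J$ into $\mathcal J^E_{reg}$, supported away from the top end, yields a nearby $\bar J$-holomorphic plane asymptotic to $P_2$. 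This gives an element of $\Theta$, proving the lemma.
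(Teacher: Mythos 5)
Your first route is exactly the paper's proof: translate $\tu_q$ in the $\R$-direction so that $\min_{\C} a_q>2$, and since every $\bar J\in\mathcal{J}(\lambda,J,\lambda_E,J_E)$ coincides with $\tj=(\lambda,J)$ on $\{a\geq 2\}$ (the $J$ here being the one already fixed in Section \ref{se:foliating-gamma1=0}), the translated plane is literally $\bar J$-holomorphic with finite generalized energy, so $\Theta\neq\emptyset$. The additional perturbation discussion and the second route are unnecessary for the lemma (no perturbation of the curve or of $\bar J$ near the top end is needed, and in the optional index count the correct value is $\ind=\mu(P_2)-\chi(S^2)+1=1$, not $2$), but they do not affect the correctness of the main argument.
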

\begin{proof}
	Let $\tu_q=(a_q,u_q):\C\to \R\times S^3$ be the $\tilde{J}$-holomorphic plane asymptotic to $P_2$ given by Theorem \ref{theo:bubbling-off-two-vertices}.
	After an $\R$-translation we can assume that $\min_{z\in \C}a_q(z)>2$, so that $\tu_q=(a_q,u_q)$ can be viewed as a generalized finite energy $\bar{J}$-holomorphic plane asymptotic to $P_2$.
\end{proof}
By the proof of Lemma \ref{le:theta-nao-vazio}, we have $\tu_q\in \Theta$. 
Let $\Theta'$ be the connected component of $\Theta$ containing $\tu_q$.

\subsection{Existence of a $\boldsymbol{\tj}$-holomorphic cylinder}

Consider a sequence $\tv_n=(b_n,v_n)$ of generalized finite energy planes representing elements of $\Theta'$ . The energy $E(\tv_n)$ is uniformly bounded by $T_2$. The following statement adapted from \cite{Hryniewicz2016} is a corollary of the SFT compactness theorem of \cite{bourgeois2003compactness}.


\begin{theorem}[\cite{Hryniewicz2016},Theorem 3.11]\label{theo:sft-compactness-corollary}
	Up to a subsequence of $\tv_n$, still denoted by $\tv_n$, there exists a bubbling-off tree $\mathcal{B}=(\mathcal{T},\mathcal{U})$ with the following properties
	\begin{itemize}
		\item For every vertex $q$ of $\mathcal{T}$ there exist sequences $z_n^q, ~\delta_n^q\in \C$ and $c_n^q\in \R$ such that 
		\begin{equation}\label{eq:sft-compactness-convergence}
		\tv_n(z^q_n+\delta_n^q \cdot)+c_n^q\to \tv_q ~\text{in }C^\infty_{loc}(\C\setminus \Gamma_q)\text{ as }n\to \infty
		\end{equation}
		Here $\tv + c := (b + c, v)$, where $\tv = (b, v)$ and $c\in \R$
		\item The curve $\tv_r$ is asymptotic to $P_2$ at $\infty$ and the asymptotic limits of all curves $\tv_q$ are closed orbits with periods $\leq T_2$  of the Reeb flow of either $\lambda$ or $\lambda_E$.
	\end{itemize}
\end{theorem}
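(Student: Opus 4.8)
The plan is to deduce this from the SFT compactness theorem of \cite{bourgeois2003compactness}, applied to the symplectic cobordism constructed in the previous subsections. First I would record the set-up. Since $h(a,\cdot)=f$ for $a\ge 2$, $h(a,\cdot)=f_E$ for $a\le -2$, and the family $\{J_a\}$ is cylindrical outside $[-1,1]\times S^3$, every generalized finite energy plane $\tv_n=(b_n,v_n)$ representing an element of $\Theta'$ is a $\bar J$-holomorphic sphere with a single positive puncture (at $\infty$) inside the completed symplectic cobordism obtained by attaching the symplectization of $(S^3,\lambda)$ at the positive end and of $(S^3,\lambda_E)$ at the negative end of $([-1,1]\times S^3,\,d(h\lambda_0))$. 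As recalled above, $E(\tv_n)\le T_2$, so the hypotheses of SFT compactness hold.

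Next I would invoke SFT compactness: after passing to a subsequence, $\tv_n$ converges to a stable $\bar J$-holomorphic building $\mathcal{B}$, whose (finitely many) components are nonconstant punctured finite energy spheres, each living in the symplectization of $\lambda$, in the symplectization of $\lambda_E$, or in the cobordism level $[-1,1]\times S^3$, and which are matched at their punctures along common periodic orbits. Finiteness of the number of components follows from the energy bound, since every component carries at least the minimal action of a $\lambda$- or $\lambda_E$-Reeb orbit with period $\le T_2$. Because each $\tv_n$ has genus zero and exactly one positive puncture, the components of $\mathcal{B}$ are organized along a tree $\mathcal{T}$, rooted at the component $\tv_r$ carrying the positive puncture: the edges of $\mathcal{T}$ record the breaking orbits, the set of negative punctures of $\tv_q$ is the finite set $\Gamma_q$, and the positive puncture of each $\tv_q$ sits at $\infty$. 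The convergence mode furnished by SFT compactness is exactly a $C^\infty_{loc}$ convergence after reparametrizing the domain and translating in the target, so there are sequences $z_n^q,\delta_n^q\in\C$ and $c_n^q\in\R$ with $\tv_n(z_n^q+\delta_n^q\cdot)+c_n^q\to\tv_q$ in $C^\infty_{loc}(\C\setminus\Gamma_q)$, which is \eqref{eq:sft-compactness-convergence}.

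It then remains to identify the root and bound periods. Since $\tv_n$ is asymptotic at $\infty$ to the nondegenerate orbit $P_2$ and the asymptotic limit at the positive puncture is preserved under SFT convergence, the root $\tv_r$ is asymptotic to $P_2$ at $\infty$. For the period bound I would invoke Stokes' theorem: the actions of the orbits matched along the edges of $\mathcal{T}$ and of the asymptotic limits at the leaves appear as partial sums of nonnegative contributions whose total is $E(\tv_n)\le T_2$, so all of these periods are $\le T_2$. Finally, the top levels of $\mathcal{B}$ are symplectizations of $(S^3,\lambda)$ and so their ends limit onto $\lambda$-Reeb orbits, the bottom levels are symplectizations of $(S^3,\lambda_E)$ and so their ends limit onto $\lambda_E$-Reeb orbits, and the cobordism level has positive ends at $\lambda$-orbits and negative ends at $\lambda_E$-orbits; hence every orbit occurring is a closed Reeb orbit of either $\lambda$ or $\lambda_E$.

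I expect the main difficulty to be organizational rather than analytic: translating the level-structured holomorphic building produced by SFT compactness into the bubbling-off-tree formalism used in the rest of the paper, keeping track of which rescaling and $\R$-shift goes with which vertex, and checking that the incidence graph is genuinely a tree — which is precisely where stability of the limit building together with the genus-zero, one-positive-puncture hypothesis enter. The analytic input (a priori energy bound, no loss of energy in the limit, local convergence near each component) is standard once the almost complex structures are of the form in $\mathcal{J}(\lambda,J,\lambda_E,J_E)$.
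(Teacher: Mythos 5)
Your proposal follows exactly the route the paper takes: the paper does not prove this statement itself but quotes it from \cite{Hryniewicz2016} as a corollary of the SFT compactness theorem of \cite{bourgeois2003compactness}, and your sketch (energy bound, convergence to a stable holomorphic building in the completed cobordism, genus-zero/one-positive-puncture forcing a tree, translation of the level structure into the bubbling-off-tree formalism with the per-vertex rescalings and $\R$-shifts giving \eqref{eq:sft-compactness-convergence}, and Stokes for the period bound) is precisely that derivation. So the approach is correct and essentially identical to the one the paper relies on.
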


Here, a bubbling off tree $\mathcal{B}=(\mathcal{T},\mathcal{U})$ consists of a finite, rooted tree $\mathcal{T}=(E,\{r\},V)$, with edges oriented away from the root, and a finite set of pseudo-holomorphic spheres $\mathcal{U}$, satisfying the following properties.
\begin{itemize}
	\item There is a bijective correspondence between vertices $q\in \mathcal{T}$ and finite-energy punctured spheres $\tv_q:\C\setminus \Gamma_q\to \R\times S^3 \in \mathcal{U}$. Each $\tv_q:\C\setminus \Gamma_q\to \R\times S^3$ is pseudo-holomorphic with respect to either $\tilde{J}$, $\tilde{J}_E$ or $\bar{J}$ and has finite energy.
	\item Each sphere $\tv_q$ has exactly one positive puncture at $\infty$ and a finite set $\Gamma_q$ of negative punctures. 
	If $\tv_q$ is $\tilde{J}$ or $\tilde{J}_E$-holomorphic and its contact area vanishes, then $\#\Gamma_q\geq 2$.
	\item  Each ordered path $(q_1,\dotsc, q_N)$ from the root $q_1=r$  to a leaf $q_N$, where $q_{k+1}$ is a direct descendant of $q_k$, contains at most one vertex $q_i$ such that $\tv_{q_i}$ is $\bar{J}$-holomorphic, in which case $\tv_{q_j}$ is $\tilde{J}$-holomorphic $\forall 1\leq j<i$, and $\tv_{q_j}$ is $\tilde{J}_E$-holomorphic $\forall i<j\leq N$.
	
	\item If the vertex $q$ is not the root then $q$ has an incoming edge $e$ from a vertex $q'$, and $\#\Gamma_q$ outgoing edges $f_1,\dots , f_{\#\Gamma_q}$ to vertices $p_1, \dots , p_{\#\Gamma_q}$ of $\mathcal{T}$, respectively. The edge $e$ is associated to the positive puncture of $\tv_q$ and the edges $f_1,\dots , f_{\#\Gamma_q}$ are associated to the negative punctures of $\tv_q$. 
	The asymptotic limit of $\tv_q$ at its positive puncture coincides with the asymptotic limit of $\tv_{q'}$ at its negative puncture associated to $e$. 
	In the same way, the asymptotic limit of $\tv_q$ at a negative puncture corresponding to $f_i$ coincides with the asymptotic limit of $\tv_{p_i}$ at its unique positive puncture.
\end{itemize}

The following statement is a reformulation of Lemma 3.12 from \cite{Hryniewicz2016}, adapted to our set-up. 
\begin{lemma}[\cite{Hryniewicz2016}, Lemma 3.12]\label{le:sft-compactness-corollary}
	Let $z_n^q, \delta_n^q, c_n^q$ be sequences such that \eqref{eq:sft-compactness-convergence} holds for all vertices $q$ of $\mathcal{T}$. Then we can assume, up to a selection of a subsequence still denoted by $\tv_n$, that one of the three mutually excluding possibilities holds for each vertex $q$.
	\begin{itemize}
		\item[(I)] $c_n^q$ is bounded, $b_n(z_n +\delta_n^q\cdot)$ is $C^0_{loc}(\C\setminus \Gamma_q)$-bounded and $\tv_q$ is a $\bar{J}$-holomorphic curve;
		
		\item[(II)] $c_n^q\to -\infty$, $b_n(z_n^q+\delta_n^q\cdot)\to +\infty$ in $C^0_{loc}(\C\setminus \Gamma_q)$ as $n\to \infty$ and $\tv_q$ is a $\tilde{J}$-holomorphic curve;
		
		\item[(III)]$c_n^q\to +\infty$, $b_n(z_n^q+\delta_n^q\cdot)\to -\infty$ in $C^0_{loc}(\C\setminus \Gamma_q)$ as $n\to \infty$ and $\tv_q$ is a $\tilde{J}_E$-holomorphic curve.
		
	\end{itemize}
	Moreover,  if $q$ is a vertex for which (III) holds, then $\tv_q$ is asymptotic at its positive puncture to a Reeb orbit having period strictly less than $T_2$. In particular, (III) does not hold for the root $r$.
\end{lemma}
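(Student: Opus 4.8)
The plan is to adapt the proof of Lemma~3.12 of \cite{Hryniewicz2016}; the structural fact to exploit is that $\bar{J}$ is $\R$-translation invariant on $(\R\setminus[-1,1])\times S^3$, where it equals $\tilde{J}$ on $\{a\ge 2\}$ and $\tilde{J}_E$ on $\{a\le-2\}$, while on $[-1,1]\times S^3$ it is merely $d(h\lambda_0)$-compatible. For the trichotomy, since $\mathcal{T}$ is finite, after a diagonal passage to a subsequence I may assume $c_n^q\to c_\infty^q\in[-\infty,+\infty]$ for every vertex $q$. Writing $\tv_q=(b_q,v_q)$, \eqref{eq:sft-compactness-convergence} forces $b_n(z_n^q+\delta_n^q\,\cdot)=\bigl(b_n(z_n^q+\delta_n^q\,\cdot)+c_n^q\bigr)-c_n^q\to b_q-c_\infty^q$ in $C^0_{loc}(\C\setminus\Gamma_q)$. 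If $c_\infty^q$ is finite, then $b_n(z_n^q+\delta_n^q\,\cdot)$ is $C^0_{loc}$-bounded and the translated maps $\tv_n(z_n^q+\delta_n^q\,\cdot)+c_n^q$ are pseudo-holomorphic for the $c_n^q$-translates of $\bar{J}$, which converge to an $\R$-translate of $\bar{J}$, again of cobordism type, so $\tv_q$ is $\bar{J}$-holomorphic after absorbing this translation; this is (I). If $c_\infty^q=+\infty$, then $b_n(z_n^q+\delta_n^q\,\cdot)\to-\infty$ in $C^0_{loc}$, so over each compact subset the image of $\tv_n(z_n^q+\delta_n^q\,\cdot)$ eventually lies in $\{a\le-2\}$ where $\bar{J}=\tilde{J}_E$, and translation invariance of $\tilde{J}_E$ gives that $\tv_q$ is $\tilde{J}_E$-holomorphic; this is (III). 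The case $c_\infty^q=-\infty$ is symmetric, giving $b_n(z_n^q+\delta_n^q\,\cdot)\to+\infty$ and $\tv_q$ $\tilde{J}$-holomorphic, i.e.\ (II). The three alternatives are mutually exclusive because $c_\infty^q$ lies in exactly one of $\{\text{finite}\}$, $\{+\infty\}$, $\{-\infty\}$ and $\bar{J},\tilde{J}_E,\tilde{J}$ are pairwise distinct.

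Next I would prove the period estimate. Fix a vertex $q$ with (III); then $\tv_q$ is $\tilde{J}_E$-holomorphic, so by Theorem~\ref{th:asymptotic-limit} its positive asymptotic limit $P$ is a closed orbit of $R_{\lambda_E}$, of $\lambda_E$-action $\mathcal{A}_{\lambda_E}(P)$ (its period); write $\mathcal{A}_\lambda,\mathcal{A}_{\lambda_E}$ for $\lambda$- and $\lambda_E$-actions of closed orbits. On the $r$-to-$q$ path in $\mathcal{T}$ the vertex types, by the level structure built into the definition of a bubbling-off tree, form a possibly empty block of $\tilde{J}$-curves, then at most one $\bar{J}$-curve, then a nonempty block of $\tilde{J}_E$-curves containing $q$; since $\lambda$ and $\lambda_E$ have no common closed Reeb orbit, a $\tilde{J}$-curve cannot have a $\tilde{J}_E$-curve as a child, so the $\bar{J}$-vertex is genuinely present — call it $q_*$, strictly above $q$, with only $\tilde{J}_E$-vertices strictly between. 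Chaining Stokes' theorem along the $\tilde{J}_E$-curves from $q_*$ down to $q$ and using nonnegativity of the $d\lambda_E$-area, one gets $\mathcal{A}_{\lambda_E}(P)\le\mathcal{A}_{\lambda_E}(Q_-)$, where $Q_-$ is the negative asymptotic limit of $\tv_{q_*}$ lying on the path toward $q$.

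Then I would cross the cobordism. The curve $\tv_{q_*}$ has its single positive puncture asymptotic to a $\lambda$-orbit $Q_+$ (there $a\to+\infty$) and at least one negative puncture asymptotic to a $\lambda_E$-orbit (there $a\to-\infty$), so its image meets $[-1,1]\times S^3$. Taking $\phi\equiv\tfrac12\in\Sigma$ one checks $d\tau_\phi=\tfrac12 d(h\lambda_0)$ on all of $\R\times S^3$, which equals $\tfrac12 d\lambda$ on $\{a\ge2\}$ and $\tfrac12 d\lambda_E$ on $\{a\le-2\}$, and that $\tv_{q_*}^*d\tau_\phi\ge0$ pointwise (nonnegativity of the generalized energy density); by \eqref{eq:derivada-cobordismo} this density is strictly positive on $\tv_{q_*}^{-1}(\{|a|<1\}\times S^3)$ away from the discrete critical set, a nonempty open set, so $\int_{S\setminus\Gamma_{q_*}}\tv_{q_*}^*d\tau_\phi>0$. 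Evaluating the same integral by Stokes' theorem as $\tfrac12\mathcal{A}_\lambda(Q_+)-\tfrac12\sum_i\mathcal{A}_{\lambda_E}(Q_-^i)$ yields $\mathcal{A}_{\lambda_E}(Q_-)<\mathcal{A}_\lambda(Q_+)$, and running from $q_*$ up through the $\tilde{J}$-block to $r$ with nonnegativity of the $d\lambda$-area gives $\mathcal{A}_\lambda(Q_+)\le\mathcal{A}_\lambda(P_2)=T_2$. Combining, $\mathcal{A}_{\lambda_E}(P)<T_2$, which is the asserted bound; and (III) cannot hold for $r$, whose positive asymptotic limit $P_2$ is a $\lambda$-orbit of period exactly $T_2$.

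The step I expect to be the main obstacle is the strict inequality across the cobordism: one must be certain that $\tv_{q_*}$ genuinely enters $\{|a|<1\}$ — forced by the coexistence of a positive puncture at the $\lambda$-end and a negative puncture at the $\lambda_E$-end — and that the positive energy density there is not supported on a negligible set, which follows from the fact that a nonconstant pseudo-holomorphic curve has positive symplectic area on any open set meeting its image. The remaining work is bookkeeping: the Stokes computation with the family $\tau_\phi$ and the orientation conventions at negative punctures, so the boundary terms come out as $\pm\tfrac12$ times the correct actions; the telescoping of the Stokes inequalities along a branch of the tree; and the harmless absorption of the $\R$-translation in case (I), which only shifts the window $[-1,1]$ and affects none of the conclusions.
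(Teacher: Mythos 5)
The paper does not actually prove this lemma: it is quoted from [Hryniewicz2016, Lemma 3.12] and used as a black box, so there is no in-paper proof to compare against; I can only judge your reconstruction on its own. In its main lines it is the standard argument and it is correct: the trichotomy comes from passing to a diagonal subsequence so that $c_n^q\to c_\infty^q\in[-\infty,+\infty]$ for every vertex, transferring this to $b_n(z_n^q+\delta_n^q\cdot)$ via \eqref{eq:sft-compactness-convergence}, and using that $\bar J$ coincides with the $\R$-invariant structures $\tilde J$ on $\{a\geq 2\}$ and $\tilde J_E$ on $\{a\leq -2\}$; and the period bound is the usual chained Stokes argument, with the strict drop supplied by the main-level curve, which must cross $[-1,1]\times S^3$, where $d(h\lambda_0)$ is symplectic by \eqref{eq:derivada-cobordismo}, so that its $d\tau_\phi$-energy with $\phi\equiv\tfrac12$ is strictly positive while the boundary terms compute to $\tfrac12$ times the $\lambda$-action at the top minus the $\lambda_E$-actions at the bottom. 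Your remark that in case (I) one renormalizes $c_n^q$ so that the limit is literally $\bar J$-holomorphic (rather than holomorphic for a translate of $\bar J$) is the right bookkeeping.

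The one step I would not accept as written is your justification that the $\bar J$-vertex $q_*$ sits above every vertex of type (III): you derive it from the assertion that $\lambda$ and $\lambda_E$ have no common closed Reeb orbit. That is neither assumed in the paper nor automatic (both forms are positive multiples of $\lambda_0$, and a single loop could in principle be a closed orbit of both Reeb fields). What you should invoke instead is the level structure of the SFT limit building from which the tree of Theorem \ref{theo:sft-compactness-corollary} is extracted: upper levels are $\tilde J$-holomorphic, there is exactly one main ($\bar J$) level, lower levels are $\tilde J_E$-holomorphic, and consecutive levels are matched, so any branch reaching a $\tilde J_E$-vertex passes through a $\bar J$-vertex unless the root itself lies in a lower level. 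The latter is excluded because the root is asymptotic at its positive puncture to $P_2$, an $R_\lambda$-orbit of $\lambda$-action exactly $T_2$, whereas a positive asymptotic limit of a lower-level curve is an $R_{\lambda_E}$-orbit; even in the degenerate scenario of a shared loop its $\lambda_E$-action would be strictly less than $T_2$, since $f_E<f$ pointwise and a Reeb trajectory pulls back $\lambda_0$ positively, so it could not be the datum $P_2=(x_2,T_2)$. Relatedly, note that your proof of the period estimate does not literally cover $q=r$ (there is no vertex strictly above the root), so the final sentence of the lemma rests precisely on this separate observation, which you state but again justify only through the unsupported ``no common orbit'' claim; with the fix above the argument is complete.
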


{The following lemma is proved using the maximum
	principle combined with estimates for cylinders with small area (Lemma \ref{le:cylinders-with-small-area}). }
\begin{lemma}\label{le:plane-near-orbit}
	Let $z_n^r, \delta_n^r, c_n^r$ be sequences such that \eqref{eq:sft-compactness-convergence} holds for the root $r$. Then, by Theorem \ref{theo:sft-compactness-corollary}, $P_2=(x_2,T_2)$ is the asymptotic limit of $\tv_r$ at the positive puncture $\infty$. For every $\R/\Z$-invariant neighborhood $\mathcal{W}$ of $t\mapsto x_2(T_2t)$ in $C^\infty(\R/\Z,S^3)$ and for every number $M>0$, there exist $R_0>0$ and $n_0$ such that if $R>R_0$ and $n>n_0$, then the loop $t\mapsto v_n(z_n^r+\delta_n^rRe^{i2\pi t})$ belongs to $\mathcal{W}$ and $b_n^r(z_n^r + \delta_n^r e^{i2\pi t})+c_n^r>M$.
\end{lemma}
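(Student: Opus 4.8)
\emph{Proof idea.} The plan is to analyze $\tv_n=(b_n,v_n)$ on the annular region $A_n(R_0):=\{w\in\C:|w-z_n^r|\geq \delta_n^r R_0\}$, which contains the positive puncture $\infty$ of $\tv_n$; in the coordinates $w=z_n^r+\delta_n^r e^{2\pi(s+it)}$ this region is a half-cylinder $[s_0,\infty)\times S^1$ with $s_0=\tfrac{\ln R_0}{2\pi}$, and I write $\tilde C_n(s,t)=\tv_n(z_n^r+\delta_n^r e^{2\pi(s+it)})+c_n^r=(g_n(s,t),h_n(s,t))$. The first input I would record is that no $d\lambda$-area is lost at the positive puncture in the limit of Theorem \ref{theo:sft-compactness-corollary}: for every $\epsilon>0$ there are $R_0>1$ and $n_0$ with $\int_{A_n(R_0)}v_n^*d\lambda<\epsilon$ whenever $n\geq n_0$; in particular one may arrange $\int_{A_n(R_0)}v_n^*d\lambda\leq\sigma(T_3)$. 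Since $E(\tv_n)\leq T_2\leq T_3$ and, by \eqref{eq:sft-compactness-convergence}, the loop $t\mapsto h_n(s_0,t)$ converges in $C^\infty(S^1,S^3)$ to $t\mapsto v_r(R_0e^{2\pi it})$ — whose $\lambda$-integral is positive and as close to $T_2$ as we like once $R_0$ is large, because $\tv_r$ is asymptotic to $P_2$ at $\infty$ — the hypotheses of Lemma \ref{le:cylinders-with-small-area} are met on every finite subcylinder $[s_0,S]\times S^1$ (a preliminary use of the maximum principle confines the relevant part of $\tilde C_n$ to $\{a\geq 2\}$, where $\bar J=\tilde J$). Letting $S\to\infty$ and enlarging $R_0$ yields conclusion (a): the loop $t\mapsto v_n(z_n^r+\delta_n^r R e^{2\pi it})$ lies in $\mathcal W$ for all $R\geq R_0$ and all large $n$.

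For the lower bound on $b_n+c_n^r$ I would first shrink $\mathcal W$ — this only strengthens the claim — so that every loop in it has $\lambda$-integral in $(T_2/2,2T_2)$ and each component of $\mathcal W$ contains at most one orbit modulo $S^1$. By (a), $\mathcal A_n(s):=\int_{\{s\}\times S^1}h_n^*\lambda\in(T_2/2,2T_2)$ for $s\geq s_0$ and $n$ large, and Stokes' theorem shows $\mathcal A_n$ is nondecreasing in $s$. On the locus $\{g_n\geq 2\}$, where $\bar J=\tilde J$, the identity $\partial_s g_n=\lambda(\partial_t h_n)$ gives $\tfrac{d}{ds}\int_{S^1}g_n(s,\cdot)\,dt=\mathcal A_n(s)>0$, so the circle-average $\bar g_n(s):=\int_{S^1}g_n(s,t)\,dt$ strictly increases there. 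Independently, the smallness $\int_{A_n(R_0)}v_n^*d\lambda\leq\sigma(T_3)$ together with the standard gradient (``$\epsilon$-regularity'') estimate for pseudo-holomorphic cylinders of small area — uniform on $[s_0+1,\infty)\times S^1$ by the $\R$-invariance of the geometry on $\{|a|\geq 2\}$ — bounds $|\partial_t g_n|$ by a universal constant $C_0$, hence the oscillation of $g_n(s,\cdot)$ by $C_0$.

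Now I would choose $R_0$ additionally so large that $\int_{S^1}b_r(R_0e^{2\pi it})\,dt>M+C_0+2$; this is possible because, by Theorem \ref{theo:asymptotic behavior} applied to $\tv_r$ at its positive puncture, $b_r(w)=T_2\tfrac{\ln|w|}{2\pi}+\text{const}+O(|w|^{-r_0})\to+\infty$. Then $\bar g_n(s_0)>M+C_0+1$ for $n$ large. A short continuation argument finishes: set $S^*=\sup\{S\geq s_0:g_n>2\text{ on }[s_0,S]\times S^1\}$; on $[s_0,S^*)$ the two previous paragraphs apply, so $\bar g_n$ is increasing there and $g_n(s,t)\geq\bar g_n(s)-C_0\geq\bar g_n(s_0)-C_0>M+1>2$, which forces $S^*=+\infty$ — here the maximum principle (subharmonicity of the $\R$-coordinate of a $\tilde J$-holomorphic map, and of a $\bar J$-holomorphic one on $\{|a|\leq 2\}$ thanks to the monotonicity \eqref{eq:derivada-cobordismo}) is what prevents $g_n$ from sneaking below $2$ while its average stays above $M+C_0+1$. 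Consequently $g_n(s,t)>M$ for all $s\geq s_0$ and all large $n$, i.e.\ $b_n(z_n^r+\delta_n^r R e^{2\pi it})+c_n^r>M$ for all $R\geq R_0$, as required.

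The main obstacle is exactly the passage from the $C^\infty_{loc}$-convergence in \eqref{eq:sft-compactness-convergence}, which controls $\tv_n$ only on bounded pieces of the rescaled domain, to uniform control over the entire non-compact neck approaching the positive puncture $\infty$. This is where the small-area cylinder estimate of Lemma \ref{le:cylinders-with-small-area}, the no-loss-of-$d\lambda$-area property of the SFT limit, and the maximum principle must work together — both to keep the loops near $x_2(T_2\cdot)$ and to prevent the $\R$-coordinate from dipping — and getting the constants in the right order ($C_0$ universal, then $R_0$ large depending on $M$ and $C_0$, then $n_0$ large) is the delicate bookkeeping.
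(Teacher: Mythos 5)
The paper never writes out a proof of Lemma \ref{le:plane-near-orbit} (it only records that the statement follows from the maximum principle together with Lemma \ref{le:cylinders-with-small-area}), and the engine you build -- control of the inner circle $|z-z_n^r|=\delta_n^rR_0$ from \eqref{eq:sft-compactness-convergence} and the asymptotics of $\tv_r$, Stokes' theorem giving $\int_{A_n(R_0)}v_n^*d\lambda=T_2-\int_{|z-z_n^r|=\delta_n^rR_0}v_n^*\lambda$ small, growth of the circle-average of the $\R$-coordinate at rate $\mathcal{A}_n(s)$ via $\partial_s g_n=\lambda(\partial_t h_n)$, an oscillation bound from a small-area gradient estimate, and a continuation argument -- is indeed the right mechanism for this kind of statement. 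The genuine gap is in how you launch it: you establish conclusion (a) first by applying Lemma \ref{le:cylinders-with-small-area} on the neck, justified by ``a preliminary use of the maximum principle confines the relevant part of $\tilde C_n$ to $\{a\geq2\}$'', and you close by crediting subharmonicity of the $\R$-coordinate with preventing $g_n$ from dipping. Subharmonicity (equivalently $v_n^*d\lambda\geq0$ on the cylindrical part, and the monotonicity \eqref{eq:derivada-cobordismo} in the cobordism piece) yields a \emph{maximum} principle -- no interior local maximum of $a$ -- and says nothing about interior \emph{minima}; and in fact the planes $\tv_n$ cannot stay in $\{a\geq2\}$ for large $n$ (otherwise they would be $\tj$-holomorphic planes asymptotic to $P_2$, hence by Proposition \ref{pr:unicos-cilindros-p3p2-e-plano} $\R$-translates of $\tu_q$, contradicting that $\Theta'$ is an interval), so no soft minimum-principle argument can confine even the outer annulus. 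As written, (a) rests on an unjustified confinement, and (b) then quotes (a), while the confinement is really a byproduct of the argument in (b): the logic is circular.

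The repair is a reordering, not a new idea, and your second and third paragraphs essentially contain it: run the threshold/continuation argument first, using only (i) $\mathcal{A}_n(s_0)$ close to $T_2$ and $\bar g_n(s_0)$ large, from \eqref{eq:sft-compactness-convergence} together with Theorem \ref{theo:asymptotic behavior} for $\tv_r$; (ii) Stokes monotonicity of $\mathcal{A}_n$ and the identity $\partial_s g_n=\lambda(\partial_t h_n)$, valid on the part of the neck already known to lie in $\{a\geq2\}$; and (iii) the oscillation bound. This gives the confinement and the bound $b_n+c_n^r>M$ simultaneously, and only afterwards is Lemma \ref{le:cylinders-with-small-area} legitimately applicable on the whole neck, which yields (a). Two further slips to fix along the way: the locus where $\bar J=\tilde J$ is $\{b_n\geq2\}$, not $\{g_n=b_n+c_n^r\geq2\}$ -- harmless in case (II) of Lemma \ref{le:sft-compactness-corollary}, where $c_n^r\to-\infty$, but in case (I) you must raise the threshold, e.g.\ assume $M\geq 2+\sup_n|c_n^r|$, which is no loss; and the Stokes identity above only controls a \emph{signed} integral, so converting it into the nonnegative $d\lambda$-area needed in Lemma \ref{le:cylinders-with-small-area} and in the gradient estimate also uses the confinement to $\{a\geq2\}$ and therefore must come after, not before, the continuation step.
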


Since $\Theta'$ is a connected $1$-dimensional manifold without boundary, it is diffeomorphic either to $S^1$ or to an open interval. It can not be diffeomorphic to $S^1$, since it contains the family $\{[(a_q+t,u_q)]\}_{t>0}$ of equivalence classes of translations of $\tu_q$, where $\tu_q=(a_q,u_q)$ is the plane obtained in Theorem \ref{theo:bubbling-off-two-vertices}. Thus, the family is diffeomorphic to an interval and we can assume that  $\Theta'=\{[\tv_\tau=(b_\tau,v_\tau)]\}_{\tau\in(\tau_-,+\infty)}$, where for $\tau\geq 0$, $\tv_\tau=(a_q+\tau,u_q)$.
Consider a sequence $\tv_n:=\tv_{\tau_n}$ satisfying $\tau_n\to \tau_-$.

\begin{proposition}\label{prop:arvore-p1-p2}
	The bubbling-off tree obtained as an SFT-limit of the sequence $[\tv_n]$, as in Theorem \ref{theo:sft-compactness-corollary}, is as follows. The tree has vertices $r,q$, where $r$ is the root and $q$ is a leaf and direct descendant of $r$. The root $r$ corresponds to a $\tilde{J}$-holomorphic cylinder $\tv_r=(b_r,v_r):\C\setminus \{0\}\to \R\times S^3$ asymptotic to $P_2$ at its positive puncture $z=\infty$ and to 
	$P_1$ at its negative puncture $z=0$.
	The leaf $q$ corresponds to a $\bar{J}$-holomorphic plane asymptotic to $P_1$. 
\end{proposition}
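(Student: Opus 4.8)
The plan is to apply the SFT compactness result, Theorem~\ref{theo:sft-compactness-corollary}, to the degenerating sequence $[\tv_n]$, and then to pin down the combinatorial type of the resulting bubbling-off tree $\mathcal{B}=(\mathcal{T},\mathcal{U})$ using the Fredholm index estimates and the winding estimates of Section~\ref{se:preliminaries}, together with the structural hypotheses of Theorem~\ref{theo:main-theorem}. First I would observe that the tree is nontrivial: Theorem~\ref{theo:sft-compactness-corollary} produces a root $r$ whose curve $\tv_r$ is asymptotic to $P_2$ at its positive puncture $\infty$, and if its set $\Gamma_r$ of negative punctures were empty, then $\tv_r:\C\to\R\times S^3$ would be a generalized finite energy $\bar{J}$-holomorphic plane asymptotic to $P_2$, hence an element of $\Theta$; since connected components of a manifold are closed and $[\tv_n]\to[\tv_r]$, this would force $[\tv_r]\in\Theta'$, contradicting $\tau_n\to\tau_-$ together with the fact that $\Theta'$ is parametrized by the open interval $(\tau_-,+\infty)$. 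So $\#\Gamma_r\ge1$.

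Next I would show that the root is $\tilde{J}$-holomorphic. By Lemma~\ref{le:sft-compactness-corollary} the root is of type (I) or (II). If $\tv_r$ were $\bar{J}$-holomorphic, then every other vertex of $\mathcal{T}$ would be $\tilde{J}_E$-holomorphic, so every negative asymptotic limit of $\tv_r$ would be a cover of $\bar{P}_0$ or $\bar{P}_1$, hence of Conley--Zehnder index $\ge3$ by dynamical convexity of $\lambda_E$; since $P_2$ is simple $\tv_r$ is somewhere injective, and Theorem~\ref{th:fredholm-estimates-generalized} would give
\[
0\le\ind(\tv_r)=\mu(P_2)-\sum_{z\in\Gamma_r}\mu(P_z)-1+\#\Gamma_r\le 2-3\#\Gamma_r-1+\#\Gamma_r<0,
\]
a contradiction. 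Hence $\tv_r$ is $\tilde{J}$-holomorphic, lives in the symplectization of $\lambda$, and all of its negative asymptotic limits are closed $\lambda$-orbits of period $\le T_2<T_3$, hence nondegenerate by hypothesis~(i).

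Now I would run the index analysis on the root. Since $\pi\circ dv_r\not\equiv0$ (otherwise Theorem~\ref{theo:vanishing-dlambda-energy} would make $\tv_r$ an orbit cylinder over an iterate of the simple orbit $P_2$, forcing $\#\Gamma_r=0$), Theorem~\ref{theo:fredholm-estimate} gives $\ind(\tv_r)\ge1$, i.e. $1\le\ind(\tv_r)=1+\sum_{z\in\Gamma_r}(1-\mu(P_z))$. For each $z\in\Gamma_r$ the descendant of $r$ attached at $z$ has positive asymptotic limit $P_z$; if that descendant is $\tilde{J}$-holomorphic then $\mu(P_z)\ge2$ by Lemma~\ref{le:indice-maior-igual-a-2}, if it is $\bar{J}$-holomorphic with at least one negative puncture then $\mu(P_z)\ge3$ by a Fredholm count as above, and if it is a $\bar{J}$-holomorphic plane then $\ind\ge0$ gives only $\mu(P_z)\ge1$. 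In all cases $\mu(P_z)\ge1$, so the inequality forces $\mu(P_z)=1$ for every $z\in\Gamma_r$, $\ind(\tv_r)=1$, and every descendant of $r$ must be a $\bar{J}$-holomorphic plane, hence a leaf. Since all asymptotic limits of $\tv_r$ have index in $\{1,2,3\}$, Lemma~\ref{le:wind=1,0} yields $\wind_\pi(\tv_r)=0$ and $\wind_\infty(\tv_r,z)=1$ at every puncture, so $v_r$ is an immersion transverse to $R_\lambda$ and, by Theorem~\ref{th:siefring-2.6}, $\tv_r$ and $v_r$ are embeddings; moreover $0<A(\tv_r)=T_2-\sum_{z\in\Gamma_r}T_z$ shows each negative limit has period $<T_2$.

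It remains to prove $\#\Gamma_r=1$ and that the corresponding orbit is $P_1$. I would first show each negative limit $P_z$ (index $1$, period $<T_2$) is unlinked from $P_2$: the $\bar{J}$-holomorphic plane capping $z$ projects to an embedded disk bounded by $P_z$, and positivity and stability of intersections, together with the fact that the $v_n$ do not meet $P_2$, force this disk to be disjoint from $P_2$, so $\mathrm{lk}(P_z,P_2)=0$; hypothesis~(iii) of Theorem~\ref{theo:main-theorem} then gives $P_z=P_1$. Finally I would exclude $\#\Gamma_r\ge2$: if two negative punctures of $\tv_r$ were asymptotic to the simple orbit $P_1$, their asymptotic eigensections both lie in the one-dimensional $\nu^{pos}_{P_1}$-eigenspace (as $\wind_\infty(\tv_r,\cdot)=1$ there), and applying the intersection criteria of Theorems~\ref{co:5.9-siefring}, \ref{th:2.5-siefring} and~\ref{th:2.4-siefring} to $\tv_r$ against itself and against the capping planes produces a positive intersection or tangency count, contradicting embeddedness. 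This leaves $\Gamma_r=\{0\}$: $\tv_r$ is a $\tilde{J}$-holomorphic cylinder asymptotic to $P_2$ at $\infty$ and to $P_1$ at $0$, with a single leaf which is a $\bar{J}$-holomorphic plane asymptotic to $P_1$. I expect this last part --- establishing the unlinkedness of the negative limits from $P_2$ and, above all, ruling out multiple negative punctures of $\tv_r$ via the intersection theory at the possibly elliptic orbit $P_1$ --- to be the main obstacle.
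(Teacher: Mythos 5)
Your overall strategy (SFT limit, Fredholm counts to constrain the tree, linking/uniqueness hypotheses to identify the orbits) matches the paper's, but two steps as you sketch them do not go through.

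First, your exclusion of $\Gamma_r=\emptyset$ is incomplete. If the root has no negative punctures, Lemma~\ref{le:sft-compactness-corollary} allows two scenarios: case (I), where $\tv_r$ is a $\bar{J}$-holomorphic plane (this is the case your ``connected components are closed'' argument addresses), and case (II), where $c_n\to-\infty$, the levels $b_n$ drift to $+\infty$, and $\tv_r$ is a $\tilde{J}$-holomorphic plane asymptotic to $P_2$ --- i.e.\ (by Proposition~\ref{pr:unicos-cilindros-p3p2-e-plano}) a translate of $\tu_q$ itself. In case (II) the limit is \emph{not} an element of $\Theta$: it is pseudo-holomorphic for a different almost complex structure and is only obtained after unbounded $\R$-shifts, so you cannot conclude $[\tv_r]\in\Theta'$ and no contradiction with $\tau_n\to\tau_-$ arises. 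This scenario (the family $\Theta'$ escaping back up into the cylindrical end as $\tau\to\tau_-$) has to be ruled out quantitatively: the paper first proves $\limsup_n\bigl(\min b_n(\C)\bigr)\le m:=\min a_q(\C)$, using that $\bar{J}=\tilde{J}$ on $\{a\ge 2\}$ together with the uniqueness of the $\tilde{J}$-plane asymptotic to $P_2$ and the injectivity of the parametrization of $\Theta'$, and then uses Lemma~\ref{le:plane-near-orbit} to show that in case (II) one would have $\inf b_n(\C)>m$ for large $n$, a contradiction. Without some argument of this type your claim $\#\Gamma_r\ge 1$ is unproved, and everything downstream (in particular that the root is $\tilde{J}$-holomorphic) rests on it.

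Second, your exclusion of $\#\Gamma_r\ge 2$ does not work as sketched. Theorem~\ref{th:2.5-siefring} is stated for \emph{even} orbits, and $P_1$ has Conley--Zehnder index $1$, so it is elliptic or negative hyperbolic and the theorem does not apply; moreover your claim that the two asymptotic eigensections lie in the one-dimensional $\nu^{pos}_{P_1}$-eigenspace is unjustified: since $\mu(P_1)=1$ one has $\wind(\nu^{neg}_{P_1})=0$ and $\wind(\nu^{pos}_{P_1})=1$, so \emph{both} winding-$1$ eigenvalues are nonnegative and the asymptotic eigensections at two negative punctures need not be proportional. The paper avoids this entirely: after showing (via Theorem~\ref{th:2.4-siefring} applied to the pair of $\tilde J$-curves $\tu_q$ and $\tv_r$, not via the $\bar J$-capping plane --- note also that $\R\times P_2$ is not globally $\bar J$-holomorphic, so your positivity-of-intersection argument there is shaky, and ``$v_n$ does not meet $P_2$'' is asserted, not proved) that each $P_z$ is unlinked from $P_2$, hence $P_z=P_1$, it shows $v_r(\C\setminus\Gamma_r)\subset\mathcal{R}_2$ by intersection theory against the family $\{u_\tau\}$ and then uses homology: $[x_2(T_2\cdot)]$ generates $H_1(\mathcal{R}_2,\Z)$ and the projected curve is a $2$-chain giving $[x_2(T_2\cdot)]=\#\Gamma_r\,[x_1(T_1\cdot)]$, which forces $\#\Gamma_r=1$. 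A smaller point: your appeal to Lemma~\ref{le:indice-maior-igual-a-2} for $\tilde J$-holomorphic descendants is not directly licensed (that lemma concerns limits of germinating sequences); the paper gets the index bounds at the negative punctures by an inductive Fredholm argument down the tree (Claim~I of its proof) and rules out a $\tilde J$-holomorphic descendant via the index-zero/vanishing-$d\lambda$-area/simple-orbit argument.
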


\begin{proof}
	
	Let $\mathcal{B}=(\mathcal{T},\mathcal{U})$ be the bubbling-off tree given by Theorem \ref{theo:sft-compactness-corollary} and let $\tv_r:\C\setminus \Gamma_r\to \R\times S^3$ be the finite energy sphere associated to the root of $\mathcal{T}$. By Lemma \ref{le:sft-compactness-corollary}, $\tv_r$ is not $\tilde{J}_E$-holomorphic. 
	Now we show that $\tv_r$ is $\tilde{J}$-holomorphic. 
	
	Suppose, by contradiction, that $\tv_r$ is $\bar{J}$-holomorphic. By Theorem \ref{theo:sft-compactness-corollary}, $\tv_r$ is asymptotic to $P_2$ at $\infty$. Since $P_2$ is simple, $\tv_r$ is somewhere injective. If $\Gamma_r=\emptyset$, then $\tv_r\in \Theta$, contradicting the fact that the interval  $\Theta'$ is maximal.
	Assume $\Gamma_r\neq \emptyset$.  
	By Theorem \ref{th:fredholm-estimates-generalized}, it follows that 
	\begin{equation}
	0\leq\ind(\tv_r)=2-\sum_{z\in \Gamma_r}\mu(P_z)-2+\#\Gamma_r+1~.
	\end{equation}
	Then 
	\begin{equation}
	\sum_{z\in \Gamma_r}\mu(P_z)-\#\Gamma_r\leq 1~.
	\end{equation}
	Contradicting the fact that for all $z\in \Gamma_r$, the asymptotic limit $P_z$ of $\tv_r$ at $z$ is a Reeb orbit of $\lambda_E$, which is a dynamically convex contact from, that is, $\mu(P_z)\geq 3$, for all $z\in \Gamma_r$.
	Thus, $\tv_r$ is $\tilde{J}$-holomorphic.
	
	Let $m:=\min a_q(\C)$. 
	We claim that  $$\limsup_n\left(\min b_n(\C)\right)\leq m~.$$
	To prove the claim, suppose by contradiction that 
	$\limsup_n\left(\min b_n(\C)\right) >m>2.$ 
	Then there exists a 
	subsequence $\tv_{n_k}=(b_{n_k},v_{n_k})$ satisfying $\min b_{n_k}(\C)>m$. By the definiton of $\bar{J}$, the planes $\tv_{n_k}$ are $\tj$-holomorphic.
	By Proposition \ref{pr:unicos-cilindros-p3p2-e-plano}, we know that  $\tu_q$ is the unique finite energy $\tj$-holomorphic plane asymptotic to $P_2$, up to reparametrization and $\R$-translation.
	This implies  $[\tv_{n_k}]=[a_q+\tau_k,u_q]$ for a sequence $\tau_k>0$. 
	This contradicts the fact that $\Theta'$ is an interval.
	
	
	Now we show that $\Gamma_r\neq \emptyset$.
	Suppose, by contradiction, that $\tv_r$ is a $\tilde{J}$-holomorphic plane. 
	Let $z_n$, $\delta_n$ and $c_n$ be the sequences given by Theorem \ref{theo:sft-compactness-corollary}, such that 
	$$\tv_n(z_n+\delta_n \cdot)+c_n\to \tv_r ~\text{in }C^\infty_{loc}(\C)\text{ as }n\to \infty.$$
	The limit  $\tv_r$ satisfies \ref{le:sft-compactness-corollary}(II), so that $c_n\to -\infty$ and $b_n(z_n+\delta_n\cdot)\to +\infty$ in $C^0_{loc}(\C)$ as $n\to \infty$.
	By Lemma \ref{le:plane-near-orbit}, there exists $R_0>0$ such that $b_n(z_n+\delta_nz)>m-c_n>m$ for $|z|>R_0$ and $n$ large enough.
	For $|z|\leq R_0$, by Lemma \ref{le:sft-compactness-corollary}(II), we have $b_n(z_n+\delta_n z)>m$, for $n$ large enough. 
	Thus, $\inf b_n(\C)>m$ for $n$ large enough.
	This contradicts $\limsup_n \inf b_n(\C)\leq m$, and concludes the  proof of $\Gamma_r\neq \emptyset$.
	
	So far, we know that $\tv_r:\C\setminus\Gamma_r \to \R\times M$ is a $\tilde{J}$-holomorphic sphere and $\Gamma_r\neq \emptyset$. The next step is to prove that every negative asymptotic limit of $\tv_r$ has Conley-Zehnder index equal to 1.

	\paragraph{\textit{Claim I}} If $q$ is not the root and $P_\infty$ is the asymptotic limit of $\tv_q$ at $\infty$, then $\mu(P_\infty)\geq 1$.
	To prove Claim I, we argue indirectly assuming that $\mu(P_\infty)\leq 0$.
	The curve $\tv_q:\C\setminus \Gamma_q\to \R\times S^3$ factors as  
	$\tv_q=\tu\circ p~,$
	where $\tu:\C\setminus \Gamma'\to \R\times S^3$ is a somewhere injective finite energy sphere and $p$ is a polynomial. 
	If $P$ is the asymptotic limit of $\tu$ at $\infty$, then $P^{\deg p}=P_\infty$.
	By Lemma \ref{le:properties-cz-index}, $\mu(P_\infty)\leq 0$ implies $\mu(P)\leq 0$.
	By Theorem \ref{th:fredholm-estimates-generalized}, we have 
	$$0\leq \ind \tu:=\mu(P)-\sum_{z'\in \Gamma'}\mu(P_{z'})+\#\Gamma_q-1,$$
	where $P_{z'}$ is the asymptotic limit of $\tu$ at $z'$.
	If $\Gamma'=\emptyset$, we already have a contradiction. 
	Otherwise, there exists $z_0'\in \Gamma'$ such that $\mu(P_{z_0'})\leq 0$.
	Let $z_0\in \Gamma_q$ be such that $p(z_0)=z_0'$. Then $P_{z_0}=P_{z_0'}^k$, for some $k\leq \deg p$, where $P_{z_0}$ is the asymptotic limit of $\tv_q$ at $z_0$.
	By Lemma \ref{le:properties-cz-index}, we have $\mu(P_{z_0})\leq 0$. 
	Since the tree has a finite number of vertices, by induction we find a leaf $l$ of the tree such that the finite energy plane $\tu_l:\C\to \R\times S^3$ is asymptotic to an orbit $P$ with $\mu(P)\leq 0$, a contradiction. This proves Claim I.
	
	\paragraph{\textit{Claim II}} For every $z\in \Gamma_r$, we have  $\mu(P_z)=1$, where $P_z$ is the asymptotic limit of $\tv_r$ at $z$.
	To prove Claim II, first note that, by Theorem \ref{theo:fredholm-estimate}, we have
	$$0\leq \ind \tv_r=2-\sum_{z\in \Gamma_r} \mu(P_z)+\#\Gamma_r-1~.$$
	It follows that 
	$$\sum_{z\in \Gamma_r} \mu(P_z)\leq \#\Gamma_r+1~.$$
	Since, by Claim I, we have  $1\leq \mu(P_z)$, for every $z\in \Gamma_r$, there exists at most one puncture $z_0\in \Gamma_r$ such that $\mu(P_{z_0})\geq 2$.
	If there exists such puncture, we have $\ind (\tv_r)=0$. By Theorem \ref{theo:fredholm-estimate}, this implies $\pi\circ dv_r\equiv 0$.
	By the definition of bubbling-off tree, this implies $\#\Gamma_r\geq 2$.
	By Theorem \ref{theo:vanishing-dlambda-energy}, there exists a periodic orbit $P$ and a polynomial  $p:\C\to \C$ such that $p^{-1}(0)=\Gamma_r$ and $\tv_r=F_P\circ p$, where $F_P$ is the cylinder over the orbit $P$, 
	which contradicts the fact that $P_2$ is simple.
	This proves Claim II.
	We have also proved that $\int_{\C\setminus \Gamma_r} v_r^*d\lambda >0$.
	
    We are now in a position to show that $\Gamma_r=\{0\}$ and $\tv_r:\C\setminus \{0\}\to \R\times S^3$ is a cylinder asymptotic to $P_1$ at $z=0$.
	By Lemma \ref{le:wind=1,0}, we conclude that $\wind_\infty(\tv_r,z)=1,~\forall z\in \Gamma_r \cup \{\infty\}$.
	Recall that
	$u_q(\C)\cap P_2=\emptyset~.$
	Thus, the curves $\tu_q$ and $\tv_r$ satisfy condition (2) of Theorem \ref{th:2.4-siefring}. 
	Here $\tu_q$ is the plane asymptotic to $P_2$ obtained by Theorem \ref{theo:bubbling-off-two-vertices}. It follows from Theorem \ref{th:2.4-siefring} that the projected curve $u_q$ does not intersect any of the negative asymptotic limits of $\tv_r$. 
	This implies that $P_2$ is contractible in $S^3\setminus P_z$, for every $P_z$ asymptotic limit of $\tv_r$ at $z\in \Gamma_r$. 
	Consequently,
	$${\rm lk}(P_2,P_z)={\rm lk}(P_z,P_2)=0,~\forall z\in \Gamma_r.$$
	Since, by hypothesis, the orbit $P_1$ is the only orbit with Conley-Zehnder index $1$ and  period less than $T_2$ that is not linked to $P_2$, it follows that
	$$P_z=P_1,~~\forall z\in \Gamma_r~.$$
	Applying Theorem \ref{th:2.4-siefring} to the finite energy curves  $\tv_r:\C\setminus \Gamma_r\to \R\times S^3$ and $\tu_\tau:\C\to \R\times S^3$, where $\tu_\tau$ is any of the planes in the family \eqref{eq:familia-de-planos}, we prove that $v_r$ does not intersect $u_\tau$.
	We conclude that $v_r(\C\setminus \Gamma_r)\subset\mathcal{R}_2$, where $\mathcal{R}_2:=S^3\setminus \mathring{\mathcal{R}_1}$ and $\mathcal{R}_1$ is the closed region given by Proposition \ref{pr:foliation}.
	The region $\mathcal{R}_1$ contains an embedded disk with boundary $P_2$, so that $P_2$ is contractible in $\mathcal{R}_1$. One can show, using  Mayer-Vietoris sequence, that the holomology class of $\R/\Z\ni t \mapsto  x_2(T_2 t)$ generates $H_1(\mathcal{R}_2,\Z)$.  
	The projected curve $v_r$ defines a singular 2-chain in $C_2(\mathcal{R}_2,\Z)$, so that $[x_2(T_2\cdot)]=n[x_1(T_1\cdot)]$ in $H_1(\mathcal{R}_2,\Z)$, where $n=\#\Gamma_r$. It follows that $\#\Gamma_r=1$ and we can assume that $\Gamma_r=\{0\}$.

	 Now we prove that the next (and last) level of the bubbling-off tree consists of a $\bar{J}$-holomorphic plane. 
	Let $\tv_q:\C\setminus \Gamma_q\to \R\times S^3$ be the finite energy sphere associated to the unique vertex that is a direct descendant of the root $r$.
	The asymptotic limit of $\tv_q$ at $\infty$ is $P_1$, that is a simple orbit. 
	It follows that $\tv_q$ is somewhere injective. 
	{By the definition of bubbling-off tree, $\tv_q$ is either $\tilde{J}$-holomorphic or $\bar{J}$-holomorphic.}
	By Theorem \ref{th:fredholm-estimates-generalized}, we have
	$$0\leq\ind \tv_q=1-\sum_{z\in \Gamma_q} \mu(P_z)+\#\Gamma_q-1=\#\Gamma_q-\sum_{z\in \Gamma_q} \mu(P_z) ~,$$
	where $P_z$ is the asymptotic limit of $\tv_q$ at $z\in \Gamma_q$.
	Since, by Claim I, $\mu(P_z)\geq 1$ for all $z\in \Gamma_q$, it follows that  $\ind \tv_q=0$ and $\mu(P_z)=1$, for all $z\in \Gamma_q$.
	
	Suppose, by contradiction, that $\tv_q$ is $\tilde{J}$-holomorphic.
	By Theorem \ref{theo:fredholm-estimate}, we have $\pi\circ dv_q\equiv 0$. 
	By the definition of bubbling-off tree, this implies $\#\Gamma_q\geq 2$.
	Theorem \ref{theo:vanishing-dlambda-energy} and the fact that $P_1$ is simple lead to a contradiction.
	We have proved that $\tv_q$ is $\bar{J}$-holomorphic.
	
	Suppose that $\Gamma_q\neq \emptyset$. By the definition of bubbling-off tree, if $l$ is a vertex of the tree that is a direct descendant of $q$, then $\tv_l$ is necessarily $\tilde{J}_E$-holomorphic. The asymptotic limit of $\tv_l$ at $\infty$ is equal to $P_z$ for some $z\in \Gamma_q$. But $\mu(P_z)=1$ for all $z\in \Gamma_q$, contradicting the fact that all closed orbits of $R_E$ have Conley-Zehnder index $\geq 3$.
	We have proved that $\Gamma_q=\emptyset$. This finishes the proof of Proposition \ref{prop:arvore-p1-p2}.
	
\end{proof}

\begin{proposition}\label{pr:v_r-mergulho-nao intersecta-limites}
	The curve $\tv_r=(b_r,v_r):\C\setminus \{0\}\to \R\times S^3$ obtained in Proposition \ref{prop:arvore-p1-p2} is an embedding.  The projected curve $v_r:\C\setminus \{0\}\to S^3$ is an embedding which is transverse to the Reeb vector field and does not intersect any of its asymptotic limits.
	{Moreover, $v_r(\C\setminus \{0\})\cap \mathcal{R}_1=\emptyset$, where $\mathcal{R}_1$ is the closed region obtained in Proposition \ref{pr:foliation}.}
\end{proposition}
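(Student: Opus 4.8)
The plan is to deduce the whole statement from Siefring's intersection theory, fed by the algebraic data for $\tv_r$ already obtained in the proof of Proposition~\ref{prop:arvore-p1-p2}: namely that $\pi\circ dv_r$ does not vanish identically, that $\wind_\pi(\tv_r)=0$, that $\wind_\infty(\tv_r,z)=1$ for both punctures $z\in\{0,\infty\}$, and that $v_r(\C\setminus\{0\})\subset\mathcal{R}_2$ with $v_r$ disjoint from the family $\{u_\tau\}$. The first two facts already give that $v_r$ is an immersion everywhere transverse to $R_\lambda$. The key reduction is to show that the algebraic self-intersection number $int(\tv_r,\tv_{r,c})$ between $\tv_r$ and its $\R$-translates $\tv_{r,c}=(b_r+c,v_r)$ vanishes for every $c\in\R\setminus\{0\}$. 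Indeed, $\tv_r$ is somewhere injective (both asymptotic limits $P_1,P_2$ are simple), connected, and not contained in an orbit cylinder, and $\wind_\infty(\tv_r,z)/m_z=1$ at both punctures; so Theorem~\ref{th:siefring-2.6} then yields at once that $\tv_r$ and $v_r$ are embeddings, that $v_r$ is an immersion transverse to $R_\lambda$, and --- since condition~(2) of that theorem implies condition~(3) --- that $v_r$ meets none of its asymptotic limits $P_1,P_2$.

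To prove $int(\tv_r,\tv_{r,c})=0$ I would degenerate along the family $\Theta'$. The generalized intersection number $[\tv_\tau]*[\tv_{\tau,c}]$ is a homotopy invariant, hence constant along the connected one-manifold $\Theta'$ and independent of $c\neq 0$. For $\tau\geq 0$ one has $\tv_\tau=(a_q+\tau,u_q)$, so $\tv_\tau$ and $\tv_{\tau,c}$ are $\R$-translates of $\tu_q$, and $[\tu_q]*[(\tu_q)_c]=0$ by Theorem~\ref{co:5.9-siefring}: its condition~(2) holds since $u_q(\C)\cap P_2=\emptyset$ by Proposition~\ref{le:the-cylinders-do-not-intersect-orbits} and $\wind_\infty(\tu_q,\infty)=1=\lfloor\mu(P_2)/2\rfloor$, so the winding at the shared orbit $P_2$ is extremal. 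Therefore $[\tv_n]*[\tv_{n,c}]=0$ for the degenerating sequence $\tv_n=\tv_{\tau_n}$, $\tau_n\to\tau_-$, of Proposition~\ref{prop:arvore-p1-p2}. In the SFT limit $[\tv_n]*[\tv_{n,c}]$ splits as a sum of non-negative contributions, one for each vertex of the limiting bubbling-off tree $\{\tv_r,\tv_q'\}$ together with non-negative breaking terms, and the contribution of the root is $[\tv_r]*[\tv_{r,c}]$; hence $[\tv_r]*[\tv_{r,c}]\leq[\tv_n]*[\tv_{n,c}]=0$. Since $\tv_r$ and $\tv_{r,c}$ are both $\tj$-holomorphic, positivity of intersections gives $0\leq int(\tv_r,\tv_{r,c})\leq[\tv_r]*[\tv_{r,c}]=0$.

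For the last assertion, $v_r(\C\setminus\{0\})\cap\mathcal{R}_1=\emptyset$: by the step above $v_r$ is disjoint from $P_2$, and $v_r(\C\setminus\{0\})\subset\mathcal{R}_2$; since $\mathcal{R}_1\cap\mathcal{R}_2=T=P_2\cup P_3\cup U_1\cup U_2$, it suffices to show $v_r$ misses $U_1$, $U_2$ and $P_3$. I would apply Theorem~\ref{th:2.4-siefring} to the pairs $(\tu_r,\tv_r)$ and $(\tu_r',\tv_r)$: no curve lies in an orbit cylinder, their images differ since $\tu_r$ and $\tu_r'$ are asymptotic to $P_3$ at their positive punctures whereas $\tv_r$ is asymptotic to $P_2$, the only shared asymptotic orbit is $P_2$ --- a negative puncture of $\tu_r$ (resp.\ $\tu_r'$) and the positive puncture of $\tv_r$ --- and there the matching condition holds because $\wind_\infty(\tu_r,0)=1=\lfloor\mu(P_2)/2\rfloor$ and $\wind_\infty(\tv_r,\infty)=1=-\lfloor-\mu(P_2)/2\rfloor$. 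Combined with $u_r(\C\setminus\{0\})\cap P_2=\emptyset$, $u_r'(\C\setminus\{0\})\cap P_2=\emptyset$ (Proposition~\ref{le:the-cylinders-do-not-intersect-orbits}) and $v_r\cap P_2=\emptyset$, condition~(2) of Theorem~\ref{th:2.4-siefring} is satisfied for each pair, so the projected curves do not intersect, i.e.\ $v_r\cap U_1=v_r\cap U_2=\emptyset$; and, the three conditions of that theorem being equivalent, $v_r$ is also disjoint from the asymptotic limits of $\tu_r$, in particular $v_r\cap P_3=\emptyset$. Thus $v_r(\C\setminus\{0\})$ is disjoint from $T$, so $v_r(\C\setminus\{0\})\subset\mathcal{R}_2\setminus T=S^3\setminus\mathcal{R}_1$.

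The step I expect to be the main obstacle is the bookkeeping of the intersection numbers under the SFT degeneration of the sequences $\{\tv_n\}$ and $\{\tv_{n,c}\}$: the leaf $\tv_q'$ is $\bar J$-holomorphic and lives in the non-$\R$-invariant cobordism, so it cannot simply be translated, and one must verify carefully that $[\tv_n]*[\tv_{n,c}]$ equals, in the limit, the sum of the non-negative contributions of the vertices of the limiting building together with non-negative breaking terms --- this is what forces the root contribution $[\tv_r]*[\tv_{r,c}]$ to vanish. The remaining ingredients --- the extremal-winding computations, the verifications of the hypotheses of Theorems~\ref{co:5.9-siefring}, \ref{th:2.4-siefring} and~\ref{th:siefring-2.6}, and the final topological conclusion --- are routine given Sections~\ref{se:preliminaries} and~\ref{se:foliating-gamma1=0}.
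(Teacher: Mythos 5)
Your second and third paragraphs are fine and essentially coincide with the paper's argument (Theorem \ref{th:siefring-2.6} for embeddedness once non-intersection with the asymptotic limits is known, and Theorem \ref{th:2.4-siefring} applied to the pairs $(\tu_r,\tv_r)$, $(\tu_r',\tv_r)$ for the containment in $S^3\setminus\mathcal{R}_1$). The genuine gap is exactly the step you flag as "the main obstacle", and it is not a matter of bookkeeping that can be taken for granted: to get $[\tv_r]\ast[\tv_{r,c}]\leq[\tv_n]\ast[\tv_{n,c}]=0$ you need a splitting/semicontinuity statement for the generalized intersection number under the SFT degeneration of Proposition \ref{prop:arvore-p1-p2}, with \emph{all} local and asymptotic contributions nonnegative. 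None of the results quoted in the paper provide this, and the usual positivity inputs are not available here: $\bar J$ is not $\R$-invariant, so $\tv_{n,c}=(b_n+c,v_n)$ is not $\bar J$-holomorphic, and neither is the translate of the leaf $\tv_q$ in the limit building; hence the algebraic intersections of $\tv_n$ with $\tv_{n,c}$, the cross terms between levels, and in particular the asymptotic contributions concentrating at the breaking orbit $P_1$ have no a priori sign. Without that, the inequality bounding the root term by the total does not follow, and the whole reduction of condition (2) of Theorem \ref{th:siefring-2.6} to $[\tu_q]\ast[(\tu_q)_c]=0$ collapses. Note also that you cannot shortcut this by applying Theorem \ref{co:5.9-siefring} directly to $\tv_r$ and $\tv_{r,c}$: its condition (2) already requires that $v_r$ miss $P_1$ and $P_2$, which is what you are trying to prove, so that route is circular — which is presumably why you went to the degeneration argument in the first place.

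The paper avoids this entirely by proving the non-intersection statement first, by hand: Lemma \ref{pr:cilindro-nao-intersecta-orbita} shows that a finite energy cylinder with simple, unlinked, nondegenerate asymptotic limits of Conley--Zehnder index in $\{1,2,3\}$ misses both asymptotic orbits. The argument is topological, following Theorem 1.3/4.6 of \cite{hofer1995properties2}: cap the negative end with a disk for $\bar P$ disjoint from $P$, define the intersection count $int(u)\geq 0$ with $P$, and compute $int(u)=\wind_\infty(\tu,\infty)-c(u)=1-(2e^+-1)=2-2e^+\leq 0$ using $\wind_\infty=1$ (Lemma \ref{le:wind=1,0}) and the count of positive elliptic singularities of the characteristic distribution of a spanning disk; hence $int(u)=0$. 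With that in hand one invokes condition (3) of Theorem \ref{th:siefring-2.6} rather than condition (2), and no intersection-number degeneration is needed. If you want to salvage your route, you would have to prove the SFT splitting statement for $\ast$ with a non-holomorphic translate partner yourself, which is substantially harder than the lemma the paper uses.
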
 

To prove Proposition \ref{pr:v_r-mergulho-nao intersecta-limites}, we need the following.

\begin{lemma}\label{pr:cilindro-nao-intersecta-orbita}
	Let $\lambda$ be a tight contact form on $S^3$. 
	Let $ \tu:\R\times S^1\to \R\times S^3$ be a finite energy cylinder asymptotic to nondegenerate simple Reeb orbits $P=(x,T)$ at $+\infty$ and $\bar{P}=(\bar{x},\bar{T})$ at $-\infty$. Assume that 
	$P\neq \bar{P}$, 
		$P\cup \bar{P}$ is an unlink and 
	$\mu(P),~\mu(\bar{P}) \in \{1,2,3\}$. 
	Then $u(\R\times S^1)\cap P=\emptyset$ and $u(\R\times S^1)\cap \bar{P}=\emptyset$.
\end{lemma}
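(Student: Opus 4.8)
\emph{The plan.} First I would upgrade $\tu$ to an immersion transverse to the Reeb field with sharp asymptotic winding numbers, then reduce the non-intersection claim to a linking computation closed off by Bennequin's inequality.

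\emph{Step 1 (winding numbers).} Note first that $\pi\circ du\not\equiv 0$: otherwise $u$ would map $\R\times S^1$ into a single Reeb trajectory, necessarily $x(\R)=P$ since $\tu$ is asymptotic to $P$ at $+\infty$, forcing $\bar x(\R)\subset x(\R)$ and hence $\bar P=P$, against the hypothesis. So Lemma \ref{le:wind-infty-estimate} applies and gives $\wind_\infty(\tu,+\infty)\le\wind(\nu^{neg}_P)$ and $\wind_\infty(\tu,-\infty)\ge\wind(\nu^{pos}_{\bar P})$. Reading off formula \eqref{eq:conley-zehnder-wind} exactly as in the proof of Lemma \ref{le:wind=1,0}, the hypothesis $\mu(P),\mu(\bar P)\in\{1,2,3\}$ gives $\wind(\nu^{neg}_P)\le 1$ and $\wind(\nu^{pos}_{\bar P})\ge 1$. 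Since here $\chi(S^2)=\#\{0,\infty\}=2$, equation \eqref{eq:wind_pi-wind_infty} reads $\wind_\pi(\tu)=\wind_\infty(\tu,+\infty)-\wind_\infty(\tu,-\infty)$, which is $\ge 0$; combining these inequalities forces
\[
\wind_\pi(\tu)=0,\qquad \wind_\infty(\tu,+\infty)=\wind_\infty(\tu,-\infty)=1 .
\]
In particular $u:\R\times S^1\to S^3$ is an immersion transverse to $R_\lambda$.

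\emph{Step 2 (positivity and finiteness).} Because $\wind_\pi(\tu)=0$, the map $\pi\circ du$ is a fibrewise complex-linear isomorphism onto $\xi$; hence wherever the immersed surface $u$ meets a closed Reeb orbit it does so transversally, with local intersection index $+1$ (orienting $S^3$ by $\lambda\wedge d\lambda$, the orbit by $R_\lambda$, and $u$ by its complex orientation). By Theorem \ref{theo:asymptotic behavior}, for $s\gg0$ the loop $t\mapsto u(s,t)$ lies in a small neighbourhood of $P$ and misses $P$ (the asymptotic eigensection is nowhere zero), and for $-s\gg0$ it lies near $\bar P$, which is disjoint from $P$ since $P\ne\bar P$ are simple; the symmetric statements hold with $P$ and $\bar P$ interchanged. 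Thus $u^{-1}(P)$ and $u^{-1}(\bar P)$ are finite, and it suffices to prove that the non-negative integers $u\cdot P:=\#u^{-1}(P)$ and $u\cdot\bar P:=\#u^{-1}(\bar P)$ vanish.

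\emph{Step 3 (linking computation).} Fix $s\gg0\gg s'$ and set $\ell_s=u(s,\cdot)$, $\ell_{s'}=u(s',\cdot)$, both disjoint from $P$ by Step 2. Regarding $u|_{[s',s]\times S^1}$ as a singular $2$-chain with boundary $\ell_s-\ell_{s'}$ (a chain-level identity that remains valid for the possibly non-embedded $u$) gives $u\cdot P=\mathrm{lk}(\ell_s,P)-\mathrm{lk}(\ell_{s'},P)$. As $s'\to-\infty$, $\ell_{s'}$ converges to a parametrisation of $\bar P$, so $\mathrm{lk}(\ell_{s'},P)=\mathrm{lk}(\bar P,P)=0$ because $P\cup\bar P$ is an unlink. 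By Theorem \ref{theo:asymptotic behavior}, $\ell_s$ is a push-off of $P$ in the direction of the asymptotic eigensection, whose winding with respect to the global trivialisation $\Psi$ of $\xi$ equals $\wind_\infty(\tu,+\infty)=1$; consequently $\mathrm{lk}(\ell_s,P)=\wind_\infty(\tu,+\infty)+\mathrm{lk}(P^{\Psi},P)=1+\sli(P)$, where $P^{\Psi}$ is the $\Psi$-framed push-off and $\mathrm{lk}(P^{\Psi},P)=\sli(P)$ since $\Psi$ trivialises $\xi$ globally. Hence $u\cdot P=1+\sli(P)$. Since $P$ is an unknot in the tight $S^3$, Bennequin's inequality gives $\sli(P)\le -1$, so $u\cdot P\le 0$; together with $u\cdot P\ge 0$ this yields $u\cdot P=0$, i.e. $u(\R\times S^1)\cap P=\emptyset$. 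Repeating the argument at the negative puncture, using $\wind_\infty(\tu,-\infty)=1$ and $\mathrm{lk}(P,\bar P)=0$, gives $u\cdot\bar P=1+\sli(\bar P)=0$, so $u(\R\times S^1)\cap\bar P=\emptyset$ (and incidentally $\sli(P)=\sli(\bar P)=-1$).

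\emph{Main obstacle.} The one genuinely delicate point is the bookkeeping in Step 3: one must use the precise asymptotic expansion of Theorem \ref{theo:asymptotic behavior} to identify $\ell_s$ with a push-off of $P$ whose $\Psi$-winding is \emph{exactly} $\wind_\infty(\tu,+\infty)$, keep track of the orientation conventions at the negative puncture, and correctly account for the framing defect $\mathrm{lk}(P^{\Psi},P)=\sli(P)$; the chain-level identity $u\cdot P=\mathrm{lk}(\ell_s,P)-\mathrm{lk}(\ell_{s'},P)$ for a non-embedded $u$ should also be justified via mapping degree. (Alternatively, Step 3 can be replaced by an intersection-theoretic argument pairing $\tu$ with the orbit cylinders over $P$ and $\bar P$ in the spirit of Section \ref{se:preliminaries}, but the linking argument is more self-contained here.)
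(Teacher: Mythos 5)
Your argument is correct and follows essentially the same route as the paper: both proofs reduce the claim to showing that the algebraic intersection number of $u$ with each asymptotic limit, which positivity of intersections makes nonnegative, equals $\wind_\infty(\tu,\cdot)+\sli=1+\sli\le 0$, with the unlink hypothesis killing the contribution from the opposite end. The only difference is cosmetic: where you invoke the Bennequin--Eliashberg inequality $\sli(P)\le -1$ directly and compare linking numbers of the loops $u(s,\cdot)$ at the two ends, the paper (following the proof of Theorem 1.3 in \cite{hofer1995properties2}) caps the cylinder with a disk bounded by $\bar{P}$ disjoint from $P$ and encodes the same self-linking bound as $c(u)=2e^+-1\ge 1$ via the positive elliptic singularities of the characteristic foliation of a spanning disk for $P$.
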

\begin{proof}
	Our arguments follow the proof of Theorem 4.4 from \cite{hofer1995properties2}, so we sketch the proof here and refer to the results of \cite{hofer1995properties2} when necessary.
	
	A finite energy surface $\tu:\C\setminus \Gamma\to \R\times S^3$ such that $\pi\cdot du$ is not identically zero can intersect its asymptotic limits in at most finitely many points (see {\cite[Theorem 5.2]{hofer1996properties1}}).
	This allows the definition of an algebraic intersection index as follows.
	Take a small embedded $2$-disk $\mathcal{D}$ transversal to the periodic orbit at a point $x(t)$ of $P$ and tangent to $\xi=\ker \lambda$ at $x(t)$, that is, $T_{x(t)\mathcal{D}}=\xi_{x(t)}$.
	We orient the disk in such a way that $T_{x(t)\mathcal{D}}$ and $\xi_{x(t)}$ have the same orientation.
	Let $M\in \R$ be such that all the intersection points of $u$ with $P$ are contained in $u((-\infty,M)\times S^1)$.
	Let $\varphi:\D\to S^3$ be a disk map to $\bar{P}$ such that $\varphi(\D)\cap P=\emptyset$. Such disk exists since $P\cup \bar{P}$ is a trivial link.
	Glue the disk $\D$ to $[-\infty,M]\times S^1$ along $-\infty\times S^1$ to form a new disk $D$.
	Let $\bar{u}:\bar{\R}\times S^1\to S^3$ be the map obtained by defining $\bar{u}(-\infty,t)=\bar{x}(\bar{T}t)$ and $\bar{u}(+\infty,t)=x(Tt)$ and 
	define $U:D\to S^3$ by 
	$$U|_{[-\infty,M]\times S^1}=\bar{u},~~~U|_{\D}=\varphi~.$$
	Consider $U_*:H_1(\partial D,\Z)\to H_1(S^3\setminus P,\Z)$. 
	If $\alpha$ is the generator of $H_1(\partial D,\Z)$, 
	there exists an integer, called 
	$int(u)$,
	such that 
	$$U_*(\alpha)=int(u)[\partial \mathcal{D}]\in H_1(S^3\setminus P,\Z)~.$$
	{Here we have used the fact that $H_1(S^3\setminus P,\Z)$ is generated by $[\partial \mathcal{D}]$.}
	
	It follows from the proof of Theorem 4.6 in \cite{hofer1995properties2} that $int(u)$ is the oriented intersection number of $U$ and $P$. 
	Moreover, all the intersections are in the image of the map $u$, since there is no intersections of $P$ with $\varphi(\D)$.
	Following the proof  of Theorem 4.6 in \cite{hofer1995properties2}, one can show that $int(u)\geq 0$ and  $int(u)=0$ if and only if $u(\R\times S^1)\cap P=\emptyset$.
	Now we show that $int(u)=0$.
	
	{Let $\Phi:\mathcal{U}\to S^3$ be an embedding of an open neighborhood $\mathcal{U}$ of the zero section of $\xi|_P$. 
		We require that $\Phi(0_p)=p$ and the fiberwise derivative of $\Phi$ at $0_p$ is the inclusion of $\xi$ into $T_pS^3$.}
	Consider  a nonvanishing section $v(t)$ of $\xi$ along $P$ which is contained in $\mathcal{U}$.
	Define the loop $\beta(v)$ by $\beta(v)(t)=\Phi\circ v(t)$ for $0\leq t\leq T$. 
	It is contained in $S^3\setminus P$ and we denote by $[\beta(v)]\in H_1(S^3\setminus P)$ the homology class generated by this loop.
	{Fix the  global trivialization $\Psi:\xi\to S^3\times \R^2$.}
	Let $\wind(v,\Psi)$ be the winding number of the small section $v(t)$ with respect to the trivialization $\Psi$.
	It is proved in \cite{hofer1995properties2} that 
	$$\wind(v,\Psi)[\partial \mathcal{D}]-[\beta(v)]\in H_1(S^3\setminus P)$$
	is independent of the section $v$ as described above.
	We  define a constant $c(u)$ by
	$$(\wind(v,\Psi)-c(u))[\partial\mathcal{D}]=[\beta(v)]~.$$
	If we choose, for example, the special section $v(t)$ such that $\Phi\circ v(t)= u(s^*,t)$ for some large $s^*\in \R$, then by definition 
	$[\beta(v)]=int(u)[\partial\mathcal{D}]$ 
	and taking the limit as $s^*\to \infty$, we have 
	$$\wind_\infty(\tu,\infty)-c(u)=int(u)~.$$
	It is proved in \cite{hofer1995properties2} that there exists an embedded disk $F=\varphi(\D)$ with $\varphi(\partial \D)=P$ whose characteristic distribution has $e^+\geq 1$ positive elliptic points, and that 
	$$c(u)=2e^+-1~~.$$
	By Lemma \ref{le:wind=1,0}, we have $\wind_\infty(\tu,\infty)=1$, so that
	$$int(u)=2-2e^+~.$$
	Since $int(u)\geq 0$ and $e^+\geq 1$, we have $int(u)=0$ and $e^+=1$.
	This shows that $u(\R\times S^1)\cap P=\emptyset$.
	We can repeat the arguments replacing $P$ by $\bar{P}$ to show that $u(\R\times S^1)\cap \bar{P}=\emptyset$ and conclude the proof.
\end{proof}

\begin{proof}[Proof of Proposition \ref{pr:v_r-mergulho-nao intersecta-limites}]
	From Lemma \ref{pr:cilindro-nao-intersecta-orbita}, we conclude that $v_r$ does not intersect its asymptotic limits. 
	Thus, $\tv_r$ satisfies condition \textit{(2)} of Theorem \ref{th:siefring-2.6}. We conclude that $\tv_r$ is an embedding and $v_r$ is an embedding transverse to the Reeb vector field.
	Applying Theorem \ref{th:2.4-siefring} to the finite energy cylinders $\tv_r:\C\setminus \{0\}\to \R\times S^3$ and $\tu_r:\C\setminus \{0\}\to \R\times S^3$, where $\tu_r$ is the cylinder obtained in Proposition \ref{theo:bubbling-off-two-vertices}, we prove that $v_r$ does not intersect $u_r$. Similarly, we prove that $v_r$ does not intersect $u_r'$. Since $P_1\subset (S^3\setminus \mathcal{R}_1)$, we conclude that $v_r(\C\setminus \{0\})\subset (S^3\setminus \mathcal{R}_1)$.
\end{proof}



\begin{proposition}\label{pr:unico-cilinro-p1-p2}
	Let $\tv_r=(b_r,v_r):\C\setminus \{0\}\to \R\times S^3$ be the $\tj$-holomorphic cylinder obtained in Proposition \ref{prop:arvore-p1-p2}. Then, up to reparametrization and $\R$-translation, $\tv_r$ is the unique $\tilde{J}$-holomorphic cylinder  asymptotic to $P_2$ at $\infty$ and to $P_1$ at $0$ that does not intersect $\R\times \left(P_1\cup P_2\right)$. Moreover, the cylinder $\tv_r$ and the plane $\tu_q$ obtained in Proposition \ref{theo:bubbling-off-two-vertices} approach $P_2$ in opposite directions, according to Definition \ref{de:opposite-directions}. Consequently, $u_q(\C)\cup v_r(\C\setminus \{0\})\cup P_2$ is a $C^1$-embedded disk.
\end{proposition}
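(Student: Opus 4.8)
The plan is to adapt the argument of Proposition~\ref{pr:unicos-cilindros-p3p2-e-plano}, with the plane $\tu_q$ over $P_2$ playing the role of the obstruction to a second cylinder; everything is driven by the asymptotic analysis at the even orbit $P_2$ together with the intersection dichotomies of Theorems~\ref{co:5.9-siefring} and~\ref{th:2.5-siefring}. Following Theorem~\ref{theo:asymptotic behavior}, let $\eta$ and $\sigma$ denote the asymptotic eigensections of $\tv_r$ and of $\tu_q$ at their positive punctures at $\infty$; both are eigensections of $A_{P_2,J}$ with negative eigenvalue, and by Lemma~\ref{le:wind=1,0} (applied to $\tv_r$ with negative puncture $\{0\}$ and to $\tu_q$) we have $\wind(\eta)=\wind_\infty(\tv_r,\infty)=1$ and $\wind(\sigma)=\wind_\infty(\tu_q,\infty)=1$. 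Since $\nu^{neg}_{P_2}$ is the unique negative eigenvalue of $A_{P_2,J}$ of winding number $1$ and its eigenspace is one-dimensional (Proposition~\ref{pr:properties-asymptotic-operator} and \eqref{eq:conley-zehnder-wind}), we obtain $\eta=c\,\sigma$ with $c\neq 0$. To see that $c<0$, suppose $c>0$: then $\tv_r$ and $\tu_q$ approach $P_2$ in the same direction, and since $v_r(\C\setminus\{0\})\subset S^3\setminus\mathcal{R}_1$ by Proposition~\ref{pr:v_r-mergulho-nao intersecta-limites} while $u_q(\C)\subset\mathcal{R}_1$ by Proposition~\ref{pr:foliation}, their projected images are disjoint and in particular do not coincide on neighbourhoods of the two punctures, so Theorem~\ref{th:2.5-siefring} gives $[\tv_r]\ast[\tu_q]>0$; on the other hand $u_q$ misses $P_2$, $\tu_q$ has no negative puncture, and at the common orbit $P_2$ both punctures are positive with $\wind_\infty=1=\lfloor\mu(P_2)/2\rfloor$, so condition~(2) of Theorem~\ref{co:5.9-siefring} forces $[\tv_r]\ast[\tu_q]=0$, a contradiction. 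Hence $c<0$ and $\tv_r$ and $\tu_q$ approach $P_2$ in opposite directions.

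By Propositions~\ref{le:the-cylinders-do-not-intersect-orbits} and~\ref{pr:v_r-mergulho-nao intersecta-limites} neither $u_q$ nor $v_r$ meets $P_2$, and $u_q(\C)\cap v_r(\C\setminus\{0\})=\emptyset$ since these sets lie on opposite sides of the torus $T=\partial\mathcal{R}_1$; thus $\tu_q(\C)\cup v_r(\C\setminus\{0\})\cup P_2$ is embedded. Feeding the relation $\eta=c\,\sigma$ with $c<0$ into the asymptotic expansion of Theorem~\ref{theo:asymptotic behavior} at $P_2$ shows, exactly as in Proposition~\ref{pr:unicos-cilindros-p3p2-e-plano}, that the plane and the cylinder join in a $C^1$ fashion along $P_2$, producing a $C^1$-embedded disk with boundary $P_1$.

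For uniqueness, let $\tv=(b,v):\C\setminus\{0\}\to\R\times S^3$ be any $\tj$-holomorphic cylinder asymptotic to $P_2$ at $\infty$ and to $P_1$ at $0$ whose image is disjoint from $\R\times(P_1\cup P_2)$; since $P_2$ is simple, $\tv$ is somewhere injective, and by Lemma~\ref{le:wind=1,0} its asymptotic winding number is $1$ at both punctures. As above, the asymptotic eigensection of $\tv$ at $\infty$ is a nonzero multiple of $\eta$. If that multiple is negative, then $\tv$ and $\tu_q$ approach $P_2$ in the same direction; their projected images cannot coincide on neighbourhoods of $\infty$ (the cylinder and the plane have incompatible topological type, as in the proof of Proposition~\ref{pr:unicos-cilindros-p3p2-e-plano}), so Theorem~\ref{th:2.5-siefring} gives $[\tv]\ast[\tu_q]>0$, while $v$ misses $P_2$, $u_q$ misses $P_1$ (because $u_q(\C)\subset\mathcal{R}_1$ and $P_1\subset S^3\setminus\mathcal{R}_1$), and the winding conditions hold, so Theorem~\ref{co:5.9-siefring} gives $[\tv]\ast[\tu_q]=0$, a contradiction. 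Hence the multiple is positive and $\tv$ approaches $P_2$ in the same direction as $\tv_r$. If $v(\C\setminus\{0\})\neq v_r(\C\setminus\{0\})$, then by unique continuation the projected images do not coincide near $\infty$, so Theorem~\ref{th:2.5-siefring} yields $[\tv]\ast[\tv_r]>0$; but $v$ misses $P_2$, $v_r$ misses $P_1$ by Proposition~\ref{pr:v_r-mergulho-nao intersecta-limites}, and the winding numbers at $P_2$ and at $P_1$ satisfy the hypotheses of Theorem~\ref{co:5.9-siefring}, so $[\tv]\ast[\tv_r]=0$, again a contradiction. Therefore $v(\C\setminus\{0\})=v_r(\C\setminus\{0\})$, that is, $\tv=\tv_r$ up to reparametrization and $\R$-translation.

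The main obstacle is the simultaneous application of Theorems~\ref{co:5.9-siefring} and~\ref{th:2.5-siefring}: in each step one must check that the relevant projected curves avoid the appropriate asymptotic limits --- which uses the position of $P_1$ with respect to $\mathcal{R}_1$ and Proposition~\ref{pr:v_r-mergulho-nao intersecta-limites} --- and that the equalities $\wind_\infty=\lfloor\mu/2\rfloor$ required by Theorem~\ref{co:5.9-siefring} hold, which in turn rest on the computations $\wind_\infty(\tv_r,\cdot)=\wind_\infty(\tu_q,\cdot)=\wind_\infty(\tv,\cdot)=1$ from Lemma~\ref{le:wind=1,0}. A secondary subtlety, handled as in Proposition~\ref{pr:unicos-cilindros-p3p2-e-plano}, is ruling out that the projected images of two of these curves coincide on neighbourhoods of their punctures at $\infty$.
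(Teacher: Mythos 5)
Your proof is correct and follows essentially the same route as the paper: identify the asymptotic eigensections at $P_2$ as multiples of the unique winding-one negative eigensection of $A_{P_2,J}$, then play Theorem~\ref{th:2.5-siefring} against Theorem~\ref{co:5.9-siefring} to rule out same-direction approach, and repeat the argument for uniqueness. The only cosmetic differences are that you justify non-coincidence of the projected images near $\infty$ via their lying on opposite sides of $T$ rather than via Carleman's similarity principle, and you spell out the uniqueness step that the paper dismisses with ``the same arguments''; both are fine.
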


\begin{proof}
	The proof follows {\cite[Proposition C.1]{dePS2013}}.
	Following Theorem \ref{theo:asymptotic behavior}, let $\eta_q$ be the asymptotic eigensection of $\tu_q$ at $\infty$ and let $\eta_r$ be the asymptotic eigensection of $\tv_r$ at $\infty$.
	By Lemma \ref{le:wind=1,0}, we have $\wind_\infty(\tu_q,\infty)=\wind(\tv_r,\infty)=1$.
	Using formula \eqref{eq:conley-zehnder-wind} and Proposition \ref{pr:properties-asymptotic-operator}, we conclude that $\nu_{P_2}^{neg}$ is the unique negative eigenvalue of $A_{P_2,J}$ with winding number $1$.
	By Proposition \ref{pr:properties-asymptotic-operator}, we know that the eigenspace of $\nu_{P_2}^{neg}$ is one dimensional.
	Thus, there exists $c\neq 0$ such that $\eta_r=c\eta_q$.
	By Proposition \ref{pr:v_r-mergulho-nao intersecta-limites} and Theorem \ref{th:2.5-siefring}, we conclude that $c<0$, that is, $\tv_r$ and $\tu_q$ approach $P_2$ in opposite directions.
	Using the same arguments above, we conclude that any cylinder with the properties given in the statement must have the same image as $\tv_r$. 
\end{proof}
The proof of Proposition \ref{pr:a-cylinder-asymptotic-p1-p2} is complete. 

\section{A family of cylinders asymptotic to $P_3$ and $P_1$}\label{se:familia-cilindros-folheacao}
Recall that we have obtained a foliation of a closed region $\mathcal{R}_1\subset S^3$ and  a finite energy cylinder asymptotic to $P_2$ at its positive puncture and $P_1$ at its negative puncture, whose projection to $S^3$ is contained in $\mathcal{R}_2:=S^3\setminus \mathring{\mathcal{R}_1}$. 
Now we construct a foliation of $\mathcal{R}_2$. More precisely, we prove the following statement. 
\begin{proposition}\label{pr:a-family-of-cylinders}
	Let $\lambda$ be a tight contact form on $S^3$ satisfying the hypotheses of Theorem \ref{theo:main-theorem}.
	Then there exists a family of finite energy cylinders $\{\tw_\tau=(c_\tau,w_\tau):\C\setminus \{0\}\to \R\times S^3\}_{\tau\in (0,1)}$, all of them  asymptotic to $P_3$ at their positive punctures $z=\infty$ and $P_1$ at their negative punctures $z=0$.
	The projections $w_\tau$  are embeddings transverse to the Reeb vector field and $\{C_\tau:=w_\tau(\C\setminus\{0\})\}_{\tau\in (0,1)}\subset \mathcal{R}_2\setminus (P_1\cup P_2\cup P_3)$. The union of $\{C_\tau\}_{\tau\in (0,1)}$, $U_1$, $U_2$ and $V$ determine a smooth foliation of $\mathcal{R}_2\setminus (P_1\cup P_2\cup P_3)$. Here the surfaces $U_1$, $U_2$ and $V$ are given by Propositions \ref{pr:foliation-solid-torus} and \ref{pr:a-cylinder-asymptotic-p1-p2}.
\end{proposition}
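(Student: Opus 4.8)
The plan is to start from the finite energy $\tj$-holomorphic cylinder $\tv_r$ asymptotic to $P_2$ at $+\infty$ and to $P_1$ at $0$ obtained in Proposition \ref{pr:a-cylinder-asymptotic-p1-p2}, together with one of the rigid cylinders $\tu_r$ from $P_3$ to $P_2$, and to glue/deform them into a $\tj$-holomorphic cylinder $\tw_{\tau_0}$ asymptotic to $P_3$ at $+\infty$ and to $P_1$ at $0$. Since $\mu(P_3)=3$ and $\mu(P_1)=1$, such a cylinder has $\mu(\tw)=3-1=2$, so $\ind(\tw)=\mu(\tw)-\chi(S^2)+\#\Gamma=2-2+2=2$, which is exactly the index in the hypothesis of Theorem \ref{th:family-of-cylinders-wendl} (Wendl's theorem for families of embedded finite energy spheres with simple, odd-index asymptotic limits). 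Applying that theorem gives a maximal one-parameter family $\{\tw_\tau=(c_\tau,w_\tau):\C\setminus\{0\}\to\R\times S^3\}_{\tau\in(\tau_-,\tau_+)}$ of embedded cylinders asymptotic to $P_3$ at $\infty$ and $P_1$ at $0$, with mutually disjoint projections not meeting $P_1\cup P_3$. Just as in Section \ref{se:a-family-of-planes}, equation \eqref{eq:wind_pi-wind_infty}, Lemma \ref{le:wind-infty-estimate}, Lemma \ref{le:wind=1,0} and formula \eqref{eq:conley-zehnder-wind} force $\wind_\pi(\tw_\tau)=0$, so each $w_\tau$ is an immersion transverse to $R_\lambda$; combined with Theorem \ref{th:siefring-2.6} (no self-intersections, via the asymptotic winding constraint $\gcd(m_z,\wind_\infty)=1$) the $w_\tau$ are embeddings transverse to the Reeb vector field. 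Using Theorem \ref{th:2.4-siefring} applied to $\tw_\tau$ against $\tu_r$, $\tu_r'$, $\tv_r$ and against the rigid plane $\tu_q$ and the family $\{\tu_\sigma\}$ bounding $\mathcal{R}_1$, one shows the projections $C_\tau=w_\tau(\C\setminus\{0\})$ are disjoint from $\mathcal{R}_1$ and from $P_2$; hence $\{C_\tau\}\subset\mathcal{R}_2\setminus(P_1\cup P_2\cup P_3)$.

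The next step is the compactness analysis of how $\{\tw_\tau\}$ degenerates at the ends $\tau\to\tau_\pm$. I would run the same bubbling-off/soft-rescaling machinery of Section \ref{se:bubbling-planos} (germinating sequences, Proposition \ref{pr:limit-germinating-sequence}, Lemma \ref{le:cylinders-with-small-area}, Lemma \ref{le:indice-maior-igual-a-2}) together with the Fredholm index estimate of Theorem \ref{theo:fredholm-estimate}, exactly as in the proof of Proposition \ref{theo:bubbling-off-two-vertices}. The expected outcome is that at each end the cylinder $\tw_\tau$ breaks into a building consisting of one of the rigid cylinders $\tu_r$ (resp. $\tu_r'$) from $P_3$ to $P_2$ together with the rigid cylinder $\tv_r$ from $P_2$ to $P_1$ — so the two ends of the family recover precisely $U_1\cup V$ and $U_2\cup V$. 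The index bookkeeping is the key constraint: a limit cylinder from $P_3$ to $P_2$ must have $\ind=0$ (rigid), and by Proposition \ref{pr:unicos-cilindros-p3p2-e-plano} the only such cylinders disjoint from $\R\times(P_2\cup P_3)$ are $\tu_r$ and $\tu_r'$; similarly the only $P_2$-to-$P_1$ rigid cylinder disjoint from $\R\times(P_1\cup P_2)$ is $\tv_r$ by Proposition \ref{pr:unico-cilinro-p1-p2}. One must rule out a direct $P_3$-to-$P_1$ plane-plus-cylinder degeneration or a nodal sphere bubbling off by the same linking and index arguments used before (e.g. any extra plane asymptotic to $P_2$ must be $\tu_q$, which lies in $\mathcal{R}_1$, contradicting $C_\tau\subset\mathcal{R}_2$).

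Finally, to conclude that $\{C_\tau\}_{\tau\in(0,1)}\cup U_1\cup U_2\cup V$ foliates $\mathcal{R}_2\setminus(P_1\cup P_2\cup P_3)$, I would argue as in the proof of Proposition \ref{pr:foliation}: the leaves are mutually disjoint embedded surfaces transverse to $R_\lambda$; the family is a closed and open subset of $\mathcal{R}_2\setminus T$ minus the cylinder $V$; and any point $p$ not on a leaf is separated from $U_1\cup U_2\cup V\cup P_1\cup P_2\cup P_3$ by a piecewise-smooth embedded sphere built from two nearby leaves $C_{\tau_0},C_{\sigma_0}$ together with $P_3$ (and $P_1$ capped appropriately), and a Jordan–Brouwer/open-closed argument then shows $p$ lies on some $C_\tau$, a contradiction. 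The main obstacle I expect is the SFT-compactness step: one has to exclude all spurious breakings of the family (in particular buildings containing curves asymptotic to iterates of $P_1$ or to other index-$2$ orbits, and curves with vanishing $d\lambda$-area that by Theorem \ref{theo:vanishing-dlambda-energy} would be orbit cylinders over $P_3$ or $P_1$, contradicting simplicity) and to verify that the breaking is \emph{exactly} one rigid cylinder over $P_3$--$P_2$ glued to $\tv_r$, with matching asymptotic eigensections so that the degenerate leaf is $C^1$ up to $P_2$; this uses hypotheses (i)--(v) of Theorem \ref{theo:main-theorem} in an essential way, together with the uniqueness statements in Propositions \ref{pr:unicos-cilindros-p3p2-e-plano} and \ref{pr:unico-cilinro-p1-p2}.
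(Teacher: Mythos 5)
Your proposal follows essentially the same route as the paper: glue $\tu_r$ and $\tv_r$ via the gluing theorem (Theorem \ref{th:gluing}), extend to a maximal family by Theorem \ref{th:family-of-cylinders-wendl}, establish embeddedness, transversality and disjointness from $\mathcal{R}_1$ via Lemmas \ref{le:wind=1,0}, \ref{pr:cilindro-nao-intersecta-orbita} and Theorems \ref{th:siefring-2.6}, \ref{th:2.4-siefring}, then run the soft-rescaling/index/linking analysis at the ends and invoke the uniqueness results (Propositions \ref{pr:unicos-cilindros-p3p2-e-plano}, \ref{pr:unico-cilinro-p1-p2}) to identify the breaking as $\tu_r\cup\tv_r$ and $\tu_r'\cup\tv_r$, finishing with the open--closed/Jordan--Brouwer covering argument. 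This matches the paper's proof (Propositions \ref{pr:cilindro-e-mergulho}--\ref{pr:folheacao-final}), with your sketch only leaving the adaptation of the bubbling-off analysis to the extra negative puncture at $z=0$ (the mass dichotomy and the Mayer--Vietoris homology argument) at the level of an outline rather than carrying it out.
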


In Subsection \ref{se:conclusao-prova}, we also prove that ${\rm{sl}}(P_i)=-1$, $i=1,2,3$, 
completing the proof of Theorem \ref{theo:main-theorem}.


\subsection{Gluing}
The following statement is a consequence of the usual gluing theorem for pseudo-holomorphic curves in symplectizations, see \cite[\S7]{nelson2013} or \cite[\S10]{wendl2016lectures} for a nice exposition.
\begin{theorem}\label{th:gluing}
	Let $P_+,P,P_-\in \mathcal{P}(\lambda)$ be simple Reeb orbits such that 
	$\mu(P_+)=\mu(P)+1=\mu(P_-)+2$. 
	Let $\tu_+,\tu_-:\C\setminus\{0\}\to \R\times S^3$ be finite energy cylinders, which are pseudo-holomorphic with respect to $\tj\in \mathcal{J}_{reg}$ and such that $\tu_+$ is asymptotic to $P_+$ at the positive puncture $z=\infty$ and to $P$ at the negative puncture $z=0$ and $\tu_-$ is asymptotic to $P$ at the positive puncture $z=\infty$ and to $P_-$ at the negative puncture $z=0$. 
	Then there exists $R_0\in \R$ and a family of finite energy cylinders 
	$$\{\tu_R:\C\setminus\{0\}\to \R\times S^3\},~R\in [R_0,+\infty)$$
	asymptotic to $P_+$ at  the positive puncture $z=\infty$ and to $P_-$ at the negative puncture $z=0$.
	For every sequence $R_n\in [R_0,+\infty)$ satisfying $R_n\to \infty$,
	there exist sequences $\delta_n^\pm\in \C$ and $c_n^\pm\in \R$ such that 
	\begin{equation*}
		\begin{aligned}
			\tu_R(\delta_n^+\cdot)+c_n^+&\to \tu_+ ~~\text{in }C^\infty_{loc}(\C\setminus \{0\})\\
			\tu_R(\delta_n^-\cdot)+c_n^-&\to \tu_- ~~\text{in }C^\infty_{loc}(\C\setminus \{0\})
		\end{aligned}
	\end{equation*}	 
\end{theorem}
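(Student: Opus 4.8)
The plan is to deduce the statement from the standard gluing theorem for index-one curves in symplectizations (see \cite[\S7]{nelson2013} and \cite[\S10]{wendl2016lectures}); the bulk of the argument is the verification of its hypotheses.

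First I would record the Fredholm data. Fixing a global symplectic trivialization $\Psi$ of $\xi$, the Conley--Zehnder index of a cylinder is the difference of the indices of its asymptotic orbits, so $\mu(\tu_+)=\mu(P_+)-\mu(P)=1$ and $\mu(\tu_-)=\mu(P)-\mu(P_-)=1$. Since each cylinder has domain $S^2$ with $\#\Gamma=2$ punctures, \eqref{eq:formula-indice-fredholm} gives $\ind(\tu_\pm)=\mu(\tu_\pm)-\chi(S^2)+\#\Gamma=1$, while the prospective glued cylinder, asymptotic to $P_+$ and $P_-$, has $\ind=\mu(P_+)-\mu(P_-)=2$. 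Neither $\tu_+$ nor $\tu_-$ is an orbit cylinder, since $P_+\neq P\neq P_-$; by Theorem \ref{theo:vanishing-dlambda-energy} a finite energy cylinder with $\pi\circ du\equiv 0$ would be a cylinder over a single orbit, so in fact $\pi\circ du_\pm\not\equiv 0$ and $\tu_\pm$ have positive $d\lambda$-area. Because the asymptotic orbits are simple, a nontrivial branched cover is impossible, so $\tu_\pm$ are somewhere injective; since $\tj\in\mathcal{J}_{reg}$, Theorem \ref{theo:fredholm-estimate} (more precisely the transversality result of \cite{dragnev2004fredholm} underlying it) shows that the linearized operators $D_{\tu_\pm}$ are surjective, i.e. $\tu_+$ and $\tu_-$ are Fredholm regular. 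Finally, the orbit $P$ along which we glue is nondegenerate --- it carries a well-defined Conley--Zehnder index --- so by Proposition \ref{pr:properties-asymptotic-operator} the asymptotic operator $A_{P,J}$ has trivial kernel; this nondegeneracy is exactly what the linear gluing estimate requires.

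With these inputs the gluing proceeds in the usual way. For large $R$ one pre-glues $\tu_+$ and $\tu_-$ along $P$: truncate each curve near the matching puncture, where by Theorem \ref{theo:asymptotic behavior} it is exponentially close to a cylinder over $P$, and splice in a cylindrical neck of length of order $R$ over $P$, matching leading asymptotic terms; after rescaling one recovers a map $\tu^{\mathrm{pre}}_R:\C\setminus\{0\}\to\R\times S^3$ asymptotic to $P_+$ at $\infty$ and to $P_-$ at $0$, which solves $\bar\partial_{\tj}=0$ up to an error of size $O(e^{-\delta R})$ in the appropriate weighted Sobolev norm. The linearized operator at $\tu^{\mathrm{pre}}_R$ admits a right inverse bounded uniformly in $R$ --- obtained by patching the right inverses of $D_{\tu_\pm}$ across the neck, using the surjectivity above together with the nondegeneracy of $P$ --- and the nonlinearity obeys a uniform quadratic estimate, so a Newton iteration / Banach fixed point argument yields, for each $R\geq R_0$, a genuine $\tj$-holomorphic finite energy cylinder $\tu_R$ near $\tu^{\mathrm{pre}}_R$, still asymptotic to $P_+$ at $z=\infty$ and to $P_-$ at $z=0$. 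The convergence assertion is built into this construction: restricting $\tu^{\mathrm{pre}}_{R_n}$ to the part coming from $\tu_\pm$ and applying the corresponding rescaling $z\mapsto\delta_n^\pm z$ and $\R$-shift $c_n^\pm$, one gets convergence to $\tu_\pm$ in $C^\infty_{loc}(\C\setminus\{0\})$, and the correction term tends to zero.

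The main obstacle is the uniform invertibility of the linearized operator along the family $\{\tu^{\mathrm{pre}}_R\}$, with bounds independent of $R$ --- the linear part of the gluing; the remainder is index bookkeeping and a routine implicit-function argument. Since the statement is presented as a consequence of the gluing theorems of \cite{nelson2013} and \cite{wendl2016lectures}, in the write-up I would carry out the hypothesis check in detail and invoke those references for the analytic core.
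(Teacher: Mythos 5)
Your proposal is correct and follows essentially the same route as the paper, which states Theorem \ref{th:gluing} as a direct consequence of the standard gluing theorems of \cite{nelson2013} and \cite{wendl2016lectures} without further proof. Your verification of the hypotheses (index count via \eqref{eq:formula-indice-fredholm}, somewhere injectivity from simplicity of the asymptotic orbits, regularity from $\tj\in\mathcal{J}_{reg}$ via \cite{dragnev2004fredholm}, and nondegeneracy of the breaking orbit $P$) is exactly the bookkeeping the paper leaves implicit, and the analytic core is correctly delegated to the cited references.
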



\subsection{A family of $\tj$-holomorphic cylinders}
Let $\tu_r:\C\setminus \{0\}\to \R\times S^3$ and $\tv_r:\C\setminus\{0\}\to \R\times S^3$ be the finite energy cylinders obtained by Proposition \ref{theo:bubbling-off-two-vertices} and Proposition \ref{prop:arvore-p1-p2} respectively. 
Applying Theorem \ref{th:gluing} to $\tu_r$ and $\tv_r$ we obtain a family of finite energy $\tj$-holomorphic cylinders 
\begin{equation}\label{eq:familia-de-cilindros}
	\{\tw_\tau=(c_\tau,w_\tau):\C\setminus \{0\}\to \R\times S^3\},\tau\in [R_0,+\infty),
\end{equation}
all of them asymptotic to the orbit $P_3$ at the positive puncture $z=\infty$ and $P_1$ at the negative puncture $z=0$.


\begin{proposition}\label{pr:cilindro-e-mergulho}
	For every $\tau\in[R_0,+\infty)$, $\tw_\tau$  is an embedding. The projection $w_\tau:\C\setminus\{0\}\to S^3$  is an embedding which does not intersect its asymptotic limits and is transverse to the Reeb vector field. Moreover, $w_\tau(\C\setminus \{0\})\cap \mathcal{R}_1=\emptyset$, where $\mathcal{R}_1$ is the closed region defined in Proposition \ref{pr:foliation}.
\end{proposition}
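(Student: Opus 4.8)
The plan is to establish the three claims --- $\tw_\tau$ is an embedding, $w_\tau$ is an embedding disjoint from $P_1\cup P_3$, and $w_\tau(\C\setminus\{0\})\cap\mathcal{R}_1=\emptyset$ --- by first computing the algebraic invariants of $\tw_\tau$, then invoking the intersection-theoretic criteria of Siefring together with the gluing data. First I would observe that every $\tw_\tau$ is a finite energy cylinder asymptotic to $P_3$ (index $3$) at $z=\infty$ and to $P_1$ (index $1$) at $z=0$, with both asymptotic limits nondegenerate and of Conley-Zehnder index in $\{1,2,3\}$. Since $P_3$ and $P_1$ are simple, $\tw_\tau$ is somewhere injective; moreover $\pi\circ dw_\tau$ cannot vanish identically, since otherwise Theorem \ref{theo:vanishing-dlambda-energy} would force both asymptotic limits to be iterates of a single simple orbit, contradicting $\mu(P_3)\ne\mu(P_1)$ (or simply that $P_3\ne P_1$). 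Hence Lemma \ref{le:wind=1,0} applies: $\wind_\pi(\tw_\tau)=0$ and $\wind_\infty(\tw_\tau,\infty)=\wind_\infty(\tw_\tau,0)=1$. In particular $w_\tau$ is an immersion everywhere transverse to $R_\lambda$.

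Next I would show $w_\tau$ does not intersect $P_1\cup P_3$. Because $P_1\cup P_3$ is an unlink, $P_1\ne P_3$, and both indices lie in $\{1,2,3\}$, Lemma \ref{pr:cilindro-nao-intersecta-orbita} applies directly to $\tw_\tau$ and gives $w_\tau(\C\setminus\{0\})\cap(P_1\cup P_3)=\emptyset$. With this in hand, $\tw_\tau$ satisfies condition \textit{(3)} of Theorem \ref{th:siefring-2.6} (the winding quotients at the two punctures agree, both being $1$, and $w_\tau$ avoids its own asymptotic limits), so $\tw_\tau$ and its projection $w_\tau$ are both embeddings. This disposes of the first two assertions.

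For the disjointness from $\mathcal{R}_1$, the idea is to use the gluing description \eqref{eq:familia-de-cilindros}: for large $R$, $\tw_R$ breaks (after reparametrization and $\R$-translation) into $\tu_r$ (the cylinder from $P_3$ to $P_2$ of Proposition \ref{theo:bubbling-off-two-vertices}) and $\tv_r$ (the cylinder from $P_2$ to $P_1$ of Proposition \ref{prop:arvore-p1-p2}), whose projections both lie in $\mathcal{R}_2=S^3\setminus\mathring{\mathcal{R}_1}$ and avoid $P_2$. I would apply Theorem \ref{th:2.4-siefring} to the pair $\tw_\tau$ and $\tu_\sigma$, where $\tu_\sigma$ is any plane in the family \eqref{eq:familia-de-planos} asymptotic to $P_3$: since $w_\tau$ avoids $P_3$ (the positive asymptotic limit of both) and $u_\sigma$ avoids $P_1$ (the negative asymptotic limit of $\tw_\tau$), and the winding quotient at the shared orbit $P_3$ is $1$ on both sides so the numerical conditions of \textit{(2)} hold, the projected curves $w_\tau$ and $u_\sigma$ are disjoint for all $\sigma\in(0,1)$. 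Likewise $w_\tau$ avoids $u_q=D$ by applying Theorem \ref{th:2.4-siefring} with the plane $\tu_q$ (here one uses $u_q(\C)\cap(P_1\cup P_3)=\emptyset$ from Proposition \ref{le:the-cylinders-do-not-intersect-orbits} and Lemma \ref{pr:cilindro-nao-intersecta-orbita}). Since $\mathcal{R}_1\setminus(P_2\cup P_3)$ is foliated by $\{u_\sigma(\C)\}_{\sigma\in(0,1)}\cup U_1\cup U_2\cup D$ (Proposition \ref{pr:foliation}), and $w_\tau$ also avoids $U_1=u_r(\C\setminus\{0\})$ and $U_2=u_r'(\C\setminus\{0\})$ --- again by Theorem \ref{th:2.4-siefring}, checking that $w_\tau$ avoids $P_3$ and $u_r,u_r'$ avoid $P_1$, with matching winding quotients $1$ at $P_3$ and compatible quotients at $P_2$ (which $w_\tau$ avoids) --- it follows that $w_\tau(\C\setminus\{0\})$ is disjoint from $\mathcal{R}_1\setminus(P_2\cup P_3)$, hence from all of $\mathcal{R}_1$ since $w_\tau$ also avoids $P_2$ (by the computation that $P_2$ is contractible in the complement of $w_\tau$ via its non-intersection with the plane $D$ whose boundary is $P_2$, so $\mathrm{lk}(P_2,\cdot)=0$, combined with $P_1,P_3$ avoidance). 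The main obstacle I expect is the bookkeeping in these repeated applications of Theorem \ref{th:2.4-siefring}: one must verify in each case that the relevant curve avoids the positive/negative asymptotic limits of the other and that the winding-number quotients satisfy the prescribed (in)equalities at each shared orbit $P_2$ or $P_3$; establishing that $w_\tau$ really avoids $P_2$ (rather than just being disjoint from the leaves) is the most delicate point and is where the plane $D$ with $\partial D=P_2$ and the non-linking hypotheses must be used carefully.
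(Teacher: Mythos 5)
Your proposal is correct and is essentially the paper's own argument: Lemma \ref{pr:cilindro-nao-intersecta-orbita} together with Theorem \ref{th:siefring-2.6} gives both embedding statements, and Lemma \ref{le:wind=1,0} together with Theorem \ref{th:2.4-siefring} applied to $\tw_\tau$ against the holomorphic curves attached to $\mathcal{R}_1$ gives the disjointness, with $w_\tau(\C\setminus\{0\})\cap P_2=\emptyset$ coming from the equivalence of conditions (2) and (3) in that theorem. The paper is slightly leaner, testing $\tw_\tau$ only against $\tu_r$ and $\tu_r'$ and concluding from disjointness with the boundary torus $T$, whereas you also test against the interior leaves $u_\sigma$ and $u_q$; for those applications the unstated fact that the leaves avoid $P_1$ follows from ${\rm lk}(P_1,P_3)=0$ and their transversality to $R_\lambda$ (equivalently $P_1\subset S^3\setminus\mathcal{R}_1$), and your citation of Lemma \ref{pr:cilindro-nao-intersecta-orbita} for $u_q(\C)\cap P_1=\emptyset$ is misplaced (that lemma concerns cylinders), though this fact is not actually needed to run Theorem \ref{th:2.4-siefring} with $u=u_q$, $v=\tw_\tau$.
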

\begin{proof}
	By Lemma \ref{pr:cilindro-nao-intersecta-orbita} and Theorem \ref{th:siefring-2.6}, we conclude that $\tw_\tau$ is an embedding and $w_\tau$ is an embedding which does not intersect its asymptotic limits and is transverse to the Reeb vector field.
	Applying Lemma \ref{le:wind=1,0} and Theorem \ref{th:2.4-siefring} to $\tw_\tau$ and $\tu_r$, we conclude that $w_\tau$ and $u_r$ do not intersect and that $w_\tau$ does not intersect the orbit $P_2$.  Similarly, we conclude that $w_\tau$ and $u_r'$ do not intersect, where the $\tj$-holomorphic cylinder $\tu_r'=(a'_r,w_r'):\C\setminus \{0\}\to\R\times S^3$ is obtained in Proposition \ref{theo:bubbling-off-two-vertices}. Consequently, $w_\tau(\C\setminus \{0\})\cap \mathcal{R}_1=\emptyset$. 
\end{proof}

Applying Theorem \ref{th:family-of-cylinders-wendl} to the maps $\tw_\tau$ in \eqref{eq:familia-de-cilindros}, we obtain a maximal smooth one-parameter family of finite energy cylinders, containing the family \eqref{eq:familia-de-cilindros}. 
Assuming that $\tau$ strictly increases in the direction of $R_\lambda$ and 
the normalization $\tau\in (0,1)$, we denote this maximal family by   
\begin{equation}\label{eq:familia-maximal-cilindros}
\{\tw_\tau=(c_\tau,w_\tau):\C\setminus\{0\}\to \R\times S^3\}, ~\tau\in (0,1).
\end{equation}


\begin{proposition}\label{th:bubbling-tree-cylinders}
	Consider a sequence $\tilde{w}_n=(c_n,w_n):\C\setminus\{0\}\to \R\times S^3$ in the family \eqref{eq:familia-maximal-cilindros}, where $\tw_n=\tilde{w}_{\tau_n}$ and $\tau_n\to 1^-$. 
	Let $\tu_r,\tu_r':\C\setminus \{0\}\to \R\times S^3$ and $\tv_r:\C\setminus\{0\}\to \R\times S^3$ be the finite energy cylinders obtained by Proposition \ref{theo:bubbling-off-two-vertices} and Proposition \ref{prop:arvore-p1-p2} respectively. 
	Then after suitable reparametrizations and $\R$-translations of $\tw_n$, $\tu_r,~\tu_r'$ and $\tv_r$, we have
	\begin{enumerate}[label=(\roman*)]
		\item up to a subsequence, 
		$\tw_n\to \tu_r$ in $C^\infty_{loc}(\C\setminus \{0\})$ as $n\to \infty$.
		\item There exist sequences $\delta_n^+\to 0^+$ and $d_n\in \R$ such that, up to a subsequence,
		$\tw_n(\delta_n \cdot)+d_n\to \tv_r$ 	in $C^\infty_{loc}(\C\setminus \{0\})$ as $n\to \infty$. 		
	\end{enumerate} 
	A similar statement holds for any sequence $\tau_n\to 0^+$, with $\tu_r$ replaced by $\tu_r'$. 
	
\end{proposition}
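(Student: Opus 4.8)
The plan is to run, one level higher, the bubbling-off analysis of the proofs of Propositions~\ref{theo:bubbling-off-two-vertices} and~\ref{prop:arvore-p1-p2}. Fix a sequence $\tw_n=\tw_{\tau_n}$ in the family~\eqref{eq:familia-maximal-cilindros} with $\tau_n\to1^-$. Each $\tw_n$ is asymptotic to $P_3$ at $\infty$, so $E(\tw_n)\le T_3$ and $A(\tw_n)=T_3-T_1$. By the SFT compactness theorem, as in the proof of Proposition~\ref{prop:arvore-p1-p2} (or by a bubbling-off analysis in the spirit of Section~\ref{se:foliating-gamma1=0}), I would extract a bubbling-off building whose top component $\tw_\infty=(c_\infty,w_\infty)\colon\C\setminus\Gamma_0\to\R\times S^3$ is a $C^\infty_{loc}$-limit of suitably reparametrized and $\R$-translated $\tw_n$ and is asymptotic to $P_3$ at $\infty$. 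Since the family~\eqref{eq:familia-maximal-cilindros} is maximal, Theorem~\ref{th:family-of-cylinders-wendl} forces $\Gamma_0\ne\emptyset$; since $P_3$ is simple, $\tw_\infty$ is somewhere injective, so Theorem~\ref{theo:fredholm-estimate} together with Lemma~\ref{le:indice-maior-igual-a-2} gives $\#\Gamma_0\le2$, and Theorem~\ref{theo:vanishing-dlambda-energy} combined with the simplicity of $P_3$ rules out both $\#\Gamma_0=2$ and $\pi\cdot dw_\infty\equiv0$. Hence $\Gamma_0=\{0\}$ and $\tw_\infty$ is a cylinder of positive $d\lambda$-area, asymptotic to $P_3$ at $\infty$ and to an orbit $P_0$ with $\mu(P_0)=2$ and period $<T_3$ at $0$.

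To identify $P_0$ I would use that $w_n(\C\setminus\{0\})\subset\mathring{\mathcal{R}_2}$ by Proposition~\ref{pr:cilindro-e-mergulho}, hence $w_\infty$ avoids $\mathring{\mathcal{R}_1}$; capping the projected loops $t\mapsto w_n(s,t)$ on the negative side by a disk spanning $P_1$, which may be chosen disjoint from $P_3$ since ${\rm lk}(P_1,P_3)=0$, shows ${\rm lk}(w_n(\{s\}\times S^1),P_3)=0$ for all $s$, so ${\rm lk}(P_0,P_3)=0$, and by hypothesis~(ii) of Theorem~\ref{theo:main-theorem} this forces $P_0=P_2$. By Lemma~\ref{pr:cilindro-nao-intersecta-orbita} the cylinder $\tw_\infty$ does not meet $\R\times(P_2\cup P_3)$, so Proposition~\ref{pr:unicos-cilindros-p3p2-e-plano} identifies it, up to reparametrization and $\R$-translation, with $\tu_r$ or $\tu_r'$; together with the $C^\infty_{loc}$-convergence this yields part~(i) once it is pinned down which of the two occurs (see below). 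For part~(ii) I would soft-rescale at the puncture $0$ of $\tw_\infty$ exactly as in Subsection~\ref{se:soft-rescaling}, producing sequences $\delta_n\to0^+$, $d_n\in\R$ and a further limit $\tw_{bot}$ whose positive asymptotic limit is $P_2$ by Proposition~\ref{pr:same-orbits-tree}. Repeating the index and area bookkeeping for $\tw_{bot}$ (Theorem~\ref{theo:fredholm-estimate}, Lemma~\ref{le:indice-maior-igual-a-2}, Theorem~\ref{theo:vanishing-dlambda-energy}, the simplicity of $P_1,P_2$, and the fact that no finite energy plane has a negative-index asymptotic limit), and using Theorem~\ref{th:2.4-siefring} applied to $\tw_{bot}$ and the plane $\tu_q$ asymptotic to $P_2$ (whose projection misses $P_2$) to see that every negative asymptotic limit of $\tw_{bot}$ is unlinked from $P_2$ and has Conley--Zehnder index $1$, hypothesis~(iii) of Theorem~\ref{theo:main-theorem} and a homology count in $\mathcal{R}_2$ force $\tw_{bot}$ to be a single cylinder asymptotic to $P_2$ at $\infty$ and to $P_1$ at $0$, of positive $d\lambda$-area. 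By Lemma~\ref{pr:cilindro-nao-intersecta-orbita} it avoids $\R\times(P_1\cup P_2)$, so Proposition~\ref{pr:unico-cilinro-p1-p2} identifies it with $\tv_r$; this is part~(ii). The area count $T_3-T_1=A(\tw_\infty)+A(\tw_{bot})=(T_3-T_2)+(T_2-T_1)$ shows the building has no other nontrivial component.

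It remains to match the two ends of the family~\eqref{eq:familia-maximal-cilindros} with $\tu_r$ and $\tu_r'$. Since~\eqref{eq:familia-maximal-cilindros} contains the glued family~\eqref{eq:familia-de-cilindros}, whose large-parameter end breaks onto $\tu_r\cup\tv_r$ by Theorem~\ref{th:gluing}, and $\tau$ is normalized to increase in the direction of $R_\lambda$, this breaking end is $\tau\to1^-$; hence the top component obtained above for $\tau_n\to1^-$ is $\tu_r$, giving~(i). For $\tau_n\to0^+$ the same analysis produces $\tu_r\cup\tv_r$ or $\tu_r'\cup\tv_r$, and the former is excluded by an open--closed argument as in the proof of Proposition~\ref{pr:foliation}: if both ends broke onto $\tu_r\cup\tv_r$, the leaves $\{w_\tau(\C\setminus\{0\})\}_{\tau\in(0,1)}$ together with $U_1$, $V$, $P_1$, $P_2$, $P_3$ would be a nonempty open and closed subset of $\mathring{\mathcal{R}_2}$ disjoint from $U_2$, which is absurd. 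I expect the main difficulty to lie precisely in this case analysis --- ruling out spurious components of the bubbling-off building and pinning down its intermediate and bottom asymptotic limits via the linking hypotheses~(ii) and~(iii) --- rather than in any single estimate, all of which are imported from Sections~\ref{se:foliating-gamma1=0}--\ref{se:cilindro-p2-p1}.
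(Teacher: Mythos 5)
Your overall strategy is the paper's: bubble off along $\tau_n\to 1^-$, identify the top level as a cylinder from $P_3$ to $P_2$, soft-rescale at $z=0$ to produce the cylinder from $P_2$ to $P_1$, then invoke the uniqueness statements (Propositions \ref{pr:unicos-cilindros-p3p2-e-plano} and \ref{pr:unico-cilinro-p1-p2}), the gluing origin of the family \eqref{eq:familia-de-cilindros}, and an open--closed foliation argument to decide which end yields $\tu_r$ and which yields $\tu_r'$ (this last part matches Propositions \ref{pr:familia-de-cilindros-se-aproxima-limite} and \ref{pr:tw=tu'} closely). However, the step where you pin down the top level has a genuine gap. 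First, Lemma \ref{le:indice-maior-igual-a-2} only controls asymptotic limits at the bubbling points $\Theta$; the puncture $z=0$ is inherited from the domain of $\tw_n$, so the bound $\mu(P_0)\geq 1$ needed even for your estimate $\#\Gamma_0\leq 2$ is not free --- it requires the descent argument through the tree terminating at the cylinder over $P_1$ (the analogue of Claim I in the proof of Proposition \ref{prop:arvore-p1-p2}), which you do not carry out at this stage. Second, and more seriously, your exclusion of $\#\Gamma_0=2$ via ``Theorem \ref{theo:vanishing-dlambda-energy} plus simplicity of $P_3$'' does not work: the competing configuration is one bubbling point carrying an index-$2$ orbit together with an index-$1$ orbit at $z=0$, for which $\operatorname{ind}(\tw)=3+\#\Theta-\sum\mu=1$ and the $d\lambda$-area is positive, so neither the Fredholm inequality nor the vanishing-area theorem is triggered and simplicity of $P_3$ plays no role. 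The paper excludes this case by a topological argument you omit: hypothesis (ii) identifies the bubble's orbit as $P_2$, and then the disks $w_n(B_\epsilon(z))$, which lie in $\mathcal{R}_2$, would make $P_2$ contractible in $\mathcal{R}_2$, contradicting that $[P_2]$ generates $H_1(\mathcal{R}_2,\Z)$ (Mayer--Vietoris).

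The same issue recurs one step later: even once $\Gamma_0=\{0\}$ is known, the index count gives only $1\leq\mu(P_0)\leq 2$, and ruling out $\mu(P_0)=1$ (so that $P_0=P_2$ rather than an index-$1$ orbit, which would be compatible with everything you have used so far) is the content of Claim I in \S\ref{se:prova-teorema-cilindros}: it requires analyzing the soft-rescaled curve $\tv_0$ at the next level, including the multiply covered case via a factorization $\tv_0=\tu_0\circ p$, and uses Lemma \ref{le:either-or-cilindros}. Your part (ii) then presupposes $P_0=P_2$ when applying Proposition \ref{pr:same-orbits-tree} to $\tw_{bot}$, so it inherits the gap. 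A smaller point: maximality of the family \eqref{eq:familia-maximal-cilindros} is not used to force ``$\Gamma_0\neq\emptyset$'' (the puncture $0$ is always there); its correct use, as in the paper, is to show the mass $m(0)$ is positive, i.e., that the negative asymptotic limit at $0$ is not $P_1$, via Lemma \ref{le:orbitas-coincidem-massa-zero} and Theorem \ref{th:family-of-cylinders-wendl}. In short, the linking/homology arguments and the second-level index analysis that you defer as ``case analysis'' are precisely the nontrivial content of the proposition, and the tools you cite in their place do not close those cases.
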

The Proof of Proposition \ref{th:bubbling-tree-cylinders} is given in Subsections \ref{se:bubbling-familia-cilindros} and \ref{se:prova-teorema-cilindros}.

\subsection{Bubbling-off analysis for the family of cylinders}\label{se:bubbling-familia-cilindros}
Most of the material in Subsection \ref{se:bubbling-familia-cilindros} is adapted from {\cite[\S6.2]{hwz2003}}. However, we can not directly apply the results of \cite{hwz2003} since our hypotheses are slightly different. 

Consider a sequence $\tilde{w}_n=(c_n,w_n):\C\setminus\{0\}\to \R\times S^3$ in the family \eqref{eq:familia-maximal-cilindros}, where $\tw_n=\tilde{w}_{\tau_n}$ and $\tau_n\to 1^-$. 
Note that since all cylinders $\tw_n$ are asymptotic to $P_3$ at $z=0$, we have  $0<E(\tw_n)=T_3$. 
We reparametrize the sequence so that
\begin{equation}\label{eq:reparametrizacao-bubbling-cilindros}
\int_{\C\setminus \D}w_n^*d\lambda =\frac{\sigma(T_3)}{2}~.
\end{equation}
Define $\Theta=\{z\in \C\setminus \{0\}| \exists \text{ subsequence }\tw_{n_j} \text{ and } z_j\to z \text{ s.t. }|d\tw_{n_j}(z_j)|\to \infty\}.$ 
By the same arguments used in the proof of Proposition \ref{pr:limit-germinating-sequence}, we can assume that $\Theta$ is finite and  $\Theta\subset \D\setminus \{0\}$. Moreover, there exists a $\tj$-holomorphic map 
\begin{equation}\label{eq:definicao-w-limite-wn}
	\tw=(c,w):\C\setminus\left(\{0\}\cup \Theta\right)\to \R\times S^3
\end{equation}
such that, up to a subsequence, still denoted by $\tw_n$, 
$$\tw_n\to \tw\text{ in }C^\infty_{loc}\left(\C\setminus(\{0\}\cup \Theta),\R\times S^3\right)$$
and $E(\tw)\leq T_3$.
The punctures in $\{0\}\cup \Theta$ are non-removable and negative, and the puncture $z=\infty$ is positive.
Indeed, for any $\epsilon$ sufficiently large or small, we have
$$\int_{\partial B_\epsilon (0)}w^*\lambda =\lim_{n\to \infty}\int_{\partial B_\epsilon(0)}w_n^*\lambda \in [T_1,T_3] ,$$
where $\partial B_\epsilon(z)$ is oriented counterclockwise. 
It follows that $\infty$ is a positive puncture and $0$ is a negative puncture.
If $z\in \Theta$, then for any sufficiently small $\epsilon$, we have
$$\int_{\partial B_\epsilon(z)}w^*\lambda =\lim_{n\to \infty}\int_{\partial B_\epsilon(z)}w_n^*\lambda=\lim_{n\to \infty}\int_{B_\epsilon(z)}w_n^*d\lambda\geq T>0,$$
where $T\leq T_3$ is a period. 
This follows from the same arguments used in \S\ref{se:germinating-seuqences}. 
We conclude that $z$ is a negative puncture.
By the same arguments used in the proof of Proposition \ref{theo:bubbling-off-two-vertices} we conclude that the asymptotic limit of $\tw$ at $\infty$ is $P_3$.

\begin{lemma}\label{le:int-omega->0}
	$\int_{\C\setminus (\{0\}\cup \Theta)}w^*d\lambda >0~.$
\end{lemma}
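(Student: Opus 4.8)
The plan is to argue by contradiction. Since $\int_{\C\setminus(\{0\}\cup\Theta)}w^*d\lambda\ge 0$ always holds, with equality if and only if $\pi\circ dw\equiv 0$, it suffices to rule out the case $\pi\circ dw\equiv 0$. So assume $\pi\circ dw\equiv 0$. The map $\tw$ in \eqref{eq:definicao-w-limite-wn} is nonconstant (its punctures were shown above to be non-removable) and has finite energy, its unique positive puncture is $\infty$, and its negative punctures are exactly $\{0\}\cup\Theta$; hence Theorem \ref{theo:vanishing-dlambda-energy} applies and produces a nonconstant polynomial $p\colon\C\to\C$ with $p^{-1}(0)=\{0\}\cup\Theta$, together with a closed Reeb orbit $P=(x,T)$, such that $\tw=F_P\circ p$, where $F_P(e^{2\pi(s+it)})=(Ts,x(Tt))$.

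Next I would exploit the asymptotic limit at $\infty$. We have already established that $\tw$ is asymptotic to $P_3$ at $\infty$, and $P_3$ is simple. On the other hand $F_P\circ p$ is asymptotic at $\infty$ to the iterate $P^{\deg p}$, so simplicity of $P_3$ forces $\deg p=1$. Consequently $P=P_3$, the set $\Theta$ is empty, and $p(z)=Az$ for some $A\neq 0$; thus $\tw$ is, up to the biholomorphism $z\mapsto Az$, the trivial cylinder over $P_3$. A direct computation then gives $\int_{\partial\D}w^*\lambda=T_3$, where $\partial\D$ denotes the unit circle oriented counterclockwise.

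Finally I would contradict the normalization \eqref{eq:reparametrizacao-bubbling-cilindros}. For each fixed $n$, Stokes' theorem on the annuli $\{1\le |z|\le R\}$, combined with the asymptotic behavior of $\tw_n$ at its positive puncture $\infty$ (asymptotic to $P_3$, so $\int_{\{|z|=R\}}w_n^*\lambda\to T_3$ as $R\to\infty$), gives $\int_{\C\setminus\D}w_n^*d\lambda=T_3-\int_{\partial\D}w_n^*\lambda$; hence $\int_{\partial\D}w_n^*\lambda=T_3-\tfrac{\sigma(T_3)}{2}$ by \eqref{eq:reparametrizacao-bubbling-cilindros}. Since $\Theta\subset\D\setminus\{0\}$, the circle $\partial\D$ is a compact subset of $\C\setminus(\{0\}\cup\Theta)$, so $\tw_n\to\tw$ in $C^\infty_{loc}$ yields $\int_{\partial\D}w^*\lambda=\lim_n\int_{\partial\D}w_n^*\lambda=T_3-\tfrac{\sigma(T_3)}{2}$, which contradicts $\int_{\partial\D}w^*\lambda=T_3$ because $\sigma(T_3)>0$.

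I do not expect any serious obstacle: the proof is a standard trivial-cylinder rigidity argument followed by an elementary area computation. The only points requiring care are the orientation bookkeeping in the Stokes identity and the appeal to Theorem \ref{th:asymptotic-limit} (respectively Theorem \ref{theo:asymptotic behavior}) to justify the limit $\int_{\{|z|=R\}}w_n^*\lambda\to T_3$ as $R\to\infty$ for each fixed $n$; both are routine, and no uniformity in $n$ is needed since $\int_{\partial\D}w_n^*\lambda$ turns out to be independent of $n$.
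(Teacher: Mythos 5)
Your proof is correct, and every ingredient you invoke is available at this point of the paper: the sign and non-removability of the punctures of $\tw$, the fact that its asymptotic limit at $\infty$ is the simple orbit $P_3$, Theorem \ref{theo:vanishing-dlambda-energy}, and the normalization \eqref{eq:reparametrizacao-bubbling-cilindros}. The paper arranges the argument a little differently: it splits into the cases $\Theta=\emptyset$ and $\Theta\neq\emptyset$. For $\Theta\neq\emptyset$ the contradiction is a pure degree count, since $p^{-1}(0)=\{0\}\cup\Theta$ forces $\deg p\geq 2$, incompatible with the asymptotic limit $P_3$ at $\infty$ being simple, so no action computation is needed there; for $\Theta=\emptyset$ it reads the bound $\int_{\C\setminus\{0\}}w^*d\lambda\geq \sigma(T_3)/2$ directly off \eqref{eq:reparametrizacao-bubbling-cilindros} (implicitly the same Stokes/asymptotics computation you spell out, since a naive $C^\infty_{loc}$ limit over the noncompact region $\C\setminus\D$ only yields an upper bound). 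You instead run a single contradiction covering both cases: vanishing $d\lambda$-area plus simplicity of $P_3$ give $\deg p=1$, hence $\Theta=\emptyset$ and $\tw$ is a trivial cylinder over $P_3$, whose action over $\partial\D$ is $T_3$, against the value $T_3-\tfrac{\sigma(T_3)}{2}$ forced by the normalization. This is precisely the route the paper takes for the analogous positivity statement for $\tu_r$ in the proof of Proposition \ref{theo:bubbling-off-two-vertices}, so both arguments are equally legitimate; the paper's case split is marginally cheaper when $\Theta\neq\emptyset$, while your version makes the $\Theta=\emptyset$ case fully explicit. One small remark: your appeal to $\Theta\subset\D\setminus\{0\}$ to keep $\partial\D$ away from the punctures is superfluous at that stage, since you have already concluded $\Theta=\emptyset$ in the contradiction branch.
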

\begin{proof}
	If $\Theta=\emptyset$, then it follows from \eqref{eq:reparametrizacao-bubbling-cilindros} that $\int_{\C\setminus\{0\}}w^*d\lambda \geq \frac{\sigma(T_3)}{2}$.
	Now assume $\Theta \neq \emptyset$ and suppose, contrary to our claim, that  $\int_{\C\setminus (\{0\}\cup \Theta)}w^*d\lambda =0$.
	By Theorem \ref{theo:vanishing-dlambda-energy}, there exists a polynomial $p:\C\to \C$ and a periodic orbit $P\in \mathcal{P}(\lambda)$ such that 
	$p^{-1}(0)=\{0\}\cup\Theta$  and $\tw=F_P\circ p$,
	where $F_P$ is the cylinder over the orbit $P$.
	But this implies $\deg p \geq 2$, contradicting the fact that the asymptotic limit of $\tw$ at $\infty$ is $P_3$, that is a simple orbit.
\end{proof}

\subsubsection{Soft rescaling near $\boldsymbol{z\in \Theta}$}
Assume $\Theta\neq \emptyset$ and take a puncture $z\in \Theta$.
Now we proceed as in the \textit{soft rescaling} done in \S\ref{se:soft-rescaling}. 
Define the mass $m(z)$ of $z$ as in \eqref{eq:int-maior-sigma-c}.
Fix $\epsilon>0$ satisfying \eqref{eq:m-mepsilon} and
choose sequences $z_n\in \overline{B_\epsilon(z)}$ and $0<\delta_n<\epsilon$ satisfying \eqref{eq:min-bn}-\eqref{eq:int-sigma-c}.
It follows that $z_n\to z$ and, passing to a subsequence, we have $\delta_n\to 0$.
Take $R_n\to \infty$ such that $\delta_nR_n<\frac{\epsilon}{2}$ and define
\begin{equation}
\begin{aligned}
\tv_n=(b_n,v_n):B_{R_n}(0)&\to \R\times S^3\\
\zeta&\mapsto (c_n(z_n+\delta_n\zeta)-c_n(z_n+2\delta_n),w_n(z_n+\delta_n\zeta)).
\end{aligned}
\end{equation}
The sequence $\tv_n$ is a germinating sequence according to Definition \ref{de:germinating-sequence}.
Let
\begin{equation}
\Theta_1=\{\zeta\in \C|\exists \zeta_j\to \zeta \text{ and subsequence } \tv_{n_j} \text{ s.t. } |d\tv_{n_j}(\zeta_j)|\to \infty \}~.
\end{equation}
Passing to a subsequence, we can assume that $\Theta_1$ is finite.
Let 
$$\tv_z:\C\setminus \Theta_1\to \R\times S^3$$
be a limit of $\tv_n$ as defined in \ref{de:limit-germinating-sequence}.
Let $P_z$ be the asymptotic limit of $\tw$ at $z$. 
Then $\tv_z$ is asymptotic to $P_z$ at its unique positive puncture $\infty$.
Using Lemma \ref{le:indice-maior-igual-a-2}, we conclude the following.

\begin{lemma}
	If $z\in \Theta$ and $\tw$ is asymptotic to $P_z$ at $z$, then $\mu(P_z)\geq2$.
\end{lemma}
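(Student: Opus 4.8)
The plan is to read this off directly from Lemma \ref{le:indice-maior-igual-a-2}, applied to the bubble $\tv=(b,v):\C\setminus\Theta_1\to\R\times S^3$ produced by the soft rescaling carried out in the paragraph just above. Concretely, with $z\in\Theta$ fixed and the sequences $z_n\to z$, $\delta_n\to 0^+$, $R_n\to+\infty$ (with $\delta_nR_n<\epsilon/2$) and the germinating sequence $\tv_n=(b_n,v_n)$ already constructed, together with its $C^\infty_{loc}$-limit $\tv$, the argument has only two things to verify: that $\tv$ is nonconstant, and that $\tv$ is asymptotic to $P_z$ at its unique positive puncture $\infty$. Once these are in hand, Lemma \ref{le:indice-maior-igual-a-2} (which applies since $\tv$ is a nonconstant limit of a sequence satisfying \eqref{eq:geminating-sequence-def1}--\eqref{eq:germinating-sequence-def4}) immediately yields $\mu(P_z)\geq 2$.

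For the nonconstancy of $\tv$ I would argue by the usual dichotomy, exactly as in Proposition \ref{rmk:either-or}. If $\Theta_1\neq\emptyset$, then $\tv$ is automatically nonconstant, since its punctures are non-removable and carry positive mass (the remark following Proposition \ref{pr:limit-germinating-sequence}). If $\Theta_1=\emptyset$, I would instead compute the $d\lambda$-area over the unit disk as in \eqref{eq:int-disco-w}: using the normalization \eqref{eq:int-sigma-c} of the soft rescaling and the mass lower bound \eqref{eq:int-maior-sigma-c}, namely $m(z)=T_z>\sigma(T_3)$, one obtains $\int_\D v^*d\lambda=m_\epsilon(z)-\sigma(T_3)\geq T_z-\sigma(T_3)>0$, so again $\tv$ is nonconstant. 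Either way the hypothesis of Lemma \ref{le:indice-maior-igual-a-2} is met.

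For the identification of the positive asymptotic limit of $\tv$ with $P_z$ I would invoke the proof of Proposition \ref{pr:same-orbits-tree}: its argument uses only the normalization \eqref{eq:int-sigma-c} of the soft rescaling and the small-area cylinder estimate Lemma \ref{le:cylinders-with-small-area}, and so goes through verbatim with $\tw$ in the role of the limit curve and $P_z$ its asymptotic limit at the negative puncture $z$. Combining the two verifications, $\tv$ is a nonconstant limit of a germinating sequence whose positive asymptotic limit is $P_z$, and Lemma \ref{le:indice-maior-igual-a-2} gives $\mu(P_z)\geq 2$. I do not anticipate a genuine obstacle: all the substantive work is already packaged in Lemmas \ref{le:indice-maior-igual-a-2} and \ref{le:cylinders-with-small-area} and in the soft-rescaling machinery; the only point needing a moment's care is ensuring the case $\Theta_1=\emptyset$ in the nonconstancy step is handled by the area computation rather than overlooked.
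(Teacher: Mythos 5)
Your proposal is correct and follows essentially the same route as the paper: the paper's own justification is precisely the soft rescaling at $z\in\Theta$ producing the germinating sequence $\tv_n$ whose limit $\tv$ is asymptotic to $P_z$ at its unique positive puncture (by the argument of Proposition \ref{pr:same-orbits-tree}), followed by an application of Lemma \ref{le:indice-maior-igual-a-2}. Your explicit treatment of nonconstancy (the dichotomy of Proposition \ref{rmk:either-or} together with the area computation as in \eqref{eq:int-disco-w} when $\Theta_1=\emptyset$) merely spells out a point the paper leaves implicit.
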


{Since $P_3$ is simple, it follows that $\tw$ is somewhere injective.}
By Theorem \ref{theo:fredholm-estimate}, we have
$$1\leq \ind(\tw)=3-\sum_{z\in \{0\}\cup \Theta}\mu(P_z)+\#\Theta~$$
and consequently 
\begin{equation}\label{eq:estimate-fredholm-sylinders}
\sum_{z\in \{0\}\cup \Theta}\mu(P_z)\leq 2+\#\Theta~.
\end{equation}
This proves the following lemma.
\begin{lemma}\label{le:bubbling-cilindros-fredholm-estimate}
	Assume $\Theta\neq \emptyset$. Then $\mu(P_0)\leq 1$.	If $\mu(P_0)=1$, then $\Theta=\{z\}$ and $\mu(P_z)=2$. Here $P_z$ is the asymptotic limit of $\tw$ at the puncture  $z\in \{0\}\cup \Theta$.
\end{lemma}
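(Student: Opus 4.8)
The plan is to obtain both assertions by a direct counting argument, using the Fredholm inequality \eqref{eq:estimate-fredholm-sylinders} together with the lower bound on the Conley--Zehnder indices at the punctures in $\Theta$ established in the preceding lemma. Set $n=\#\Theta$; since $\Theta\neq\emptyset$ we have $n\geq 1$, and since $0\notin\Theta$ (recall $\Theta\subset\D\setminus\{0\}$ by construction), the index set $\{0\}\cup\Theta$ has exactly $n+1$ elements. I would split the sum in \eqref{eq:estimate-fredholm-sylinders} into the contribution of the puncture $0$ and the contributions of the $n$ punctures in $\Theta$.

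By the preceding lemma, $\mu(P_z)\geq 2$ for every $z\in\Theta$, so $\sum_{z\in\Theta}\mu(P_z)\geq 2n$. Substituting into \eqref{eq:estimate-fredholm-sylinders} gives
$$\mu(P_0)+2n\ \leq\ \mu(P_0)+\sum_{z\in\Theta}\mu(P_z)\ =\ \sum_{z\in\{0\}\cup\Theta}\mu(P_z)\ \leq\ 2+n,$$
hence $\mu(P_0)\leq 2-n\leq 1$, which proves the first assertion.

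For the second assertion, suppose $\mu(P_0)=1$. Then $1\leq 2-n$ forces $n\leq 1$, so $n=1$ and $\Theta$ consists of a single point $\{z\}$. Putting $n=1$ and $\mu(P_0)=1$ back into \eqref{eq:estimate-fredholm-sylinders} gives $1+\mu(P_z)\leq 3$, i.e. $\mu(P_z)\leq 2$; combined with $\mu(P_z)\geq 2$ from the preceding lemma this yields $\mu(P_z)=2$, as claimed.

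There is essentially no obstacle here: all the analytic input --- that $\tw$ is somewhere injective (because $P_3$ is simple), that $\pi\circ dw$ does not vanish identically (Lemma \ref{le:int-omega->0}), and the resulting estimate \eqref{eq:estimate-fredholm-sylinders} --- has already been established, so the lemma reduces to a short arithmetic consequence of \eqref{eq:estimate-fredholm-sylinders} and the index bound on $\Theta$. The only point requiring care is the bookkeeping of the cardinality of $\{0\}\cup\Theta$, namely that $0$ is a genuinely separate negative puncture of $\tw$ not lying in $\Theta$.
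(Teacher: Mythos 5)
Your proposal is correct and follows essentially the same route as the paper: the paper derives the estimate \eqref{eq:estimate-fredholm-sylinders} from Theorem \ref{theo:fredholm-estimate} (using that $\tw$ is somewhere injective and, via Lemma \ref{le:int-omega->0}, that $\pi\cdot dw\not\equiv 0$) and then states that this, together with $\mu(P_z)\geq 2$ for $z\in\Theta$, proves the lemma — your write-up simply makes the counting explicit. The bookkeeping that $0\notin\Theta$, so $\#(\{0\}\cup\Theta)=\#\Theta+1$, is exactly as in the paper's setup.
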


\subsubsection{Soft-rescaling near $\boldsymbol{z=0}$}\label{se:soft-rescaling-cilindros}
For any $\epsilon>0$, define
\begin{equation}\label{eq:def-mepsilon}
	m_\epsilon(0)=\lim_{n\to \infty}\int_{B_\epsilon(0)\setminus \{0\}}w_n^*d\lambda,
\end{equation}
and define the \textit{mass of the puncture $z=0$} by
\begin{equation}\label{eq:mass-cylinders}
m(0)=\lim_{\epsilon\searrow 0}m_\epsilon(0)~.
\end{equation}
Note that, for large $n$ and small $\epsilon$, we have
\begin{equation}\label{eq:int-t1}
\int_{B_\epsilon(0)\setminus \{0\}}w^*_nd\lambda=\int_{\partial B_\epsilon(0)}w^*_n\lambda-\lim_{\delta\to 0}\int_{\partial B_\delta(0)}w^*_n\lambda
=\int_{\partial B_\epsilon(0)}w_n^*\lambda-T_1.
\end{equation}
It follows that 
\begin{equation}\label{eq:m0=t0-t1}
m(0)=\lim_{\epsilon\searrow 0}\int_{\partial B_\epsilon(0)}w^*\lambda-T_1=T_0-T_1,
\end{equation}
where $T_0$ is the period of the asymptotic limit $P_0=(x_0,T_0)$ of $\tilde{w}$ at the puncture $z=0$.
We have two cases: 
\begin{itemize}
	\item either $m(0)=0$ or 
	\item $m(0)>0\Rightarrow m(0)>\sigma(T_3)$.
\end{itemize}

\paragraph{I}First assume that $m(0)>\sigma(T_3)>0$. 
We claim that there is a sequence $\delta_n\to 0$ satisfying 
\begin{equation}\label{eq:int=m(0)-sigma/2}
\int_{B_{\delta_n(0)}\setminus \{0\}}w_n^*d\lambda=m(0)-\frac{\sigma(T_3)}{2}.
\end{equation}
Indeed, there exists a sequence $\delta_n$ satisfying the equation above, since using \eqref{eq:reparametrizacao-bubbling-cilindros} and $\int_{\C\setminus \{0\}}w_n^*d\lambda=T_3-T_1\geq m(0)>\sigma(T_3)$, we conclude
\begin{equation*}
\int_{\D\setminus\{0\}}w_n^*d\lambda\geq m(0)-\frac{\sigma(T_3)}{2}> 0~.
\end{equation*}
Now we show that $\liminf \delta_n=0$, so that, passing to a subsequence, still denoted by $\delta_n$, the claim is true.
Suppose that there exists $0<\epsilon'<\liminf \delta_n$. Then we have the contradiction
$$m(0)-\frac{\sigma(T_3)}{2}=\lim_{j\to \infty}\int_{B_{\delta_n(0)}\setminus \{0\}}w_n^*d\lambda\geq \lim_{j\to \infty}\int_{B_\epsilon'(0)\setminus\{0\}}w^*_nd\lambda \geq m(0)~.$$ 
This proves our claim.

Let $\epsilon_0>0$ be small enough so that the disks $B_{\epsilon_0}(z),~z\in \Theta \cup\{0\}$ are disjoint. 
Define
\begin{equation}\label{eq:defi-vn}
\tv_n(z)=(b_n(z),v_n(z))=(c_n(\delta_nz)-c_n(2\delta_n),w_n(\delta_nz))
\end{equation}
for $z\in B_{\frac{\epsilon_0}{\delta_n}}(0)\setminus \{0\}$.
It follows from \eqref{eq:def-mepsilon}, \eqref{eq:mass-cylinders}, \eqref{eq:int=m(0)-sigma/2} and \eqref{eq:defi-vn}  
that, for large $n$ and small $\epsilon_0$, we have the estimate
\begin{equation}\label{eq:estimativa-cilindro-menos-igual-sigma}
\begin{aligned}
\int_{B_{\frac{\epsilon_0}{\delta_n}(0)\setminus \D}}v_n^*d\lambda&=\int_{B_{\frac{\epsilon_0}{\delta_n}(0)\setminus \{0\}}}v_n^*d\lambda-\int_{\D\setminus\{0\}}v_n^*d\lambda\\
&=\int_{B_{\epsilon_0}(0)\setminus \{0\}}w_n^*d\lambda-\left(m(0)-\frac{\sigma(T_3)}{2}\right)\\
&\leq m(0)+\frac{\sigma(T_3)}{2}-\left(m(0)-\frac{\sigma(T_3)}{2}\right)=\sigma(T_3).
\end{aligned}
\end{equation}
Define
\begin{equation}\label{eq:theta_0}
	\Theta_0=\{z\in \C|\exists z_j\to z \text{ and subsequence } \tv_{n_j} \text{ s.t. } |d\tv_{n_j}(z_j)|\to \infty \}~.
\end{equation}
Using the proof of Proposition \ref{pr:limit-germinating-sequence} we conclude, passing to a subsequence,  that $\Theta_0$ is finite and $\Theta_0\subset \D\setminus\{0\}$. Moreover, there exists a $\tilde{J}$-holomorphic map 
$\tv_0=(b_0,v_0):\C\setminus \{0\}\cup \Theta_0\to \R\times S^3$
such that, passing to a subsequence
$$\tv_n\to \tv_0\text{ in } C^\infty_{loc}(\C\setminus \{0\}\cup \Theta_0)~.$$
The map $\tv_0$ is nonconstant, the punctures in $\{0\}\cup \Theta_0$ are non-removable and negative, and the puncture $z=\infty$ is positive.

\begin{lemma}\label{le:limites-iguais-seq-cilindros}
	The asymptotic limit of $\tv_0$ at its unique positive puncture $z=\infty$ is equal to $P_0$, the asymptotic limit of $\tw$ at $\{0\}$. 
\end{lemma}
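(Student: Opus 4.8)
Here I would simply transcribe, with the obvious relabelings, the soft-rescaling argument already carried out in the proof of Proposition \ref{pr:same-orbits-tree}: the puncture $z\in\Gamma$ there is played by the puncture $z=0$ of $\tw$. Write $P_\infty'=(x_\infty',T_\infty')$ for the asymptotic limit of $\tv_0$ at its positive puncture $\infty$, which exists by Theorem \ref{theo:asymptotic behavior} since $\tv_0$ is a finite energy surface with $\infty$ a positive puncture; the goal is $P_\infty'=P_0$. Fix, as in Lemma \ref{le:cylinders-with-small-area}, an $S^1$-invariant neighborhood $\mathcal{W}\subset C^\infty(S^1,S^3)$ of the set of periodic orbits of period $\le T_3$, each connected component of which contains at most one orbit modulo $S^1$-reparametrization, and let $\mathcal{W}_0,\mathcal{W}_\infty$ be the components containing $P_0$ and $P_\infty'$. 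The plan is to produce, for large $n$, a long $\tj$-holomorphic cylinder in the domain of $\tw_n$ whose loops all lie in $\mathcal{W}$, whose inner boundary loop lies in $\mathcal{W}_\infty$ and whose outer boundary loop lies in $\mathcal{W}_0$; connectedness then forces $\mathcal{W}_0=\mathcal{W}_\infty$.

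First I would use the two convergences. Since $\tw_n\to\tw$ in $C^\infty_{loc}(\C\setminus(\{0\}\cup\Theta))$ and $\tw$ is asymptotic to $P_0$ at $0$, I may fix $\epsilon_0>0$ small --- small enough that $B_{\epsilon_0}(0)$ is disjoint from the other bubbling points and the analogue of \eqref{eq:m-mepsilon} holds for $\epsilon_0$ --- so that the loop $t\mapsto w_n(\epsilon_0 e^{2\pi i t})$ lies in $\mathcal{W}_0$ for all large $n$. Since $\tv_n\to\tv_0$ in $C^\infty_{loc}$ (recall $v_n(z)=w_n(\delta_n z)$ from \eqref{eq:defi-vn}) and $\tv_0$ is asymptotic to $P_\infty'$ at $\infty$, I may fix $R_0>1$ large so that the loop $t\mapsto v_n(R_0 e^{2\pi i t})=w_n(\delta_n R_0 e^{2\pi i t})$ lies in $\mathcal{W}_\infty$ for all large $n$. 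Next I would verify the hypotheses of Lemma \ref{le:cylinders-with-small-area} for the cylinder $\tilde{C}_n(s,t)=\tw_n(e^{2\pi(s+it)})$ on $[\tfrac{\ln(\delta_n R_0)}{2\pi},\tfrac{\ln\epsilon_0}{2\pi}]\times S^1$: the energy bound $E(\tilde{C}_n)\le E(\tw_n)=T_3$ is automatic; by \eqref{eq:int-t1} the action on the inner boundary loop satisfies $\int_{\partial B_{\delta_n R_0}(0)}w_n^*\lambda=T_1+\int_{B_{\delta_n R_0}(0)\setminus\{0\}}w_n^*d\lambda\ge T_1>0$, giving the uniform lower bound $e:=T_1$; and, using $R_0>1$, the choice of $\delta_n$ in \eqref{eq:int=m(0)-sigma/2}, the nonnegativity of the $d\lambda$-area, and the analogue of \eqref{eq:m-mepsilon} for $\epsilon_0$, one gets $\int_{B_{\epsilon_0}(0)\setminus B_{\delta_n R_0}(0)}w_n^*d\lambda\le\bigl(m(0)+\tfrac{\sigma(T_3)}{2}\bigr)-\bigl(m(0)-\tfrac{\sigma(T_3)}{2}\bigr)=\sigma(T_3)$ for large $n$. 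For $n$ large the cylinder is long since $\delta_n\to0$.

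Then Lemma \ref{le:cylinders-with-small-area} gives $h>0$, independent of $n$, such that $t\mapsto\tilde{C}_n(s,t)$ lies in $\mathcal{W}$ for all $s\in[\tfrac{\ln(\delta_n R_0)}{2\pi}+h,\tfrac{\ln\epsilon_0}{2\pi}-h]$ and all large $n$. Choosing $\epsilon_0$ a bit smaller and $R_0$ a bit larger at the outset --- so that the loops at $s=\tfrac{\ln\epsilon_0}{2\pi}-h$ and $s=\tfrac{\ln(\delta_n R_0)}{2\pi}+h$ still lie in $\mathcal{W}_0$ and $\mathcal{W}_\infty$ respectively --- continuity of $s\mapsto(t\mapsto\tilde{C}_n(s,t))$ through $\mathcal{W}$ forces $\mathcal{W}_0=\mathcal{W}_\infty$, hence $P_0=P_\infty'$. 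The only genuinely delicate point, exactly as in Proposition \ref{pr:same-orbits-tree}, is this endpoint bookkeeping of the $h$-shift together with making the two estimates uniform in $n$; everything else is routine. I expect no new obstacle beyond what appears in that earlier proof.
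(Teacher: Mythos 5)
Your argument is correct and is essentially the paper's own proof: the same long-cylinder construction $\tilde{C}_n(s,t)=\tw_n(e^{2\pi(s+it)})$ on $[\tfrac{\ln(\delta_nR_0)}{2\pi},\tfrac{\ln\epsilon_0}{2\pi}]\times S^1$, the same area bound coming from the choice of $\delta_n$ in \eqref{eq:int=m(0)-sigma/2}, and the same application of Lemma \ref{le:cylinders-with-small-area} forcing $\mathcal{W}_0=\mathcal{W}_\infty$. The only (harmless) deviation is that you take the uniform action lower bound $e=T_1$ directly from \eqref{eq:int-t1}, whereas the paper sets $e:=\liminf_n\int_{\partial B_{\delta_nR_0}(0)}w_n^*\lambda>0$; both suffice.
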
	
\begin{proof}
	Let $\mathcal{W}\subset C^\infty(S^1,S^3)$ be as in the statement of Lemma \ref{le:cylinders-with-small-area}. 
	Let $P_\infty$ be the asymptotic limit of $\tv$ at $\infty$ and 
	let $\mathcal{W}_\infty$ and $\mathcal{W}_0$ be connected components of $\mathcal{W}$ containing $P_\infty$ and $P_0$ respectively.
	Since $\tw_n \to \tw$ in $C^\infty_{loc}$, we can choose $0<\epsilon_0'<\epsilon_0$ small enough so that, if $0<\rho\leq \epsilon_0'$ is fixed, then the loop 
	$t\in S^1\mapsto w_n(\rho e^{i2\pi t})$ 
	belongs to $\mathcal{W}_0$ for large $n$.
	Since $\tv_n\to \tv$ in $C^\infty_{loc}$, we can choose $R_0>1$ large enough so that, if $R\geq R_0$ is fixed, then  the loop 
	$t\in S^1\mapsto v_n(Re^{i2\pi t})=w_n(\delta_nRe^{i2\pi t})$ 
	belongs to $\mathcal{W}_\infty$ for large $n$.
	By \eqref{eq:int=m(0)-sigma/2}, we can show that
	\begin{equation}\label{eq:def-e-cylinders-familia-cilindros}
	e:=\liminf \int_{\partial B_{\delta_n R_0(0)}}w_n^*\lambda>0.
	\end{equation}
	Consider, for each $n$, the $\tj$-holomorphic cylinder $\tilde{C}_n:\left[\frac{\ln R_o\delta_n}{2\pi},\frac{\ln \epsilon_0'}{2\pi}\right]\times S^1\to \R\times S^3$, defined by $\tilde{C}_n(s,t)=\tw_n(e^{2\pi(s+it)})$.
	It follows from \eqref{eq:estimativa-cilindro-menos-igual-sigma} that 
	\begin{equation}\label{eq:leq-sigma-cylinders-familia-cilindros}
	\int_{\left[\frac{\ln R_o\delta_n}{2\pi},\frac{\ln \epsilon_0'}{2\pi}\right]\times S^1}C_n^*d\lambda\leq \sigma(T_3)
	\end{equation}
	for large $n$.
	Using \eqref{eq:def-e-cylinders-familia-cilindros} and \eqref{eq:leq-sigma-cylinders-familia-cilindros} and applying Lemma \ref{le:cylinders-with-small-area}
	as in the proof of Proposition \ref{pr:same-orbits-tree}, we conclude that $\mathcal{W}_\infty=\mathcal{W}_0$ and consequently that $P_\infty=P_0$.
\end{proof}

\begin{lemma}\label{le:either-or-cilindros}
	Either 
	\begin{itemize}
		\item $\int_{\C\setminus \{0\}\cup \Theta_0}v_0^*d\lambda>0$ or
		\item $\int_{\C\setminus \{0\}\cup \Theta_0}v_0^*d\lambda=0 \text{ and } \#\Theta_0\geq 1$.
	\end{itemize}
\end{lemma}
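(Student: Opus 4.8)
The plan is to argue by contradiction, following the proof of Proposition \ref{rmk:either-or}. Suppose that $\int_{\C\setminus(\{0\}\cup\Theta_0)}v_0^*d\lambda=0$ and $\#\Theta_0=0$, i.e. $\Theta_0=\emptyset$. Then $\tv_0=(b_0,v_0):\C\setminus\{0\}\to\R\times S^3$ is a nonconstant finite energy cylinder with $A(\tv_0)=0$, hence $\pi\cdot dv_0\equiv0$, whose unique positive puncture is $\infty$ and whose unique negative puncture is $z=0$ (these facts are already recorded in \S\ref{se:soft-rescaling-cilindros}). By Theorem \ref{theo:vanishing-dlambda-energy} applied with $\Gamma=\{0\}$, there exist a nonconstant polynomial $p:\C\to\C$ with $p^{-1}(0)=\{0\}$ and a closed Reeb orbit $P=(x,T)$ such that $\tv_0=F_P\circ p$. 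Since $0$ is the only zero of $p$, we have $p(\zeta)=A\zeta^m$ for some $A\neq0$ and some integer $m\geq1$, so the asymptotic limit of $\tv_0$ at $\infty$ is $P^m=(x,mT)$. By Lemma \ref{le:limites-iguais-seq-cilindros} this asymptotic limit equals $P_0=(x_0,T_0)$, the asymptotic limit of $\tw$ at $z=0$; in particular $T_0=mT$.

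Next I would evaluate $\int_{\partial\D}v_0^*\lambda$ in two ways and compare. Directly from $F_P(e^{2\pi(s+it)})=(Ts,x(Tt))$ and $p(\zeta)=A\zeta^m$, the loop $t\in S^1\mapsto v_0(e^{2\pi it})$ satisfies $\frac{d}{dt}v_0(e^{2\pi it})=mT\,R_\lambda(v_0(e^{2\pi it}))$, so $\int_{\partial\D}v_0^*\lambda=mT=T_0$. On the other hand, since $\Theta_0=\emptyset$ the convergence $\tv_n\to\tv_0$ holds in $C^\infty_{loc}(\C\setminus\{0\})$, and $\partial\D\subset\C\setminus\{0\}$, so $\int_{\partial\D}v_0^*\lambda=\lim_{n\to\infty}\int_{\partial\D}v_n^*\lambda$. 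By the definition \eqref{eq:defi-vn} of $\tv_n$ and the change of variables $z\mapsto\delta_n z$ (with $\delta_n>0$), $\int_{\partial\D}v_n^*\lambda=\int_{\partial B_{\delta_n}(0)}w_n^*\lambda$. Applying Stokes' theorem on $B_{\delta_n}(0)\setminus B_\rho(0)$, letting $\rho\to0$, and using that $\tw_n$ is asymptotic to $P_1$ at $z=0$ exactly as in \eqref{eq:int-t1}, we get $\int_{\partial B_{\delta_n}(0)}w_n^*\lambda=T_1+\int_{B_{\delta_n}(0)\setminus\{0\}}w_n^*d\lambda$. Invoking \eqref{eq:int=m(0)-sigma/2} and then \eqref{eq:m0=t0-t1} yields $\int_{\partial\D}v_0^*\lambda=T_1+m(0)-\frac{\sigma(T_3)}{2}=T_0-\frac{\sigma(T_3)}{2}$.

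Comparing the two evaluations gives $T_0=T_0-\frac{\sigma(T_3)}{2}$, which is impossible since $\sigma(T_3)>0$. This contradiction rules out the case in which both $\int v_0^*d\lambda=0$ and $\Theta_0=\emptyset$; since the two alternatives in the statement are $\int v_0^*d\lambda>0$, and $\int v_0^*d\lambda=0$ together with $\#\Theta_0\geq1$, the lemma follows.

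The conceptual input is entirely contained in Theorem \ref{theo:vanishing-dlambda-energy} and Lemma \ref{le:limites-iguais-seq-cilindros}; the part that I expect to require the most care is the bookkeeping in the second evaluation — keeping the orientation conventions straight so that the residue of $w_n^*\lambda$ at $z=0$ contributes $+T_1$ rather than $-T_1$ (consistent with \eqref{eq:int-t1} and \eqref{eq:m0=t0-t1}), and verifying the direct computation $\int_{\partial\D}v_0^*\lambda=T_0$ for the branched cover $F_P\circ p$. Neither point is conceptually difficult; both are of the same nature as the computations already carried out in \S\ref{se:soft-rescaling-cilindros} and in the proof of Proposition \ref{rmk:either-or}, so no new idea should be needed.
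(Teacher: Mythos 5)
Your proposal is correct and follows essentially the same route as the paper: assume $\int v_0^*d\lambda=0$ with $\Theta_0=\emptyset$, identify $\tv_0$ as a cover of an orbit cylinder via Theorem \ref{theo:vanishing-dlambda-energy} and Lemma \ref{le:limites-iguais-seq-cilindros} so that $\int_{\partial\D}v_0^*\lambda=T_0$, and then recompute this integral through the convergence $\tv_n\to\tv_0$, Stokes, \eqref{eq:int=m(0)-sigma/2} and \eqref{eq:m0=t0-t1} to get $T_0-\tfrac{\sigma(T_3)}{2}$, a contradiction. Your write-up merely makes explicit the covering-cylinder step that the paper leaves implicit.
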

\begin{proof}
	
	On the contrary, suppose that $\int_{\C\setminus \{0\}\cup \Theta_0}v_0^*d\lambda=0 \text{ and } \Theta_0= \emptyset$. Then
	\begin{equation*}
	\begin{aligned}
	T_0=\int_{\partial \D}v_0^*\lambda&=\lim_{n\to \infty}\int_{\partial B_{\delta_n}(0)}w_n^*\lambda\\
	&=\int_{B_{\delta_n}(0)\setminus \{0\}}w_n^*\lambda+\lim_{\epsilon\to 0}\int_{\partial B_\epsilon(0)}w_n^*\lambda\\
	&=m(0)-\frac{\sigma(T_3)}{2}+T_1\\
	&=T_0-\frac{\sigma(T_3)}{2},
	\end{aligned}
	\end{equation*}
	a contradiction.
\end{proof}	

\paragraph{II}Now assume that $m(0)=0$.
Let $\epsilon>0$ be small enough so that $m_\epsilon(0)\leq \frac{\sigma(T_3)}{2}$.
Define, for any sequence $\delta_n\to 0$, 
\begin{equation*}
\tv_n(z)=(b_n(z),v_n(z))=(c_n(\delta_nz)-c_n(2\delta_n),w_n(\delta_nz)),~~~z\in B_{\frac{\epsilon}{\delta_n}}(0)\setminus\{0\}.
\end{equation*}
Using \eqref{eq:def-mepsilon}, we conclude that 
\begin{equation}\label{eq:bubbling-cilindro-nao-tem-bubbling}
	\lim_{n\to \infty}\int_{B_{\frac{\epsilon}{\delta_n}}(0)\setminus\{0\}}v_n^*d\lambda=\int_{B_\epsilon(0)\setminus \{0\}}w_n^*d\lambda=m_\epsilon(0)\leq \frac{\sigma(T_3)}{2}~.
\end{equation}
It follows that $\Theta_0=\emptyset$, where $\Theta_0$ is defined as in \eqref{eq:theta_0}.
Indeed, if $z\in \Theta_0$, arguing as in the proof of Proposition \ref{pr:limit-germinating-sequence}, we find sequences $r_j\to 0^+$, $z_j\to z$ and $n_j\to +\infty$ such that $\lim_{n\to \infty}\int_{B_{r_j}(z_j)}v_n^*d\lambda\geq T,$  
for some period $T$, contradicting \eqref{eq:bubbling-cilindro-nao-tem-bubbling}.
Thus, passing to a subsequence, still denoted $\tv_n$, there exists a $\tj$-holomorphic map $\tv_0=(b_0,v_0):\C\setminus\{0\}\to \R \times S^3$ such that 
$\tv_n\to \tv_0 \text{ in } C^\infty_{loc}(\C\setminus \{0\})~.$ 

\begin{lemma}\label{le:orbitas-coincidem-massa-zero}
	If $m(0)=0$, the curve $\tv_0$ is a trivial cylinder over the orbit $P_1$. Moreover, $\tw$ is asymptotic to $P_1$ at its negative puncture $z=0$. 
\end{lemma}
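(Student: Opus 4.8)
The setting is case II, where the mass $m(0)=0$, so the rescaled sequence $\tv_n$ converges in $C^\infty_{loc}(\C\setminus\{0\})$ to a $\tj$-holomorphic cylinder $\tv_0=(b_0,v_0):\C\setminus\{0\}\to\R\times S^3$ with no extra bubbling punctures. The plan is to show this limit has vanishing $d\lambda$-area and then invoke Theorem \ref{theo:vanishing-dlambda-energy}. First I would estimate the $d\lambda$-area of $\tv_0$ from above: by \eqref{eq:bubbling-cilindro-nao-tem-bubbling} and Fatou's lemma, $\int_{\C\setminus\{0\}}v_0^*d\lambda\le m_\epsilon(0)$, and since this holds for every small $\epsilon>0$ and $m_\epsilon(0)\to m(0)=0$, we get $\int_{\C\setminus\{0\}}v_0^*d\lambda=0$, i.e. $\pi\cdot dv_0\equiv 0$. (One should first check $\tv_0$ is nonconstant; this follows because its positive puncture at $\infty$ carries positive mass — the asymptotic limit of $\tw$ at $0$ is some orbit $P_0=(x_0,T_0)$ with $T_0\ge T_1$, and the normalization $b_n(2)=0$ together with the fact that $z=0$ is a negative puncture keeps things from collapsing.)

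Next, with $\pi\cdot dv_0\equiv 0$, Theorem \ref{theo:vanishing-dlambda-energy} gives a nonconstant polynomial $p:\C\to\C$ and a closed Reeb orbit $P=(x,T)$ with $p^{-1}(0)=\Theta_0\cup\{0\}=\{0\}$ and $\tv_0=F_P\circ p$. Since $0$ is the only zero of $p$, we have $p(\zeta)=A\zeta^k$ for some $A\ne 0$ and $k\ge 1$, so $\tv_0=F_{P^k}$ up to reparametrization, and the asymptotic limit of $\tv_0$ at its positive puncture $\infty$ is $P^k$. But that limit equals $P_0$, the asymptotic limit of $\tw$ at $0$ (this is the analogue of Lemma \ref{le:limites-iguais-seq-cilindros} for the puncture at $0$; it is proved by the same estimate-for-cylinders-with-small-area argument as in Proposition \ref{pr:same-orbits-tree}, using \eqref{eq:bubbling-cilindro-nao-tem-bubbling} in place of the earlier normalization). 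Hence $P_0=P^k$ and $\tv_0=F_{P_0}$ is the trivial cylinder over $P_0$.

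Then I would identify $P_0$ with $P_1$. On one hand, since $m(0)=0$, formula \eqref{eq:m0=t0-t1} gives $T_0=T_1$. On the other hand, each $w_n$ is an embedding whose image lies in $\mathcal R_2=S^3\setminus\mathring{\mathcal R_1}$ and does not intersect $P_1\cup P_2\cup P_3$ (Propositions \ref{pr:cilindro-e-mergulho}), and $w_n$ is asymptotic to $P_1$ at $0$; passing to the $C^\infty_{loc}$ limit, the loops $t\mapsto w(\rho e^{2\pi it})$ for small $\rho$ are $C^\infty$-close to reparametrizations of $x_1(T_1\,\cdot\,)$, so $P_0$ and $P_1$ lie in the same connected component of the neighborhood $\mathcal W$ from Lemma \ref{le:cylinders-with-small-area}, forcing $P_0=P_1$. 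Combining, $\tv_0=F_{P_1}$ and $\tw$ is asymptotic to $P_1$ at $z=0$.

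\textbf{Main obstacle.} The delicate point is establishing that the positive-puncture asymptotic limit of the rescaled limit $\tv_0$ coincides with $P_0=$ (limit of $\tw$ at $0$), i.e.\ that no Reeb orbit is "lost between scales." This is exactly where the cylinders-with-small-area lemma (Lemma \ref{le:cylinders-with-small-area}) does the work: one builds, for each $n$, a long $\tj$-holomorphic cylinder interpolating between a small circle $\{|\zeta|=\rho\}$ (whose image under $w_n$ is close to $x_0(T_0\,\cdot)$) and a large circle $\{|\zeta|=R\}$ in the rescaled picture (whose image under $v_n$ is close to $x_0^{\,}$-at-the-new-scale), verifies the area bound \eqref{eq:bubbling-cilindro-nao-tem-bubbling} and a uniform lower bound on $\int u^*\lambda$ along the inner circle, and concludes that the whole interpolating cylinder stays in one $S^1$-component of $\mathcal W$. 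Once that component contains both asymptotic loops, the two orbits must agree. The rest — the area estimate, the application of Theorem \ref{theo:vanishing-dlambda-energy}, and the identification with $P_1$ via non-linking — is routine given the machinery already assembled.
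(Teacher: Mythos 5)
Most of your route runs parallel to the paper and is sound: the vanishing of $\int_{\C\setminus\{0\}}v_0^*d\lambda$ (via $m_\epsilon(0)\to m(0)=0$, or via the paper's observation that a positive area would exceed $\sigma(T_3)$), the application of Theorem \ref{theo:vanishing-dlambda-energy} to see that $\tv_0$ is a trivial cylinder, and the identification of the positive asymptotic limit of $\tv_0$ with $P_0$ by a cylinders-with-small-area argument (the needed lower bound on the inner circles, $\int_{\partial B_{\delta_nR_0}(0)}w_n^*\lambda\geq T_1$, is available by monotonicity of the $\lambda$-integral in the radius). The genuine gap is in your last step, the identification $P_0=P_1$. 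The inference ``each $w_n$ is asymptotic to $P_1$ at $0$, $w_n\to w$ in $C^\infty_{loc}$, hence the loops $t\mapsto w(\rho e^{2\pi it})$ for small $\rho$ are close to reparametrizations of $x_1(T_1\cdot)$'' is false in general: asymptotic limits at a puncture do not pass to $C^\infty_{loc}$ limits on the punctured domain. Indeed, in the case $m(0)>\sigma(T_3)$ treated just before, every $w_n$ is still asymptotic to $P_1$ at $z=0$, yet the limit $\tw$ is not. What the $C^\infty_{loc}$ convergence gives at a small fixed $\rho$ is closeness to $x_0(T_0\cdot)$, which is circular for your purpose. Nor does $T_0=T_1$ (from \eqref{eq:m0=t0-t1} with $m(0)=0$) pin down the orbit: distinct closed orbits may share the period $T_1$, and hypothesis \textit{(iii)} of Theorem \ref{theo:main-theorem} only gives uniqueness among index-$1$ orbits not linked to $P_2$, neither of which is known for $P_0$ at this stage. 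So as written, $P_1$ never actually enters your argument through a valid channel: you bridge the scales between radius $\approx\epsilon$ and radius $\approx\delta_n$, but not the remaining gap between radius $\delta_n$ and the genuinely asymptotic regime of $w_n$ near $0$, which is where $P_1$ lives.

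The paper closes exactly this gap by exploiting the freedom in the rescaling when $m(0)=0$: the sequence $\delta_n\to 0$ is arbitrary there, so one chooses it so that the loops $t\mapsto v_n(e^{2\pi it})=w_n(\delta_ne^{2\pi it})$ lie in the component $\mathcal{W}_1$ of $\mathcal{W}$ containing $P_1$ for every $n$ (possible because, for each fixed $n$, $w_n(\rho e^{2\pi i\cdot})\to x_1(T_1\cdot)$ as $\rho\to 0$). Since $\tv_0$ has vanishing $d\lambda$-area, its radius-one loop is a reparametrized cover of its asymptotic orbit; being the $C^\infty$-limit of loops in $\mathcal{W}_1$ it lies in $\overline{\mathcal{W}_1}$, and since distinct orbits of period $\leq T_3$ sit in disjoint open components of $\mathcal{W}$, that orbit must be $P_1$ (the paper also fixes its period as $T_1$ by the sandwich computation $\int_{\partial\D}v_0^*\lambda=T_1$). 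Combining this with your small-area identification of the same orbit with $P_0$ yields $P_0=P_1$ and $\tv_0=F_{P_1}$. Alternatively, you could keep your arbitrary $\delta_n$ and apply Lemma \ref{le:cylinders-with-small-area} a second time, on annuli $B_{\delta_n}(0)\setminus B_{\rho_n}(0)$ with $\rho_n$ chosen in the asymptotic regime of $w_n$, using the area bound \eqref{eq:bubbling-cilindro-nao-tem-bubbling} and the inner lower bound $T_1$; either device supplies the missing scale-bridging step.
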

\begin{proof}
It follows from \eqref{eq:bubbling-cilindro-nao-tem-bubbling} that  
$\int_{\C\setminus\{0\}}v_0^*d\lambda=0~.$ 
Indeed, if $\int_{\C\setminus\{0\}}v_0^*d\lambda>0$, then $\int_{\C\setminus\{0\}}v_0^*d\lambda>\sigma(T_3)$, which contradicts \eqref{eq:bubbling-cilindro-nao-tem-bubbling}.
Note that 
\begin{equation}\label{eq:int-vn}
\begin{aligned}
	\int_{\partial \D}v_0^*\lambda&= \lim_{n\to \infty}\int_{\partial \D}v_n^*\lambda\\
	&=\lim_{n\to \infty}\int_{\partial B_{\delta_n}(0)}w^*_n\lambda\\
	&=\lim_{n\to \infty}\left(\int_{\partial B_\epsilon(0)}w^*_n\lambda-\int_{B_\epsilon(0)\setminus B_{\delta_n}(0)}w_n^*\lambda\right).
\end{aligned}
\end{equation}
Using \eqref{eq:int-t1} we conclude that 
\begin{equation}\label{eq:teorema-sanduiche}
	0\leq \lim_{n\to \infty}\int_{B_\epsilon(0)\setminus B_{\delta_n}(0)}w_n^*\lambda\leq \lim_{n\to \infty}\int_{B_\epsilon(0)\setminus \{0\}}w_n^*\lambda=\int_{\partial B_\epsilon(0)}w^*\lambda-T_1.
\end{equation}
It follows from \eqref{eq:int-vn}, \eqref{eq:teorema-sanduiche} and \eqref{eq:m0=t0-t1} that  
\begin{equation}
\int_{\partial \D}v_0^*\lambda= 
\lim_{n\to \infty}\left(\int_{\partial B_\epsilon}w^*_n\lambda-\int_{B_\epsilon\setminus B_{\delta_n}}w_n^*\lambda\right)=T_1.
\end{equation}
We conclude that $\tv_0$ is a cylinder over a $T_1$-periodic orbit $P=(x,T_1)$.

Now we prove that $P=P_1=P_0$, where $P_0=(x_0,T_0)$ is the asymptotic limit of $\tilde{w}$ at the puncture $z=0$.
Let $\mathcal{W}\subset C^\infty(S^1,S^3)$ be as in the statement of Lemma \ref{le:cylinders-with-small-area}.
	For fixed $n$, we know that	$(t\mapsto w_n(\epsilon e^{2\pi it}))\to x_1(T_1t) \text{ as } \epsilon\to 0~.$ 
	Thus, we can choose a sequence $\delta_n\to 0$ such that  
	$$(t\mapsto w_n(\delta_ne^{2\pi i t})=v_n(e^{2\pi it}))\in \mathcal{W}_1,~~\forall n~,$$
	where $\mathcal{W}_1$ is the connected component of $\mathcal{W}$ containing $P_1$. We conclude that $P=P_1$.
	Since $(t\mapsto w(\epsilon e^{2\pi it})) \to x_0(T_0t)~\text{ as }\epsilon\to 0,$ 
	we conclude, from estimate \eqref{eq:bubbling-cilindro-nao-tem-bubbling} and Lemma \ref{le:cylinders-with-small-area}, arguing as in the proof of Proposition \ref{pr:same-orbits-tree}, that  $P=P_0$.
\end{proof}

This completes the analysis of the case $m(0)=0$.

Going back to the case $m(0)>\sigma(T_3)>0$,
if the mass of the puncture $z=0$ of $\tv_0$ is positive or $\Theta_0\neq 0$, we repeat the process.
It necessarily stops after finitely many iterations, when we reach punctures with zero mass or run out of bubbling-off points. This follows from Lemmas \ref{rmk:either-or} and \ref{le:either-or-cilindros}.
{We obtain a bubbling-off tree of finite energy spheres, defined as in Theorem \ref{theo:sft-compactness-corollary}. 
The leaves of the tree correspond to finite energy planes originating from the bubbling-off points and a cylinder over the orbit $P_1$, originating from the puncture $z=0$.



\subsection{Proof of Proposition \ref{th:bubbling-tree-cylinders}}\label{se:prova-teorema-cilindros}
The finite energy curve $\tw:\C\setminus (\Theta\cup \{0\})\to \R\times S^3$ defined by \eqref{eq:definicao-w-limite-wn} is asymptotic to $P_3$ at its positive puncture $z=\infty$ and to an orbit $P_0=(x_0,T_0)$ at the negative puncture $z=0$. If $\Theta\neq \emptyset$, the punctures in $\Theta$ are negative.

One of the leaves of the bubbling-off tree obtained from the sequence $\tw_n$ is a cylinder over the orbit $P_1$ originated from the puncture $z=0$. 
Using the fact that $\mu(P_1)=1$ and an argument similar to Claim I in the proof of Theorem \ref{prop:arvore-p1-p2}, we conclude that 
\begin{equation}\label{eq:indice-maiorigual-1}
	\mu(P_0)\geq 1.
\end{equation}

We first show that $\Theta=\emptyset$. 
On the contrary, suppose that $\Theta\neq\emptyset$. Then by Lemma \ref{le:bubbling-cilindros-fredholm-estimate}, we have $\mu(P_0)=1$, $\#\Theta=1$ and $\mu(P_z)=2$, where $P_z=(x_z,T_z)$ is the asymptotic limit at the unique puncture $z\in\Theta$.
The orbit $P_z$ is not linked to $P_3$. 
Indeed, for each $n$, $w_n$ is an embedding whose image does not intersect $P_3$, which implies that any contractible loop in $w_n(\C\setminus \{0\})$ is not linked to $P_3$ as well. It follows that any loop in the image of $w$ near the end $z\in \Theta$ is not linked to $P_3$ and consequently that $P_z$ is not linked to $P_3$. 
Since $T_z<T_3$, by hypothesis we have $P_z=P_2$. 
This implies that $P_2$ is contractible in $\mathcal{R}_2$. 
Since the region $\mathcal{R}_1$ contains an embedded disk with boundary $P_2$ and consequently $P_2$ is contractible in $\mathcal{R}_1$, it is easy to prove, using  Mayer-Vietoris sequence, that the holomology class of $\R/\Z \ni t\to x_2(T_2 t)$ generates $H_1(\mathcal{R}_2,\Z)$, a contradiction.

Now we show that the mass $m(0)$ of the puncture $z=0$, defined by \eqref{eq:mass-cylinders}, is positive.
On the contrary, suppose that $m(0)=0$.  By Lemma \ref{le:orbitas-coincidem-massa-zero}, we know that  
$\tw$ is asymptotic to $P_1$ at its negative puncture $z=0$. This contradicts the fact that the family of cylinders \eqref{eq:familia-maximal-cilindros} is maximal.

So far, we know that $\tw:\C\setminus\{0\}\to \R\times S^3$ is a $\tj$-holomorphic cylinder asymptotic to $P_3$ at $\infty$ and to the orbit $P_0$ at $0$. By \eqref{eq:estimate-fredholm-sylinders} and \eqref{eq:indice-maiorigual-1}, we have $1\leq \mu(P_0)\leq 2.$ 

The second level of the bubbling-off tree obtained from the sequence $\tw_n$ consists of  a unique vertex associated to a finite energy sphere 
$\tv_0:\C\setminus \{0\}\cup \Theta_0 \to \R\times S^3$, asymptotic to $P_0$ at its positive puncture $\infty$.
Observe that since the orbit $P_1$ is not linked to $P_3$ and, for each $n$, $P_3$ does not intersect the image of $w_n$, we know that any loop in the image of $w_n$ is also not linked to $P_3$. It follows that any loop in the image of $\tw$ is not linked to $P_3$ and consequently, that $P_0$ is also not linked to $P_3$.

\paragraph{\textit{Claim I:} $\mu(P_0)=2$.} 
To prove the claim, suppose by contradiction that $\mu(P_0)=1$.
If $\tv_0$ is somewhere injective, using Theorem \ref{theo:fredholm-estimate}, we have
$$0\leq 1-\mu(P_0^v)-\sum_{z\in \Theta_0}\mu(P_z^v)+\#\Theta_0,$$
where $P_z^v$ is the asymptotic limit of $\tv_0$ at the puncture $z\in \Theta_0\cup\{0\}$.
Using the fact that $\mu(P_1)=1$ and an argument similar to Claim I in the proof of Theorem \ref{prop:arvore-p1-p2}, we conclude that $\mu(P_0^v)\geq 1$.
If $z\in \Theta_0$, by Lemma \ref{le:indice-maior-igual-a-2}, we have $\mu(P_z^v)\geq 2$.
We conclude that $\#\Theta_0=0$ and $\pi\cdot dv_0\equiv 0$, which contradicts Lemma \ref{le:either-or-cilindros}.
If $\tv_0$ is not somewhere injective, there exists a somewhere injective $\tj$-holomorphic curve $\tu_0:\C\setminus \Gamma\to \R\times S^3$ and a polynomial $p:\C\to \C$ such that 
$\tv_0=\tu_0\circ p$, $p(\Theta_0\cup\{0\})=\Gamma$ and $p^{-1}(\Gamma)=\Theta_0\cup \{0\}$. 
This implies that $P_0=P_\infty^{\deg p}$, where $P_\infty$ is the asymptotic limit of $\tu_0$ at $\infty$. 
Using Lemma \ref{le:properties-cz-index} and the assumption $\mu(P_0)=1$, we conclude that $\mu(P_\infty)=1$. 
For every $z\in \Theta_0\cup \{0\}$, we have  $P_z^v=(P^u_w)^k$, where $p(z)=w$, $k\mid\deg p$ and $P_w^u$ is the asymptotic limit of $\tu_0$ at the puncture $w$. Since $\mu(P_z^v)\geq 1,~\forall z\in \Theta_0\cup \{0\}$, using Lemma \ref{le:properties-cz-index} we conclude that $\mu(P_z^u) \geq 1,~\forall z\in \Gamma$. 
Applying Theorem \ref{theo:fredholm-estimate} to the curve $\tu_0$ we have
$$0\leq 1-\sum_{z\in \Gamma}\mu(P_z^u)+\#\Gamma-1,$$
where $P_z^u$ is the asymptotic limit of $\tu_0$ at the puncture $z\in\Gamma$.
It follows that $\mu(P_z^u)=1,~\forall z\in \Gamma$.
By Theorem \ref{theo:fredholm-estimate}, we have $\pi\cdot du_0\equiv 0$, which implies $\pi\cdot dv_0\equiv 0$. 
It follows from Theorem \ref{theo:vanishing-dlambda-energy} that $\tv_0=F_P\circ p$, where $F_P$ is a cylinder over an orbit $P\in \mathcal{P}(\lambda)$, $p:\C\to \C$ is a  polynomial and 
$P_0=P^{\deg p}$. Since $\mu(P_0)=1$, we conclude, using Lemma \ref{le:properties-cz-index}, that $\mu(P)=1$. 
By Lemma \ref{le:either-or-cilindros}, we have $\Theta_0\neq \emptyset$. 
Let $z\in \Theta_0$. Then $\mu(P_z^v)\geq 2$ and $P_z^v=P^k$, where $k\mid \deg p$.
This implies that  $2\leq \mu(P^k)\leq \mu(P^{\deg p})=1$, a contradiction. 
This proves our claim.

Since $T_0<T_3$ and $P_0$ is not linked to $P_3$, by hypothesis we have $P_0=P_2.$

Now we prove that the mass $m(0)$ of the puncture $z=0$ of $\tv_0$ is zero, according to definition \eqref{eq:mass-cylinders}. 
This implies that $\tv_0$ is asymptotic to $P_1$ at its unique negative puncture $z=0$.
Suppose, by contradiction, that $m(0)>0$. 
Since $P_2$ is simple, we know that $\tv_0$ is somewhere injective. Applying Theorem \ref{theo:fredholm-estimate} to $\tv_0$, we have 
$$0\leq \ind(\tv_0)=2-\mu(P_0^v)-\sum_{z\in \Theta_0}\mu(P^v_z)+\#\Theta_0.$$
If $\ind(\tv_0)=0$, then $\pi\cdot dv_0\equiv0$. By Theorem \ref{theo:vanishing-dlambda-energy}, we conclude that $\tv_0$ is a cylinder over $P_2$, contradicting Lemma \ref{le:either-or-cilindros}.
Therefore, $\ind(\tv_0)\geq 1$ and we have 
$$1\leq \mu(P_0^v)\leq 1-\sum_{z\in \Theta_0}\mu(P^v_z)+\#\Theta_0.$$
Thus $\#\Theta_0=\emptyset$ and $\mu(P_0^v)=1$. 
Following the same arguments used to prove \textit{Claim I}, we get a contradiction with $m(0)>0$.
We have proved that $\tv_0$ is asymptotic to $P_1$ at $z=0$. 

We claim that $\Theta_0=\emptyset$. 
On the contrary, suppose that $\Theta_0\neq \emptyset$. Since $\tv_0$ is somewhere injective and $\pi\circ dv_0$ is not identically zero, by Theorem \ref{theo:fredholm-estimate} we have
$$1\leq \ind(\tv_0)=2-\mu(P_1)-\sum_{z\in \Theta_0}\mu(P^v_z)+\#\Theta_0=1-\sum_{z\in \Theta_0}\mu(P^v_z)+\#\Theta_0.$$
This implies that $\mu(P^v_z)\leq 1,~\forall z\in \Theta_0$, contradicting \ref{le:indice-maior-igual-a-2}.
So far, we have proved the following statement.
\begin{proposition}\label{pr:bubbling-off-cilindros-limite}
	Consider a sequence $\tilde{w}_n=(c_n,w_n):\C\setminus\{0\}\to \R\times S^3$ in the family \eqref{eq:familia-maximal-cilindros}, where $\tw_n=\tilde{w}_{\tau_n}$ and $\tau_n\to 1^-$. 
	There exists a cylinder $\tw:\C\setminus \{0\}\to \R\times S^3$ asymptotic to $P_3$ at its positive puncture $\infty$ and to $P_2$ at its negative puncture $0$ and a cylinder $\tv_0:\C\setminus \{0\}\to \R\times S^3$ asymptotic to $P_2$ at its positive puncture $\infty$ and to $P_1$ at its negative puncture $0$, such that,  after suitable reparametrizations and $\R$-translations of $\tw_n$, we have
	\begin{enumerate}[label=(\roman*)]
		\item up to a subsequence, 
		$\tw_n\to \tw$ in $C^\infty_{loc}(\C\setminus \{0\})$ as $n\to \infty$.
		\item There exist sequences $\delta_n^+\to 0^+$ and $d_n\in \R$ such that, up to a subsequence,
		$\tw_n(\delta_n \cdot)+d_n\to \tv_0$ 	in $C^\infty_{loc}(\C\setminus \{0\})$ as $n\to \infty$. 		
	\end{enumerate} 
	A similar statement holds for any sequence $\tau_n\to 0^+$ with $\tw$ replaced with a cylinder $\tw'$ with the same asymptotics as $\tw$ and $\tv_0$ replaced with a cylinder $\tv_0'$ with the same asymptotics as $\tv_0$. 
\end{proposition}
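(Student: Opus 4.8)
The plan is to assemble the bubbling-off analysis carried out in Subsections~\ref{se:bubbling-familia-cilindros} and~\ref{se:prova-teorema-cilindros}. First I would normalize the sequence as in~\eqref{eq:reparametrizacao-bubbling-cilindros}, reparametrizing $\tw_n$ so that $\int_{\C\setminus\D}w_n^*d\lambda=\sigma(T_3)/2$ and translating in the $\R$-direction, and then run the compactness argument of Proposition~\ref{pr:limit-germinating-sequence}. This produces a finite set $\Theta\subset\D\setminus\{0\}$ of bubbling-off points and a $\tj$-holomorphic limit $\tw=(c,w)\colon\C\setminus(\{0\}\cup\Theta)\to\R\times S^3$ with $\tw_n\to\tw$ in $C^\infty_{loc}$, in which $\infty$ is a positive puncture with asymptotic limit $P_3$ (by the small-area cylinder estimate of Lemma~\ref{le:cylinders-with-small-area}, as in Proposition~\ref{pr:same-orbits-tree}) and every point of $\{0\}\cup\Theta$ is a negative puncture with period $\leq T_3$. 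Once $\Theta$ is shown to be empty this already gives assertion~(i).

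Next I would rule out interior bubbling. Combining Theorem~\ref{theo:fredholm-estimate} with Lemma~\ref{le:indice-maior-igual-a-2} and Lemma~\ref{le:bubbling-cilindros-fredholm-estimate}, the only surviving possibility when $\Theta\neq\emptyset$ is $\mu(P_0)=1$, $\#\Theta=1$, and the asymptotic limit $\tilde P$ of $\tw$ at the unique point of $\Theta$ has $\mu(\tilde P)=2$ and period $<T_3$. Since each $w_n$ is an embedding disjoint from $P_3$, every small loop near that puncture is unlinked from $P_3$, so $\tilde P$ is unlinked from $P_3$ and hypothesis~(ii) of Theorem~\ref{theo:main-theorem} forces $\tilde P=P_2$; but $P_2$ bounds a disk inside $\mathcal{R}_1$ (Proposition~\ref{pr:foliation}), so a Mayer--Vietoris computation shows $[x_2(T_2\cdot)]$ generates $H_1(\mathcal{R}_2,\Z)$, contradicting the contractibility of $P_2$ in $\mathcal{R}_2$. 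Hence $\Theta=\emptyset$, and $\tw\colon\C\setminus\{0\}\to\R\times S^3$ is a cylinder asymptotic to $P_3$ at $\infty$ and to an orbit $P_0=(x_0,T_0)$ at $0$ with $1\le\mu(P_0)\le 2$ by~\eqref{eq:estimate-fredholm-sylinders} and~\eqref{eq:indice-maiorigual-1}, and (by the same linking argument) $P_0$ is unlinked from $P_3$.

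Then I would soft-rescale at $z=0$. From~\eqref{eq:m0=t0-t1} one has $m(0)=T_0-T_1\ge 0$; if $m(0)=0$ then Lemma~\ref{le:orbitas-coincidem-massa-zero} makes $\tw$ asymptotic to $P_1$ at $0$, contradicting maximality of the family~\eqref{eq:familia-maximal-cilindros}, so $m(0)>\sigma(T_3)$ and we are in Case~I of \S\ref{se:soft-rescaling-cilindros}. Picking $\delta_n\to 0^+$ with $\int_{B_{\delta_n}(0)\setminus\{0\}}w_n^*d\lambda=m(0)-\sigma(T_3)/2$ and setting $\tv_n(z)=(c_n(\delta_nz)-c_n(2\delta_n),w_n(\delta_nz))$, compactness produces a limit $\tv_0\colon\C\setminus(\{0\}\cup\Theta_0)\to\R\times S^3$ asymptotic to $P_0$ at $\infty$ by Lemma~\ref{le:limites-iguais-seq-cilindros}. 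The decisive step is \emph{Claim~I} of \S\ref{se:prova-teorema-cilindros}: a Conley--Zehnder index computation, using Lemma~\ref{le:properties-cz-index}, Theorem~\ref{theo:vanishing-dlambda-energy} and Lemma~\ref{le:either-or-cilindros} and treating the possibly multiply covered $\tv_0$ separately, which upgrades $\mu(P_0)=2$; since $P_0$ has period $<T_3$ and is unlinked from $P_3$, hypothesis~(ii) gives $P_0=P_2$. A further index argument shows the mass of the $z=0$ puncture of $\tv_0$ vanishes, so $\tv_0$ is asymptotic to $P_1$ at $0$, and $\Theta_0=\emptyset$ (otherwise $\mu$ of the asymptotic limit at a point of $\Theta_0$ would be $\le 1$, contradicting Lemma~\ref{le:indice-maior-igual-a-2}). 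Thus $\tv_0\colon\C\setminus\{0\}\to\R\times S^3$ is a cylinder asymptotic to $P_2$ at $\infty$ and to $P_1$ at $0$, and $\tw_n(\delta_n\cdot)+d_n\to\tv_0$ with $d_n:=-c_n(2\delta_n)$, which is assertion~(ii).

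It remains to note that the case $\tau_n\to 0^+$ is handled verbatim, producing cylinders $\tw'$ and $\tv_0'$ with the same asymptotics as $\tw$ and $\tv_0$ respectively (the family breaks at that end as well), the only difference being bookkeeping. I expect the main obstacle to be the index bookkeeping of the third paragraph --- pinning down $\mu(P_0)=2$ and the vanishing of the bottom mass of $\tv_0$ --- which requires excluding $\pi\cdot dv_0\equiv 0$ via Theorem~\ref{theo:vanishing-dlambda-energy} and controlling the Conley--Zehnder index under iteration for multiply covered components through Lemma~\ref{le:properties-cz-index}; everything else is a routine synthesis of the compactness and Fredholm estimates already established.
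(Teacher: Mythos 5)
Your proposal is correct and follows essentially the same route as the paper: the normalization \eqref{eq:reparametrizacao-bubbling-cilindros} and compactness, the exclusion of interior bubbling via Lemma \ref{le:bubbling-cilindros-fredholm-estimate}, the bound \eqref{eq:indice-maiorigual-1} and the homological argument in $\mathcal{R}_2$, the maximality argument forcing $m(0)>\sigma(T_3)$, and the soft rescaling with \emph{Claim I} giving $P_0=P_2$, the vanishing bottom mass and $\Theta_0=\emptyset$ are exactly the steps of \S\ref{se:bubbling-familia-cilindros} and \S\ref{se:prova-teorema-cilindros}. No substantive deviation or gap to report.
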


\begin{proposition}\label{pr:familia-de-cilindros-se-aproxima-limite}
	Let $\tw_n$, $\tw$ and $\tv_0$ be as in Proposition \ref{pr:bubbling-off-cilindros-limite}. Then
	\begin{enumerate}[label=(\roman*)]
		\item Given an $S^1$-invariant neighborhood $\mathcal{W}_3\subset C^\infty(\R/\Z,S^3)$ of the loop $t\mapsto x_3(T_3t)$, there exists $R_3>>1$ such that, for $R\geq R_3$ and large $n$, the loop $t\mapsto w_{n}(Re^{2\pi it})$ belongs to $\mathcal{W}_3$.
		\item Given an $S^1$-invariant neighborhood $\mathcal{W}_2\subset C^\infty(\R/\Z,S^3)$ of the loop $t\mapsto x_2(T_2t)$, there exist $\epsilon_2>0$ and $R_2>1$ such that  the loop $t\mapsto w_n(Re^{2\pi it})$ belongs to $\mathcal{W}_2$, for  $R_2\delta_n\leq R\leq \epsilon_2$ and large $n$.
		\item  Given an $S^1$-invariant neighborhood $\mathcal{W}_1\subset C^\infty(\R/\Z,S^3)$ of the loop $t\mapsto x_1(T_1t)$, there exist $\epsilon_1>0$ such that  the loop $t\mapsto w_n(\rho e^{2\pi it})$ belongs to $\mathcal{W}_1$, for  $\rho\leq \epsilon_1\delta_n$ and large $n$.
		\item Given any neighborhood $\mathcal{V}\subset \mathcal{R}_2$ of $w(\C\setminus\{0\})\cup v_0(\C\setminus {0})\cup P_1 \cup P_2\cup P_3$, we have $w_{n}(\C\setminus \{0\})\subset \mathcal{V}$, for large $n$.
	\end{enumerate}	
	A similar statement holds for any sequence $\tw_{\tau_n}$ such that $\tau_n\to 0^+$, with $w$ replaced by $w'$ and $v_0$ replaced by $v_0'$. 
\end{proposition}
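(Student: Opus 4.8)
The plan is to imitate closely the proof of Proposition \ref{pr:familia-de-planos-se-aproxima-limite}, replacing planes by cylinders and using the refined convergence statement from Proposition \ref{pr:bubbling-off-cilindros-limite} together with Lemma \ref{le:cylinders-with-small-area}. Throughout, by hypothesis the sequence $\tw_n = \tw_{\tau_n}$ with $\tau_n\to 1^-$ has been reparametrized as in \eqref{eq:reparametrizacao-bubbling-cilindros}, so that $\int_{\C\setminus\D}w_n^*d\lambda = \sigma(T_3)/2$ and $E(\tw_n)=T_3$ for all $n$; in particular the $\tw_n$ are germinating sequences in the sense of Definition \ref{de:germinating-sequence}. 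As in the proof of Proposition \ref{pr:same-orbits-tree}, I first shrink the neighborhoods: one may assume $\mathcal{W}_i$, $i=1,2,3$, contains only the periodic orbit $t\mapsto x_i(T_i\cdot)$ modulo $S^1$-reparametrizations, and fix an $S^1$-invariant neighborhood $\mathcal{W}$ of the set of periodic orbits of period $\le T_3$ with each connected component containing at most one orbit modulo $S^1$-reparametrization and $\mathcal{W}_1,\mathcal{W}_2,\mathcal{W}_3\subset\mathcal{W}$.

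\emph{Proof of (i).} Using the normalization \eqref{eq:reparametrizacao-bubbling-cilindros}, apply Lemma \ref{le:cylinders-with-small-area} to the cylinders $(s,t)\mapsto\tw_n(e^{2\pi(s+it)})$ on the region $s\geq 0$ (with $e$ taken below $\liminf\int_{\partial\D}w_n^*\lambda > 0$, which is positive since the asymptotic limit at $\infty$ has period $T_3$): this produces $R_3>1$ so that for $R\geq R_3$ and all large $n$ the loops $t\mapsto w_n(Re^{2\pi it})$ lie in $\mathcal{W}$, and by the asymptotic behavior of the cylinders $\tw_n$ (which are all asymptotic to $P_3$ at $\infty$) they lie in the same connected component of $\mathcal{W}$ as $t\mapsto x_3(T_3t)$, namely $\mathcal{W}_3$. \emph{Proof of (iii).} This is symmetric: all $\tw_n$ are asymptotic to $P_1$ at $z=0$, so for $\rho$ small enough the loops $t\mapsto w_n(\rho e^{2\pi it})$ lie near $x_1(T_1\cdot)$; but one must take $\rho\leq\epsilon_1\delta_n$ because the inner scale $\delta_n\to 0^+$ of the second bubble of Proposition \ref{pr:bubbling-off-cilindros-limite} shrinks, and only below that scale does the loop track $P_1$. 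Concretely, apply Lemma \ref{le:cylinders-with-small-area} on the annulus corresponding to $\{\rho\leq\epsilon_1\delta_n\}$, using that $\int$ of $w_n^*d\lambda$ over that annulus is controlled (the mass near $z=0$ tends to $T_0-T_1 = T_2-T_1 > 0$ and is captured by scale $\delta_n$ as in \S\ref{se:soft-rescaling-cilindros}).

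\emph{Proof of (ii).} Exactly as in the proof of Proposition \ref{pr:same-orbits-tree} and part (ii) of Proposition \ref{pr:familia-de-planos-se-aproxima-limite}: consider the cylinders $\tilde{C}_n(s,t) = \tw_n(e^{2\pi(s+it)})$ restricted to the intermediate annulus between scales $\delta_n R_2$ and $\epsilon_2$; by the normalization and the soft-rescaling estimate $\int u^*d\lambda\le\sigma(T_3)$ on that annulus, Lemma \ref{le:cylinders-with-small-area} gives $h>0$ such that every loop $t\mapsto \tilde C_n(s,t)$ lies in $\mathcal{W}$ for $s$ in the corresponding range. Since at the outer end (scale $\epsilon_0$) the loop converges to a loop of $\tw$, which near its negative puncture $z=0$ tracks $P_2$, and at the inner end (scale $\delta_n R_2$) the loop converges to a loop of $\tv_0$, which near its positive puncture $z=\infty$ also tracks $P_2$, connectedness of each component of $\mathcal{W}$ forces all these loops into $\mathcal{W}_2$. \emph{Proof of (iv).} This follows from (i), (ii), (iii) and Proposition \ref{pr:bubbling-off-cilindros-limite}: outside the annular pieces handled above, $C^\infty_{loc}$ convergence $\tw_n\to\tw$ and $\tw_n(\delta_n\cdot)+d_n\to\tv_0$ forces the images $w_n(\C\setminus\{0\})$ into any prescribed neighborhood $\mathcal{V}$ of $w(\C\setminus\{0\})\cup v_0(\C\setminus\{0\})\cup P_1\cup P_2\cup P_3$; and $\mathcal{V}\subset\mathcal{R}_2$ is available because $w_n(\C\setminus\{0\})\subset\mathcal{R}_2$ for all $n$ by Proposition \ref{pr:cilindro-e-mergulho}. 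The statement for $\tau_n\to 0^+$ is identical with $w,v_0$ replaced by $w',v_0'$, using the corresponding half of Proposition \ref{pr:bubbling-off-cilindros-limite}.

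The main obstacle I expect is part (iii): unlike the plane case, the relevant limit curve $\tv_0$ is itself a cylinder whose negative puncture is $P_1$, and the correct neighborhood of $P_1$ is only visited on the innermost annulus of scale $\epsilon_1\delta_n$, so one has to set up Lemma \ref{le:cylinders-with-small-area} on a three-scale decomposition ($1 \gg \epsilon_2 \gg \epsilon_2\delta_n^{-1}$-type scaling near $z=0$) and carefully track which soft-rescaling mass estimate ($m(0)>\sigma(T_3)$ versus $m(0)=0$, cf.\ \S\ref{se:soft-rescaling-cilindros}) applies; keeping the constants $h$ from the various applications of Lemma \ref{le:cylinders-with-small-area} compatible across the scales is the delicate bookkeeping.
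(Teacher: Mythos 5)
Your proposal follows the paper's own route: the same shrinking of the neighborhoods $\mathcal{W}_i$ inside a set $\mathcal{W}$ as in Lemma \ref{le:cylinders-with-small-area}, the normalization \eqref{eq:reparametrizacao-bubbling-cilindros} plus that lemma and the asymptotics of the $\tw_n$ for (i), the intermediate-annulus argument of Proposition \ref{pr:same-orbits-tree}/Lemma \ref{le:limites-iguais-seq-cilindros} between the scales $\delta_nR_2$ and $\epsilon_2$ for (ii), and (iv) as a consequence of (i)--(iii) and Proposition \ref{pr:bubbling-off-cilindros-limite}. These parts match the paper's proof essentially verbatim.

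The one place where your justification is imprecise is (iii). The smallness of $\int_{\{0<\rho\le\epsilon_1\delta_n\}}w_n^*d\lambda$ does \emph{not} follow from the statement that the mass of $w_n$ at $z=0$ tends to $T_2-T_1$ and ``is captured by scale $\delta_n$'': by the soft-rescaling normalization of \S\ref{se:soft-rescaling-cilindros} (case $m(0)>\sigma(T_3)$), the $d\lambda$-area of $w_n$ on $\{0<\rho\le\delta_n\}$ equals $(T_2-T_1)-\sigma(T_3)/2>\sigma(T_3)/2$, which is not small. What is actually needed, and what the paper uses, is the finer fact established in \S\ref{se:prova-teorema-cilindros} that the rescaled sequence $\tv_n$ has \emph{zero} mass at its puncture $z=0$ (equivalently, for every $\eta>0$ there is $\epsilon_1>0$ with $\limsup_n\int_{\{0<\rho\le\epsilon_1\delta_n\}}w_n^*d\lambda\le\eta$), i.e.\ the whole mass $T_2-T_1$ is concentrated strictly above the scale $\epsilon_1\delta_n$. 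With this, the paper runs the $m(0)=0$ analysis of Lemma \ref{le:orbitas-coincidem-massa-zero}: Lemma \ref{le:cylinders-with-small-area} on the annulus $\delta_n'\le\rho'\le\epsilon_1$ for the $v_n$, and then a second application of the lemma using that $v_n(\delta_n'\cdot)$ converges to the trivial cylinder over $P_1$, to cover all radii $\rho'\le\delta_n'$ as well (your alternative anchor, the per-$n$ asymptotics of $w_n$ to $P_1$ at $z=0$, works equally well for identifying the component $\mathcal{W}_1$, but you still need the zero-mass statement for the rescaled sequence, not the positive-mass statement for $w_n$, to get the small-area hypothesis of the lemma on the whole range up to $\epsilon_1\delta_n$). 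With that substitution your argument coincides with the paper's.
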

\begin{proof}
	We can assume that $\mathcal{W}_i,i=1,2,3$, contains only the periodic orbit $t\mapsto x_i(T_i\cdot)$ modulo $S^1$-reparametrizations.   
	Let $\mathcal{W}\subset C^\infty(\R/\Z,S^3)$ be as in the statement of Lemma \ref{le:cylinders-with-small-area} and such that $\mathcal{W}_1\cup \mathcal{W}_2\cup \mathcal{W}_3\subset \mathcal{W}$.

	Using the normalization condition \eqref{eq:reparametrizacao-bubbling-cilindros} we can apply Lemma \ref{le:cylinders-with-small-area} and find $R_3>>1$ such that for $R\geq R_3$, the loops $\{t\mapsto w_n(Re^{i2\pi t})\}, n\in \N$ belong to $\mathcal{W}$.
	By the asymptotic behavior of the cylinders $\tw_n$, 
	we conclude that for any $R\geq R_3$ and large $n$, the loop $t\mapsto w_n(Re^{2\pi it})$ belongs to $\mathcal{W}_3$. 
	Assertion \textit{(i)} is proved. 
	
	Now we prove \textit{(ii)}. Recall that the asymptotic limit of $\tw$ at $z=0$ is the orbit $P_2$.  
	We can apply  \ref{le:cylinders-with-small-area} as in the proof of Proposition \ref{le:limites-iguais-seq-cilindros} to find $\epsilon_2>0$ small and $R_2>>1$ such that for every $R$ satisfying  $\delta_nR_2\leq R\leq \epsilon_2$ and large $n$, the loop 
	$t\mapsto w_n(Re^{2\pi it})$ belongs to $\mathcal{W}_2$. 
	
	To prove \textit{ (iii)}, first recall that the mass of the puncture $z=0$ of the sequence $\tv_n$ is zero. 
	Applying \ref{le:cylinders-with-small-area} as in the proof of Lemma \ref{le:orbitas-coincidem-massa-zero}, we find $\epsilon_1 >0$ small  and a sequence $\delta'_n\to 0$ such that 
	$t\mapsto w_n(\delta_n \rho' e^{2\pi i t})=v_n(\rho' e^{2\pi it})\in \mathcal{W}_1$ for $\delta'_n\leq \rho'\leq \epsilon _1$. 
	The sequence $v_n(\delta_n'\cdot)$ converges  in $C^\infty_{loc}$ to the cylinder over $P_1$, so that applying Lemma \ref{le:cylinders-with-small-area} again, we conclude that for large $n$, the loop $t\mapsto v_n(\delta_n' r e^{2\pi it})$ belongs to $\mathcal{W}_1$ for every $r\leq 1$. 
	We conclude that for any $\rho\leq \delta_n\epsilon_1$ and $n$ large enough, the loop $t\mapsto w_n(\rho e^{2\pi it})$ belongs to $\mathcal{W}_1$.
	
	The proof of \textit{(iv)} follows from \textit{(i)}, \textit{(ii)}, \textit{(iii)} and Proposition \ref{pr:bubbling-off-cilindros-limite}.
\end{proof}


\begin{proposition}\label{pr:tw=tu'}
	Let $\tw$, $\tv_0$, $\tw'$ and $\tv_0'$ be as in Proposition \ref{pr:bubbling-off-cilindros-limite}. Let $\tu_r$ and $\tu_r'$ be the finite energy cylinders obtained in Proposition \ref{theo:bubbling-off-two-vertices} and let $\tv_r$ be the finite energy cylinder obtained in Proposition \ref{prop:arvore-p1-p2}.  Then, up to reparametrization and $\R$-translation, $\tv_0=\tv_0'=\tv_r$, $\tw=\tu_r$ and $\tw'=\tu_r'$.
\end{proposition}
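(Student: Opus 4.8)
The plan is to exploit the uniqueness statements already proved in Propositions \ref{pr:unicos-cilindros-p3p2-e-plano} and \ref{pr:unico-cilinro-p1-p2}, together with the non-intersection properties established in Propositions \ref{le:the-cylinders-do-not-intersect-orbits}, \ref{pr:v_r-mergulho-nao intersecta-limites} and \ref{pr:cilindro-e-mergulho}. First I would record the asymptotic data: by Proposition \ref{pr:bubbling-off-cilindros-limite}, both $\tw$ and $\tw'$ are finite energy cylinders asymptotic to $P_3$ at $+\infty$ and to $P_2$ at $0$, while $\tv_0$ and $\tv_0'$ are cylinders asymptotic to $P_2$ at $+\infty$ and to $P_1$ at $0$; and $\tu_r,\tu_r'$ (from Proposition \ref{theo:bubbling-off-two-vertices}) have exactly the former asymptotics, $\tv_r$ (from Proposition \ref{prop:arvore-p1-p2}) exactly the latter. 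So all seven curves fall into the two uniqueness classes. The key remaining input is that each of the new curves $\tw,\tw',\tv_0,\tv_0'$ does not intersect $\R\times(P_2\cup P_3)$, respectively $\R\times(P_1\cup P_2)$: this follows from Lemma \ref{pr:cilindro-nao-intersecta-orbita} (applicable since $P_i\cup P_j$ is an unlink, the orbits are distinct, and all Conley–Zehnder indices lie in $\{1,2,3\}$), exactly as in the proof of Proposition \ref{pr:v_r-mergulho-nao intersecta-limites}.

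Next I would invoke the uniqueness parts of Propositions \ref{pr:unicos-cilindros-p3p2-e-plano} and \ref{pr:unico-cilinro-p1-p2}. Proposition \ref{pr:unico-cilinro-p1-p2} states that $\tv_r$ is the unique $\tj$-holomorphic cylinder asymptotic to $P_2$ at $\infty$ and $P_1$ at $0$ disjoint from $\R\times(P_1\cup P_2)$; since $\tv_0$ and $\tv_0'$ have those asymptotics and (by the previous paragraph) that disjointness, we conclude $\tv_0=\tv_0'=\tv_r$ up to reparametrization and $\R$-translation. For the cylinders connecting $P_3$ and $P_2$, the situation is slightly more delicate because Proposition \ref{pr:unicos-cilindros-p3p2-e-plano} gives uniqueness of the \emph{pair} $\{\tu_r,\tu_r'\}$ among cylinders disjoint from $\R\times(P_2\cup P_3)$, and it asserts that $\tu_r$ and $\tu_r'$ approach $P_2$ in opposite directions. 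So $\tw$ must coincide with $\tu_r$ or $\tu_r'$, and likewise $\tw'$; I must rule out $\tw=\tw'$ (i.e. both limiting to the same cylinder). To do this I would use Proposition \ref{pr:familia-de-cilindros-se-aproxima-limite}(iv): if $\tw$ and $\tw'$ had the same image, then for $\tau$ near $0$ and near $1$ the cylinders $w_\tau$ would be confined to an arbitrarily small neighborhood of $w(\C\setminus\{0\})\cup v_0(\C\setminus\{0\})\cup P_1\cup P_2\cup P_3$, and a Jordan–Brouwer / open-and-closed argument identical to the one in the proof of Proposition \ref{pr:foliation} would force $\mathcal{R}_2$ to be covered by the family $\{w_\tau\}$ together with $w,v_0$ and the three orbits; since every $w_\tau$ is transverse to the Reeb vector field, this contradicts the existence of a Reeb orbit (e.g. $P_1$ is not linked to $P_3$, or more precisely one produces a contradiction with the transversality and the topology of $\mathcal{R}_2$). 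Hence $\tw$ and $\tw'$ have distinct images, so $\{\tw,\tw'\}=\{\tu_r,\tu_r'\}$ as unordered pairs.

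Finally I would pin down which is which. Recall that $\tau$ is normalized to increase in the direction of $R_\lambda$, both for the family \eqref{eq:familia-de-planos} (whose $\tau\to0^+$ limit produced $\tu_r$, cf. Proposition \ref{theo:bubbling-off-two-vertices}) and for the family \eqref{eq:familia-maximal-cilindros} of cylinders (whose $\tau\to1^-$ limit produced $\tw$ and $\tv_0$). The breaking analyses identify the side of the torus $T=U_1\cup U_2\cup P_2\cup P_3$ toward which each limit degenerates: $\tu_r$ and $\tw$ are the degeneration from the $\tau\to1^-$ end, hence have matching co-orientation data at $P_2$ and $P_3$ (same asymptotic eigensections, since by Lemma \ref{le:wind=1,0} the relevant winding numbers are $1$ and the corresponding eigenspaces are one-dimensional, so the eigensections agree up to a positive scalar once the $R_\lambda$-direction convention is fixed). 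Since $\tu_r$ and $\tu_r'$ approach $P_2$ in opposite directions, this determines $\tw=\tu_r$, and then necessarily $\tw'=\tu_r'$. The main obstacle is the matching-of-directions step: one must carefully check that the $\tau$-orientation conventions on the two maximal families are compatible and that the breaking genuinely forces the asymptotic eigensection of $\tw$ at $P_2$ to be a positive multiple of that of $\tu_r$ — this is where $C^\infty_{loc}$-convergence of the rescaled sequences, the asymptotic formula of Theorem \ref{theo:asymptotic behavior}, and the one-dimensionality of the $\nu^{pos}_{P_2}$-eigenspace must be combined, exactly in the spirit of the arguments in Propositions \ref{pr:unicos-cilindros-p3p2-e-plano} and \ref{pr:unico-cilinro-p1-p2}.
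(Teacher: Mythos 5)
Your first two reductions are sound and coincide with the paper's: the limits avoid $\R\times(P_1\cup P_2\cup P_3)$ (the paper gets this from positivity and stability of intersections with the members $\tw_\tau$ of the family; your route via Lemma \ref{pr:cilindro-nao-intersecta-orbita} also works), so Proposition \ref{pr:unico-cilinro-p1-p2} gives $\tv_0=\tv_0'=\tv_r$ and Proposition \ref{pr:unicos-cilindros-p3p2-e-plano} confines $\tw,\tw'$ to $\{\tu_r,\tu_r'\}$. The problems are in your last two steps. First, your reason for excluding that both ends break onto the same cylinder is wrong as stated: if $\tw$ and $\tw'$ had the same image, the open-and-closed sweeping argument does show that the family fills one side of a surface built from two near-end leaves, but the contradiction cannot be ``the existence of a Reeb orbit not linked to $P_3$'' --- $P_1$ is an asymptotic limit of every $w_\tau$, so it belongs to the binding and its existence contradicts nothing. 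The correct contradiction is that the swept region would contain points the leaves cannot reach, for instance points of $\mathring{\mathcal{R}_1}$ or of $u_r'(\C\setminus\{0\})\subset\partial\mathcal{R}_1$, which is impossible since $w_\tau(\C\setminus\{0\})\cap\mathcal{R}_1=\emptyset$ by Proposition \ref{pr:cilindro-e-mergulho}; this is exactly the separation argument the paper runs with the surface $S=w_{\tau_0}(\C\setminus\{0\})\cup w(\C\setminus\{0\})\cup v_r(\C\setminus\{0\})\cup P_1\cup P_2\cup P_3$.

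Second, and more seriously, the identification of which end gives which cylinder is not actually proved. The mechanism you propose --- matching the asymptotic eigensections at $P_2$ of $\tw$ and $\tu_r$ through the $\tau$-orientation conventions of the two maximal families --- is precisely the step you flag as the main obstacle, and it cannot be extracted from those conventions alone: the family of planes \eqref{eq:familia-de-planos} lives in $\mathcal{R}_1$ while the family \eqref{eq:familia-maximal-cilindros} lives in $\mathcal{R}_2$, and nothing in the breaking analysis compares their directions of approach to $P_2$. The input the paper uses, and which you never invoke, is the provenance of the family: \eqref{eq:familia-maximal-cilindros} contains the glued family \eqref{eq:familia-de-cilindros} produced by Theorem \ref{th:gluing} applied to $\tu_r$ and $\tv_r$, and the convergence statement of the gluing theorem combined with Proposition \ref{pr:familia-de-cilindros-se-aproxima-limite} forces one end of the family to break exactly onto $\tu_r$ over $\tv_r$; the separation argument above then excludes the other end from also producing $\tu_r$, giving $\tw=\tu_r$ and $\tw'=\tu_r'$. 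Without this, your argument yields at best $\{\tw,\tw'\}=\{\tu_r,\tu_r'\}$, and even that only after repairing the contradiction in your second step.
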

\begin{proof}
	By positivity and stability of intersection of pseudo-holomorphic curves and the fact that the cylinders $\tw_\tau$ in the family \eqref{eq:familia-maximal-cilindros} do not intersect $\R\times \left(P_1\cup P_2\cup P_3\right)$, we conclude that the cylinders $\tw$, $\tv_0$, $\tw'$ and $\tv_0'$ do not intersect $\R\times \left(P_1\cup P_2\cup P_3\right)$. 
	It follows from Proposition \ref{pr:unico-cilinro-p1-p2} that $\tv_0=\tv_0'=\tv_r$, up to reparametrization and $\R$-translation. 
	{It follows from Proposition \ref{pr:unicos-cilindros-p3p2-e-plano} that $\tw$ is either equal to $\tu_r$ or $\tu_r'$, up to reparametrization and $\R$-translation.}
	Similarly, $\tw'$ is either equal to $\tu_r$ or $\tu_r'$.
	Since the family of cylinders \eqref{eq:familia-de-cilindros} was obtained by Theorem \ref{th:gluing} applied to $\tu_r$ and $\tv_r$, we conclude, using Proposition \ref{pr:familia-de-cilindros-se-aproxima-limite}, that $\tw=\tu_r$. 
	
	It remains  to prove that $\tw'=\tu_r'$.
	Fix $\tau_0\in (0,1)$. 
	By Jordan-Brouwer separation theorem, the surface 
	$$S:=w_{\tau_0}(\C\setminus \{0\})\cup u_r(\C\setminus\{0\})\cup v_r(\C\setminus \{0\})\cup P_1 \cup P_2\cup P_3$$
	divides $S^3/S$ into two disjoint regions with boundary $S$.
	One of these regions, that we call $A$, is contained in $\mathcal{R}_2$, since $S$ does not intersect any of the curves foliating the interior of the region $\mathcal{R}_1$. 
	
	We will show that $A$ is foliated by cylinders in the family $\{w_\tau\}$. 
	Let $p\in A$ and let $\mathcal{V}\subset \mathcal{R}_2$ be a small neighborhood of $S$ in $\mathcal{R}_2$ separating $p$ and $S$. 
	By Proposition \ref{pr:familia-de-cilindros-se-aproxima-limite}, for $\tau_1>\tau_0$ sufficiently close to $1$, we have $w_{\tau_1}(\C\setminus\{0\})\subset \mathcal{V}$. 
	Moreover, $w_{\tau_1}(\C\setminus \{0\})\subset  \mathcal{V}\cap A$, since there are points in the image of the family $w_{\tau}$ converging to points in a compact subset of $u_r(\C\setminus \{0\})$, as $\tau\to 1^-$. 
	Let $B\subset A$ be the region limited by $w_{\tau_0}(\C\setminus \{0\})\cup P_3\cup P_1\cup w_{\tau_1}(\C\setminus \{0\})$. 
	Thus, the image of the family $w_\tau$, $\tau\in (\tau_0,\tau_1)$ is open, closed and nonempty in $B$. 
	It follows that $p$ is in the image of $w_\tau$ for some $\tau\in (\tau_0,\tau_1)$. 
	We conclude that $A$ is foliated by the images of the cylinders  $\{w_\tau\}$, $\tau\in (\tau_0,1)$. 
	
	Since every neighborhood of a compact set of $u_r(\C\setminus \{0\})$ in $\mathcal{R}_2$ is contained in $\bar{A}$, the family $\{w_\tau\}$, $\tau\in (\tau_0,1)$, foliates $A$ and the cylinders in the family $\{w_\tau\}$ do not intersect each other, we conclude that 
	$\tw'=\tu_r'$.
\end{proof}	
The proof of Proposition \ref{th:bubbling-tree-cylinders} is complete. 

\subsection{Conclusion of the proof of Theorem \ref{theo:main-theorem}}	\label{se:conclusao-prova}

\begin{proposition}\label{pr:folheacao-final}
	Let $\tu_r$ and $\tu_r'$ be the finite energy cylinders obtained in Proposition \ref{theo:bubbling-off-two-vertices} and let $\tv_r$ be the finite energy cylinder obtained in Proposition \ref{prop:arvore-p1-p2}. The images of the family $\{w_\tau\},\tau \in (0,1)$ given by \eqref{eq:familia-maximal-cilindros}, $u_r$, $u_r'$, and  $v_r$ determine a smooth foliation of $\mathcal{R}_2\setminus (P_1\cup P_2\cup P_3)$.
	Here $\mathcal{R}_2=\overline{S^3\setminus \mathcal{R}_1}$, where $\mathcal{R}_1$ is the region obtained in Proposition \ref{pr:foliation}.
\end{proposition}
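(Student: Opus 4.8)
The plan is to follow the scheme of Proposition~\ref{pr:foliation} essentially verbatim, replacing planes by cylinders. First I would record what is already in hand: by Propositions~\ref{le:the-cylinders-do-not-intersect-orbits}, \ref{pr:v_r-mergulho-nao intersecta-limites} and~\ref{pr:cilindro-e-mergulho}, each of $u_r$, $u_r'$, $v_r$ and every $w_\tau$ is an embedding transverse to $R_\lambda$ (indeed $\wind_\pi=0$ by Lemma~\ref{le:wind=1,0}), none meets its own asymptotic limits, the curves $u_r,u_r',v_r$ are pairwise disjoint, and $w_\tau(\C\setminus\{0\})\cap\mathcal{R}_1=\emptyset$; since $u_r,u_r'\subset T=\partial\mathcal{R}_1$ and $P_1,P_2,P_3\subset\mathcal{R}_1$, this already gives that each $w_\tau$ is disjoint from $u_r$, $u_r'$ and from $P_1\cup P_2\cup P_3$. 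The only disjointness statements left are: $w_\tau$ disjoint from $v_r$, and $w_{\tau_1}$ disjoint from $w_{\tau_2}$ for $\tau_1\ne\tau_2$. For these I would invoke Theorem~\ref{th:2.4-siefring}: by Lemma~\ref{le:wind=1,0} every relevant $\wind_\infty$ at a puncture equals the covering multiplicity $1$, so the numerical condition in part~(3) of that theorem holds at each shared asymptotic orbit ($P_1$ for the pair $w_\tau,v_r$; $P_1$ and $P_3$ for two members of the family), and since none of these projected curves meets any of the others' asymptotic limits, the projections are disjoint provided they do not have identical image.

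The one genuinely new point is ruling out identical images inside the family. If $w_{\tau_1}(\C\setminus\{0\})=w_{\tau_2}(\C\setminus\{0\})$ for some $\tau_1\ne\tau_2$, then, exactly as in \S\ref{se:a-family-of-planes}, the maximal family~\eqref{eq:familia-maximal-cilindros} closes up into an $S^1$-family of embedded cylinders asymptotic to $P_3$ and $P_1$, giving an open book decomposition of $S^3$ with binding $P_1\cup P_3$ and annular pages, all transverse to $R_\lambda$. Then $P_2\subset S^3\setminus(P_1\cup P_3)$ is a Reeb orbit transverse to the pages; capping an annular page with a disk bounded by the unknot $P_1$ (disjoint from $P_3$, since $P_1$ and $P_3$ are unlinked) yields a Seifert surface for $P_3$, and counting intersections gives ${\rm lk}(P_2,P_3)\ge 1+{\rm lk}(P_2,P_1)=1>0$, contradicting the hypothesis that $P_2$ and $P_3$ are not linked. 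Hence no two members of the family share an image, and combining with the previous paragraph, the leaves $u_r,u_r',v_r,\{w_\tau\}_{\tau\in(0,1)}$ are pairwise disjoint embedded surfaces transverse to $R_\lambda$, all disjoint from $P_1\cup P_2\cup P_3$.

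It then remains to show that these leaves exhaust $\mathcal{R}_2\setminus(P_1\cup P_2\cup P_3)$ and fit together smoothly. Fix $p$ in this set not lying on $u_r$, $u_r'$ or $v_r$. Using $\tw=\tu_r$, $\tw'=\tu_r'$, $\tv_0=\tv_r$ from Propositions~\ref{pr:bubbling-off-cilindros-limite} and~\ref{pr:tw=tu'}, Proposition~\ref{pr:familia-de-cilindros-se-aproxima-limite} provides $\tau_0$ near $1$ and $\sigma_0$ near $0$ with $w_{\tau_0}(\C\setminus\{0\})$ and $w_{\sigma_0}(\C\setminus\{0\})$ contained in preassigned small neighbourhoods of $u_r(\C\setminus\{0\})\cup v_r(\C\setminus\{0\})\cup P_1\cup P_2\cup P_3$ and of $u_r'(\C\setminus\{0\})\cup v_r(\C\setminus\{0\})\cup P_1\cup P_2\cup P_3$ chosen to avoid $p$. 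Then $w_{\tau_0}(\C\setminus\{0\})\cup w_{\sigma_0}(\C\setminus\{0\})\cup P_1\cup P_3$ is a piecewise smooth embedded torus; by Jordan--Brouwer it separates $S^3$, the component $A$ containing $p$ lies in $\mathcal{R}_2$ and is disjoint from $u_r\cup u_r'\cup v_r$, and the set of leaves of the family meeting $A$ is nonempty (those with $\tau$ slightly below $\tau_0$), open, and closed in $A$ (the family only breaks as $\tau\to 0^+,1^-$, whose limits lie outside $A$), so it fills $A$; thus $p\in w_\tau(\C\setminus\{0\})$ for some $\tau$. Letting $\tau_0\to 1^-$, $\sigma_0\to 0^+$ disposes of the residual collar, giving the foliation of $\mathcal{R}_2\setminus(P_1\cup P_2\cup P_3)$; smoothness in the interior follows from the smooth embedding $\tilde F$ of Theorem~\ref{th:family-of-cylinders-wendl}, and near the binding and along $u_r,u_r',v_r$ from Theorem~\ref{theo:asymptotic behavior}. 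The main obstacle is precisely this last verification at the index-$2$ orbit $P_2$: there the leaf $u_r$ (with boundary $P_3\cup P_2$), the leaf $v_r$ (with boundary $P_2\cup P_1$), and the nearby cylinders $w_\tau$ all accumulate onto $P_2$, and one must check they assemble into a smooth singular foliation with the standard hyperbolic local model, using that $\tu_r,\tu_r',\tv_r$ approach $P_2$ along the one-dimensional eigenspaces of $\nu^{neg}_{P_2}$ and $\nu^{pos}_{P_2}$ in opposite directions (Propositions~\ref{pr:unicos-cilindros-p3p2-e-plano} and~\ref{pr:unico-cilinro-p1-p2}).
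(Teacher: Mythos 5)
Your outline is essentially the paper's own argument: identify the two broken limits of the maximal family via Propositions \ref{pr:bubbling-off-cilindros-limite} and \ref{pr:tw=tu'}, get pairwise disjointness of the leaves from Theorem \ref{th:2.4-siefring} together with Lemma \ref{le:wind=1,0}, and exhaust $\mathcal{R}_2$ by a nonempty--open--closed connectedness argument inside regions cut out by leaves and binding orbits. Your second paragraph, ruling out that the maximal family closes up into an $S^1$-family of annuli by showing this would force ${\rm lk}(P_2,P_3)>0$, is a point the paper leaves implicit for cylinders; the argument is correct and parallels the paper's treatment of the plane family, so it is a welcome addition rather than a deviation.

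The one step that does not hold as written is in the exhaustion: you assert that the component $A$ of the complement of the torus $w_{\tau_0}(\C\setminus\{0\})\cup w_{\sigma_0}(\C\setminus\{0\})\cup P_1\cup P_3$ containing $p$ is disjoint from $u_r\cup u_r'\cup v_r$, and later that ``letting $\tau_0\to 1^-$, $\sigma_0\to 0^+$ disposes of the residual collar''. Both claims amount to saying that the two collar regions between your torus and the broken configurations $u_r\cup v_r$ and $u_r'\cup v_r$ shrink onto those configurations, i.e.\ that $p$ lies in the middle component for suitable $\tau_0,\sigma_0$. This does not follow from Proposition \ref{pr:familia-de-cilindros-se-aproxima-limite}: that result places the leaves $w_{\tau_0},w_{\sigma_0}$ inside a prescribed neighbourhood of the broken limits, but says nothing about the regions they bound together with those limits (a surface lying in a small neighbourhood of a set may still bound a large region), and at this stage of the proof one cannot yet invoke the foliation of the collars to rule out $p$ sitting there. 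The paper sidesteps this by a different decomposition: for a single fixed $\tau_0$ it runs the open--closed argument in the region bounded by $w_{\tau_0}(\C\setminus\{0\})\cup u_r(\C\setminus\{0\})\cup v_r(\C\setminus\{0\})\cup P_1\cup P_2\cup P_3$, filled by $w_\tau$ with $\tau\in(\tau_0,1)$, and then in its exact complement in $\mathcal{R}_2$, bounded by the analogous surface with $u_r$ replaced by $u_r'$, filled by $\tau\in(0,\tau_0)$; since these two regions are complementary, every $p$ off the rigid leaves is covered without any limiting step. Your version can be repaired the same way (run your open--closed argument inside each collar, whose boundary consists of one leaf and one broken configuration, so degenerating leaves accumulate only on the boundary and relative closedness survives), or by a separate argument showing $p$ must lie in the middle component, e.g.\ joining $p$ to $\mathring{\mathcal{R}_1}$ by a path avoiding the broken configuration (which, being a surface with boundary, does not separate $S^3$) and then choosing $\tau_0$ so that $w_{\tau_0}$ avoids that path. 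As it stands, however, the collar step is asserted rather than proved.
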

\begin{proof}
	Arguing as in the proof of  Proposition \ref{pr:tw=tu'}, 
	we conclude that for $\tau_0\in (0,1)$ fixed, $\{w_\tau(\C\setminus\{0\})\},~\tau\in (\tau_0,1)$ determine a foliation of a region $A$ in $\mathcal{R}_2$ bounded by
	$$S:=w_{\tau_0}(\C\setminus \{0\})\cup u_r(\C\setminus\{0\})\cup v_r(\C\setminus \{0\})\cup P_1 \cup P_2\cup P_3.$$
	Repeating the arguments in the proof of Proposition \ref{pr:tw=tu'}, with $u_r$ replaced by $u_r'$, we find a foliation of the complement of $A$ in $\mathcal{R}_2$.  
\end{proof}

The proof of Proposition \ref{pr:a-family-of-cylinders} is complete. Now we complete the proof of Theorem \ref{theo:main-theorem}.

\begin{proposition}\label{pr:self-linking-1}
	${\rm sl}(P_i)=-1$, for $i=1,2,3$.
\end{proposition}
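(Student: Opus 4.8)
The plan is to use the foliation of $S^3$ we have just constructed, together with the rigid disk $D$ with boundary $P_2$, the rigid disk $D\cup V$ with boundary $P_1$, and an appropriate disk bounded by $P_3$, to compute each self-linking number directly from the definition. Recall from the previous sections that $D=u_q(\C)$ is an embedded disk with $\partial D = P_2$, transverse to the Reeb vector field, that $D\cup V \cup P_2$ (with $V = v_r(\C\setminus\{0\})$) is a $C^1$-embedded disk with boundary $P_1$, transverse to the Reeb flow, and that the family $\{F_\tau=u_\tau(\C)\}$ of planes asymptotic to $P_3$ breaks at one end into $U_1$ (or $U_2$) together with $D$, giving an embedded disk $\bar D_3 := U_1 \cup D \cup P_2$ with boundary $P_3$ after smoothing; alternatively any single leaf $F_\tau$ furnishes an embedded disk with boundary $P_3$.

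First I would treat $P_2$. Since $D$ is an embedded disk with $\partial D = P_2$ and $\dot D = D\setminus P_2$ is transverse to $R_\lambda$, hence transverse to $\xi$ away from the boundary, I would compute $\mathrm{sl}(P_2)$ as the oriented intersection number $L_\epsilon \cdot D$ where $L_\epsilon$ is the push-off of $P_2$ along a nonvanishing section $Z$ of $\xi|_D$. The key point is that the section $\eta$ of $x_2^*\xi$ coming from the boundary parametrization of $D$ as a strong transverse section is an eigensection of the asymptotic operator $A_{P_2,J}$ with winding number $\wind_\infty(\tu_q,\infty)$; by Lemma~\ref{le:wind=1,0} (applied to $\tu_q$, whose only puncture is the positive puncture at $\infty$) this winding number equals $1$. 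On the other hand, the self-linking number relative to a global trivialization $\Psi$ of $\xi$ equals $\wind(\eta,\Psi)$ minus the winding of the boundary direction of the disk, which by the standard computation (as carried out in the proof of Lemma~\ref{pr:cilindro-nao-intersecta-orbita}, via the constant $c(u)=2e^+-1$ with $e^+=1$ the number of positive elliptic points on an embedded spanning disk) gives $\mathrm{sl}(P_2) = \wind_\infty(\tu_q,\infty) - 2e^+ = 1 - 2 = -1$. The same argument applied to $\tv_r$ (asymptotic to $P_1$ at its negative puncture, with $\wind_\infty(\tv_r,0)=1$ by Lemma~\ref{le:wind=1,0}) and the embedded spanning disk $D\cup V$ for $P_1$ gives $\mathrm{sl}(P_1) = -1$; and applied to any plane $\tu_\tau$ in the family \eqref{eq:familia-de-planos}, which is embedded, transverse to $R_\lambda$, has $\wind_\infty(\tu_\tau,\infty)=1$ and spanning disk $F_\tau$, gives $\mathrm{sl}(P_3) = -1$.

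The cleanest way to organize all three cases uniformly is the following. For a simple orbit $P$ bounding an embedded disk $\Sigma$ whose interior is transverse to $R_\lambda$ and which arises as the projection of a finite energy $\tj$-holomorphic curve $\tu$ with $P$ as asymptotic limit at a puncture $z$, the section $\eta$ from \eqref{eq:equacao-defi-strong} is the asymptotic eigensection of $\tu$ at $z$ (Remark~\ref{rm:strong-transverse-section}), so we may take $Z=\pi_\xi\eta$ to push off, and $\mathrm{sl}(P) = \mathrm{wind}_\infty(\tu,z,\Psi) - (\text{winding of } \partial\Sigma\text{-direction in }\Psi)$. The winding of the boundary direction is the winding of the loop $t\mapsto x_T(t)$ seen in $\Psi$, which for an unknotted orbit spanning an embedded disk is computed by counting the characteristic foliation singularities on the disk; the Bennequin-type identity forces it to be $2e^+ - 1 + (\text{something})$, and combined with $\mathrm{wind}_\pi(\tu)=0$ (Lemma~\ref{le:wind=1,0}, which guarantees the disk's characteristic foliation has exactly one singularity, positive elliptic) the answer is $-1$ in each case. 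I expect the main obstacle to be bookkeeping the orientation conventions and confirming that the "winding of the boundary direction" term is exactly $2$ in the normalization where $\mathrm{wind}_\infty = 1$ — i.e.\ checking that our trivialization conventions and the sign conventions for $\mathrm{sl}$ (oriented intersection $L_\epsilon\cdot D$ with $S^3$ oriented by $\lambda\wedge d\lambda$) combine to give $-1$ rather than $+1$ or $-3$; this is exactly the content of the computation in the proof of Lemma~\ref{pr:cilindro-nao-intersecta-orbita} and I would reuse it verbatim, noting that $e^+ = 1$ there was deduced precisely from $\mathrm{wind}_\infty = 1$.
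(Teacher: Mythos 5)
Your plan is correct and rests on the same underlying identity as the paper's proof -- namely that ${\rm sl}(P)$ equals minus the winding, in a global trivialization of $\xi$, of the framing induced along $P$ by a spanning surface transverse to the flow -- but you compute that winding by a genuinely different route. The paper never invokes $\wind_\infty$ here: it analyzes the characteristic foliation of the leaf $F_\tau$ (resp.\ of $D$, resp.\ of the annulus $V$), notes that all its singularities are positive because the surface is transverse to $R_\lambda$ and has positive $d\lambda$-area, and obtains the boundary winding $1$ from a Poincar\'e--Hopf count; for $P_1$ it transfers the winding from $P_2$ to $P_1$ across $V$ (total index $0$ on the annulus) and then uses the $C^1$-disk $D\cup V$ as Seifert surface. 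You instead read the boundary winding off the asymptotic eigensection, i.e.\ $\wind_\infty=1$ from Lemma~\ref{le:wind=1,0} together with Remark~\ref{rm:strong-transverse-section}, and for $P_1$ you use $\wind_\infty(\tv_r,0)=1$ directly, bypassing the annulus bookkeeping. Both routes work: the paper's argument is self-contained at the level of strong transverse sections (it applies to any such spanning surface, holomorphic or not), while yours is shorter given the machinery already established. Two points to tighten. First, the formula ``${\rm sl}(P_2)=\wind_\infty(\tu_q,\infty)-2e^+$'' is not the clean statement: what the computation in Lemma~\ref{pr:cilindro-nao-intersecta-orbita} actually gives is ${\rm sl}(P)=-c(u)=1-2e^+$, since the push-off along a $\Psi$-constant section of $\xi$ has linking $-c(u)$ with $P$ and, $\xi$ being trivial, this computes the self-linking number; your expression agrees only because $\wind_\infty=1$, so either state it as ${\rm sl}(P)=-\wind(\eta,\Psi)=-\wind_\infty(\tu,z)$ with $\eta$ the surface framing, or as $-c(u)$, but not as a mixture. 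Second, for $P_1$ the orientation of $D\cup V$ at the negative puncture of $\tv_r$ must be checked (the boundary orientation induced there is opposite to the Reeb orientation of $P_1$); this is precisely the sign bookkeeping you flag at the end, and it is the step the paper settles explicitly via the outward-pointing characteristic vector field on $V$.
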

\begin{proof}
	First we prove that ${\rm{sl}}(P_3)=-1$. Our proof is adapted from \cite[\S6]{hs2011}.
	Let 
	 $F=F_\tau$ be one of the disks with boundary $P_3$ obtained in Proposition \ref{pr:foliation-solid-torus}.  
	Consider the orientation of $S^3$ induced by $\lambda\wedge d\lambda$ and 
	let $o$ be the orientation on $\bar{F}$ induced by $R_\lambda$. 
	The characteristic distribution of $\bar{F}$, defined by 
	$(T\bar{F}\cap \xi)^\bot,$
	where $\bot$ means the $d\lambda$-symplectic orthogonal,
	can be parametrized by a smooth vector field $X$ on $\bar{F}$, which is transverse to $\partial F$ pointing outwards. 
	After a $C^\infty$ perturbation of $F$ away from a neighborhood of $\partial F$ and keeping the transversality of $R_\lambda$ and ${F}$, we can assume that all singular points of $X$ are nondegenerate. 
	More details about these facts can be found in \cite{hofer1995properties2} and \cite{hofer1995characterization}.
	Let $o'$ be the orientation on $\xi$ given by $d\lambda|_\xi$. 
	A zero $p$ of $X$ is called positive if $o$ coincides with $o'$ at $p$, and negative otherwise.  
	Since $R_\lambda$ is transverse to ${F}$, all singularities of $X$ have the same sign and they must be positive since $\int_Fd\lambda=\int_{P_2}\lambda >0$. 
	Considering $X$ as a section of the bundle $\xi\to \bar{F}$, we have 
	\begin{equation}\label{eq:eq1prova-sl}
		\wind(X|_{\partial F},\Psi|_{\bar{F}})=\sum_{X(z)=0} {\rm sign}\left(DX(z):(T_zF,o_z)\to (\xi_z,o'_z)\right),
	\end{equation}
	where $\Psi$ is a trivialization of $\xi\to S^3$.
	Let $\Psi_F:T\bar{F}\to \bar{F}\times \R^2$ be a trivialization of $T\bar{F}\to \bar{F}$. 
	Considering $X$ as a section of the bundle $T\bar{F}\to \bar{F}$ and using the fact that $X$ points outwards at $\partial F$, we have 
	\begin{equation}\label{eq:eq2prova-sl}
		1=\wind(X|_{\partial F},\Psi_{F})=\sum_{X(z)=0} {\rm sign}\left(DX(z):(T_zF,o_z)\to (T_zF,o_z)\right).
	\end{equation}
	Since all singularities of $X$ are positive and $DX(z)$ at a singularity $z$ of $X$ does not depend on whether we consider $X$ as a section of $\xi\to \bar{F}$ or as a section of $T\bar{F}\to \bar{F}$, the right side of equations \eqref{eq:eq1prova-sl} and \eqref{eq:eq2prova-sl} coincide.

	Consider the nonvanishing section of $\xi\to \bar{F}$ defined by $Z(z)=\Psi^{-1}(z,(1,0))$. After a small perturbation of $P_3$ in the direction of $Z$, which we can assume to be transverse to ${F}$ and $\xi$, we find a new closed curve $\tilde{P}_3$. Consider a trivialization $\tilde{\Psi}$ of $(\xi|_{\partial F},o)$ induced by the nonvanishing section $X|_{\partial F}$. 
	We have
	$${\rm sl}(P_3)=\tilde{P}_3\cdot \bar{F}= \wind(Z,\tilde{\Psi})=-\wind (X|_{\partial F},\Psi)=-1.$$
	The proof of ${\rm sl}(P_2)=-1$ is completely analogous, replacing $F$ with the disk $D$ given by Proposition \ref{pr:foliation-solid-torus}. 
	
	Now we prove that ${\rm sl}(P_1)=-1$. 
	Let $V$ be the cylinder with boundary $P_1\cup P_2$ given by Proposition \ref{pr:a-cylinder-asymptotic-p1-p2}. 
	Let $o$ be the orientation on $\bar{V}$ induced by $R_\lambda$. The characteristic distribution of $\bar{V}$ can be parametrized by a smooth vector field $\tilde{X}$ on $\bar{V}$, which is transverse to $\partial V$ pointing outwards. Note that, along $P_2$, we have $\tilde{X}(z)=-X(z)$. 
	After a $C^\infty$ perturbation of $X$ away from a neighborhood of $\partial V$ and keeping the transversality of $R_\lambda$ and ${V}$, we can assume that all singular points of $\tilde{X}$ are nondegenerate. 
	Since $R_\lambda$ is transverse to ${V}$, all singularities of $\tilde{X}$ have the same sign and they must be positive since $\int_Vd\lambda=\int_{P_2}\lambda -\int_{P_1}\lambda>0$.
	Let $\Psi_V$ be a trivialization of $T\bar{V}\to \bar{V}$. Then we have 
	\begin{equation*}
		\begin{aligned}
			\wind(\tilde{X}|_{P_2}, \Psi)-\wind(\tilde{X}|_{P_1}, \Psi)&=\sum_{\tilde{X}(z)=0} {\rm sign}\left(D\tilde{X}(z):(T_zV,o_z)\to (\xi_z,o'_z)\right)\\
			&=\sum_{\tilde{X}(z)=0} {\rm sign}\left(D\tilde{X}(z):(T_zV,o_z)\to (T_zV,o_z)\right)\\
			&=\wind(\tilde{X}|_{P_2},\Psi_V)- \wind(\tilde{X}|_{P_1},\Psi_V)\\
			&=0.
		\end{aligned}
	\end{equation*}
	Here we have used the fact that $\tilde{X}$ points outwards. 
	Consequently, 
	$$\wind(\tilde{X}|_{P_1},\Psi)=\wind(\tilde{X}|_{P_2}, \Psi)=\wind({X}|_{P_2}, \Psi)=1.$$
	We know that $\mathcal{D}:=D\cup V\cup P_1\cup P_2$ is a $C^1$-embedded disk with boundary $P_1$.
	Note that $\mathcal{D}$ with the orientation induced by $P_1$ is an Seifert surface for $P_1$. 
	Consider the nonvanishing section of $\xi\to \mathcal{D}$ defined by $Z(z)=\Psi^{-1}(z,(1,0))$. After a small perturbation of $P_1$ in the direction of $Z$, which we can assume to be transverse to the interior of $\mathcal{D}$ and $\xi$, we find a new closed curve $\tilde{P}_1$. Consider a trivialization $\tilde{\Psi}$ of $(\xi|_{\partial \mathcal{D}},o)$ induced by the nonvanishing section $\tilde{X}|_{P_1}$. 
	Thus, 
	$${\rm sl}(P_1)=\tilde{P}_1\cdot (D\cup V)= \wind(Z,\tilde{\Psi})=-\wind (\tilde{X}|_{P_1},\Psi)=-1.$$
\end{proof}		

We have proved the existence of a $3-2-1$ foliation adapted to $\lambda$. {The proof of Theorem \ref{theo:main-theorem} is complete. } 

\section{Proof of Proposition \ref{TH:NECESSIDADE-3}}\label{se:prova-necessidade}
In this section we prove Proposition \ref{TH:NECESSIDADE-3}, which is restated below. 
\begin{proposition}
	Assume there exists a $3-2-1$ foliation adapted to the contact form $\lambda$ and let $P_2=(x_2,T_2)$ be the binding orbit with Conley-Zehnder index $2$, as in Definition \ref{de:3-2-1-foliation}.
	Then there is no $C^1$-embedding $\psi:S^2\to S^3$ such that $\psi({S^1\times \{0\}})=x_2(\R)$ 
	and each hemisphere is a strong transverse section.
\end{proposition}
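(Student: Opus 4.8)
The plan is to argue by contradiction: suppose both a $3$--$2$--$1$ foliation adapted to $\lambda$ exists and there is a $C^1$-embedded sphere $\psi:S^2\to S^3$ through $P_2$ whose two hemispheres $\Sigma^\pm$ are strong transverse sections. The sphere $\psi(S^2)$ separates $S^3$ into two closed balls $B^+$ and $B^-$, with $\partial B^\pm=\psi(S^2)$ and $\mathring{B^\pm}$ on opposite sides. The key point is that the binding orbit $P_2$ is hyperbolic (its Conley--Zehnder index is $2$, hence even, so by the discussion after Lemma~\ref{le:properties-cz-index} it is positive hyperbolic), and near $P_2$ the section $\eta$ from \eqref{eq:equacao-defi-strong} attached to each hemisphere is a nonzero section of $x_2^*\xi$ satisfying $d\lambda(\eta,\mathcal{L}_{R_\lambda}\eta)\neq 0$ on all of $\R/\Z$. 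Using the relation in Remark~\ref{rm:strong-transverse-section}, this forces $\pi_\xi\eta$ to be (proportional to) an eigensection of the asymptotic operator $A_{P_2,J}$ associated to a nonzero eigenvalue; since the two hemispheres approach $P_2$ from opposite sides, the two eigensections point in opposite directions, and both have winding number $1$ relative to a global trivialization (the only negative and the only positive eigenvalues with that winding are $\nu^{neg}_{P_2}$ and $\nu^{pos}_{P_2}$, whose eigenspaces are one-dimensional by Proposition~\ref{pr:properties-asymptotic-operator}).

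**Reducing to a transverse-section obstruction.** With $B^\pm$ in hand, I would first note that because $\mathring\Sigma^\pm$ is everywhere transverse to the Reeb flow and $\psi(S^2)\setminus P_2$ is embedded, the Reeb flow crosses $\psi(S^2)$ in one direction along $\mathring\Sigma^+$ and in the other direction along $\mathring\Sigma^-$; consequently orbits enter one of the balls, say $B^+$, through $\mathring\Sigma^+$ and leave through $\mathring\Sigma^-$ (or the reverse), with $P_2$ the only orbit lying on the separating sphere. This is precisely the situation of a Birkhoff-type ``disk with hyperbolic boundary'' obstruction. The strategy is then to take one of the rigid leaves of the $3$--$2$--$1$ foliation that has $P_2$ as an asymptotic limit — the rigid disk $D$ (asymptotic to $P_2$ at its positive puncture) together with the rigid cylinder $V$ (from $P_2$ to $P_1$) forming the $C^1$-embedded disk $D\cup V$ with boundary $P_1$ — and to compare the asymptotic eigensection of $D$ at $P_2$ with the two eigensections coming from $\Sigma^\pm$. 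Since $D\cup V$ is transverse to $T$ (in particular enters $\mathcal R_1$) while the two hemispheres enter $B^+$ and $B^-$, at least one hemisphere $\Sigma^\varepsilon$ approaches $P_2$ in a direction that is neither parallel nor antiparallel to the eigensection of $D$, or else the same winding-number/eigenspace rigidity that was used in Proposition~\ref{pr:unicos-cilindros-p3p2-e-plano} forces $\mathring\Sigma^+$ and $\mathring\Sigma^-$ together with $D$ to form a configuration that cannot close up.

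**The counting argument.** The cleanest way to extract the contradiction is a Poincaré--Hopf / self-linking computation, exactly as in the proof of Proposition~\ref{pr:self-linking-1}. On each hemisphere $\Sigma^\pm$ the characteristic distribution $(T\Sigma^\pm\cap\xi)^\perp$ is parametrized by a vector field $X^\pm$ transverse to $\partial\Sigma^\pm=P_2$, pointing outward along $\Sigma^+$ and — since the two hemispheres lie on opposite sides and the flow is transverse — pointing \emph{inward} along $\Sigma^-$, or with matching co-orientations forced by the embeddedness of the $C^1$ sphere. Because $\mathring\Sigma^\pm$ is transverse to $R_\lambda$, all zeros of $X^\pm$ have the same sign, and the sign is fixed by $\int_{\Sigma^\pm}d\lambda=\pm\int_{P_2}\lambda\neq 0$. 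Computing the winding of $X^\pm|_{P_2}$ relative to the global trivialization $\Psi$ in two ways (as a section of $\xi$ and as a section of $T\Sigma^\pm$) gives, as in \eqref{eq:eq1prova-sl}--\eqref{eq:eq2prova-sl}, $\wind(X^+|_{P_2},\Psi)=1$ and $\wind(X^-|_{P_2},\Psi)=1$. But the co-orientations of $\Sigma^+$ and $\Sigma^-$ along their common boundary are opposite, so their characteristic vector fields differ by a reflection, which would force $\wind(X^+|_{P_2},\Psi)+\wind(X^-|_{P_2},\Psi)=\chi(S^2)\cdot(\text{something})$ — concretely, gluing $\Sigma^+$ and $\Sigma^-$ into the embedded sphere and applying Poincaré--Hopf to the characteristic field on $\psi(S^2)$ one gets total index $2$, forcing at least one genuine singularity on the sphere; but a singularity of the characteristic field on $\psi(S^2)$ is a point of tangency of $R_\lambda$ with $\psi(S^2)$, and there is none in $\mathring\Sigma^+\cup\mathring\Sigma^-$, while along $P_2$ the field is transverse — contradiction.

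**Main obstacle.** The delicate point I expect to grind on is the bookkeeping of orientations and co-orientations: making precise that the two hemispheres, being honest pieces of a $C^1$-embedded sphere, induce \emph{opposite} co-orientations of $\xi$ along $P_2$ at the boundary, so that the two Poincaré--Hopf computations are genuinely incompatible rather than trivially consistent. This is where the hyperbolicity of $P_2$ (even Conley--Zehnder index) and the one-dimensionality of the relevant eigenspaces of $A_{P_2,J}$ enter decisively, just as in Proposition~\ref{pr:unicos-cilindros-p3p2-e-plano}: the strong-transverse-section condition pins the asymptotic eigensection of each hemisphere to the line spanned by a $\nu^{neg}_{P_2}$- or $\nu^{pos}_{P_2}$-eigensection, and the two hemispheres must use the \emph{same} sign of eigenvalue (both the positive or both the negative one) yet point in opposite directions, which is exactly what makes the sphere impossible. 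Once that orientation bookkeeping is set up, the contradiction is immediate from the index count above.
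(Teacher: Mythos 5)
Your argument breaks down at the final counting step, and the error is concrete: a singular point of the characteristic distribution $(T\Sigma\cap\xi)^{\perp}$ on $\psi(S^2)$ is a point where the contact plane $\xi_p$ coincides with the tangent plane of the sphere, and at such a point $R_\lambda$ is \emph{transverse} to the sphere, not tangent to it. So transversality of the open hemispheres to the Reeb vector field does not exclude singularities of the characteristic field; on the contrary, exactly as in the computation of Lemma \ref{le:wind=1}, each hemisphere must carry singularities, all of the same (positive) sign and of total index $1$, because the winding of the characteristic field along the boundary, computed in a trivialization of the tangent bundle of the hemisphere, equals $1$. Gluing the two hemispheres and applying Poincar\'e--Hopf gives total index $1+1=2=\chi(S^2)$, which is perfectly consistent: no contradiction arises. (Note also that along $P_2$ the Reeb field is tangent to the sphere, since it is tangent to $P_2\subset\psi(S^2)$, so the last clause of your count is off as well.) Your preliminary observations — that the asymptotic sections of the two hemispheres have winding $1$ and are pinned to one-dimensional eigenspaces of $A_{P_2,J}$ — are correct in spirit, but by themselves they only constrain the directions in which the hemispheres approach the hyperbolic orbit; they do not produce an obstruction.

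More fundamentally, your final argument never uses the $3-2-1$ foliation, and the statement is false without it: in the $3-2-3$ foliations of \cite{dePS2013} the index-$2$ binding orbit lies precisely on a $C^1$-embedded sphere formed by two rigid planes, each hemisphere a strong transverse section. Any proof must therefore play the sphere against the foliation, which is what the paper does through the rigid torus $T=U_1\cup U_2\cup P_2\cup P_3$. The strong-section condition forces $d\lambda(\eta,\mathcal{L}_{R_\lambda}\eta)<0$ for the hemispheres (local maximum of the action) and the opposite sign for sections tangent to $U_1\cup U_2$ (local minimum), so by Proposition \ref{pr:secoes-fortes-wind-1} the two families of sections occupy complementary pairs of quadrants determined by the stable/unstable directions of $P_2$; hence $\psi(S^2)$ is transverse to $T$ along $P_2$ (Lemma \ref{LE:TRANSVERSAL-AO-LONGO-DE-P_2}) and, since one hemisphere enters the region $\mathcal{R}_2$ where $P_2$ is not contractible, the sphere must cross $T$ away from $P_2$ in a collection of circles. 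A homological argument locates a circle $\psi(S)$ inside $U_1$ or $U_2$ cobounding with $P_2$ a region of the cylinder, and the contradiction is an action comparison: from the sphere side $\int_{\psi(S)}\lambda<T_2$ by positivity of $\psi^*d\lambda$ on the hemisphere, while from the cylinder side $\int_{\psi(S)}\lambda>T_2$ because $P_2$ is a negative asymptotic limit of $U_i$; compare \eqref{eq:t2-maximo} and \eqref{eq:t2-minimo}. Your sketch gestures at the rigid disk $D\cup V$ of the foliation but never extracts a usable obstruction from it, so the proposal as written does not prove the proposition.
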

Throughout the section, we follow the notation of Definition \ref{de:3-2-1-foliation}.
Before proving Proposition \ref{TH:NECESSIDADE-3} we need two lemmas, which are stated below.
\begin{lemma}\label{le:transversal-ao-toro}
	If $\psi:S^2\to S^3$ is a $C^1$-embedding  such that $\psi({S^1\times \{0\}})=P_2$ and  $\psi(S^2)\setminus P_2$ is transverse to the Reeb vector field, then we can assume that $\psi|_{S^2\setminus (S^1\times \{0\})}$ is also transverse to $T\setminus (P_3\cup P_2)=U_1\cup U_2$.
\end{lemma}

\begin{proof}
	Define 
$	F: \R\times S^2\to S^3$ by 
$	(t,x)\mapsto\varphi^t\circ \psi(x)$, where $\varphi^t$ is the Reeb flow. 
	Then $F$ satisfies $F(t,\cdot)(S^1\times \{0\})=\psi(S^1\times \{0\})$, $\forall t\in \R$, and 
	$$dF_{(t,x)}(a,v)=aR_\lambda(\varphi^t\circ \psi(x))+d\varphi^t_{\psi(x)}\cdot d\psi_xv~,$$
	for every $(t,x)\in \R\times S^2$ and $(a,v)\in \R\times T_{x}S^2$.
	Since $\psi$ is transverse to the Reeb flow away from $S^1\times \{0\}$, we know that $\pi\circ d\psi_x$ is surjective.
	Since $d\varphi^t$ preserves the splitting $TS^3=\R R_\lambda\oplus \xi$, it follows that
	$\pi\circ  d\varphi^t_{\psi(x)}\circ d\psi_x$ is surjective.
	We conclude that  $dF(t,x)$ is surjective for all $x\in S^2\setminus (S^1\times \{0\})$ and $t\in \R$. In particular, $F$ is transverse to $T\setminus (P_2\cup P_3)$ on $\R\times (S^2\setminus (S^1\times\{0\}))$. 
	This implies
	\footnote{
			If $X$, $S$ and $Y$ are smooth manifolds, $Z$ is a submanifold of $Y$ and $F:S\times X\to Y$ is a smooth map transverse to $Z$, then for almost every $s\in S$, $f_s:=F(s,\cdot)$ is transverse to $Z$.
			A proof of this fact can be found in {\cite[\S3]{guillemin2010differential}}.
	}
	that for almost all $t\in \R$, $F(t,\cdot)|_{S^2\setminus S^1\times \{0\}}$ is transverse to $T\setminus (P_2\cup P_3)$.
	
	Moreover, $F(t,\cdot)^*d\lambda=(\varphi^t\circ \psi)^*d\lambda=\psi^*d\lambda$, and it follows that $F(t,\cdot)$ is still transverse to the flow for all $t\in \R$.
	Thus, we can replace $\psi$ with $F(t,\cdot)$ for some $t$ satisfying $F(t,\cdot)|_{S^2\setminus S^1\times \{0\}} \pitchfork T\setminus (P_2\cup P_3)$ to get the desired embedding.  
\end{proof}

\begin{lemma}\label{LE:TRANSVERSAL-AO-LONGO-DE-P_2}
	Assume that $\psi:S^2\to S^3$ is a $C^1$ embedding such that $\psi({S^1\times \{0\}})=P_2$  and such that 
	the image of each closed hemisphere is a strong transverse section.
	Then $\psi$ is transverse to the torus $T$ along $P_2$. 
\end{lemma}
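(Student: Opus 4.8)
We follow the notation of Definition \ref{de:3-2-1-foliation}. Since $\psi$ is a $C^1$-embedding and $T\setminus P_3$ is $C^1$-embedded, both $\psi(S^2)$ and $T$ are $C^1$-embedded near $P_2$; and since each contains $P_2$ and is transverse to $R_\lambda$ away from $P_2$, the tangent plane of each along $P_2$ contains $R_\lambda$. Hence, at each point $x_2(t)$, the plane $T_{x_2(t)}\psi(S^2)$ meets $\xi_{x_2(t)}$ in a line $\mathbb{R}\,\eta_\Sigma(t)$ and $T_{x_2(t)}T$ meets it in a line $\mathbb{R}\,\eta_T(t)$, where $\eta_\Sigma,\eta_T$ are nowhere-vanishing sections of $x_{2,T_2}^*\xi$. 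By $C^1$-ness of $\psi$ the line $\mathbb{R}\,\eta_\Sigma(t)$ is the common $\xi$-component of the radial directions of the two closed hemispheres supplied by Definition \ref{de:strong-transverse-section}, and likewise $\mathbb{R}\,\eta_T(t)$ comes from $\overline{U_1}$ and $\overline{U_2}$. Since two $2$-planes in $T_{x_2(t)}S^3$ sharing the line $\mathbb{R}\,R_\lambda(x_2(t))$ coincide iff their other directions agree, the assertion that $\psi$ is transverse to $T$ along $P_2$ is equivalent to: $\eta_\Sigma(t)$ and $\eta_T(t)$ are pointwise linearly independent for every $t$. The plan is to prove this.

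First I identify $\eta_T$. Applying Remark \ref{rm:strong-transverse-section} to the strong transverse section $\overline{U_1}$, which is the projection of the $\tj$-holomorphic cylinder $\tu_r$ asymptotic to $P_2$ at its negative puncture $z=0$, shows that $\eta_T$ is, up to a nowhere-vanishing real function, an eigensection of $A_{P_2,J}$ for a nonzero eigenvalue; this eigenvalue is positive because the puncture is negative (Theorem \ref{theo:asymptotic behavior}), and it has winding $\wind_\infty(\tu_r,0)=1$ (Lemma \ref{le:wind=1,0}); since $\mu(P_2)=2$ forces $\wind(\nu^{neg}_{P_2},\Psi)=\wind(\nu^{pos}_{P_2},\Psi)=1$ by formula \eqref{eq:conley-zehnder-wind} together with Proposition \ref{pr:properties-asymptotic-operator}, the eigenvalue must be $\nu^{pos}_{P_2}>0$. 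Thus $\wind(\eta_T,\Psi)=1$ and, by the computation in Remark \ref{rm:strong-transverse-section}, $d\lambda(\eta_T(t),\mathcal{L}_{R_\lambda}\eta_T(t))=\tfrac{1}{T_2}\nu^{pos}_{P_2}\,d\lambda(\eta_T(t),J\eta_T(t))>0$ for all $t$. (The same argument identifies the radial direction of the rigid disk $D=u_q(\C)$ with the $\nu^{neg}_{P_2}$-eigensection, recovering transversality of $D$ to $T$ along $P_2$.)

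Next I record two facts about $\eta_\Sigma$. \emph{(a)} $\wind(\eta_\Sigma,\Psi)=1$: a closed hemisphere is a disk transverse to $R_\lambda$ in its interior with boundary $P_2$, so the characteristic line field restricted to $P_2$ is spanned by $\eta_\Sigma$ pointing outward, and running the characteristic-foliation and Poincar\'e--Hopf computation from the proof of Proposition \ref{pr:self-linking-1} on this hemisphere gives $\wind(\eta_\Sigma,\Psi)=1$ (consistently with $\mathrm{sl}(P_2)=-1$). \emph{(b)} $d\lambda(\eta_\Sigma(t),\mathcal{L}_{R_\lambda}\eta_\Sigma(t))<0$ for every $t$. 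The strong transverse section hypothesis already makes this expression nowhere zero, hence of constant sign; I pin down the sign as follows. Since the Reeb flow preserves the contact volume $\lambda\wedge d\lambda$ and $\psi(S^2)$ bounds balls of finite volume, $R_\lambda$ cannot be outward-transverse along the whole of $\psi(S^2)\setminus P_2$; being of constant crossing sign on each connected hemisphere, it then crosses the two hemispheres in opposite senses, so exactly one hemisphere $\Sigma_0$ crosses in the same sense (equivalently $\int_{\Sigma_0}d\lambda>0$ with the orientation for which $\partial\Sigma_0$ runs in the flow direction) as a finite energy plane asymptotic to $P_2$ at a positive puncture, for instance the plane $u_q$. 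In Martinet coordinates $(\theta,x_1,x_2)$ around $P_2$, where $\Sigma_0$ is parametrized near $P_2$ by $(s,t)\mapsto(\theta(s,t),z(s,t))$ with $z(0,t)=0$ and $z(s,t)=s\,w(t)+O(s^2)$, $w$ proportional to $\eta_\Sigma$, transversality of $\Sigma_0$ to $R_\lambda$ forces $t\mapsto w(t)$ to rotate monotonically, and comparing with the corresponding expansion for $u_q$ (whose radial direction is the $\nu^{neg}_{P_2}$-eigensection by Remark \ref{rm:strong-transverse-section}) gives $d\lambda(\eta_\Sigma,\mathcal{L}_{R_\lambda}\eta_\Sigma)<0$. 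I expect the bookkeeping of orientations and crossing signs in step \emph{(b)} to be the main obstacle.

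Finally I conclude by a Sturm-type comparison. Fix a unitary trivialization of $x_{2,T_2}^*\xi$, write $\eta_\Sigma(t)=r(t)e^{i\theta_\Sigma(t)}$, and let $\theta_T(t)$ be the angle of the $\nu^{pos}_{P_2}$-eigensection; with $S(t)$ the symmetric path representing $A_{P_2,J}$ and $Q(t,\vartheta):=\langle e^{i\vartheta},S(t)e^{i\vartheta}\rangle$, which is $\pi$-periodic in $\vartheta$, one has $\dot\theta_T=Q(t,\theta_T)+\nu^{pos}_{P_2}$ and $\dot\theta_\Sigma=Q(t,\theta_\Sigma)+c(t)$ with $c(t)=T_2\,d\lambda(\eta_\Sigma,\mathcal{L}_{R_\lambda}\eta_\Sigma)/|\eta_\Sigma|^2<0$ by step \emph{(b)}. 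Because $\wind(\eta_\Sigma,\Psi)=\wind(\eta_T,\Psi)=1$, the difference $\delta:=\theta_\Sigma-\theta_T$ is $1$-periodic, and whenever $\delta(t_0)\in\pi\Z$ we get $Q(t_0,\theta_\Sigma(t_0))=Q(t_0,\theta_T(t_0))$, hence $\dot\delta(t_0)=c(t_0)-\nu^{pos}_{P_2}<0$; a $1$-periodic function that is strictly decreasing at every point of $\pi\Z$ it meets can meet $\pi\Z$ nowhere. Therefore $\eta_\Sigma(t)$ is never a real multiple of $\eta_T(t)$, i.e. $\psi$ is transverse to $T$ along $P_2$, as claimed.
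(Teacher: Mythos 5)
Your overall skeleton is the right one (reduce transversality along $P_2$ to pointwise linear independence of the $\xi$-directions $\eta_\Sigma$ and $\eta_T$, get equal winding $1$, get opposite signs of $d\lambda(\cdot,\mathcal{L}_{R_\lambda}\cdot)$, and conclude by a rotation comparison), and your final Sturm-type step is actually a clean alternative to the paper's argument, which instead uses the hyperbolic Floquet decomposition of $P_2$ and places the two sections in complementary quadrants (Proposition \ref{pr:secoes-fortes-wind-1}). But there is a genuine gap in how you obtain the inputs. Lemma \ref{LE:TRANSVERSAL-AO-LONGO-DE-P_2} lives inside the proof of Proposition \ref{TH:NECESSIDADE-3}, whose only hypothesis is the existence of an abstract $3$-$2$-$1$ foliation as in Definition \ref{de:3-2-1-foliation}: there is no chosen $J$, and $U_1$, $U_2$, $D$ are merely leaves whose closures are strong transverse sections, not projections of $\tilde J$-holomorphic curves. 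So your identification of $\eta_T$ with the asymptotic eigensection of $\tu_r$ at its negative puncture (hence eigenvalue $\nu^{pos}_{P_2}$, winding $\wind_\infty(\tu_r,0)=1$ via Lemma \ref{le:wind=1,0} and Remark \ref{rm:strong-transverse-section}), and your later appeal to the plane $u_q$, use objects that simply do not exist under the hypotheses of the lemma. The paper gets these facts intrinsically: $\wind(\eta_T,\Psi)=1$ comes from the characteristic-foliation computation on the rigid disk $D$ together with the transversality of $D\cup V$ to $T$ required in Definition \ref{de:3-2-1-foliation} (Lemma \ref{le:wind=1}), and $d\lambda(\eta_T,\mathcal{L}_{R_\lambda}\eta_T)>0$ comes from the second variation of the action functional \eqref{eq:hessiana-funcional-de-acao}: since $P_2$ is a \emph{negative} asymptotic limit of the positively oriented leaves $U_1,U_2$, the action has a local minimum at $P_2$ along $T$, and the strong-section condition upgrades $\geq 0$ to a strict pointwise sign.

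The second gap is your step \emph{(b)}. The volume-preservation remark only constrains crossing directions, which is first-order information; it does not determine the sign of the second-order quantity $d\lambda(\eta_\Sigma,\mathcal{L}_{R_\lambda}\eta_\Sigma)$, and the decisive phrase ``comparing with the corresponding expansion for $u_q$'' is both unavailable (no $u_q$ here) and not an argument --- as you yourself note, this is exactly where the bookkeeping is unresolved. The paper's route is short and avoids all of this: orienting $S^1\times\{0\}$ in the flow direction, Stokes gives $\int_{H_\pm}\psi^*d\lambda=T_2>0$ for \emph{both} closed hemispheres, transversality of the interiors to $R_\lambda$ then forces $\psi^*d\lambda>0$ there, so loops on $\psi(S^2)$ near $P_2$ have action $<T_2$; hence $P_2$ is a local maximum of the action along the sphere, the second-variation formula gives $\int_{S^1}d\lambda(\eta_\Sigma,\mathcal{L}_{R_\lambda}\eta_\Sigma)\,dt\leq 0$, and the strong-section hypothesis yields the strict pointwise inequality \eqref{eq:dlambda-negativo}. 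If you replace your two holomorphic-curve-based steps by these action/second-variation and $D\pitchfork T$ arguments, your concluding comparison (which only needs equal windings and pointwise opposite signs, not that $\eta_T$ be an honest eigensection) does give a correct proof, somewhat different in its last step from the paper's quadrant analysis.
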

{The proof of Lemma \ref{LE:TRANSVERSAL-AO-LONGO-DE-P_2} is postponed to Subsection \ref{ch:apendice-prova-lema}. Subsection \ref{se:relative-position} below consists of preliminary results. 
	Now we use Lemmas \ref{le:transversal-ao-toro} and \ref{LE:TRANSVERSAL-AO-LONGO-DE-P_2} to prove Proposition \ref{TH:NECESSIDADE-3}.
}

\begin{proof}[Proof of Proposition \ref{TH:NECESSIDADE-3}]
	Suppose, by contradiction, that $\psi:S^2\to S^3$ is a $C^1$ embedding such that $\psi({S^1\times \{0\}})=P_2$  and such that 
	each hemisphere is a strong transverse section.
	
	Let $T=U_1\cup U_2\cup P_2\cup P_3$ be the torus given by Definition \ref{de:3-2-1-foliation}.
	$T$ divides $S^3$ into two closed regions $\mathcal{R}_1$ e $\mathcal{R}_2$. The region $\mathcal{R}_1$ contains an embedded disk with boundary $P_2$, so that $P_2$ is contractible in $\mathcal{R}_1$. One can also show, using  Mayer-Vietoris sequence, that the holomology class of $x_{T_2}$ generates $H_1(\mathcal{R}_2,\Z)$.  
	Lemma \ref{LE:TRANSVERSAL-AO-LONGO-DE-P_2} shows that the image of any neighborhood of $S^1\times \{0\}\subset S^2$ by $\psi$ intersects both $\mathcal{R}_1$ and $\mathcal{R}_2$ away from $\psi(S^1\times \{0\})$.
	This implies that  the sphere $\psi(S^2)$ intersects the torus $T$ away from $P_2$. Indeed, one of the hemispheres of $\psi(S^2)$ intersects $\mathcal{R}_2$ and can not be contained in $\mathcal{R}_2$, since this would imply $P_2$ contractible in $\mathcal{R}_2$, a contradiction. 
	
	Since 
	$\psi|_{S^2\setminus S^1\times \{0\}}$ is transverse to $P_3$ and ${\rm lk}(P_2,P_3)=0$, we conclude that $\psi(S^2)\cap P_3=\emptyset$. 
	Using Lemmas \ref{le:transversal-ao-toro} and \ref{LE:TRANSVERSAL-AO-LONGO-DE-P_2}, we conclude that 
	$\psi$ intersects $T$ transversely and the intersection $\psi(S^2)\cap T$ is contained in a closed subset of $T\setminus P_3$. 
	We conclude that the preimage of the intersection $\psi(S^2)\cap T$ by $\psi$ is a $1$-dimensional submanifold of $S^2$ which is a closed subset of $S^2$. 	
	It follows that each connected component of $\psi^{-1}(\psi(S^2)\cap T)$ is diffeomorphic to $S^1$.  
	
	The equator $S^1\times\{0\}$ is one of the connected components of the boundary of a region $R\subset S^2$ such that  $\psi(R)\subset \mathcal{R}_2$.
	Thus, one of the other connected components of the boundary of $R$, that we denote by $S$, is such that $\psi|_{S}$ is homologous to ${x_2}_{T_2}$ in $\mathcal{R}_2$.
	Denoting $H^1(T,\Z)=\Z[{x_2}_{T_2}]\oplus \Z [m]$, the homology class of $\psi|_{S}$ in $H^1(T,\Z)$ is $(1,l)$ for some $l\in \Z$. 
	Since $S$ does not intersect $P_2$, $l$ must be zero. 
	This implies that $\psi({S})$ and $P_2$ divide $T$ into two connected regions. 
	
	Now fix an orientation on $S^1\times \{0\}\subset S^2$ in such a way that $\psi|_{S^1\times \{0\}}$ preserves orientation.
	Consider the closed hemispheres of $S^2$, that we call $H_+$ and $H_-$, with the orientation induced by the orientation of $S^1\times \{0\}$. 
	It follows that 
	$$0<T_2=\int_{\R/\Z}{x_2}_{T_2}^*\lambda=\int_{H_\pm}\psi^*d\lambda~.$$
	Since $\psi$ is transverse to the Reeb vector field $R_\lambda$ in $H_\pm\setminus S^1\times\{0\}$, we have $\psi^*d\lambda>0$ in $H_\pm\setminus S^1\times\{0\}$.
	Let  $B$ be the connected region of $S^2$ bounded by $S^1\times \{0\}$ and $S$. Since $B$ is contained in one of the hemispheres of $S^2$, we have 
	\begin{equation}\label{eq:t2-maximo}
	0<\int_{B}\psi^*d\lambda=\int_{\R/\Z}{x_2}_{T_2}^*\lambda-\int_{S}\psi^*\lambda=T_2-\int_{\psi(S)}\lambda~.
	\end{equation}
	Since $\psi(S^2)$ does not intersect $P_3$, we know that one of the regions of $T$ bounded by $P_2$ and $\psi(S)$, that we denote by $A$, satisfies either $A\subset U_1$ or $A\subset U_2$. 
	Recall that $U_1$ and $U_2$ are oriented in such a way that  $d\lambda|U_{1,2}$ is an area form, $P_3$ is a positive asymptotic limit and $P_2$ is a negative asymptotic limit. Then we have
	\begin{equation}\label{eq:t2-minimo}
	0<\int_{A}d\lambda=\int_{\psi(S)}\lambda-\int_{P_2}\lambda=\int_{\psi(S)}\lambda-T_2~,
	\end{equation}
	contrary to \eqref{eq:t2-maximo}. This proves the proposition. 
\end{proof}

\subsection{{Relative position of sections along $\boldsymbol{P_2}$}}\label{se:relative-position}

The orbit $P_2$ is hyperbolic and lies in the intersection of its stable manifold $W^+(P_2)$ and its unstable manifold $W^-(P_2)$. 
The tangent spaces of $W^{\pm}(P_2)$ along the periodic solution $t\mapsto x_2(t)$ are spanned by the Reeb vector field $R_\lambda(x_2(t))$ and vector fields $v^\pm(t)\in \xi_{x_2(t)}$ defined below.

Consider the path of symplectic matrices 
$\Phi(t)=\Psi_{x_2(t)}\circ d\varphi^t|_{\xi(x_2(0))}\circ \Psi^{-1}_{x_2(0)},~t\in\R,$  
where $\Psi$ is any global symplectic trivialization of $\xi$. 
Since $\mu(P_2)=2$, we know that $\Phi(T_2)$ has two eigenvalues $\beta, \beta^{-1}$, where $\beta>1$.
The vector $v^-(0)$ is an eigenvector of $d\varphi^{T_2}|_{\xi(x_2(0))}$ associated to the eigenvalue $\beta$ and the vector $v^+(0)$ is an eigenvector of $d\varphi^{T_2}|_{\xi(x_2(0))}$ associated to the eigenvalue $\beta^{-1}$.
For each $t$, we can define 
$$v^\pm(t)=d\varphi^t|_{\xi(x_2(0))}v^\pm(0),$$ 
so that $v^-(t)$ is an eigenvector of  $d\varphi^{T_2}|_{\xi(x(t))}$ associated to the eigenvalue $\beta>1$ and $v^+(t)$ is an eigenvector of  $d\varphi^{T_2}|_{\xi(x_2(t))}$ associated to the eigenvalue $\beta^{-1}$.

Replacing $v^\pm(t)$ with $-v^\pm(t)$ if necessary, we can assume that $\{v^-(t),v^+(t)\}$ is a positive basis for $\xi_{x(t)}$, for each $t$.
The basis $\{v^-(t),v^+(t)\}$ determines four open quadrants in $\xi_{x(t)}$. 
Let $\rm{(I)}$ and $\rm{(III)}$ be the open quadrants between  $\R v^-(t)$ and $\R v^+(t)$ following the positive orientation and $\rm{(II)}$ and $\rm{(IV)}$ the open quadrants between  $\R v^+(t)$ and $\R v^-(t)$.


\begin{proposition}\label{pr:secoes-fortes-wind-1}
	Let $t\mapsto v(t)\in \xi_{x_2(t)}$ be a $T_2$-periodic nonvanishing section such that 
	$\wind(v,\Psi)=1~.$ 
	If $d\lambda(v(t),\mathcal{L}_{R_\lambda}v(t))>0,~\forall t\in \R,$
	then $v(t)$ belongs to regions $(I)$ or $(III)$ for every $t\in \R$. 
	If 
	$d\lambda(v(t),\mathcal{L}_{R_\lambda}v(t))<0,~\forall t\in \R,$  
	then $v(t)$ belongs to regions $(II)$ or $(IV)$ for every $t\in \R$. 
\end{proposition}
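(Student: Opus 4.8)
The plan is to analyze the behavior of a nonvanishing section $v(t)$ relative to the eigendirections $\R v^-(t)$ and $\R v^+(t)$ of the linearized Reeb flow, using the sign of $d\lambda(v(t),\mathcal{L}_{R_\lambda}v(t))$ together with the winding hypothesis $\wind(v,\Psi)=1$. The key observation is that $\mathcal{L}_{R_\lambda}v(t)$ is, up to the factor $T_2$ and the complex structure $J$, the image of $v(t)$ under the asymptotic operator $A_{P_2,J}$ (see equations \eqref{eq:asymptotic-operator-independent} and \eqref{eq:operador-assintotico-extensao}), so the quantity $d\lambda(v(t),\mathcal{L}_{R_\lambda}v(t))$ controls whether $v(t)$ is being rotated ``forward'' or ``backward'' by the flow. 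First I would express everything in the frame $\{v^-(t),v^+(t)\}$: write $v(t)=\alpha(t)v^-(t)+\gamma(t)v^+(t)$ with $(\alpha(t),\gamma(t))\neq(0,0)$, and compute $\mathcal{L}_{R_\lambda}v(t)$ using that $v^\pm(t)=d\varphi^t v^\pm(0)$ are, by definition, carried along by the flow. A direct computation from \eqref{eq:lei-derivative-section} gives $\mathcal{L}_{R_\lambda}v(t)=\dot\alpha(t)v^-(t)+\dot\gamma(t)v^+(t)$ (the flow-invariance of the eigendirections kills the contribution of $\mathcal{L}_{R_\lambda}v^\pm$).

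With this, and writing $d\lambda(v^-(t),v^+(t))=:W(t)>0$ (positive since $\{v^-,v^+\}$ is a positive basis), one gets
\[
d\lambda(v(t),\mathcal{L}_{R_\lambda}v(t))=W(t)\bigl(\alpha(t)\dot\gamma(t)-\gamma(t)\dot\alpha(t)\bigr).
\]
Now $\alpha\dot\gamma-\gamma\dot\alpha$ is exactly (a positive multiple of) the angular velocity of the curve $t\mapsto(\alpha(t),\gamma(t))$ in $\R^2\setminus\{0\}$. So the hypothesis $d\lambda(v,\mathcal{L}_{R_\lambda}v)>0$ for all $t$ says precisely that $(\alpha(t),\gamma(t))$ rotates strictly monotonically counterclockwise, while $<0$ says it rotates strictly clockwise. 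The next step is to pin down \emph{which} quadrants $v(t)$ visits. Here I would use the winding hypothesis. The point is that $\wind(v,\Psi)$ measures the total rotation of $v(t)$ with respect to the trivialization $\Psi$, whereas the rotation of $(\alpha(t),\gamma(t))$ measures the rotation of $v(t)$ \emph{relative to the frame} $\{v^-(t),v^+(t)\}$. Since $v^-(t)$ and $v^+(t)$ are eigenvectors, the frame $\{v^-,v^+\}$ has a well-defined winding number $w_0$ with respect to $\Psi$; because $P_2$ is hyperbolic with $\mu(P_2)=2$, one knows (from the properties of the Conley--Zehnder index and the asymptotic operator, e.g. \eqref{eq:conley-zehnder-wind} and Proposition \ref{pr:properties-asymptotic-operator}) that $w_0=1$ — indeed $\nu^{neg}_{P_2}$ and $\nu^{pos}_{P_2}$ both have winding $1$, and the hyperbolic eigenvectors span these eigenspaces. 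Hence $v(t)$ winds once relative to $\Psi$ if and only if the curve $(\alpha(t),\gamma(t))$ has net rotation zero over one period. Combined with the strict monotonicity of its angular velocity, the net rotation can only be zero if $(\alpha(t),\gamma(t))$ stays within an open half-plane, i.e. it never crosses one of the two lines $\R v^-(t)$ or $\R v^+(t)$ twice in opposite... more precisely, strict monotonicity plus zero net rotation forces $(\alpha(t),\gamma(t))$ to remain in a single pair of opposite open quadrants: if it rotates counterclockwise it is confined to $(I)\cup(III)$, and if clockwise to $(II)\cup(IV)$.

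The step I expect to be the main obstacle is making the last deduction fully rigorous: showing that strictly monotone angular motion with zero net winding (relative to the eigenframe) implies confinement to one pair of opposite open quadrants, and in particular that $v(t)$ never lies on $\R v^\pm(t)$. The subtlety is that a priori $v(t)$ could touch an eigenaxis; I would rule this out by noting that if $v(t_0)\in\R v^-(t_0)$, then $v(t_0)$ is itself (proportional to) an eigenvector, so $\mathcal{L}_{R_\lambda}v(t_0)$ is also proportional to $v^-(t_0)$, giving $d\lambda(v(t_0),\mathcal{L}_{R_\lambda}v(t_0))=0$, contradicting the hypothesis; similarly for $\R v^+(t_0)$. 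Thus $v(t)$ stays in the open quadrants, and by continuity in a fixed pair; the sign of the rotation (determined by the sign of $d\lambda(v,\mathcal{L}_{R_\lambda}v)$) determines which pair. This last argument — that $v(t)$ on an eigenaxis forces the Lie-derivative pairing to vanish — is really the crux and is where I would be most careful; once it is in place, the quadrant assignment follows by the intermediate value theorem together with the winding count.
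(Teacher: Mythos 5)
Your first half is sound and runs parallel to the paper's proof: writing $v(t)=\alpha(t)v^-(t)+\gamma(t)v^+(t)$ in the flow-invariant eigenframe, using $\mathcal{L}_{R_\lambda}v^\pm=0$ to get $d\lambda(v,\mathcal{L}_{R_\lambda}v)=W\,(\alpha\dot\gamma-\gamma\dot\alpha)$ with $W=d\lambda(v^-,v^+)>0$, and reducing the hypothesis to strict monotonicity of the angle of $(\alpha,\gamma)$; the paper does the same computation in Floquet coordinates $\Phi(t)=P(t)e^{tB}$, arriving at $d\lambda(v,\mathcal{L}_{R_\lambda}v)=Cr^2(\dot\theta-\dot\eta)$, and both reductions carry the same information. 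The genuine gap is in your ``crux'' step. The claim that $v(t_0)\in\R v^-(t_0)$ forces $\mathcal{L}_{R_\lambda}v(t_0)$ to be proportional to $v^-(t_0)$ is false: the Lie derivative depends on the $1$-jet of the section, not on its value, and by your own formula the pairing at such a time equals $W\,\alpha(t_0)\dot\gamma(t_0)$, which has no reason to vanish. Concretely, $v(t)=\cos(2\pi t/T_2)\,v^-(t)+\sin(2\pi t/T_2)\,v^+(t)$ lies on an eigenaxis at some times yet has $d\lambda(v,\mathcal{L}_{R_\lambda}v)>0$ there; what saves the proposition is not a pointwise obstruction but the global winding hypothesis (this example has $\wind=2$, not $1$). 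So the non-touching of the axes must be derived globally, as in the paper: positivity makes every passage of the relative angle through an axis value a transverse upward crossing, while relative winding zero (which follows from $\wind(v,\Psi)=\wind(v^\pm,\Psi)=1$) forbids any net crossings over a period, hence forbids all crossings.

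A second, related omission: even granting confinement to one pair of opposite open quadrants, your argument never explains why positive relative rotation selects $(I)\cup(III)$ rather than $(II)\cup(IV)$ --- you simply assert the assignment. The discriminating input is the hyperbolicity itself. In the paper it enters through the flow's angular velocity $\dot\eta=-2\lambda\sin\eta\cos\eta$, negative on $(I)\cup(III)$ and positive on $(II)\cup(IV)$, so that a section sitting in $(II)\cup(IV)$ would have strictly increasing angle there and could never return, contradicting winding zero. In your eigenframe formulation the same information appears as the monodromy of the coefficients, $(\alpha,\gamma)(t+T_2)=(\alpha(t)/\beta,\beta\gamma(t))$ with $\beta>1$: this is also why ``net rotation zero over one period'' is not literally available ($(\alpha,\gamma)$ is not periodic), and it is exactly this scaling, pushing directions toward the $v^+$-axis, that makes strictly increasing relative angle compatible with $(I)\cup(III)$ and incompatible with $(II)\cup(IV)$. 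Without either of these inputs the sign of $d\lambda(v,\mathcal{L}_{R_\lambda}v)$ cannot distinguish the two pairs of quadrants, so as written the proposal does not prove the statement. (Minor point: your justification that $\wind(v^\pm,\Psi)=1$ via eigenspaces of the asymptotic operator is also off --- the invariant directions are not eigensections of $A_{P_2,J}$; the correct reason is the geometric description of $\mu(P_2)=2$ through the winding interval, as the paper indicates.)
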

\begin{proof}
	Let $S(t)=-J_0\dot{\Phi}(t)\Phi^{-1}(t)$. Recall that $t\mapsto S(t)$ is $T_2$-periodic, and for each $t$, $S(t)$ is real and symmetric matrix. 
	Define $A(t)=J_0S(t)$. 
	In what follows we will also write $v^\pm(t)$ for the representations of the sections $v^\pm(t)$ in the trivialization $\Psi$.
	The sections $v^\pm(t)$ are solutions of the equation
	\begin{equation}\label{eq:edo-original}
	\dot{x}(t)=A(t)x(t)
	\end{equation} satisfying
	$v^+(T_2)=\frac{1}{\beta}v^+(0),~~~v^-(T_2)=\beta v^-(0)~.$
	In the basis $\{v^-(0),v^+(0)\}$, $\Phi(T_2)$ has the form 
	$$\Phi(T_2)=
	\begin{bmatrix}
	\beta & 0 \\
	0& \frac{1}{\beta} 
	\end{bmatrix}.$$
	We want to find a matrix $B$ satisfying $e^{T_2B}=\Phi(T_2)$ and a $T_2$-periodic map $t\mapsto P(t)$ such that $\Phi(t)=P(t)e^{tB},~\forall t\in \R$, so that with the change of variables $y=P^{-1}(t)x$, the equation \eqref{eq:edo-original} becomes 
	\begin{equation}\label{eq:y'=By}
	\dot{y}(t)=By(t).
	\end{equation}
	Define 
	$$B=\frac{1}{T_2}\begin{bmatrix}
	\ln \beta & 0\\
	0& -\ln \beta
	\end{bmatrix},$$ 
	and $t\mapsto P(t)$ by
	$\Phi(t)=P(t)e^{tB}.$  
	The map $t\mapsto P(t)$ is $T_2$-periodic. 
	In fact, 
	$$\Phi(t)e^{T_2B}=\Phi(t)\Phi(T_2)=\Phi(t+T_2)=P(t+T_2)e^{tB}e^{T_2B} \Rightarrow \Phi(t)=P(t+T_2)e^{tB}.$$
	If $x(t)$ is a solution of \eqref{eq:edo-original}, then $y(t)=P(t)^{-1}x(t)$ satisfies
	\begin{equation*}
	\begin{aligned}
	\dot{y}(t)&=(\dot{P(t)^{-1}})x(t)+P(t)^{-1}\dot{x}(t)\\
	&=-P(t)^{-1}\dot{P}(t)P(t)^{-1}(P(t)y(t))+P(t)^{-1}A(t)(P(t)y(t))\\
	&=(-P(t)^{-1}\dot{P}(t)+P(t)^{-1}A(t)P(t))y(t).	
	\end{aligned}
	\end{equation*}
	Using the identities $\dot{\Phi}(t)=A(t)\Phi(t)$ and $\Phi(t)=P(t)e^{tB}$, we get
	\begin{equation*}
	\begin{aligned}
	&A(t)P(t)e^{tB}=\dot{\Phi}(t)=P(t)e^{tB}B+\dot{P}(t)e^{tB}\\
	&\Rightarrow B= -P(t)^{-1}\dot{P}(t)+P(t)^{-1}A(t)P(t).
	\end{aligned}
	\end{equation*}
	Thus, $y(t)$ is a solution of \eqref{eq:y'=By}. 
	In the same way, if $y(t)$ solves \eqref{eq:y'=By}, then $x(t)=P(t)y(t)$ solves \eqref{eq:edo-original}.
	
	Writing equation \eqref{eq:y'=By} in coordinates, we get
	\begin{equation*}
	\left\{ \begin{array}{l}
	\dot{y_1}(t)=\lambda y_1(t)\\
	\dot{y_2}(t)=-\lambda y_2(t),
	\end{array} \right.  
	\end{equation*}
	where $\lambda=\frac{1}{T_2}\ln \beta$.
	Writing $y=(y_1,y_2)$ in polar coordinates, 
	\begin{equation*}
	\left\{ \begin{array}{l}
	{y_1}(t)=\rho(t)\cos \eta(t)\\
	{y_2}(t)=\rho(t)\sin\eta(t),
	\end{array} \right.  
	\end{equation*}
	we get 
	\begin{equation}\label{eq:derivada-zeta}
	\rho^2(t)\dot{\eta}(t)=\rho^2(t)\left(-2\lambda \sin\eta(t)\cos \eta(t)\right).
	\end{equation}

	Let $t\mapsto v(t)\in \xi_{x_2(t)}$ be a $T_2$-periodic nonvanishing section on $x_2^*\xi$. We want to compute 
	$d\lambda(v(t),\mathcal{L}_{R_\lambda}v(t)).$ 
	In the symplectic trivialization $\Psi$, $\mathcal{L}_{R_\lambda} v(t)$ takes the form
	$$\mathcal{L}_{R_\lambda}v(t)=\frac{d}{dt}v(t)-J_0S(t)v(t),$$
	where we also denote by $v(t)$ its representation in the trivialization $\Psi$.
	Thus,
	\begin{equation*}
	\begin{aligned}
	d\lambda(v(t),\mathcal{L}_{R_\lambda}v(t))&=d\lambda_0(v(t),\dot{v}(t)-J_0S(t)v(t))\\
	&=d\lambda_0(v(t),\dot{v}(t))-d\lambda_0(v(t),A(t)v(t)).
	\end{aligned}
	\end{equation*}
	Writing $v(t)$ in the basis $\{v^-(0),v^+(0)\}$ as $v(t)=(v_1(t),v_2(t))$ and in the basis 
	$\{P(t)v^-(0),P(t)v^+(0)\}$ as $v(t)=(u_1(t),u_2(t))$, we have
	$$(r(t)\cos\theta(t),r(t)\sin\theta(t)):=(u_1(t),u_2(t))=P(t)^{-1}(v_1(t),v_2(t)).$$
	Thus, we have 
	\begin{equation*}
	\begin{aligned}
	d\lambda_0(v(t),\dot{v}(t))&=C \cdot \det P(t)\cdot  d\lambda_0\left((u_1(t),u_2(t)),P(t)^{-1}(\dot{v_1}(t),\dot{v_2}(t))\right)\\
	&= C \left(r(t)^2 \dot{\theta}(t)+ d\lambda_0\left((u_1(t),u_2(t)), \frac{d}{dt}{(P(t)^{-1})}(v_1(t),v_2(t))\right)\right),
	\end{aligned}
	\end{equation*}
	where $C$ is a positive constant. 
	For  fixed $t$, let $s\mapsto x_t(s)$ be the solution of \eqref{eq:edo-original} such that $x_t(t)=v(t)$.
	Then
	\begin{equation*}
	\begin{aligned}
	A(t)v(t)&=A(t)x_t(t)=\dot{x_t}(t).
	\end{aligned}
	\end{equation*}
	Writing $x_t(s)$ is the basis $\{v^-(0),v^+(0)\}$ as $x_t(s)=(x_1(s),x_2(s))$ and in the basis 
	$\{P(t)v^-(0),P(t)v^+(0)\}$ as $(y_1(s),y_2(s))$, we have 
	$$(\rho(s)\cos\eta(s),\rho(s)\sin\eta(s))=(y_1(s),y_2(s))=P^{-1}(x_1(s),x_2(s)).$$
	Thus, we have
	\begin{equation*}
	\begin{aligned}
	d\lambda_0(v(t),A(t)v(t))&=d\lambda_0(x_t(t),\dot{x}_t(t))\\
	&=C\left(\rho^2(t)\dot{\eta}(t)+d\lambda_0\left((u_1(t),u_2(t)), \frac{d}{dt}(P(t)^{-1})(v_1(t),v_2(t))\right)\right).
	\end{aligned}
	\end{equation*}
	We conclude that 
	\begin{equation}\label{eq:theta-eta}
	d\lambda(v(t),\mathcal{L}_{R_\lambda}v(t))=Cr^2(t)(\dot{\theta}(t)-\dot{\eta}(t)).
	\end{equation}
	Assume that $t\mapsto v(t)\in \xi_{x_2(t)}$ satisfies $\wind(v,\Psi)=1$ and that  
	$d\lambda(v(t),\mathcal{L}_{R_\lambda}v(t))>0,~\forall t\in \R~.$
	By \eqref{eq:theta-eta}, we have 
	$\dot{\theta}(t)>\dot{\eta}(t),~\forall t\in \R.$ 
	Note that $\wind(v^\pm(t),\Psi)=1$. 
	This follows from $\mu(P_2)=2$ and the geometric description of the Conley-Zehnder index given in \S\ref{se:Conley-Zehnder-index}.
	This implies that 
	\begin{equation}\label{eq:wind=0}
	\wind(v(t),P(t)v^\pm(0))=\wind(v(t),v^\pm(t))=0~.
	\end{equation}
	Now we show that for all $t\in \R$, $v(t)$ lies in regions $\rm{(I)}$ or $\rm{(III)}$. 
	On the contrary, suppose that there exists $t_0\in \R$ such that $v(t_0)$ belongs to regions $\rm{(II)}$, $\rm{(IV)}$, or is the direction of $P(t_0)v^-(0)$. 
	By \eqref{eq:derivada-zeta}, we have  $\dot{\theta}(t_0)>\dot{\eta}(t_0)\geq0$.
	This implies that $\theta(t)$ is increasing near $t_0$. 
	Since $\wind(v(t),P(t)v^\pm(0))=0$, this would force the existence of $t_1>t_0$ such that $\theta(t_1)=\theta(t_0)$ and $\dot{\theta}(t_1)\leq0$, a contradiction with $\dot{\theta}(t_1)>\dot{\eta}(t_1)\geq0$. 
	Now, suppose that, for some $t_0$, $v(t_0)$ is in the direction of $P(t_0)v^+(0)$, then, using \eqref{eq:derivada-zeta}, we have $\dot{\theta}(t_0)>\dot{\eta}(t_0)=0$. 
	But this would force $v(t)$ to go to $\rm{(II)}$ or $\rm{(IV)}$, which is impossible.
	We conclude that $v(t)$	lies in regions $\rm{(I)}$ or $\rm{(III)}$, for every $t\in \R$. 
	
	By a similar argument, we conclude that if $t\mapsto v(t)\in \xi_{x_2(t)}$ satisfies $\wind(v,\Phi)=1$ and 
	$d\lambda(v(t),\mathcal{L}_{R_\lambda}v(t))<0,~\forall t\in \R~,$
	then we have 
	$v(t)\in \rm{(II)}$ or $v(t)\in\rm{(IV)}$, $\forall t\in \R$.
\end{proof}

\subsection{Proof of Lemma \ref{LE:TRANSVERSAL-AO-LONGO-DE-P_2}}\label{ch:apendice-prova-lema}
\begin{lemma}\label{le:wind=1}
	Let  $\R/\Z\ni t\mapsto \eta(t)\in \xi_{x_2(T_2t)}$ be a section on ${x_2}_{T_2}^*\xi$ such that $\eta(\cdot)$ and ${R_\lambda}(x_2(T_2\cdot))$ generate $d\psi(TS^2)$ along $x_2(T_2\cdot)$. Let  $\R/\Z\ni t\mapsto \eta'(t)\in \xi_{x_2(T_2t)}$ be a section on ${x_2}_{T_2}^*\xi$ such that $\eta'(\cdot)$ and  ${R_\lambda}(x_2(T_2\cdot))$ generate the tangent space of $T$ along $x_2(T_2\cdot)$.
	Then $\wind(\eta', \Psi)=\wind(\eta,\Psi)=1$. 
\end{lemma}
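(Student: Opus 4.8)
The statement asserts that two particular sections of $x_2^*\xi$ — one coming from the embedded sphere $\psi(S^2)$ along $P_2$, one from the torus $T$ along $P_2$ — each have winding number $1$ with respect to a global symplectic trivialization $\Psi$. The key point is that $P_2$ has Conley-Zehnder index $2$, so by the geometric description in \S\ref{se:Conley-Zehnder-index} the eigenvectors $v^\pm(t)$ of the linearized return map have winding number exactly $1$, and more precisely $\nu^{neg}_{P_2}$ and $\nu^{pos}_{P_2}$ are each the unique eigenvalue of $A_{P_2}$ of their sign with winding number $1$ (formula \eqref{eq:conley-zehnder-wind} gives $\tilde\mu(P_2)=2\wind(\nu^{neg}_{P_2})+p$ with $p=0$, forcing $\wind(\nu^{neg}_{P_2})=\wind(\nu^{pos}_{P_2})=1$).

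First I would handle $\eta'$, the section tangent to $T=U_1\cup U_2\cup P_2\cup P_3$ along $P_2$. By Definition \ref{de:3-2-1-foliation}, the cylinders $U_1=u_r(\C\setminus\{0\})$ and $U_2=u_r'(\C\setminus\{0\})$ are negative-asymptotic to $P_2$, their closures are strong transverse sections (Definition \ref{de:strong-transverse-section}), and by Remark \ref{rm:strong-transverse-section} the section $\eta$ defined by \eqref{eq:equacao-defi-strong} along $P_2$ is, up to projection by $\pi_\xi$, the asymptotic eigensection of $\tu_r$ (resp. $\tu_r'$) at its negative puncture. By Lemma \ref{le:wind=1,0} applied to $\tu_r$ — whose asymptotic limits $P_3, P_2$ have Conley-Zehnder indices in $\{1,2,3\}$ — we get $\wind_\infty(\tu_r, 0) = 1$, i.e. the asymptotic eigensection at $P_2$ has winding number $1$ with respect to the fixed global trivialization (Remark \ref{rm:trivializacao-global-fixada}). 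Since $\eta'(\cdot)$ and $R_\lambda(x_2(T_2\cdot))$ span $T_xT$ along $x_2(T_2\cdot)$, and the tangent to a cylinder along its asymptotic orbit is spanned by the Reeb direction and the asymptotic eigendirection, $\eta'$ is pointwise a nonzero multiple of that eigensection, so $\wind(\eta',\Psi)=1$.

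For $\eta$, the section tangent to $\psi(S^2)$ along $P_2$: by hypothesis each closed hemisphere of $\psi(S^2)$ is a strong transverse section with boundary $P_2$, so the section $\eta$ of $x_2^*\xi$ spanning $d\psi(TS^2)$ together with $R_\lambda$ satisfies $d\lambda(\eta(t),\mathcal{L}_{R_\lambda}\eta(t))\neq 0$ for all $t$ by Definition \ref{de:strong-transverse-section}; since $S^1$ is connected this quantity has a constant sign. I would then invoke the computation behind Proposition \ref{pr:secoes-fortes-wind-1}: via the Floquet-type change of variables $y = P^{-1}(t)x$ reducing the linearized equation to $\dot y = By$ with $B=\mathrm{diag}(\lambda,-\lambda)$, one obtains the identity \eqref{eq:theta-eta}, $d\lambda(\eta(t),\mathcal{L}_{R_\lambda}\eta(t)) = Cr^2(t)(\dot\theta(t)-\dot\eta(t))$ with $C>0$, and the key ODE \eqref{eq:derivada-zeta} for the angle $\eta$ of $v^\pm$. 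A nonzero constant-sign strong-section condition forces $\dot\theta - \dot\eta$ to have constant sign; combined with periodicity of $\theta$ and the fact (from \eqref{eq:derivada-zeta}) that $\dot\eta$ vanishes on and changes sign across the eigendirections, a standard winding/Poincaré-Bendixson argument — exactly the one in the proof of Proposition \ref{pr:secoes-fortes-wind-1} — shows $\eta(t)$ stays within a pair of opposite quadrants and hence cannot wind relative to $v^\pm(t)$, i.e. $\wind(\eta, v^\pm) = 0$. Since $\wind(v^\pm(\cdot),\Psi)=1$ (Conley-Zehnder index $2$), additivity of winding numbers gives $\wind(\eta,\Psi) = \wind(\eta,v^\pm) + \wind(v^\pm,\Psi) = 1$.

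**Main obstacle.** The delicate part is the winding estimate for $\eta$: extracting $\wind(\eta,\Psi)=1$ purely from the sign of $d\lambda(\eta,\mathcal{L}_{R_\lambda}\eta)$ without assuming $\eta$ is itself an eigensection of $A_{P_2}$. One cannot simply quote Proposition \ref{pr:secoes-fortes-wind-1}, since that proposition already presupposes $\wind(\eta,\Psi)=1$ — which is precisely what is to be proved. The right move is to run the angular-ODE comparison argument \emph{in reverse}: the constant sign of $\dot\theta-\dot\eta$ together with the location of the zeros of $\dot\eta$ pins down $\wind(\eta,v^\pm)$ directly, and this is where the bulk of the work lies. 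One must be careful that the change of variables $P(t)$ (hence the comparison between $\theta$ and $\eta$) is valid for \emph{arbitrary} nonvanishing sections, not just solutions of the linearized flow, which is exactly the content of the computation leading to \eqref{eq:theta-eta} in the proof of Proposition \ref{pr:secoes-fortes-wind-1}.
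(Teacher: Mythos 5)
Your proposal has two genuine gaps, one in each half.

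For $\eta'$: Lemma \ref{le:wind=1} sits inside the proof of Proposition \ref{TH:NECESSIDADE-3}, where the $3-2-1$ foliation is only assumed in the abstract sense of Definition \ref{de:3-2-1-foliation}. The cylinders $U_1,U_2$ are merely leaves whose closures are strong transverse sections; they are \emph{not} given as projections of finite energy $\tj$-holomorphic curves, so there is no $\tu_r$, no asymptotic eigensection at a negative puncture, and no way to invoke Lemma \ref{le:wind=1,0} or $\wind_\infty(\tu_r,0)=1$. Your identification $U_1=u_r(\C\setminus\{0\})$ imports the notation of Proposition \ref{pr:foliation-solid-torus}, which belongs to the sufficiency construction, not to the necessity statement being proved. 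The paper instead computes the winding of the section $\nu$ tangent to the rigid disk $D$ of the foliation by the characteristic-distribution argument and then transfers it to $\eta'$ using the transversality of $D\cup V$ to $T$ along $P_2$, which \emph{is} part of Definition \ref{de:3-2-1-foliation}: two pointwise linearly independent nonvanishing sections of $x_2^*\xi$ have equal winding.

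For $\eta$: the "reverse" angular argument does not pin down the winding, and this is not just a presentational issue. In the Floquet coordinates of Proposition \ref{pr:secoes-fortes-wind-1}, the strong-section inequality $d\lambda(\eta,\mathcal{L}_{R_\lambda}\eta)<0$ reads $\dot\theta(t)<-\lambda\sin\bigl(2\theta(t)\bigr)$; at the four eigendirections this forces $\dot\theta<0$, so $\theta$ can only cross them downward, which excludes relative winding $\geq 1$ with respect to $v^\pm$ but \emph{not} relative winding $\leq -1$ (for instance $\theta(t)=\theta(0)-2\pi t/T_2$ satisfies the inequality whenever $\ln\beta<2\pi$). So the boundary data alone yields only $\wind(\eta,\Psi)\leq 1$, and symmetrically the positive-sign condition yields only $\geq 1$; neither gives the claimed equality. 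The equality requires global input from the interior of the hemisphere, and that is exactly what the paper's proof uses: $F=\psi(H)$ is an embedded disk with $\partial F=P_2$ and interior transverse to $R_\lambda$, its characteristic distribution is parametrized by a vector field $X$ pointing outward along $\partial F$ whose (nondegenerate) zeros are all positive because $\int_F d\lambda>0$, and comparing the boundary winding of $X$ computed as a section of $\xi|_F$ (which is $\wind(\eta,\Psi)$, since $X|_{\partial F}$ is parallel to $\eta$) with the boundary winding computed as a section of $TF$ (which is $1$ by a Poincar\'e--Hopf count) gives $\wind(\eta,\Psi)=1$. Without an argument of this kind your proof of the key equality $\wind(\eta,v^\pm)=0$ is unsupported, as you yourself half-acknowledge in the "main obstacle" paragraph.
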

\begin{proof}
	Let $H\subset S^2$ be one of the closed hemispheres of $S^2$ and let $F:=\psi(H)\subset S^3$. 
	Then $F$ is an embedded disk satisfying $\partial F=P_2$ and $\mathring{F}$ is transverse to the Reeb vector field $R_\lambda$. 
	The characteristic distribution 
	$(TF\cap \xi)^\bot$ 
	can be parametrized by a smooth vector field $X$ on $F$, which is transverse to $\partial F$ pointing outwards. 
	After a $C^\infty$ perturbation of $F$ 
	keeping the transversality of $R_\lambda$ and $\mathring{F}$, we can assume that all singular points of $X$ are nondegenerate. 
	Let $o$ be the orientation on $F$ induced by the orientation of $\partial F$ in the direction of $R_\lambda$. 
	Let $o'$ be the orientation on $\xi$ given by $d\lambda|_\xi$. 
	A zero $p$ of $X$ is called positive if $o$ coincides with $o'$ at $p$, and negative otherwise.  
	Since $R_\lambda$ is transverse to $\mathring{F}$, all singularities of $X$ have the same sign and they must be positive since $\int_Fd\lambda=\int_{P_2}\lambda >0$. 
	Considering $X$ as a section of the bundle $\xi\to F$, we have 
	\begin{equation}\label{eq:eq1prova}
	\wind(\eta,\Psi)=\wind(X|_{\partial F},\Psi)=\sum_{X(z)=0} {\rm sign}\left(DX(z):(T_zF,o_z)\to (\xi_z,o'_z)\right).
	\end{equation}
	Let $\Psi_F:TF\to F\times \R^2$ be a trivialization of $TF\to F$. 
	Considering $X$ as a section of the bundle $TF\to F$ and using the fact that $X$ points outwards at $\partial F$, we have 
	\begin{equation}\label{eq:eq2prova}
	1=\wind(X|_{\partial F},\Psi_F)=\sum_{X(z)=0} {\rm sign}\left(DX(z):(T_zF,o_z)\to (T_zF,o_z)\right).
	\end{equation}
	Since all the singularities of $X$ are positive and $DX(z)$ at a singularity $z$ of $X$ does not depend on whether we consider $X$  as a section of $\xi\to F$ or as a section of $TF\to F$, equations \eqref{eq:eq1prova} and \eqref{eq:eq2prova} coincide.
	We conclude that 
	$\wind(\eta,\Psi)=1.$ 
	
	By the same arguments above, if $\nu$ is a section such that $\{\nu(\cdot), R(x_2(T_2\cdot))\}$ generates the tangent space of the disk $\bar{D}$ along $x_2(T_2 \cdot)$, where ${D}$ is the disk given by the definition of $3-2-1$ foliation \ref{de:3-2-1-foliation}, we have
	$\wind(\nu,\Psi)=1.$ 
	Since $T$ is transverse to $\bar{D}$ along $x_2$, we obtain
	$\wind(\eta',\Psi)=1~.$
\end{proof}
Now we are ready to prove Lemma \ref{LE:TRANSVERSAL-AO-LONGO-DE-P_2}.

\begin{proof}[Proof of Lemma \ref{LE:TRANSVERSAL-AO-LONGO-DE-P_2}]
	Let  $S^1=\R/\Z\ni t\mapsto \eta(t)\in \xi$ be a section along $x_2(T_2\cdot)$ such that $\{\eta(\cdot), R(x_2(T_2\cdot))\}$ generates $d\psi(TS^2)$ along $x_2(T_2\cdot)$ and
	\begin{equation}\label{eq:prova-lemma-strong-transverse-section-esfera}
		d\lambda(\eta(t),\mathcal{L}_{R_\lambda}\eta(t))\neq 0,~\forall t\in S^1.
	\end{equation}
	Let $u:(-\epsilon,\epsilon)\times S^1\to \Psi(S^2)$; $(s,t)\mapsto u(s,t)$ be a smooth function such that 
	\begin{equation*}
	\left\{
	\begin{array}{lr}
	u(0,\cdot)={x_2}_{T_2}\\
	\frac{\partial}{\partial s}u(s,\cdot)\big|_{s=0}=\eta
	\end{array}
	\right.
	\end{equation*}
	Fix an orientation on $S^1\times \{0\}\subset S^2$ in such a way that $\psi|_{S^1\times \{0\}}$ preserves orientation.
	Consider the closed hemispheres of $S^2$, that we call $H_+$ e $H_-$, with the orientation induced by the orientation of $S^1\times \{0\}$. 
	It follows that 
	$$0<T_2=\int_{S^1}{x_2}_{T_2}^*\lambda=\int_{H_\pm}\psi^*d\lambda~.$$
	Since $\psi$ is transverse to the Reeb flow $R_\lambda$ along $H_\pm\setminus (S^1\times\{0\})$, we have $\psi^*d\lambda>0$ on $H_\pm\setminus (S^1\times\{0\})$.
	This implies that $0$ is a local maximum of the function 
	$$(-\epsilon,\epsilon)\ni s\mapsto \mathcal{A}(u(s,\cdot))\in \R,$$
	where $\mathcal{A}:C^\infty(\R/\Z,S^3)\to \R$ is the action functional defined by $\mathcal{A}(\gamma)=\int_{\R/\Z}\gamma^*\lambda$.
	For any $J\in\scs$, we have 
	\begin{equation}\label{eq:hessiana-funcional-de-acao}
		\begin{aligned}
			\frac{d^2}{ds^2}(\mathcal{A}(u(s,\cdot)))\bigg|_{s=0}&=\int_{S^1}d\lambda\left(A_{P,J}(\eta),J\eta\right)dt,\\
		\end{aligned}
	\end{equation}
	where $A_{P,J}$ is the asymptotic operator defined by \eqref{eq:operador-assintotico-extensao}.
	For a proof, see \cite[\S 1.4]{deoliveira2020}.
	It follows from \eqref{eq:asymptotic-operator-independent} and  \eqref{eq:hessiana-funcional-de-acao} that
	\begin{equation}
	\frac{d^2}{ds^2}(\mathcal{A}(u(s,\cdot)))\bigg|_{s=0}=T_2\int_{S^1}d\lambda(\eta(t),\mathcal{L}_{R_\lambda}\eta(t))dt\leq 0~.
	\end{equation}
By \eqref{eq:prova-lemma-strong-transverse-section-esfera}, we have
	\begin{equation}\label{eq:dlambda-negativo}
		d\lambda(\eta(t),\mathcal{L}_{R_\lambda}\eta(t))< 0,~\forall t\in S^1.
	\end{equation}
	Let  $\R/\Z\ni t\mapsto \eta'(t)\in \xi$ be a section along $x_2(T_2\cdot)$ such that $\{\eta'(\cdot), R(x_2(T_2\cdot))\}$ generates the tangent space of $T$ along $x_2(T_2\cdot)$ and
	\begin{equation}\label{eq:prova-lema-stron-transverse-section-toro}
		d\lambda(\eta'(t),\mathcal{L}_{R_\lambda}\eta'(t))\neq 0,~\forall t\in S^1.
	\end{equation}
	Let $v:(-\epsilon,\epsilon)\times S^1\to T$, $(s,t)\mapsto v(s,t)$ be a smooth function such that 
	\begin{equation*}
	\left\{
	\begin{array}{lr}
	v(0,\cdot)={x_2}_{T_2}\\
	\frac{\partial}{\partial s}v(s,\cdot)\big|_{s=0}=\eta'.
	\end{array}
	\right.
	\end{equation*} 
	Recall that the cylinders $U_1$ and $U_2$ given by Definition \ref{de:3-2-1-foliation} are oriented in such a way that  $d\lambda|U_{1,2}$ is an area form, $P_3$ is a positive asymptotic limit and $P_2$ is a negative asymptotic limit. 
	This implies that $0$ is a local minimum of the function $s\mapsto \mathcal{A}(v(s,\cdot))$.
	It follows from \eqref{eq:asymptotic-operator-independent} and  \eqref{eq:hessiana-funcional-de-acao} that
	\begin{equation}
	\frac{d^2}{ds^2}(\mathcal{A}(v(s,\cdot)))\bigg|_{s=0}=T_2\int_{S^1}d\lambda(\eta'(t),\mathcal{L}_{R_\lambda}\eta'(t))dt\geq 0~.
	\end{equation}
	By \eqref{eq:prova-lema-stron-transverse-section-toro}, we have
	\begin{equation}\label{eq:dlambda-positivo}
		d\lambda(\eta'(t),\mathcal{L}_{R_\lambda}\eta'(t))> 0,~\forall t\in S^1.
	\end{equation}
	Now we can apply Lemma \ref{le:wind=1} and Proposition \ref{pr:secoes-fortes-wind-1} to the sections $\eta$ and $\eta'$ and conclude that $\psi$ is transverse to the torus $T$ along $P_2$. 
\end{proof}

\section{Proof of Theorem \ref{pr:example}}\label{se:proposition-pre-exemple}
In this section, we prove Theorem \ref{pr:example}, which is restated below.
\begin{theorem}
	Let $\lambda$ be a tight contact form on $S^3$.
	Assume that there exist Reeb orbits $P_1=(x_1,T_1),~P_2=(x_2,T_2),~P_3=(x_3,T_3)\in \mathcal{P}(\lambda)$ that are nondegenerate, simple, and have Conley-Zehnder indices respectively $1$, $2$ and $3$. 
	Assume further that the orbits $P_1$, $P_2$, and $P_3$ are unknotted, $P_i$ and $P_j$ are not linked for $i\neq j$, $i,j\in\{1,2,3\}$, and the following conditions hold: 
	\begin{enumerate}[label=(\roman*)]
		\item $T_1<T_2<T_3<2T_1$;
		\item If $P=(x,T)\in\mathcal{P}(\lambda)$ satisfies $P\neq P_3,~T\leq T_3$ and  ${\rm lk}(P,P_3)=0$, then $P\in\{P_1,P_2\}$.
		\item There exists $J\in\mathcal{J}(\lambda)$ such that the almost complex structure $\tj=(\lambda,J)$ admits a finite energy plane $\tu:\C\to \R\times S^3$ asymptotic to $P_3$ at it positive puncture $z=\infty$ and a finite energy clylinder 
		$\tw:\C\setminus \{0\}\to \R\times S^3$ asymptotic to $P_3$ at its positive puncture $z=\infty$ and $P_1$ at its negative puncture $z=0$;
		\item There exists no $C^1$-embedding $\Psi:S^2\subset \R^3\to S^3$ such that $\Psi({S^1\times \{0\}})=x_2(\R)$ and 
		each hemisphere is a strong transverse section.
	\end{enumerate}	
	Then there exists a $3-2-1$ foliation adapted to $\lambda$ with binding orbits $P_1$, $P_2$, and $P_3$. Consequently, there exists at least one homoclinic orbit to $P_2$. 
\end{theorem}			
\begin{proof}[Proof of Theorem \ref{pr:example}] 
	By the same arguments used in \S\ref{se:a-family-of-planes}, we find a maximal one-parameter family of finite energy $\tj$-holomorphic  planes 
	\begin{equation}\label{eq:familia-de-planos-exemplo}
		\tilde{u}_\tau=(a_\tau, u_\tau):\C\to \R\times S^3,~~\tau\in (0,1)
	\end{equation}
	asymptotic to the orbit $P_3$. For each $\tau\in (0,1)$, $u_\tau$ is an embedding transverse to the Reeb flow and $u_{\tau_1}(\C)\cap u_{\tau_2}(\C)=\emptyset, ~\forall \tau_1\neq\tau_2.$
	We assume that 
	$\tau$ strictly increases in the direction of $R_\lambda$.
	

	Now we describe how the family $\{\tu_\tau\}$ breaks as $\tau \to 0^+$ and $\tau\to 1^-$. 
	Consider a sequence $\tau_n\in(0,1)$ satisfying $\tau_n\to 0^+$, and define $\tilde{u}_n:=\tilde{u}_{\tau_n}$. 
	\paragraph{\textit{Claim I}}
	There exists a $\tilde{J}$-holomorphic finite energy cylinder 
	\begin{equation}\label{eq:cilindro-ur-exemplo}
		\tu_r=(a_r,u_r):\C\setminus \{0\}\to \R\times S^3
	\end{equation}
	asymptotic to $P_3$ at its positive puncture $z=\infty$ and to $P_2$ at its negative puncture $z=0$, and a finite energy $\tilde{J}$-holomorphic plane 
	\begin{equation}
		\tu_q=(a_q,u_q):\C\to\R\times S^3
	\end{equation}
	asymptotic to $P_2$ at its positive puncture $z=\infty$, such that, after a suitable reparametrization and an $\R$-translations of $\tu_n$, the following hold
	\begin{enumerate}[label=(\roman*)]
		\item up to a subsequence, 
		$\tu_n\to \tu_r$ in $C^\infty_{loc}(\C\setminus \{0\})$ as $n\to \infty$.
		\item There exist sequences $\delta_n\to 0^+$, $z_n\in \C$ and $c_n\in \R$ such that, up to a subsequence,
		$\tu_n(z_n+\delta_n \cdot)+c_n\to \tu_q$ 	in $C^\infty_{loc}(\C)$ as $n\to \infty$.
		\item Given an $S^1$-invariant neighborhood $\mathcal{W}_3\subset C^\infty(\R/\Z,S^3)$ of the loop $t\mapsto x_3(T_3t)$, there exists $R_0>>1$ such that  the loop $t\mapsto u_n(Re^{2\pi it})$ belongs to $\mathcal{W}_3$, for $R\geq R_0$ and large $n$.
		\item Given an $S^1$-invariant neighborhood $\mathcal{W}_2\subset C^\infty(\R/\Z,S^3)$ of the loop $t\mapsto x_2(T_2t)$, there exists $\epsilon_1>0$ and $R_1>>0$ such that the loop $t\mapsto u_n(z_n+ Re^{2\pi it})$ belongs to $\mathcal{W}_2$, for $R_1\delta_n\leq R\leq \epsilon_1$ and large $n$.
		\item Given any neighborhood $\mathcal{V}$ of $u_r(\C\setminus\{0\})\cup u_q(\C)\cup P_2\cup P_3$, we have $u_n(\C)\subset \mathcal{V}$, for large $n$.  	
	\end{enumerate} 
	Here $(a,x)+c:=(a+c,x),~\forall (a,x)\in \R\times S^3,~c\in \R$.
	{A similar claim holds for any sequence ${\tau_n}\in (0,1)$ satisfying $\tau_n\to 1^-$. In this case we change the notation from $\tu_r$ and $\tu_q$ to $\tu_r'$ and $\tu_q'$ respectively. }
	
	To prove \textit{Claim I}, define $\gamma$ as any real number satisfying 
	\begin{equation}\label{eq:defi-gamma-exemplo}
		0<\gamma<\min\{T_1,T_2-T_1,T_3-T_2\}.
	\end{equation}
	After reparametrizing and translating $\tu_n$ in the $\R$ direction, we can assume that
	\begin{align}
		\int_{\C\setminus \D}u_n^*d\lambda=\gamma,~\forall n\in \N \label{eq:germinante1-exemplo}\\
		a_n(2)=0,~\forall n \in \N \label{eq:germinante2-exemplo}\\
		a_n(0)=\inf a_n(\C),~\forall n \in \N\label{eq:germinante3-exemplo}
	\end{align}
	Let $\Gamma\subset \C$ be the set of points $z\in \C$ such that there exist subsequence $\tu_{n_j}$ and sequence $z_j\to z$ satisfying $|d\tu_{n_j}(z_j)|\to \infty$. 
	
	If $\Gamma=\emptyset$, by elliptic estimates we find a $\tj$-holomorphic map $\tu_r:\C\to \R\times S^3$ such that, up to a subsequence, $\tu_n\to \tu_r$ in $C^\infty_{loc}(\C,\R\times S^3)$. 
	Using Fatou's Lemma we conclude that $E(\tu_r)\leq T_3$. By \eqref{eq:germinante1-exemplo} we have
	\begin{equation}
		\int_{\partial \D} u_r^*d\lambda=\lim_{n\to \infty}\int_{\partial \D}u_n^*\lambda=\lim_{n\to \infty}\int_{\D} u_n^*d\lambda= T_3-\gamma>0.
	\end{equation}
	Therefore, $\tu_r$ is nonconstant.
	
	If $\Gamma\neq \emptyset$, let $z\in \Gamma$ and let $z_n\to z$ be such that, passing to a subsequence still denoted by $\tu_n$, we have $|d\tu_n(z_n)|\to \infty$. 
	Consider a sequence $r_n\to 0^+$ such that $r_n|d\tu_n(z_n)|\to \infty$. 
	{We need the following lemma.
		\begin{lemma}[{\cite[Lemma 3.3]{hofer1992}}]\label{le:hofer-lemma}
			Let $(X,d)$ be a complete metric space and $f:X\to [0,+\infty)$ a continuous function. 
			For any $\epsilon_0>0$ and $x_0\in X$, there exist $\epsilon'_0\in (0,\epsilon_0]$ and $x'_0\in \overline{B_{2\epsilon_0}(x_0)}$ such that 
			\begin{equation*}
				\left\{
				\begin{array}{lr}
					f(x_0')\epsilon_0'\geq f(x_0)\epsilon_0\\
					d(x,x_0')\leq \epsilon_0'\Rightarrow f(x)\leq 2f(x_0')
				\end{array}	
				\right.	.
			\end{equation*} 
		\end{lemma}}
	By Lemma \ref{le:hofer-lemma}, we have, after perhaps modifying $r_n$ and $z_n$,
	\begin{equation}\label{eq:derivada-un-exemplo1}
		|d\tu_n(z)|\leq 2|d\tu_n(z_n)|,~\text{ for } z\in B_{r_n}(z_n).
	\end{equation}
	Denoting $\delta_n=|d\tu_n(z_n)|^{-1}$, define the sequence of $\tj$-holomorphic maps $\tv_n=(b_n,v_n):B_{\frac{r_n}{\delta_n}}(0)\to \R\times S^3$ by
	\begin{equation}\label{eq:defi-vn-bubbling-exemplo}
		\tv_n(z)=(a_n(z_n+\delta_nz)-a_n(z_n),u_n(z_n+\delta_nz)).
	\end{equation}
	From \eqref{eq:derivada-un-exemplo1} and \eqref{eq:defi-vn-bubbling-exemplo} we get
	\begin{align*}
		\tv_n(0)\in \{0\}\times S^3,~\forall n\in \N\\
		|d\tu_n(z)|\leq 2, ~\forall z\in B_{\frac{r_n}{\delta_n}}(0).
	\end{align*}
	By elliptic estimates, there exists a subsequence, still denoted by $\tv_n$ and a $\tj$-holomorphic map $\tv:\C\to \R\times S^3$ such that $\tv_n\to \tv$ in $C^\infty_{loc}(\C,\R\times S^3)$. 
	Since $|d\tv(0)|=\lim_{n\to \infty}|d\tv_n(0)|=1$, we know that $\tv$ is nonconstant. Since $E(\tv_n)\leq E(\tu_n)=T_3$, we have $E(\tv)\leq T_3$. 
	Any asymptotic limit of the plane $\tv$ is not linked to $P_3$ and has action $\leq T_3$. We conclude that $\tv$ is asymptotic to either $P_1$, $P_2$ or $P_3$.  Now we show that $\tv$ is asymptotic either to $P_3$ or to $P_2$. 
	Suppose, by contradiction, that $\tv$ is asymptotic to $P_1$. From equation \eqref{eq:conley-zehnder-wind}, Lemma \ref{le:wind-infty-estimate} and $\mu(P_1)=1$, we have $\wind_\infty(\tv,\infty)\leq 0$. Using equation \eqref{eq:wind_pi-wind_infty}, we get the contradiction 
	$$0\leq \wind_\pi(\tv)=\wind_\infty(\tv)-1\leq -1.$$
	
	Using Fatou's Lemma and passing to a subsequence, still denoted by $\tu_n$, we get
	\begin{equation}\label{eq:nao-tem-energia-suficiente-exemplo}
		T_2\leq \int_\C v^*d\lambda \leq 
		\lim_{n\to \infty}\int_{B_{r_n}(z_n)}u_n^*d\lambda\leq \int_\C u_n^*d\lambda =T_3.
	\end{equation}
	Since $T_3<2T_2$, we conclude from \eqref{eq:nao-tem-energia-suficiente-exemplo} that $\Gamma=\{z\}$. 
	From \eqref{eq:germinante2-exemplo} and elliptic estimates, we find a $\tj$-holomorphic map
	$$\tu_r:\C\setminus \{z\}\to \R\times S^3$$
	such that, up to a subsequence, $\tu_n\to\tu_r$ and $E(\tu_r)\leq T_3$.
	The puncture $z$ is negative. Indeed, for any $\epsilon>0$, we have 
	\begin{equation}\label{eq:massa-furo-negativo-bubbling-exemplo}
		m_\epsilon(z):=\int_{\partial B_\epsilon(z)}u_r^*\lambda =\lim_{n\to \infty}\int_{\partial B_\epsilon(z)}u_n^*\lambda=\lim_{n\to \infty}\int_{B_\epsilon(z)}u_n^*d\lambda\geq T_2>0, 
	\end{equation}
	where $B_\epsilon(z)$ is oriented counterclockwise. Here we have used \eqref{eq:nao-tem-energia-suficiente-exemplo}.
	The puncture $\infty$ is necessarily positive since $0<E(\tu_r)\leq T_3$. Using \eqref{eq:germinante3-exemplo} 
	we conclude that $z=0$. 
	
	
	Now we show that $\tu_r$ is asymptotic to $P_3$ at its positive puncture $z=\infty$ and to $P_2$ at its negative puncture $z=0$. 
	We need the following lemma, which is an adaptation of Lemma 4.9 from \cite{hwz2003} to our set-up. 
	\begin{lemma}\label{le:cylinders-small-area-exemplo}
		Consider a constant $e>0$ and let $\gamma$ be defined by \eqref{eq:defi-gamma-exemplo}.
		Identifying $S^1=\R/\Z$, let $\mathcal{W}\subset C^\infty(S^1,S^3)$ be an open neighborhood of the periodic orbits $P_1$, $P_2$ and $P_3$, viewed as maps $x_T:S^1\to S^3$, $x_T(t)=x(Tt)$. We assume that $\mathcal{W}$ is $S^1$-invariant, meaning that $y(\cdot+c)\in \mathcal{W}\Leftrightarrow y\in \mathcal{W}, \forall c\in S^1$, and that each of the connected components of $\mathcal{W}$ contains at most one periodic orbit modulo $S^1$-reparametrizations.  	
		Then there exists a constant $h>0$ such that the following holds. 
		If $\tu=(a,u):[r,R]\times S^1 \to \R\times S^3$ is a $\tj$-holomorphic cylinder such that the image of $\tu$ does not intersect $P_3$, $u(r,\cdot)$ is not linked to $P_3$, and 
		\begin{equation*}\label{eq:cylinders-small-area-exemplo}
			E(\tu)\leq T_3,~~~\int_{[r,R]\times S^1}u^*d\lambda\leq \gamma,~~~~\int_{\{r\}\times S^1}u^*\lambda\geq e~~~~\text{and}~~ r+h\leq R-h,
		\end{equation*} 
		
		then each loop $t\in S^1\to u(s,t)$ is contained in $\mathcal{W}$ for all $s\in [r+h,R-h]$.
	\end{lemma}
	\begin{proof}
		Arguing by contradiction, we find a sequence of $\tj$-holomorphic maps 
		$$\tu_n=(a_n,u_n):[r_n,R_n]\times S^1\to \R\times S^3$$
		 such that the image of $\tu_n$ does not intersect $P_3$, $u_n(r_n,\cdot)$ is not linked to $P_3$,   
		\begin{equation}\label{eq:propriedade-lemma-exemplo}
			E(\tu_n)\leq T_3,~~~~\int_{[r_n,R_n]\times S^1}u_n^*d\lambda\leq \gamma,~~~~\int_{\{r_n\}\times S^1}u^*\lambda\geq e,~~~~ r_n+n\leq R_n-n
		\end{equation}
		and 
		\begin{equation}\label{eq:nao-esta-na-viz-lemma}
			u_n(s_n,\cdot )\notin \mathcal{W} \text{ for some } s_n\in [r_n+n,R_n-n].
		\end{equation}
		Define a sequence of $\tj$-holomorphic maps $\tv_n=(b_n,v_n):[r_n-s_n,R_n-s_n]\times S^1\to \R\times S^3$ by  
		$$\tv_n(s,t)=(a_n(s+s_n,t)-a_n(s_n,0),u_n(s+s_n,t)).$$ 
		Note that $b_n(0,0)=0, \forall n\in \N$ and by \eqref{eq:nao-esta-na-viz-lemma} we have
		\begin{equation}\label{eq:nao-esta-na-viz-lemma-2}
			v_n(0,\cdot )\notin \mathcal{W}.
		\end{equation}
		We claim that $|d\tv_n(s,t)|$ is uniformly bounded in $n\in \N$ and $(s,t)\in\R\times S^1$. Conversely, suppose that there exist a subsequence $\tv_{n_j}$ and a sequence $z_j\to z\in \C$ satisfying $|d\tv_{n_j}(z_j)|\to \infty$. Arguing as in the proof of \eqref{eq:nao-tem-energia-suficiente-exemplo}, we get
		$T_2\leq \lim_{n\to \infty}\int_{B_{r_j}(z_j)}v_{n_j}^*d\lambda$, for some sequence $r_j\to 0^+$. This contradicts
		$$\int_{[r_n-s_n,R_n-s_n]\times S^1}v_n^*d\lambda=\int_{[r_n,R_n]\times S^1}u_n^*d\lambda\leq \gamma.$$
		Thus, passing to a subsequence, still denoted by $\tv_n$, we have $\tv_n\to \tv$ in $C^\infty_{loc}(\R\times S^1,\R\times S^3)$, where $\tv:\R\times S^1\to \R\times S^3$ is a finite energy cylinder. Using \eqref{eq:propriedade-lemma-exemplo}, we conclude that 
		\begin{equation*}
			E(\tv)\leq T_3,~~~\int_{\R\times S^1}v^*d\lambda\leq \gamma ~~\text{ and }~\int_{\{s\}\times S^1}u^*\lambda\geq e,\forall s\in \R.
		\end{equation*}
		Consequently, $\tv$ is nonconstant, has a positive puncture at $s=+\infty$, a negative puncture at $s=-\infty$ and any asymptotic limit of $\tv$ at $s=+\infty$ has period $\leq T_3$. 
		Since the loops $\tv_n(s, \cdot)$ are not linked to $P_3$, the asymptotic limits of $\tv$ are not linked to $P_3$ as well. Therefore, $\tv$ is asymptotic to either $P_1$, $P_2$ or $P_3$ at $s=+\infty$. 
		Since $\int_{\R\times S^1}v^*d\lambda\leq \gamma$, we conclude that $\tv$ is a cylinder over either $P_1$, $P_2$ or $P_3$. However, by \eqref{eq:nao-esta-na-viz-lemma-2}, we know that $v(0,\cdot)\notin \mathcal{W}$, a contradiction. 
	\end{proof}

	Any asymptotic limit $P_+=(x_+,T_+)$ of $\tu_r$ at $z=\infty$ is not linked to $P_3$ and satisfies $T_+\leq T_3$. Thus, $\tu_r$ is asymptotic to either $P_1$, $P_2$ or $P_3$ at $z=\infty$. 
	Let $\mathcal{W}$ be as in the statement of Lemma \ref{le:cylinders-small-area-exemplo}.
	Using \eqref{eq:germinante1-exemplo} and Lemma \ref{le:cylinders-small-area-exemplo}, we conclude that for each $n\in \N$ and large $s$,  $\{t\mapsto u_n(s,t)\}\in \mathcal{W}$ and $\{t\mapsto u_r(s,t)\}\in \mathcal{W}$. Since the planes $\tu_n$ are asymptotic to $P_3$ and for each fixed $s$, $\{t\mapsto u_n(s,t)\}\in \mathcal{W}$ converges to $\{t\mapsto u_r(s,t)\}\in \mathcal{W}$, as $n\to \infty$, we 
	conclude that $\tu_r$ is asymptotic to $P_3$ at $z=\infty$. 
	Consequently we have $\Gamma\neq \emptyset$, since $\Gamma=\emptyset$ would contradict the fact that the family \eqref{eq:familia-de-planos-exemplo} is maximal. 
	
	Using \eqref{eq:germinante1-exemplo}, we conclude that 
	\begin{equation}\label{eq:massa-furo-positivo-bubbling-exemplo}
		\int_{\partial D}u_r^*\lambda=\lim_{n\to \infty}\int_{\partial\D}u_n^*\lambda=\lim_{n\to \infty}\int_\D u_n^*d\lambda=T_3-\gamma.
	\end{equation}
	Therefore, 
	any asymptotic limit $P_-=(x_-,T_-)\in \mathcal{P}(\lambda)$ of $\tu_r$ at $z=0$ is not linked to $P_3$ and satisfies $T_2\leq T_-<T_3$. Here we have used \eqref{eq:massa-furo-negativo-bubbling-exemplo} and \eqref{eq:massa-furo-positivo-bubbling-exemplo}. 
	We conclude that $P_-=P_2$. 
	

	Now we proceed as in the \textit{soft rescaling} done in \S\ref{se:soft-rescaling}. 
	Let $m_\epsilon(0)$ be defined as in \eqref{eq:def-mepisilonz}. Since $m_\epsilon(0)$ is a nondecreasing function of $\epsilon$, we can fix $0<\epsilon<<1$ so that 
	\begin{equation}\label{eq:t2-mepsilon-exemplo}
		0\leq m_\epsilon(0)-T_2\leq\frac{\gamma}{2}.
	\end{equation}
	Arguing as in \S\ref{se:soft-rescaling}, we
	choose sequences $z_n\in \overline{B_\epsilon(0)}$ and $0<\delta_n<\epsilon$ satisfying
	\begin{align}
		a_n(z_n)\leq a_n(\zeta),~\forall \zeta \in B_\epsilon(0) \label{eq:min-an-exemplo},\\
		\int_{B_\epsilon(0)\setminus B_{\delta_n}(z_n)} u_n^*d\lambda =\gamma \label{eq:int-igual-gamma-exemplo},
	\end{align}
	such that $z_n\to 0$ and $\liminf \delta_n= 0$. 
	Thus, passing to a subsequence, we can assume $\delta_n\to 0$.
	
	Take any sequence $R_n\to +\infty$ satisfying
	$\delta_nR_n<\frac{\epsilon}{2}~$
	and define the sequence of $\tj$-holomorphic maps $\tilde{w}_n=(c_n,w_n):B_{R_n}(0)\to \R\times S^3$ by
	\begin{equation}\label{eq:definition-wn-exemplo}
		\tilde{w}_n(\zeta)=(a_n(z_n+\delta_n\zeta)-a_n(z_n+2\delta_n), u_n(z_n+\delta_n\zeta))~.
	\end{equation}
	Note that $E(\tw_n)\leq E(\tu_n)= T_3$ and $\tw_n(2)\in \{0\}\times S^3$. 
	Let $\Gamma_q$ be the set of points $z\in \C$ such that there exist subsequence $\tw_{n_j}$ and sequence $z_j\to z$ satisfying $|d\tw_{n_j}(z_j)|\to \infty$. 
	Then there exists a $\tj$-holomorphic map $\tu_q:\C\setminus \Gamma_q\to \R\times S^3$ such that, up to a subsequence, $\tw_n\to \tu_q$ in $C^\infty_{loc}(\C\setminus \Gamma_q,\R\times S^3)$.
	
	We claim that $\tu_q$ is nonconstant and asymptotic to $P_2$ at its positive puncture $z=\infty$. 
	Indeed, using \eqref{eq:int-igual-gamma-exemplo} and \eqref{eq:definition-wn-exemplo} we conclude that, for any $r\geq1$,
	\begin{equation}
			\int_{\partial B_r(0)}u_q^*\lambda
			= \lim_{n\to \infty}\int_{\partial B_{r\delta_n}(z_n)}u_n^*\lambda
			\geq \lim_{n\to \infty}\int_{B_{\delta_n}(z_n)}u_n^*d\lambda
			\geq T_2-\gamma,
	\end{equation}
	and by \eqref{eq:germinante1-exemplo}, we have 
	\begin{equation}
		\int_{\partial B_r(0)}u_q^*\lambda
		= \lim_{n\to \infty}\int_{\partial B_{r\delta_n}(z_n)}u_n^*\lambda
		\leq \lim_{n\to \infty}\int_{B_{\delta_nR_n}(z_n)}u_n^*d\lambda
		\leq T_3-\gamma.
	\end{equation}
	Here we have used $\delta_nR_n<\frac{\epsilon}{2}<\frac{1}{2}$ and $z_n\to 0$.
	Therefore, any asymptotic limit $P_+=(x_+,T_+)$ of $\tu_q$ at $\infty$ is not linked to $P_3$ and satisfies $P_2-\gamma\geq T_+\geq T_3- \gamma$. We conclude that $P_+=P_2$. 
	
	Now we prove that $\Gamma_q=\emptyset$. Suppose, by contradiction, that $z\in \Gamma_q$ and let $z_n\to z$ be such that, passing to a subsequence still denoted by $\tw_n$, $|d\tw_n(z_n)|\to \infty$. 
	Arguing as in the proof of \eqref{eq:nao-tem-energia-suficiente-exemplo}, we find a sequence $r_n\to 0^+$ such that, 
	for each sufficiently small $\epsilon>0$, we have
	$$\int_{\partial B_\epsilon (z)}u_q^*\lambda=\lim_{n\to \infty}\int_{\partial B_\epsilon (z)}w_n^*\lambda\geq \lim_{n\to\infty}\int_{B_{r_n}(z_n)}w_n^*d\lambda\geq T_2.$$
	We conclude that $\int_{\C\setminus \Gamma_q}u_q^*d\lambda=0$ and $\Gamma_q=\{z\}=\{0\}$.
	This implies that $\tu_q$ is a cylinder over the orbit $P_2$  and we get the contradiction
	$$T_2=\int_{\partial\D}u_q^*\lambda=\lim_{n\to \infty}\int_{\partial\D}w_n^*\lambda=\lim_{n\to \infty}\int_{B_{\delta_n}(z_n)}u_n^*d\lambda=m_\epsilon(z)-\gamma\leq T_2-\frac{\gamma}{2}.$$
	Here we have used \eqref{eq:t2-mepsilon-exemplo} and \eqref{eq:int-igual-gamma-exemplo}. 	
	{We have proved (i) and (ii).}
		
	{An application of Lemma \ref{le:cylinders-small-area-exemplo}, similar to the proof of Proposition \ref{pr:familia-de-planos-se-aproxima-limite}, proves (iii) and (iv). Assertion (v) is a consequence of (i)-(iv).} \textit{Claim I} is proved. 
		Following  the same arguments as in the proof of Propositions \ref{le:the-cylinders-do-not-intersect-orbits}-\ref{pr:foliation}, we conclude that
	\begin{itemize}
		\item Up to reparametrization, $u_q=u'_q$;
		\item $u_r(\C\setminus \{0\})\cap u_r'(\C\setminus \{0\})=\emptyset$;
		\item The union of the image of the family \eqref{eq:familia-de-planos-exemplo} with the images of $u_q$, $u_r$, $u'_r$, $x_2$ and  $x_3$ determine a singular foliation of a closed region $\mathcal{R}_1$, homeomorphic to a solid torus, such that $\partial \mathcal{R}_1=T$, where $T=P_2\cup P_3\cup u_r(\C\setminus \{0\})\cup u'_r(\C\setminus \{0\})$.
	\end{itemize}
	
	Now we find the foliation of $\mathcal{R}_2=S^3\setminus \overline{\mathcal{R}_1}$.
	Following the same arguments as in Proposition \ref{pr:cilindro-e-mergulho} we prove that the finite energy cylinder $\tw$  is an embedding. 
	Applying Theorem \ref{th:family-of-cylinders-wendl} to the finite energy cylinder $\tw$, we obtain a maximal one-parameter family of finite energy cylinders 
	\begin{equation}\label{eq:familia-de-cilindros-exemplo}
		\tilde{w}_\tau=(c_\tau, w_\tau):\C\setminus \{0\} \to\R\times S^3,~~\tau\in (0,1)
	\end{equation}
	asymptotic to $P_3$ at its positive puncture $z=\infty$ and to $P_1$ at its negative puncture $z=\{0\}$. 
	For each $\tau\in (0,1)$, $\tw_\tau$  is an embedding, the projection $w_\tau:\C\setminus\{0\}\to S^3$  is an embedding which does not intersect its asymptotic limits, and $w_\tau(\C\setminus \{0\})\cap \mathcal{R}_1=\emptyset$. 
	We assume that 
	$\tau$ strictly increases in the direction of $R_\lambda$.
	
	Consider a sequence 
	$\tau_n\in (0,1)$ satisfying $\tau_n\to 1^-$ and define $\tw_n:=\tw_{\tau_n}$.  
	
	\paragraph{\textit{Claim II}}
	There exists a finite energy cylinder 
	\begin{equation}
		\tw_r:\C\setminus \{0\}\to \R\times S^3
	\end{equation}
	 asymptotic to $P_3$ at its positive puncture $z=\infty$ and to $P_2$ at its negative puncture $z=0$, and a finite energy cylinder 
	 \begin{equation}
	 	\tw_q:\C\setminus \{0\}\to \R\times S^3
	 \end{equation}
	 asymptotic to $P_2$ at its positive puncture $z=\infty$ and to $P_1$ at its negative puncture $z=0$, such that,  after suitable reparametrizations and $\R$-translations of $\tw_n$, we have
	\begin{enumerate}[label=(\roman*)]
		\item up to a subsequence, 
		$\tw_n\to \tw_r$ in $C^\infty_{loc}(\C\setminus \{0\})$ as $n\to \infty$.
		\item There exist sequences $\delta_n^+\to 0^+$ and $d_n\in \R$ such that, up to a subsequence,
		$\tw_n(\delta_n \cdot)+d_n\to \tw_q$ 	in $C^\infty_{loc}(\C\setminus \{0\})$ as $n\to \infty$. 
		\item Given an $S^1$-invariant neighborhood $\mathcal{W}_3 \subset C^\infty(\R/\Z,S^3)$ of the loop $t\mapsto x_3(T_3t)$, there exists $R_3>>1$ such that  the loop $t\mapsto w_n(Re^{2\pi it})$ belongs to $\mathcal{W}_3$, for $R\geq R_3$ and large $n$.
		\item Given an $S^1$-invariant neighborhood $\mathcal{W}_2\subset C^\infty(\R/\Z,S^3)$ of the loop $t\mapsto x_2(T_2t)$, there exists $\epsilon_2>0$ and $R_2>1$ such that the loop $t\mapsto w_n(Re^{2\pi it})$ belongs to $\mathcal{W}_2$, for $R_2\delta_n\leq R\leq \epsilon_2$ and large $n$.
		\item Given an $S^1$-invariant neighborhood $\mathcal{W}_1\subset C^\infty(\R/\Z,S^3)$ of the loop $t\mapsto x_1(T_1t)$, there exists $\epsilon_1>0$ such that the loop $t\mapsto w_n(\rho e^{2\pi it})$ belongs to $\mathcal{W}_1$, for $\rho\leq \epsilon_1\delta_n$ and large $n$.
		\item Given any neighborhood $\mathcal{V}\subset \mathcal{R}_2$ of $w_r(\C\setminus\{0\})\cup w_q(\C\setminus \{0\})\cup P_1 \cup P_2\cup P_3$, we have $w_n(\C\setminus \{0\})\subset \mathcal{V}$, for large $n$.  		
	\end{enumerate} 
	A similar claim holds for any sequence $\tau_n\to 0^+$ with $\tw_r$ replaced with a cylinder $\tw_r'$ with the same asymptotics as $\tw_r$ and $\tw_q$ replaced with a cylinder $\tw_q'$ with the same asymptotics as $\tw_q$.

	After a reparametrization and $\R$-translation of $\tw_n$, we can assume that
	\begin{align}
		\int_{\C\setminus\D}w_n^*d\lambda=\frac{\gamma}{2},~\forall n\in\N  \label{eq:germination-cilindro-exemplo1}\\
		c_n(2)=0,~\forall n\in \N\label{eq:germinating-cilindro-exemplo-2},
	\end{align}
	where $\gamma$ is defined by \eqref{eq:defi-gamma-exemplo}.
	Observe that  $|d\tw_n(z)|$ is uniformlly bounded in $n\in \N$ and $z\in \C\setminus \{0\}$. 
	Otherwise, arguing as in the proof of \eqref{eq:nao-tem-energia-suficiente-exemplo}, we would find sequences $z_n\in \C$, $r_n\to 0^+$ and a subsequence of $\tw_n$, still denoted $\tw_n$, such that 
	$$T_2=\int_\C v^*d\lambda \leq \lim_{n\to \infty}\int_{B_{r_n}(z_n)}w_n^*d\lambda\leq \lim_{n\to \infty}\int_\C w_n^*d\lambda=T_3-T_1,$$
	contradicting the hypothesis $T_3<2T_1$. 
	We conclude that there exists a finite energy $\tj$-holomorphic cylinder 
	$\tw_r=(c_r,w_r):\C\setminus \{0\}\to \R\times S^3$
	such  that, up to subsequence, still denoted by $\tw_n$, 
	$\tw_n\to \tw_r~\text{ in }C^\infty_{loc}(\C\setminus\{0\},\R\times S^3).$ 
	
	The finite energy cylinder $\tw_r$ is nonconstant, $z=0$ is  a negative puncture and $z=\infty$ is a positive puncture. Indeed, for any $r>0$, we have 
		$\int_{\partial B_r(0)}w_r^*\lambda=\lim_{n\to \infty}\int_{\partial B_r(0)}w_n^*\in(T_1,T_3),$ 
	where $\partial B_r(0)$ is oriented counterclockwise. 
	
	It follows from \eqref{eq:germination-cilindro-exemplo1} that  
	$\int_{\C\setminus\{0\}}w_r^*d\lambda \geq\frac{\gamma}{2}>0$. 
	Using Lemma \ref{le:cylinders-small-area-exemplo} as in the proof of \textit{Claim I}, we conclude that $\tw_r$ is asymptotic to $P_3$ at the puncture $z=\infty$. 
	Any asymptotic limit of $\tw_r$ at $z=0$ is not linked to $P_3$ and has period $<T_3$. Thus, $\tw_r$ is asymptotic to either $P_2$ or $P_1$ at $z=0$. 
	Since the family \eqref{eq:familia-de-cilindros-exemplo} is maximal, we conclude that $\tw_r$ is asymptotic to $P_2$ at the negative puncture $z=0$. 
	
	Now we proceed as in the soft rescaling done in \S \ref{se:soft-rescaling-cilindros}. Arguing as in \S \ref{se:soft-rescaling-cilindros}, we find a sequence 
	$\delta_n>0$ satisfying 
	\begin{equation}\label{eq:existencia-seq-tau-n-cilindro-exemplo}
		\int_{{B_{\delta_n}(0)}\setminus\{0\}}w_n^*d\lambda=(T_2-T_1)-\frac{\gamma}{2}
	\end{equation}
and such that $\liminf \delta_n=0$.
	Thus, passing to a subsequence still denoted by $\tw_n$, we can assume that $\delta_n\to 0$.
	
	Fix $\epsilon>0$ such that 
	\begin{equation}\label{eq:epsilon-pequeno-exemplo}
		\int_{\partial B_\epsilon(0)}w_r^*\lambda\leq T_2+\frac{\gamma}{4}
	\end{equation}
	and define the sequence of $\tj$-holomorphic maps $\tv_n=(b_n,v_n):B_{\frac{\epsilon}{\delta_n}}(0)\setminus\{0\}\to \R\times S^3$ by
	\begin{equation}\label{eq:defi-vn-cilindros-exemplo}
		\tv_n(z)=(c_n(\delta_nz)-c_n(2\delta_n),w_n(\delta_n z)).
	\end{equation}
	Using \eqref{eq:existencia-seq-tau-n-cilindro-exemplo} and \eqref{eq:epsilon-pequeno-exemplo}, we conclude that, for large $n$,
	\begin{equation}\label{eq:vn-germinating-cilindros-exemplo}
		\begin{aligned}
		\int_{B_{\frac{\epsilon}{\delta_n}}(0)\setminus\D}v_n^*d\lambda
		&=\int_{B_{\epsilon}(0)\setminus\{0\}}w_n^*d\lambda- \int_{B_{\delta_n}(0)\setminus \{0\}}w_n^*d\lambda\\
		&=\int_{\partial B_\epsilon(0)}w_n^*\lambda-T_1-\left(T_2-T_1-\frac{\gamma}{2}\right)\\
		&\leq T_2+\frac{\gamma}{2}-T_1 - \left(T_2-T_1-\frac{\gamma}{2}\right)=\gamma
		\end{aligned}
	\end{equation} 
	Note that $E(\tv_n)\leq T_3$ and $\tv_n(2)\in \{0\}\times S^3$.
	 Moreover, $|d\tv_n(z)|$ is uniformly bounded on $n\in \N$ and $z\in \C\setminus\{0\}$. Otherwise, arguing as in the proof of \eqref{eq:nao-tem-energia-suficiente-exemplo}, we would find sequences $z_n\to z\in \C\setminus\{0\}$, $r_n\to 0^+$ and a subsequence of $\tv_n$, still denoted by $\tv_n$, such that 
	$$T_2\leq \lim_{n\to \infty}\int_{B_{r_n}(z_n)}v_n^*d\lambda\leq \lim_{n\to \infty}\int_{B_\epsilon(0)\setminus \{0\}}w_n^*d\lambda\leq T_2+\frac{\gamma}{2}-T_1,$$
	a contradiction.
	Therefore, there exists a finite energy $\tj$-holomorphic cylinder 
	$$\tw_q:\C\setminus\{0\}\to \R\times S^3$$
	such that, up to a subsequence, 
	$\tv_n\to \tw_q ~\text{ in } C^\infty_{loc}(\C\setminus \{0\}).$ 
	
	For $r\geq1$ and large $n$, we have 
	\begin{equation}
		T_2-\frac{\gamma}{2}\leq \int_{\partial B_{\delta_nr}(0)}w_n^*\lambda\leq \int_{\partial \D}w_n^*\lambda=T_3-\frac{\gamma}{2}.
	\end{equation}
	Here we have used \eqref{eq:germination-cilindro-exemplo1} and \eqref{eq:existencia-seq-tau-n-cilindro-exemplo}. 
	Therefore, we have
	\begin{equation}
		\int_{\partial B_r(0)}w_q^*\lambda=\lim_{n\to \infty}\int_{\partial B_r(0)}v_n^*\lambda=\lim_{n\to \infty}\int_{\partial B_{\delta_nr}(0)}w_n^*\lambda\in\left[T_2-\frac{\gamma}{2},T_3-\frac{\gamma}{2}\right].
	\end{equation}
	We conclude that $\tw_q$ is asymptotic to $P_2$ at the positive puncture $z=\infty$. 

	Using \eqref{eq:existencia-seq-tau-n-cilindro-exemplo}, we conclude that 
	$$\int_{\C\setminus \{0\}}w_q^*d\lambda\geq \int_{\C\setminus\D}w_q^*d\lambda =T_2-\lim_{n\to \infty}\int_{\partial \D}v_n^*\lambda=T_2-\lim_{n\to \infty}\int_{\partial B_{\delta_n}(0)}w_n^*d\lambda=\frac{\gamma}{2}>0.$$	
	Thus, any asymptotic limit of $\tw_q$ at its negative puncture $z=0$ is an orbit that is not linked to $P_3$ and has period $<T_2$. 
	We conclude that $\tw_q$ is asymptotic to $P_1$ at $z=0$. 
	This completes the proof of (i) and (ii).
	
	An application of Lemma \ref{le:cylinders-small-area-exemplo}, similar to the proof of Proposition \ref{pr:familia-de-cilindros-se-aproxima-limite}, proves (iii), (iv) and (v). Assertion  (vi) is a consequence of (i)-(v).

		Following the same arguments as in the proof of Propositions  \ref{pr:tw=tu'}-\ref{pr:self-linking-1}, we conclude that
	\begin{itemize}
		\item Up to reparametrization and $\R$-translation,  we have $\tw_r=\tu_r$ and $\tw'_r=\tu_r'$, where $\tu_r$ and $\tu_r'$ are  
		given by {Claim I};
		\item Up to reparametrization  and $\R$-translation we have $\tw_q=\tw'_q$;
		\item The images of the family $\{w_\tau\},\tau \in (\tau_-,+\infty)$, $u_r$, $u_r'$, $w_q$, $x_1$, $x_2$ and $x_3$ determine a singular foliation of the region $\mathcal{R}_2=\overline{S^3\setminus \mathcal{R}_1}$. 
		\item ${\rm sl}(P_i)=-1$, $i=1,2,3$. 
	\end{itemize}
It follows that there exists a $3-2-1$ foliation with binding orbits $P_1$, $P_2$ and $P_3$.
The existence of a homoclinic to $P_2$ is proved as in \S \ref{se:conclusao-prova}.
\end{proof}

\section{Proof of Proposition \ref{pr:exemplo-prova}}\label{se:proof-prop-exemplo}
In this section we prove Proposition \ref{pr:exemplo-prova}, which is restated below.
\begin{proposition}
	For sufficiently small $\epsilon$, the contact form $\lambda=\lambda_0|_S$, where $S=H^{-1}(\frac{1}{2})$, and the Reeb orbits $P_1$, $P_2$, and $P_3$, defined by \eqref{eq:defi-orbitas}-\eqref{eq:periodo-exemplo}, satisfy the hypotheses of Theorem \ref{pr:example}. Therefore, there exists a $3-2-1$ foliation adapted to $\lambda$ with binding orbits $P_1$, $P_2$, and $P_3$. 
\end{proposition}

The proof of Proposition \ref{pr:exemplo-prova} follows immediately from \eqref{eq:t1-t2-t3}, \eqref{eq:t3menort1} and 
Lemmas \ref{le:exemplo-indices}-\ref{le:exemplo-nao-existe-esfera} below.

\begin{lemma}\label{le:exemplo-indices}
	Assuming that $\epsilon$ is sufficiently small, the orbits $P_1$, $P_2$ and $P_3$ are nondegenerate and their  Conley-Zehnder indices are respectively $1,~2$ and $3$. 
\end{lemma}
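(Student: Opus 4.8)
The strategy is to compute directly the linearized Reeb flow along each of the three orbits $P_i$, which is possible because the Hamiltonian \eqref{eq:hamiltoniana} splits as $H_1(x_1,y_1)+H_2(x_2,y_2)$ and each $\gamma_i$ lies entirely in the first factor, in the plane $\{(x_2,y_2)=(p_i,0)\}$ where $(p_i,0)$ is a nondegenerate critical point of $H_2$. First I would recall that $R_\lambda = h\,X_H$ on $S$, so the linearized Reeb flow and the linearized Hamiltonian flow on $\xi$ differ only by the reparametrization $h$; since $h$ is a positive function and the orbits are critical for the "angular" motion, one checks that the transverse linearized dynamics is governed by the linearization of $X_H$ restricted to a complement of $\R X_H$ inside $T S$, i.e. by $\xi$. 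Concretely, the contact structure $\xi$ along $\gamma_i$ splits (up to the flow direction) into a "radial" $2$-plane spanned by directions in the $(x_1,y_1)$-plane transverse to $\dot\gamma_i$, and the $(x_2,y_2)$-plane; these two blocks are invariant under $d\varphi^t$ because of the product structure.

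The two blocks are then analyzed separately. The $(x_1,y_1)$-block: $H_1$ generates the standard harmonic oscillator of frequency $1$, and the orbit $\gamma_i$ runs at angular speed $2/r_i^2$ with period $T_i=\pi r_i^2$; the transverse linearized return map in this block is a rotation whose rotation number I would compute from the explicit flow, getting a contribution corresponding to a winding of $1$ (this is the block responsible for the "$+1$" in all three indices and reflects that each $\gamma_i$ is the "short" Hopf-like orbit in its $\R^2$-factor). The $(x_2,y_2)$-block: here the linearization of $X_{H_2}$ at the critical point $(p_i,0)$, reparametrized by the time spent, $T_i$, and by $h(\gamma_i)$ — which along $\gamma_i$ is the constant $h=2/r_i^2$ since $x_2 Q + y_2 P$ vanishes at a critical point — gives a linear Hamiltonian system whose monodromy over one period is $\exp(T_i\cdot h \cdot A_i)$, with $A_i$ the Hessian-type matrix of $H_2$ at $(p_i,0)$. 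For $i=1$ (local minimum) this is an elliptic rotation by a small angle $O(\epsilon)$, hence winding $0$ and no integer in the winding interval; for $i=3$ (local maximum) it is again elliptic but the combination of signs and the size $O(\epsilon)$ of the eigenvalues makes the winding interval lie just above $0$, contributing an extra $+1$; for $i=2$ (saddle) the monodromy is positive hyperbolic, so this block contributes $0$ to the winding and forces the even index. Summing the two blocks via the additivity of the Conley-Zehnder index under direct sums of symplectic paths (together with the characterization in \eqref{eq:defi-cz-index-nondeg} via the winding interval) yields $\mu(P_1)=1$, $\mu(P_2)=2$, $\mu(P_3)=3$. Nondegeneracy is immediate since in each block the monodromy has no eigenvalue equal to $1$: the $(x_2,y_2)$-block because the critical points of $H_2$ are nondegenerate, and the $(x_1,y_1)$-block because the oscillator frequency is incommensurate with the return — here I would verify that for $\epsilon$ small $r_i^2$ is not rational of the forbidden type, using $H_2(p_i,0)=O(\epsilon^4)$.

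I expect the main obstacle to be the bookkeeping of the winding intervals in the $(x_1,y_1)$-radial block and the elliptic $(x_2,y_2)$-blocks for $i=1,3$: one must pin down not just the eigenvalues but the precise position of the winding interval relative to the integers, which requires choosing a global trivialization of $\xi$ over $S^3$ and tracking how the natural "product" trivialization along $\gamma_i$ compares to it. A clean way to avoid ad hoc computation is to note that for $\epsilon=0$ the energy surface $S$ is the boundary of a convex domain (a product of a round disk and a sublevel of $\tfrac12(x_2^2+y_2^2)^2$-type, perturbed), so the orbits $\gamma_1$ and $\gamma_3$ persist from the $\epsilon=0$ situation where indices are readily computed, and then use continuity of the Conley-Zehnder index under deformation through nondegenerate orbits to conclude for small $\epsilon>0$; the saddle orbit $\gamma_2$ is created near the equilibrium and its index is forced to be $2$ by Lemma \ref{le:properties-cz-index}(4) once one knows it is positive hyperbolic with a nontrivial elliptic complement, which is exactly the $(x_1,y_1)$-block. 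This deformation argument reduces the whole lemma to one genuinely explicit computation at $\epsilon=0$ plus a hyperbolicity check at the saddle, which I would carry out by exhibiting the eigenvalues of $\exp(T_i h A_i)$ directly from the formulas for $P$ and $Q$.
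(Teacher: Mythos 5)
There is a genuine gap, and it sits exactly at the point your plan defers. The contact structure $\xi$ has rank $2$, so it cannot "split into a radial $2$-plane and the $(x_2,y_2)$-plane": along $\gamma_i$ the transverse linearized dynamics is carried \emph{entirely} by the linearization of $X_{H_2}$ at the critical point $(p_i,0)$ (rescaled by the constant $h=2/r_i^2$), read through the projection of $\xi$ onto the $(x_2,y_2)$-plane along the Reeb direction; the invariant block decomposition you describe only exists for the linearized Hamiltonian flow on all of $\R^4$, and the "radial" direction is not even tangent to $S$. In the natural constant $(x_2,y_2)$-frame this single block gives winding intervals in $(-1,0)$, containing $0$ (hyperbolic), and in $(0,1)$, hence indices $-1,0,1$ via \eqref{eq:defi-cz-index-nondeg}. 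The remaining $+2$ for each orbit is \emph{not} a dynamical contribution of an "$(x_1,y_1)$-block"; it is the relative winding of that constant frame against a global symplectic trivialization of $\xi\subset TS^3$, which must actually be computed (in the paper it is shown to equal $1$, producing the uniform shift $-1,0,1\mapsto 1,2,3$). Your bookkeeping (a universal "$+1$" from the radial block, "winding $0$" at $P_1$ without fixing the sign of the small rotation, "extra $+1$" at $P_3$) cannot reproduce a uniform shift of $+2$ and never pins down the relative trivialization — and you acknowledge this is the main obstacle without resolving it; but this comparison \emph{is} the content of the lemma, since everything else is the elementary constant-coefficient ODE that the paper also solves.

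The fallback you propose to avoid this computation does not work. At $\epsilon=0$ one has $H_2=(x_2^2+y_2^2)^2/2$, whose only critical point is the origin and is degenerate: the three points $p_1,p_2,p_3$ all collapse there, so $\gamma_1$ and $\gamma_3$ do not "persist from the $\epsilon=0$ situation" — the limiting orbit is a single totally degenerate orbit (its transverse monodromy is the identity), so there is no path of nondegenerate orbits along which to invoke continuity of the Conley--Zehnder index, and convexity of the $\epsilon=0$ domain gives no information about indices $1$ and $2$ anyway. Likewise, knowing that $P_2$ is positive hyperbolic only forces its index to be \emph{even}; Lemma \ref{le:properties-cz-index}(4) is the converse implication and does not force $\mu(P_2)=2$ — pinning the value again requires the same winding/trivialization computation. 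The paper's proof is the direct route you should follow: construct an explicit global frame of $\xi$, write the linearized Reeb flow along each $\gamma_i$ in the induced constant frame of the $(x_2,y_2)$-plane as a constant-coefficient $2\times2$ system, solve it to get nondegeneracy and indices $-1,0,1$, and then compute the winding number ($=1$) of that frame with respect to the global trivialization to obtain $\mu(P_1)=1$, $\mu(P_2)=2$, $\mu(P_3)=3$.
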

\begin{proof}
	First we define a trivialization of $\xi=\ker \lambda$. 
	Consider the matrices $A_0={\rm{Id}}$,
	\begin{equation*}
	A_1=\left[\begin{matrix}
			0&0&0&1\\
			0&0&1&0\\
			0&-1&0&0\\
			-1&0&0&0
		\end{matrix}\right],~
		A_2=\left[\begin{matrix}
			0&0&-1&0\\
			0&0&0&1\\
			1&0&0&0\\
			0&-1&0&0
		\end{matrix}\right],~
		A_3=
		\left[\begin{matrix}
			0&-1&0&0\\
			1&0&0&0\\
			0&0&0&-1\\
			0&0&1&0
		\end{matrix}\right].
	\end{equation*}
	For each $z\in S$ we have an ortonormal frame of $T_z\R^4\simeq \R^4$ given by $\{X_i(z)\},~i=0,\dotsc,3$, where
	\begin{equation}
		X_i:=A_i \left(\frac{\nabla H(z)}{\|\nabla H(z)\|}\right),~i=0,1,2,3.
	\end{equation}
	Note that $X_3(z)=\frac{X_H(z)}{\|\nabla H(z)\|}$ and $T_zS={\rm span}\{X_1,X_2,X_3\}$.
	
	The contact structure $\xi=\ker \lambda$ is isomorphic to ${\rm span}\{X_1,X_2\}$, as a tangent hyperplane distribution, via the projection 
	$\pi_{12}:TS\to {\rm span}\{X_1,X_2\}$ along $X_3$. 
	Let $\bar{X}_i\in \xi$ be the vector field determined by 
	\begin{equation}\label{eq:definition-xbar}
		\pi_{12}(\bar{X}_i)=X_i,~i=1,2.
	\end{equation}
	We can define a symplectic trivialization of $\xi$ by 
	\begin{equation}
		\Psi:S\times \R^2\to \xi;~~ \Psi(z,\alpha_1,\alpha_2)=\alpha_1\bar{X}_1(z)+\alpha_2 \bar{X}_2(z).
	\end{equation}
	Note that along the orbits $P_i$, $i=1,2,3$, ${\rm span}\{X_1,X_2\}$ coincides with the plane $(x_2,y_2)$. Indeed, along the orbits, we have $P=Q=0$ and
	\begin{equation}
		\begin{aligned}
			X_1(z)&=\frac{1}{x_1^2+y_1^2}(0,0,-y_1,-x_1)\\
			X_2(z)&=\frac{1}{x_1^2+y_1^2}(0,0,x_1,-y_1).
		\end{aligned}
	\end{equation}

	We can define another trivialization along the orbits $P_i$, $i=1,2,3$, by
	\begin{equation}
		\rho_i:\R\times \R^2\to x_i^*\xi;~~(t,\beta_1,\beta_2)\mapsto \left(\pi_{12}^{-1}\right)_{x_i(t)}(\beta_1(0,0,1,0)+\beta_2(0,0,0,1)).
	\end{equation} 
	It is easy to check that $\rho_i$,~$i=1,2,3$, is a symplectic trivialization. 
	A simple calculation shows that, in the trivialization $\rho_i$,~$i=1,2,3$, $dR_\lambda(\gamma_i(t))$ is represented by 
	\begin{equation}
		dR_\lambda(\gamma_i(t))=	\left[
		\begin{matrix}
			0&-h(\gamma_i(t))\dfrac{\partial P}{\partial y_2}(p_i,0)\\
			h(\gamma_i(t))\dfrac{\partial Q }{\partial x_2}(p_i,0)&0
		\end{matrix}
		\right] =: \left[
		\begin{matrix}
			0&k_1^i\\
			k_2^i&0
		\end{matrix}
		\right]
	\end{equation}
	which is a constant linear map.  
	Note that, for sufficiently small $\epsilon>0$, we have 
	$0<|k^i_1|,|k_2^i|<<1, ~~ i=1,2,3$.

	Let $\varphi^t$ be the flow of $R_\lambda$. 
	The linearized flow $d\varphi^t(x(0))$ along a trajectory $x(t)$ is the solution of the equation
	\begin{equation}\label{eq:edo-fluxo-linearizado}
		\dfrac{d}{dt}d\varphi^t(x(0))=dR_\lambda(x(t))\cdot d\varphi^t(x(0)).
	\end{equation}

		 For $i=1$, we have  $k_1^1>0$ and $k_2^1<0$. Solving equation \eqref{eq:edo-fluxo-linearizado} we find 
		\begin{equation}
			d\varphi^t_{x_1(0)}=\left[\begin{matrix}
				\cos(\sqrt{-k_1^1k_2^1}t)&\frac{\sqrt{k_1^1}}{\sqrt{-k_2^1}}\sin(\sqrt{-k_1^1k_2^1}t)\\
				-\frac{\sqrt{-k_2^1}}{\sqrt{k_1^1}}\sin(\sqrt{-k_1^1k_2^1}t)&\cos(\sqrt{-k_1^1k_2^1}t)
			\end{matrix}\right].
		\end{equation}
		Thus, if $\epsilon$ is sufficiently small, $P_1$ is nondegenerate and for any $z\in \R^2$, $z(t):=d\varphi^t_{x_1(0)}z$ has winding number $-1<\Delta(z)<0$, where $\Delta(z)$ is defined by \eqref{eq:winding-cz}.  We conclude that $\mu(P_1,\rho_1)=-1$.
		 	
		 For $i=2$, we have $k_1^2>0$ and $k_2^2>0$. Solving equation \eqref{eq:edo-fluxo-linearizado} we find 
		\begin{equation}
			d\varphi^t_{x_2(0)}=\left[\begin{matrix}
				\cosh(\sqrt{k_1^2k_2^2}t)&\frac{\sqrt{k_1^2}}{\sqrt{k_2^2}}\sinh(\sqrt{k_1^2k_2^2}t)\\
				\frac{\sqrt{k_2^2}}{\sqrt{k_1^2}}\sinh(\sqrt{k_1^2k_2^2}t)&\cosh(\sqrt{k_1^2k_2^2}t)
			\end{matrix}\right].
		\end{equation}
		Note that 
		\begin{equation}
			\begin{aligned}
				d\varphi^t_{x_2(0)}\left(\frac{\sqrt{k_1^2}}{\sqrt{k_2^2}},1\right)&=e^{\sqrt{k_1^2k_2^2}t}\left(\frac{\sqrt{k_1^2}}{\sqrt{k_2^2}},1\right),~~\forall t\in \R,\\
				d\varphi^t_{x_2(0)}\left(-1,\frac{\sqrt{k_2^2}}{\sqrt{k_1^2}}\right)&=e^{-\sqrt{k_1^2k_2^2}t}\left(-1,\frac{\sqrt{k_2^2}}{\sqrt{k_1^2}}\right),~~\forall t\in \R.
			\end{aligned} 
		\end{equation}
		We conclude that $P_2$ is nondegenerate and $\mu(P_2,\rho_2)=0$.
		
		For $i=3$, we have $k_1^3<0$ and $k_2^3>0$. Solving equation \eqref{eq:edo-fluxo-linearizado} we find 
		\begin{equation}
			d\varphi^t_{x_3(0)}=\left[\begin{matrix}
				\cos(\sqrt{-k_1^3k_2^3}t)&-\frac{\sqrt{-k_1^3}}{\sqrt{k_2^3}}\sin(\sqrt{-k_1^3k_2^3}t)\\
				\frac{\sqrt{k_2^3}}{\sqrt{-k_1^3}}\sin(\sqrt{-k_1^3k_2^3}t)&\cos(\sqrt{-k_1^3k_2^3}t)
			\end{matrix}\right]
		\end{equation}
		Thus, if $\epsilon$ is sufficiently small, $P_3$ is nondegenerate and for any $z\in \R^2$, $z(t):=d\varphi^t_{x_3(0)}z$ has winding number $0<\Delta(z)<1$.  We conclude that $\mu(P_3,\rho_3)=1$. 

	Now, the Conley-Zehnder index of the orbit $P_i$, $i=1,2,3$, satisfies the formula
	$$\mu(P_i)=\mu(P_i,\Psi)=\mu(P_i,\rho_i)+2\wind((\rho_i)_t(1,0),\Psi)$$
	and 
	\begin{equation}
		\begin{aligned}
			\wind((\rho_i)_t(1,0),\Psi)
			&=\deg\left((\pi_{12}^{-1})_{\gamma_i(t)}(0,0,1,0),\Psi\right)\\
			&=\deg\left(\R/T_i\Z\ni t\mapsto \Psi^{-1}_{\gamma_i(t)}(\pi_{12}^{-1})_{\gamma_i(t)}(0,0,1,0)\right)\\
			&=\deg\left(\R/T_i\Z\ni t\mapsto \left(-r_i\sin\left(\frac{2}{r_i^2}t\right),r_i\cos\left(\frac{2}{r_i^2}t\right)\right)\right)\\
			&=1.
		\end{aligned}
	\end{equation}
	We conclude that $\mu(P_1)=1$, $\mu(P_2)=2$ and $\mu(P_3)=3$.
\end{proof} 

\begin{lemma}\label{le:exemplo-plano-cilindro}
	There exists $J\in \mathcal{J}(\lambda)$ such that the almost complex structure $\tj=(\lambda,J)$ admits a finite energy plane $\tu:\C\to \R\times S$ asymptotic to $P_3$ at its positive puncture $z=\infty$ and a finite energy cylinder $\tv:\C\setminus \{0\}\to \R\times S$ asymptotic to $P_3$ at its positive puncture $z=\infty$ and $P_1$ at its negative puncture $z=0$.
\end{lemma}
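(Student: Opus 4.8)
The plan is to exploit the product structure $H = H_1 + H_2$ of the Hamiltonian to build the plane and the cylinder essentially by hand, as explicit pseudo-holomorphic curves invariant under the $S^1$-rotation in the $(x_1,y_1)$-plane. First I would analyze the geometry of the energy surface $S = H^{-1}(1/2)$: since $H_1(x_1,y_1) = (x_1^2+y_1^2)/2$, the set $S$ fibers over the region $\{H_2 \le 1/2\}$ in the $(x_2,y_2)$-plane, with the circle fiber $\{x_1^2+y_1^2 = 1 - 2H_2(x_2,y_2)\}$ collapsing to a point exactly over the boundary curve $\{H_2 = 1/2\}$, which is a single smooth embedded circle bounding a disk-like region $\mathcal{D}\subset \R^2$ (for small $\epsilon$, $\{H_2 \le c\}$ is a topological disk for $0 < c \le 1/2$). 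The three periodic orbits $P_1,P_2,P_3$ project to the three critical points $p_1,p_2,p_3$ of $H_2$. The key dynamical observation is that the projected Reeb flow on the $(x_2,y_2)$-plane is, up to the positive reparametrization function $h$, the Hamiltonian flow of $H_2$; so regular level sets of $H_2$ inside $\{H_2 < 1/2\}$ that are circles not enclosing $p_1$ correspond to surfaces in $S$ foliated by Reeb trajectories together with the $S^1$-action — these are candidate projections of holomorphic curves. Concretely, the preimage in $S$ of a path in the $(x_2,y_2)$-plane running from $p_3$ out to the boundary circle $\{H_2 = 1/2\}$ is a disk (the circle fiber over $p_3$ is $P_3$, and the fiber over the boundary point is a point), and the preimage of a path from $p_3$ to $p_1$ is a cylinder whose two boundary circles are $P_3$ and $P_1$.

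The second step is to promote these topological disks and cylinders to genuine finite-energy $\tilde J$-holomorphic curves. For this I would choose $J \in \mathcal{J}(\lambda)$ adapted to the splitting: on the energy surface, $\xi$ is spanned (away from the orbits) by the $S^1$-orbit direction and a direction tangent to the $H_2$-level geometry, and I can pick $J$ rotating one to the other, so that the $2$-dimensional submanifolds of $S$ described above are $J$-invariant in the appropriate sense. Then the standard construction (going back to Hofer–Wysocki–Zehnder, and used in exactly this product setting in the literature on ODE examples) produces a $\tilde J$-holomorphic map into $\R \times S$ by solving an ODE for the $a$-component along each Reeb trajectory, with the $S^1$-symmetry reducing the Cauchy–Riemann equation to a one-variable problem; the asymptotic analysis of Section~\ref{ch:asymptotic operator}–\ref{ch:finite-energy-surfaces} then guarantees convergence to $P_3$ at the positive puncture and to $P_1$ (for the cylinder) at the negative puncture, since those are the orbits sitting over the endpoints of the chosen path. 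Finite energy follows because the $d\lambda$-area is controlled by the action difference $T_3$ (resp.\ $T_3 - T_1$), which is finite. Alternatively, if a direct construction is cumbersome, I would instead invoke dynamical convexity: for small $\epsilon$ the energy surface $S$ is $C^2$-close to the round sphere $\{(x_1^2+y_1^2)/2 + (x_2^2+y_2^2)^2/2 = 1/2\}$ restricted suitably — more precisely one checks $S$ is strictly convex or at least dynamically convex after a small perturbation — so by \cite{hwz1998} it carries a disk-like global surface of section with binding a single index-$3$ orbit; one then identifies that binding orbit with $P_3$ (it is the unique index-$3$ orbit unlinked from everything, by Lemma~\ref{le:exemplo-indices} and the linking hypotheses), giving the plane, and the cylinder asymptotic to $P_3$ and $P_1$ is obtained from the open-book/return-map structure since $P_1$ is an interior periodic orbit linking $P_3$ with multiplicity one.

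The main obstacle I expect is the cylinder, not the plane: the plane asymptotic to $P_3$ is essentially forced by (near-)convexity and is the most robust object, but producing a finite-energy cylinder with the prescribed two ends $P_3$ and $P_1$ requires knowing that $P_1$ is ``visible'' from $P_3$ — i.e.\ that there is a holomorphic cylinder and not, say, a broken configuration or a curve with extra negative punctures. In the explicit product picture this is transparent because the path in the $(x_2,y_2)$-plane from $p_3$ to $p_1$ can be chosen to cross only regular values of $H_2$, so no bubbling can occur and the cylinder is embedded; but making this rigorous means carefully checking that the chosen almost complex structure makes the relevant $2$-submanifold of $S$ the image of an honest $\tilde J$-holomorphic curve, and controlling the asymptotics at $P_1$ (which has Conley–Zehnder index $1$, the delicate case) using the asymptotic operator computations of Lemma~\ref{le:exemplo-indices}. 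Once the plane and cylinder are in hand, Lemma~\ref{le:exemplo-plano-cilindro} is proved, and together with Lemma~\ref{le:exemplo-indices}, \eqref{eq:t1-t2-t3}, \eqref{eq:t3menort1}, and the remaining Lemmas~\ref{le:exemplo-nao-existe-esfera} (hypothesis (ii) on unlinked orbits of small period, and the nonexistence of the embedded sphere, hypothesis (iv)) the hypotheses of Proposition~\ref{pr:example} are all verified, so Proposition~\ref{pr:exemplo-prova} follows immediately.
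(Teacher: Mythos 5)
Your main construction is essentially the paper's own proof: the paper takes $J$ defined by $J\bar{X}_1=\bar{X}_2$, the $S^1$-symmetric ansatz $u(s,t)=(f(s)\cos 2\pi t,\, f(s)\sin 2\pi t,\, g(s),\,0)$ whose projection runs along the segment of the $x_2$-axis (from $p_3$ to $\bar{x}$ for the plane, from $p_1$ to $p_3$ for the cylinder), reduces the Cauchy--Riemann equations to a scalar ODE $g'(s)=G(g(s))$ with the $\R$-component recovered by integration, and obtains the asymptotics at $P_3$ and $P_1$ and the energy bound exactly as you outline. One caution: discard the dynamical-convexity fallback, since $S$ carries the closed orbits $P_1$ and $P_2$ of Conley--Zehnder index $1$ and $2$, so it is not dynamically convex and \cite{hwz1998} does not apply; note also that the projected path from $p_3$ to $p_1$ necessarily crosses the critical value $H_2=0$ (though only at a regular point), which is harmless for the ODE but contradicts your ``only regular values'' remark.
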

\begin{proof}
	Our proof follows \cite[\S 5]{dePS2013}.
	Define a $d\lambda$-compatible complex structure $J:\xi\to \xi$ by 
	\begin{equation}\label{eq:definition-J}
		J \bar{X}_1=\bar{X}_2
	\end{equation}
	where $\bar{X}_i$, $i=1,2$ is defined by \eqref{eq:definition-xbar}.
	Let $\tilde{J}=(\lambda,J)$ be the almost complex structure on $\R\times S$ defined by \eqref{eq:J-til}.
	
	First we look for the $\tj$-holomorphic plane. 
	Our candidate for the plane $\tu$ will project into the surface 
	\begin{equation}\label{eq:surface-image}
		\{(x_1,y_1,x_2,0)\in \R^4~|~x_2>p_3\}\cap S.
	\end{equation}
	We write $\tu(s,t)=\tu(e^{2\pi(s+it)})$ for $(s,t)\in \R\times S^1$. 
	First we assume that $\tu=(a(s,t),u(s,t)):\R\times S^1\to \R\times S$ satisfies 
	\begin{equation}\label{eq:defi-u}
		u(s,t)=(f(s)\cos2\pi t,f(s)\sin 2\pi t,g(s),0),
	\end{equation}
	where $f:\R\to (0,+\infty)$ and $g:\R\to \R\setminus \{0\}$ are smooth functions to be determined. 
	
	Recall that $\tu$ is $\tj$-holomorphic if and only if it satisfies
	\begin{align}
		\pi u_s+J\pi u_t=0  \label{eq:jhol1}\\  
		da\circ i=-u^*\lambda   \label{eq:jhol2},
	\end{align}
	where $\pi:TS\to \xi$ is the projection along the Reeb vector field $R_\lambda$. 
	
	Now we find conditions on $f(s)$ and $g(s)$ so that $u(s,t)$ defined by \eqref{eq:defi-u} satisfies \eqref{eq:jhol1}.
	First note that 
	\begin{equation}\label{eq:derivadasu}
		\begin{aligned}
			\pi u_s&=u_s-\lambda(u_s)R_\lambda\\
			&=(f'(s)\cos 2\pi t, f'(s)\sin 2\pi t,g'(t),0)\\
			\pi u_t&=u_t-\lambda(u_t)R_\lambda\\
			&=(-2\pi f(s)\sin 2\pi t,2\pi f(s) \cos 2\pi t,0,0)\\
			&~~~~~~~~-\pi f(s)^2h(-f(s)\sin 2\pi t,f(s)\cos 2\pi t,0, Q).
		\end{aligned}
	\end{equation}
	Restricting the frame $\{\bar{X}_1,\bar{X}_2\}$ to the surface \eqref{eq:surface-image} we find
	\begin{equation}\label{eq:framex}
		\begin{aligned}
			\bar{X}_1&=X_1-\lambda(X_1)R_\lambda\\
			&=(0,Q,-y_1,-x_1)-h(x_1Q-x_2x_1)(-y_1,x_1,0,Q)\\
			\bar{X}_2&=X_2-\lambda(X_2)R_\lambda\\
			&=(-Q,0,x_1,-y_1)-h(y_1Q-x_2y_1)(-y_1,x_1,0,Q).
		\end{aligned}
	\end{equation}
	Using \eqref{eq:derivadasu} and \eqref{eq:framex} we find
	\begin{equation}
		\begin{aligned}
			\bar{X}_1(u(s,t))&=-\frac{y_1}{g'(s)}\pi u_s +x_1\frac{(1+h(Q-x_2)Q)}{h\pi f(s)^2Q}\pi u_t\\
			\bar{X}_2(u(s,t))&=\frac{x_1}{g'(s)}\pi u_s+y_1\frac{(1+h(Q-x_2)Q)}{h\pi f(s)^2Q}\pi u_t.
		\end{aligned}
	\end{equation}
	Now using the definition of $J$ \eqref{eq:definition-J} and equation \eqref{eq:jhol1}, we find the differential equation
	\begin{equation}\label{eq:edo1}
		g'(s)=\frac{-h\pi f(s)^2Q}{1+h(Q-x_2)Q}.
	\end{equation}
	Since the image of $u$ is the surface \eqref{eq:surface-image}, we have
	\begin{equation}\label{eq:relacao-f-e-g}
		\frac{1}{2}f(s)^2=\frac{1}{2}-\left(\frac{g(s)^4}{2}+\epsilon a g(s)^3+\epsilon^2cg(s)^2\right).
	\end{equation}
	Moreover, $h(u(s,t))=2(f(s)^2+g(s)Q(g(s),0))^{-1}$.
	Thus, we may view \eqref{eq:edo1} as an ODE of the type
	\begin{equation}
		g'(s)=G(g(s)),
	\end{equation}
	where $G:[p_3,\bar{x}]\to \R$ is a smooth function. Here $\bar{x}$ is the unique positive solution of $H_2(x,0)=\frac{1}{2}$. 
	
	Note that $G$ vanishes in $g=p_3$ and in $g=\bar{x}$, and G is negative on $(p_3,\bar{x})$. 
	Thus, $g'(x)<0$ for $g(s)\in (p_3,\bar{x})$. Consequently $g(s)$ is strictly decreasing and satisfies 
	\begin{align}\label{eq:limite-g(s)}
		\lim_{s\to-\infty}g(s)=\bar{x}&&\lim_{s\to +\infty}g(s)=p_3.
	\end{align}
	It follows from \eqref{eq:relacao-f-e-g} and \eqref{eq:limite-g(s)} that 
	\begin{align}\label{eq:limite-f(s)}
		\lim_{s\to -\infty}f(s)=0&&\lim_{s\to +\infty}f(s)=r_3,
	\end{align}
	where $r_3$ is defined by \eqref{eq:defi-orbitas}.
	Using \eqref{eq:limite-g(s)} and \eqref{eq:limite-f(s)} we conclude that the loops $S^1\ni t\mapsto u(s,t)$ converge to $S^1\ni t \mapsto x_3(T_3t)$ in $C^\infty$ as $s\to +\infty$. 
	Moreover, the loops $S^1\ni t\mapsto u(s,t)$ converge to $(0,0,\bar{x},0)$ in $C^\infty$ as $s\to \infty$. 
	
	Now we define the function $a:\R\times S^1\to \R$  by
	\begin{equation}\label{eq:definicao-a}
		a(s,t)=\pi\int_0^sf(\tau)d\tau.
	\end{equation}
	Then  
	\begin{equation}
		\begin{aligned}
			a_s(s,t)&=\pi f(s)^2=\lambda(u_t)\\
			a_t(s,t)&=0=-\lambda(u_s).
		\end{aligned}
	\end{equation}
	
	Consider the map $\tu=(a,u):S^1\times \R\to \R\times S$ with $u$ defined by \eqref{eq:defi-u}, where $g$ is a solution of \eqref{eq:edo1} satisfying $g(0)\in (p_3,\bar{x})$, $f$ is defined by \eqref{eq:relacao-f-e-g}, and $a$ defined by \eqref{eq:definicao-a}.
	Since $\tu=(a,u)$ satisfies \eqref{eq:jhol1} and \eqref{eq:jhol2}, $\tu$ is $\tj$-holomorphic.
	Moreover, $\tu$ has finite energy. Indeed, using Stokes theorem and equations \eqref{eq:limite-f(s)} and \eqref{eq:definicao-a} we find
	\begin{equation}
		\begin{aligned}
			\int_{\R\times S^1} \tu^*d(\phi\lambda)&=\lim_{R\to+\infty}\left(\phi(a(R))\pi f(R)^2-\phi(a(-R))\pi f(-R)^2\right)\\
			&=\lim_{R\to +\infty}\phi(R)\pi r_3^2\\
			&=\lim_{R\to +\infty}\phi(R)T_3
		\end{aligned}
	\end{equation}
	where $\phi\lambda(a,u)=\phi(a)\lambda(u)$, $\phi:\R\to [0,1]$ is smooth and $\phi'\geq 0$. 
	We conclude that $E(\tu)=T_3$. 
	
	The mass $m(-\infty)$ of the singularity $z=-\infty$ satisfies 
	$$m(-\infty)=\lim_{s\to -\infty}\int_{\{s\}\times S^1}u^*\lambda=\lim_{s\to -\infty}\pi f(s)^2=0.$$
	This implies that $\tu$ has a removable singularity at $s=-\infty$. Removing it, we obtain a finite energy $\tj$-holomorphic plane $\tu:\C\to\R\times S^1$ asymptotic to $P_3$ at its positive puncture $z=\infty$. 
	
	{The construction of the cylinder $\tv:\C\setminus \{0\}\to \R\times S^1$ is completely analogous, but we start with a solution of \eqref{eq:edo1} satisfying $g(0)\in (p_1,p_3)$.}
	In this case, we obtain 
	\begin{align*}
		\lim_{s\to-\infty}g(s)=p_1&&\lim_{s\to +\infty}g(s)=p_3\\
		\lim_{s\to -\infty}f(s)=r_1&&\lim_{s\to +\infty}f(s)=r_3,
	\end{align*}
	and the singularity $z=-\infty$ is non-removable, since 
	$$m(-\infty)=\lim_{s\to -\infty}\int_{\{s\}\times S^1}v^*\lambda=\lim_{s\to -\infty}\pi f(s)^2=\pi r_1^2=T_1.$$
\end{proof}

\begin{lemma}\label{le:exemplo-linking}
	Assuming that $\epsilon$ is sufficiently small, if $P=(\gamma, T)\in\mathcal{P}(\lambda)$ satisfies $P\neq P_3,~T\leq T_3$  \text{ and } ${\rm lk}(P,P_3)=0$, then $P\in\{P_1,P_2\}$.
\end{lemma}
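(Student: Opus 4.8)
The statement we must prove is: for $\epsilon$ sufficiently small, if $P=(x,T)\in\mathcal{P}(\lambda)$ satisfies $P\neq P_3$, $T\leq T_3$ and ${\rm lk}(P,P_3)=0$, then $P\in\{P_1,P_2\}$. The key geometric observation is that the contact form $\lambda=\lambda_0|_S$ arises from the uncoupled Hamiltonian $H=H_1+H_2$, so the Reeb flow on $S$ is, up to time reparametrization, the product of the rotational flow in the $(x_1,y_1)$-plane generated by $H_1$ and the planar flow in the $(x_2,y_2)$-plane generated by $H_2$. Concretely, I would work with the projection $\Pi:(x_1,y_1,x_2,y_2)\mapsto(x_2,y_2)$ of $S$ onto the $(x_2,y_2)$-plane. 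A closed Reeb orbit $P$ of period $T$ projects to a loop in $\{H_2\le \text{const}\}$ which is either a critical point of $H_2$ (giving the orbits $P_1$, $P_2$, $P_3$, whose $(x_2,y_2)$-projection is a point) or a nonconstant periodic loop following a regular level set of $H_2$.

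\textbf{Step 1: classify closed Reeb orbits.} First I would write down explicitly that $R_\lambda=hX_H$ with $h>0$, so periodic orbits of $R_\lambda$ are exactly periodic orbits of $X_H$ on $S$ (with rescaled period). Since $X_H$ splits, a periodic orbit of $X_H$ must project under $\Pi$ to a periodic orbit of the planar Hamiltonian vector field $X_{H_2}$; the only nonconstant periodic orbits of $X_{H_2}$ lie on regular closed level curves of $H_2$ surrounding the minimum $(p_1,0)$, at a value in $(H_2(p_1,0),0)$ (inside the homoclinic loop through the saddle). For such an orbit, both the $(x_1,y_1)$-component (which circulates with angular speed $1$ in the appropriate units, i.e. period $\pi r^2$ where $r^2=1-2c$ for energy level $H_2=c$ is impossible because $H_1=1/2-c$... — the $(x_1,y_1)$ factor has minimal period $2\pi(1/2-c)^{-1}$... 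I would carefully recompute) and the $(x_2,y_2)$-component have their own minimal periods, and a genuine periodic orbit of the product requires these to be rationally commensurable. The point is to show that, for $\epsilon$ small, any such orbit of period $\le T_3$ that projects to a nonconstant loop is \emph{linked} with $P_3$, because $P_3$ projects to the point $(p_3,0)$ which is enclosed by no regular level curve in $\{H_2\le 0\}$... — here I must be careful about which orbits are linked to $P_3$. This is where the linking computation enters.

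\textbf{Step 2: compute linking numbers.} The orbit $P_3$ sits over $(p_3,0)$, the local maximum of $H_2$; a closed Reeb orbit $P$ with $\Pi(P)$ a nonconstant loop lies over a level curve surrounding $(p_1,0)$ and not enclosing $(p_3,0)$ in $\{H_2\le 0\}$. I would argue that such $P$ can be isotoped in $S^3\setminus P_3$ and use the product structure to reduce ${\rm lk}(P,P_3)$ to a winding number computation: since $P$ lives over a loop bounding a disk in the $(x_2,y_2)$-plane that misses $(p_3,0)$, and $P_3$ is the circle $\{(x_1,y_1): x_1^2+y_1^2=r_3^2\}\times\{(p_3,0)\}$, one gets that $P$ is \emph{unlinked} with $P_3$ essentially only when its $(x_1,y_1)$-winding is zero relative to a disk spanning $P_3$ — but $P$ does wind nontrivially in $(x_1,y_1)$. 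The cleaner route: show that any $P$ over a nonconstant level curve \emph{is} linked to $P_3$, hence ${\rm lk}(P,P_3)=0$ forces $\Pi(P)$ to be a critical point, so $P\in\{P_1,P_2,P_3\}$; combined with $P\neq P_3$ this gives $P\in\{P_1,P_2\}$. I expect the \textbf{main obstacle} to be precisely this linking computation: making rigorous that every closed Reeb orbit over a nonconstant $H_2$-level curve of energy $\le$ that corresponding to $T_3$ must link $P_3$, which requires controlling both the planar geometry (which level curves appear, and that none encloses $(p_3,0)$ while staying at energy $\le 0$) and the $(x_1,y_1)$-rotation, using $H_2(p_i,0)=O(\epsilon^4)$ and $T_3<2T_1$ so that no high iterates interfere. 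I would handle it by explicitly parametrizing $S$ as a graph, exhibiting a concrete Seifert disk for $P_3$ (namely $\{(x_1,y_1,x_2,0): x_2\ge p_3\}\cap S$, as used in Lemma~\ref{le:exemplo-plano-cilindro}), and counting intersections of $P$ with that disk; since $P$ stays in $\{x_2<p_3\}$ when its energy is below the saddle value and $\epsilon$ is small, it misses the disk entirely, giving ${\rm lk}(P,P_3)=0$ — wait, that would contradict what I want. So the correct statement must be that the orbits over level curves near $(p_1,0)$ are indeed unlinked with $P_3$, and I must instead show there are \emph{no} such periodic orbits of period $\le T_3$ other than $P_1$ itself, i.e. the commensurability condition fails for $\epsilon$ small except at the critical point.

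\textbf{Step 3: rule out nonconstant orbits of small period.} This is the decisive estimate. For a regular level $\{H_2=c\}$ with $c\in(H_2(p_1,0),0)$ the minimal period $T_2^{\rm pl}(c)$ of the planar orbit tends, as $c\to H_2(p_1,0)$, to $2\pi/\sqrt{\det D^2H_2(p_1,0)}$, which is $O(1/\epsilon)$ since the Hessian at $(p_1,0)$ is $O(\epsilon^2)$ (the leading quartic plus the $\epsilon^2$-quadratic terms); and as $c\to 0^-$ it blows up (homoclinic). The $(x_1,y_1)$-factor for a product orbit at that energy has minimal period of order $1$ (bounded away from $0$ and $\infty$ since $H_1=1/2-c\approx 1/2$). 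For the two to be commensurable with total period $\le T_3=\pi r_3^2=\pi(1-2H_2(p_3,0))\approx\pi$, one would need roughly $k\cdot(\text{order }1)=l\cdot(\text{order }1/\epsilon)$ with the common value $\le\pi$, forcing $l=0$, i.e. the $(x_2,y_2)$-loop is degenerate — a contradiction unless it is the critical point $(p_1,0)$. Hence the only closed Reeb orbits of period $\le T_3$ are $P_1,P_2,P_3$ (the three equilibria of $X_{H_2}$), and trivially the hypotheses $P\neq P_3$, ${\rm lk}(P,P_3)=0$ then leave only $P\in\{P_1,P_2\}$ (note $P_2$ is unlinked with $P_3$ since all three are a trivial link). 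I would also double-check using $T_3<2T_1$ that no iterate $P_1^m$ or $P_2^m$ with $m\ge2$ has period $\le T_3$, so the set of orbits of period $\le T_3$ is literally $\{P_1,P_2,P_3\}$, completing the argument.
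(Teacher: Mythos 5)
Your overall strategy is the same as the paper's (split the flow into the $(x_1,y_1)$-rotation and the slow planar $H_2$-dynamics, kill nonconstant planar projections near the critical points by a period estimate, and detect linking with $P_3$ by counting intersections with the disk $D=\{(x_1,y_1,x_2,0):x_2\ge p_3\}\cap S$), but the case analysis you end up with has a real hole. The nonconstant periodic orbits of $X_{H_2}$ are not only the small level curves in the lobe around $(p_1,0)$: there are also level curves around $(p_3,0)$, and, more importantly, large level curves with $H_2$-value anywhere up to $\tfrac12$ (allowed on $S$ since $H_1=\tfrac12-H_2\ge0$) which enclose all three critical points. For those large curves the planar period is of order one (the quartic term gives angular speed $2(x_2^2+y_2^2)$), so your commensurability/slow-period mechanism says nothing about them; they can only be excluded by the hypothesis ${\rm lk}(P,P_3)=0$, precisely via the transverse intersection count with $D$ that you set up and then abandoned after (correctly) noticing that orbits around $(p_1,0)$ miss $D$. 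In your final summary ("the only closed Reeb orbits of period $\le T_3$ are $P_1,P_2,P_3$") the linking hypothesis is never actually used, and that stronger claim is not established by what you wrote. The paper's dichotomy is the one you need: if $H_2\le H_2(p_1,0)$ along the orbit, exclude it by the period/action estimate; otherwise its planar projection encloses $(p_3,0)$, hence crosses the ray $\{x_2\ge p_3,\,y_2=0\}$, and since $D$ is transverse to the Reeb field all intersections have one sign, so ${\rm lk}(P,P_3)\neq0$.

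The quantitative step also has a gap as written. You bound the planar period only through its limits as the energy tends to the center value and to the homoclinic value; these two limits do not control the intermediate level curves. The paper's Claim I is exactly the missing uniform tool: any nonconstant periodic orbit of a Hamiltonian system satisfies $T\cdot\sup|d^2H_2|\ge 2\pi$, and the whole sublevel region $\{H_2\le H_2(p_1,0)\}$ is contained in a ball of radius $O(\epsilon)$ on which $|d^2H_2|<\tfrac12$ (in fact $O(\epsilon^2)$), so the planar period exceeds $4\pi$ there. Finally, do not conflate the $X_H$-period with the Reeb period $T$, which is the action $\int_P\lambda_0$: the hypothesis $T\le T_3\approx\pi$ does not bound the $X_H$-period by $\pi$ (even $P_3$ has $X_H$-period $2\pi$), so "common value $\le\pi$" is unjustified. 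The correct bookkeeping is: the $X_H$-period is a multiple $2\pi k$ of the $(x_1,y_1)$-period and is at least the planar period $>4\pi$, so $k\ge3$; then the action is $k\pi(1-2C)$ plus a planar term of size $O(\epsilon^2)k$, hence larger than $2T_1>T_3$. That is where $T_3<2T_1$ enters, and it repairs your Step 3.
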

\begin{proof}
First we prove the following claim. 
	\paragraph{\textit{Claim I}}
	Let $\gamma:\R\to \R^{2n}$ be a nonconstant $T$-periodic solution of $\dot{\gamma}(t)=X_{H}(\gamma(t))$, where $H:\R^{2n}\to \R$.  Then $h T\geq 2\pi$, where $h:=\sup_{t\in [0,T]}|d^2H(\gamma(t))|$.

	To prove the claim, first we note that $\dot{\gamma}$ satisfies
	\begin{equation}\label{eq:fourier}
		\|\dot{\gamma}\|_{L^2}\leq \frac{T}{2\pi}\|\ddot{\gamma}\|_{L^2}.
	\end{equation}
	This follows from the fact that $\int_0^T\dot{\gamma}=0$ and from the representations of $\dot{\gamma}$ and $\ddot{\gamma}$ as  Fourier series. 
	Now we have
	\begin{equation}\label{eq:fourier-ineq}
		\|\ddot{\gamma}\|_{L^2}=\|(dX_H\circ\gamma)\dot{\gamma}\|_{L^2}
		\leq \left(\int_0^T|dX_H(\gamma(t))|^2\|\dot{\gamma}(t)\|^2 dt\right)^{\frac{1}{2}}
		\leq h\|\dot{\gamma}\|_{L^2}.
	\end{equation}
	We conclude from \eqref{eq:fourier} and \eqref{eq:fourier-ineq} that $h T\geq 2\pi$.

	Consider the function $\R^3\ni (\epsilon,x,y)\mapsto H_2^\epsilon(x,y)$, where 
	$$H_2^\epsilon(x,y)=(x^2+y^2)^2-\epsilon (x^2+y^2)x-\frac{\epsilon}{2}(x^2+y^2).$$
	Then $\R^3\ni (\epsilon,x,y)\mapsto d^2(H_2^\epsilon)(x,y)$ is continuous and $d^2(H_2^0(0,0))=0$. Hence there exists $r>0$ such that, for $0<\epsilon<r$ and $(x,y)\in B_r(0)\subset \R^2$, 
	\begin{equation}\label{eq:estimate-hessian}
		|d^2H_2^\epsilon(x,y)|<\frac{1}{2}.
	\end{equation}
	For $\epsilon>0$ sufficiently small, we also have
	\begin{equation}\label{eq:a-epsilon}
		A_\epsilon:=\{(x,y)\in \R^2~|~H_2^\epsilon(x,y)\leq H_2^\epsilon(p_1,0)\}\subset B_r(0).
	\end{equation}
	Indeed, using the fact that $(\epsilon,x,y)\mapsto H_2^\epsilon(x,y)$  is a proper function, we know that the set $A_\epsilon$ is bounded by a uniform constant for every $0<\epsilon<r$. Thus, for sufficiently small $\epsilon$ and $(x,y)\in A_\epsilon$, we have
		\begin{equation}
			\begin{aligned}
				\|(x,y)\|^4&=(x^2+y^2)^2\\
				&\leq H_2^\epsilon(p_1,0)+\epsilon(x^2+y^2)x+\frac{\epsilon}{2}(x^2+y^2)\\
				&\leq r^4.
			\end{aligned}
		\end{equation}

	Fix $\epsilon>0$ so that \eqref{eq:a-epsilon} is satisfied and consider the Hamiltonian function $H=H_\epsilon$ defined by \eqref{eq:hamiltoniana}.
	Let $P=(\gamma, T)\in \mathcal{P}(\lambda)\setminus \{P_1,P_2,P_3\}$ be a simple orbit  and let $z(t)$ be a periodic solution of $\dot{z}(t)=X_H(z(t))$ that is a reparametrization of $\gamma$. 
	Then $z=z_1\times z_2$, where 
		$\dot{z_1}(t)=X_{H_1}(z_1(t))$, $\dot{z_2}(t)=X_{H_2}(z_2(t))$ and $z_2(t)$ is nonconstant. Now we prove the following claim. 

	\paragraph{\textit{Claim II}}If $z_2$ satisfies 
	\begin{equation}\label{eq:constant-C}
		H_2\circ z_2 	\equiv C\leq H_2(p_1,0), 
	\end{equation}
then the period $T$ of $P$ satisfies  $T<T_3$.
	
	Using Claim I and equation \eqref{eq:estimate-hessian} we conclude that the period $\tilde{T}_2$ of $z_2$ satisfies
	$$4\pi\leq 2h\cdot \tilde{T}_2<\tilde{T}_2.$$
	Note that the orbit 
	$z_1(t)=(\sqrt{1-2C}\cos(t+t_0),\sqrt{1-2C}\sin(t+t_0))$ is nonconstant, since $C\leq H_2(p_1,0)<\frac{1}{2}$, 
	and $2\pi$-periodic. This implies that the period $\tilde{T}$ of $z$ satisfies 
	$\tilde{T}=k2\pi$,  
	for some  $k>2$.
	The period $T$ of $\gamma$ satisfies 
	\begin{equation}
		\begin{aligned}
			T&=\int_{\gamma(\R)}\lambda\\
			&=\int_0^{\tilde{T}}\frac{1}{2}z_1^*(x_1dy_1-y_1dx_1)+\int_0^{\tilde{T}} \frac{1}{2} z_2^*(x_2dy_2-y_2dx_2)\\
			& >2 \pi (1-2C)+{\rm area}(R_2)\\
			&\geq2\pi ( 1-2H_2(p_1,0))\\
			&=2T_1
		\end{aligned}
	\end{equation}
	where $R_2$ is  the region in the plane $(x_2,y_2)$ limited by the image of $z_2$. Here we have used \eqref{eq:periodo-exemplo} and \eqref{eq:constant-C}.
	Taking a smaller $\epsilon$ if necessary and using  \eqref{eq:t3menort1}, we have 
	$$T>2T_1>T_3,$$
	proving the claim.
	
	Let $P=(\gamma, T)\in \mathcal{P}(\lambda)\setminus \{P_1,P_2,P_3\}$ be such that $T\leq T_3$. We claim that ${\rm lk}(P,P_3)\neq 0$.
	By Claim II, we know that the solution $z_2$ does not satisfy \eqref{eq:constant-C}, that is, $H_2\circ z_2\equiv C>H_2(p_1,0)$.
		The energy levels $H_2=C$, for $C>H_2(p_1,0)$ are diffeomorphic to $S^1$ and bound a region containing $(p_3,0)$ in its interior. 
		Then $P$ intersects the disk
		$$D:=\{(x_1,y_1,x_2,0)\in \R^4~|~x_2\geq p_3\}\cap S.$$
		Note that $\partial D=P_3$.
		Moreover $D$ is transverse to the Reeb vector field $R_\lambda$, since $X_{H_2}(x_2,0)=(0,Q)$. 
		We conclude that ${\rm lk}(P,P_3)\neq 0$, which proves the lemma.
	 
%
\end{proof}


\begin{lemma}\label{le:exemplo-nao-existe-esfera}
	There is no $C^1$-embedding $\psi:S^2\to S^3$ such that $\psi({S^1\times \{0\}})=P_2$ 
	and each hemisphere of $\psi(S^2)$ is a strong transverse section.
\end{lemma}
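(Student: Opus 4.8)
The plan is to argue by contradiction, transplanting the scheme of the proof of Proposition \ref{TH:NECESSIDADE-3} to the explicit setting: assume $\psi\colon S^2\to S$ is a $C^1$–embedding with $\psi(S^1\times\{0\})=x_2(\R)$ and each closed hemisphere a strong transverse section, and derive a contradiction. First I would collect the local data at $P_2$. From the computation in the proof of Lemma \ref{le:exemplo-indices} (case $i=2$), the linearized Reeb flow along $P_2$ is hyperbolic with unstable and stable eigendirections $v^-,v^+$ lying in the $(x_2,y_2)$–plane (in the trivialization $\rho_2=\pi_{12}^{-1}$), and the constant section $(0,0,1,0)$ — which by that same computation is a nonvanishing section of $x_2^*\xi$ of winding number $1$ with respect to the global trivialization $\Psi$ — sits in the open quadrant determined by $v^\pm$ on the $\{x_2>0\}$ side. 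I would record once the elementary bookkeeping: by Proposition \ref{pr:secoes-fortes-wind-1}, the section $\eta$ tangent to any embedded $R_\lambda$–transverse disk with boundary $P_2$ (oriented so that $\int d\lambda=T_2>0$) — in particular $\eta$ for $\psi(H_\pm)$ — lies in the quadrants $(\mathrm{II})\cup(\mathrm{IV})$, whereas the section tangent to an $R_\lambda$–transverse cylinder having $P_2$ as a \emph{negative} asymptotic limit lies in $(\mathrm{I})\cup(\mathrm{III})$ (positive asymptotic eigenvalue, hence $d\lambda(\cdot,\mathcal L_{R_\lambda}\cdot)>0$); consequently any such disk meets any such cylinder transversally along $P_2$. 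Finally, since $H=H_1+H_2$ splits and $H_1\equiv\tfrac12$ along $W^\pm(P_2)$, the stable and unstable manifolds of $P_2$ in $S$ project to the $(x_2,y_2)$–plane onto the level $\{H_2=0\}$, which for small $\epsilon$ is a homoclinic loop based at $(p_2,0)$ encircling $(p_3,0)$ and leaving $(p_1,0)$ outside.

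I would then set up the two structural inputs about $\psi$. That $\wind(\eta,\Psi)=1$ for the tangent section $\eta$ of $\psi$ along $P_2$ follows from the Poincar\'e--Hopf count for the characteristic distribution of the embedded transverse disk $\psi(H_\pm)$, exactly as in the first half of the proof of Lemma \ref{le:wind=1}, which uses no foliation; and $d\lambda(\eta,\mathcal L_{R_\lambda}\eta)<0$ follows from the second–variation identity of the proof of Lemma \ref{LE:TRANSVERSAL-AO-LONGO-DE-P_2} together with nonvanishing on the connected circle $S^1$. On the global side, using $P_i\cap P_j=\varnothing$, ${\rm lk}(P_i,P_j)=0$ for $i\neq j$ and transversality of $\psi(S^2)\setminus P_2$ to $R_\lambda$ (each intersection of a Reeb orbit with a transverse Seifert surface of $P_2$ has the same sign and their algebraic count is a linking number), one gets $\psi(S^2)\cap(P_1\cup P_3)=\varnothing$.

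The endgame would replay Proposition \ref{TH:NECESSIDADE-3} with a concrete reference torus $T'\subset S$ in place of the foliation torus $T$: $T'$ must contain $P_2$, be $C^1$ and transverse to $R_\lambda$ off $P_2$, split $S$ into a region $\mathcal R_1'$ in which $P_2$ is contractible (bounding the disk $\tilde{u}^-(\C)$ of Lemma \ref{le:exemplo-plano-cilindro} that projects onto $S\cap\{y_2=0,\ x_2\le 0\}$) and a region $\mathcal R_2'$ in which the class of ${x_2}_{T_2}$ generates $H_1(\mathcal R_2',\Z)$, and be assembled along $P_2$ out of $R_\lambda$–transverse cylinders having $P_2$ as a negative asymptotic limit. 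Granting such a $T'$, transversality of $\psi$ to $T'$ along $P_2$ is immediate from the quadrant bookkeeping above, transversality off $P_2$ from the flowing trick of Lemma \ref{le:transversal-ao-toro}, and then $\psi^{-1}(\psi(S^2)\cap T')$ is a compact $1$–submanifold of $S^2$ containing the equator; an adjacent component $\sigma$ cobounds with the equator a subsurface $R$ with $\psi(R)\subset\mathcal R_2'$, so $\psi(\sigma)$ represents $(1,0)$ in $H^1(T')$ (the $0$ because $\psi(\sigma)\cap P_2=\varnothing$) and $\psi(\sigma)\cup P_2$ bounds a sub–annulus $A\subset T'$ lying in one transverse cylinder; the inequalities $\int_B\psi^*d\lambda=T_2-\int_{\psi(\sigma)}\lambda>0$ (over the region $B\subset S^2$ between the equator and $\sigma$, inside one hemisphere) and $\int_A d\lambda=\int_{\psi(\sigma)}\lambda-T_2>0$ are contradictory. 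The main obstacle is the construction of $T'$: the explicit surfaces of Lemma \ref{le:exemplo-plano-cilindro} live in $\{y_2=0\}$ and there assemble only into the sphere $S\cap\{y_2=0\}$, not a torus, so one must either produce holomorphic cylinders joining $P_3$ to $P_2$ off $\{y_2=0\}$ (carrying $P_2$ as a negative puncture) by an \emph{a priori} argument not relying on hypothesis (iv), or bypass the torus entirely — e.g. by showing that $\psi$ would have to be transverse–isotopic to $S\cap\{y_2=0\}$, contradicting the failure of transversality of that sphere along $P_1\cup P_3$. Handling this last point is where the real work lies.
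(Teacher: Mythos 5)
Your plan reduces the lemma to rerunning the argument of Proposition \ref{TH:NECESSIDADE-3} against a reference torus $T'$ built out of Reeb-transverse cylinders that have $P_2$ as a negative asymptotic limit, and you yourself flag that you cannot produce such a $T'$: the explicit surfaces of Lemma \ref{le:exemplo-plano-cilindro} lie in $\{y_2=0\}$ and only assemble into a sphere, and your two fallback options (an a priori construction of pseudo-holomorphic cylinders from $P_3$ to $P_2$, or a transverse-isotopy argument to the sphere $S\cap\{y_2=0\}$) are left entirely undeveloped. That is a genuine gap, and it is exactly where the proof has to differ from Proposition \ref{TH:NECESSIDADE-3}: in this explicit example no foliation torus is available yet (the lemma is precisely what is needed to *get* the foliation), and building holomorphic cylinders here would mean redoing a large part of the compactness analysis inside the verification of one hypothesis. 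So as written the proposal does not prove the lemma; everything before the "endgame" (the quadrant bookkeeping for $\eta$ via Proposition \ref{pr:secoes-fortes-wind-1}, $\wind(\eta,\Psi)=1$, $d\lambda(\eta,\mathcal{L}_{R_\lambda}\eta)<0$, and the observation that $W^\pm(P_2)$ projects to the homoclinic level $\{H_2=0\}$) is correct and is also used in the paper, but the decisive step is missing.

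The missing idea is to use the flow-invariant set you already identified, rather than a transverse torus. Because $H=H_1+H_2$ splits, $W^+(P_2)=W^-(P_2)=H_1^{-1}\left(\tfrac12\right)\times H_2^{-1}(0)$, and $W^\pm(P_2)\setminus P_2$ consists of two cylinders $C_1,C_2$ over the two branches of the homoclinic loop; each $T_i=C_i\cup P_2$ is a topologically embedded torus separating $S$ into a region where $[P_2]$ generates $H_1$ and a region where $P_2$ is contractible. Transversality of $\psi(S^2)$ to $T_i$ along $P_2$ comes for free from your own quadrant analysis, since the tangent directions of $W^\pm(P_2)$ in $\xi$ are exactly the eigendirections $\R v^\pm$, i.e.\ the quadrant boundaries, while $\eta$ lies in the open quadrants $(\mathrm{II})\cup(\mathrm{IV})$; away from $P_2$, transversality to $C_i$ follows from transversality of $\psi$ to $R_\lambda$, because $C_i$ is tangent to the flow. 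The homological bookkeeping of your endgame then goes through verbatim and produces a circle $\psi(L)\subset C_1$ homologous to $P_2$ in $T_1$. The only change is the final area identity: on the invariant cylinder $C_1$ one has $d\lambda|_{C_1}\equiv 0$ (it is tangent to $R_\lambda$), so $\int_{\psi(L)}\lambda=\int_{P_2}\lambda=T_2$ exactly, instead of the strict inequality $>T_2$ you would get from a transverse cylinder with $P_2$ as negative limit; this still contradicts the hemisphere estimate $\int_{\psi(L)}\lambda<T_2$. This route uses only the explicit product structure of the Hamiltonian and none of the holomorphic-curve machinery, which is what removes both the circularity worry and the "real work" you left open.
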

\begin{proof}
	Let $W^+(P_2)$ and $W^-(P_2)$ be the stable and unstable manifolds of $P_2$ respectively. 
	First note that 
	$$W^+(P_2)=W^-(P_2)=H_1^{-1}\left(\frac{1}{2}-H_2(p_2,0)\right)\times H_2^{-1}(H_2(p_2,0)).$$
	Moreover, 
	$W^\pm(P_2)\setminus P_2$ 
	consists of two connected components diffeomorphic to cylinders
	\begin{align}
		C_1=H_1^{-1}\left(\frac{1}{2}-H_2(p_2,0)\right)\times \gamma^1(\R)&&C_2=H_1^{-1}\left(\frac{1}{2}-H_2(p_2,0)\right)\times \gamma^2(\R),
	\end{align}
	where $\gamma^1:\R\to\R^2$ and $\gamma^2:\R\to \R^2$ are  solutions  of $\dot{\gamma}=X_{H_2}(\gamma)$ such that 
	$$H^{-1}_2(H_2(p_2,0))=\{(p_2,0)\}\cup \gamma_2^1(\R)\cup \gamma_2^2(\R), $$  
	{and $\gamma^1(\R)$ is contained in the interior of the bounded region $R_2\subset\R^2$ with boundary $\gamma^2(\R)\cup\{(p_2,0)\}$.}
	
	Define $\mathcal{T}_i:=C_i\cup P_2,~i=1,2.$ 
	Then $\mathcal{T}_i$ is a $2$-torus topologically embedded in $S$ and divides $S$ into two closed regions with boundary $\mathcal{T}_i$. One of these regions, that we call $\mathcal{R}_i^1$, has holomogy generated by $P_2$, and $P_2$ is contractible in $\mathcal{R}_i^2:=\overline{S\setminus \mathcal{R}_i^1}$.
	Since the disk 
	$$D:=\{(x_1,y_1,x_2,0)\in \R^4~|~x_2\leq p_2\}\cap S$$
	has boundary $P_2$ and projects into the complement of $R_2$ in the plane $(x_2,y_2)$, we know that $\mathcal{R}^2_i$, $i=1,2$, also projects into the complement of $R_2$ in the plane $(x_2,y_2)$. This implies that $\mathcal{R}^1_i$ projects into $R_2$, for $i=1,2$. Moreover, we have $\mathcal{R}_1^1\subset \mathcal{R}_2^1$.
	

	Following \S \ref{se:relative-position}, let $\{v^-(t),v^+(t)\}$ be the positive basis of $\xi_{x_2(t)}$ where $v^-(t)$ is an eigenvector of  $d\varphi^T|_{\xi(x_2(t))}$ associated to the eigenvalue $\beta>1$ and $v^+(t)$ is an eigenvector of  $d\varphi^T|_{\xi(x_2(t))}$ associated to the eigenvalue $\beta^{-1}$.
	Let $\rm{(I)}$ and $\rm{(III)}$ be the open quadrants between  $\R v^-(t)$ and $\R v^+(t)$ following the positive orientation and $\rm{(II)}$ and $\rm{(IV)}$ the open quadrants between  $\R v^+(t)$ and $\R v^-(t)$.
	
	Let $\R/\Z\ni t\mapsto\eta(t)\in \xi_{x_2(T_2t)}$ be a section of $\xi$ along $x_2(T_2\cdot)$ such that $\{\eta, R_\lambda(T_2\cdot)\}$ generates $d\psi(TS^2)$ along $x_2(T_2\cdot)$. 
	We claim that 
	\begin{equation}\label{eq:relative-position-example}
		\eta(t)\in (II)\cup (IV), ~\forall t\in \R/\Z.
	\end{equation}
	By Proposition \ref{pr:secoes-fortes-wind-1} it is enough to show that 
	\begin{align}
		\wind(\eta,\Psi)=1 \label{eq:wind=1-exemplo}\\
		d\lambda(\eta(t),\mathcal{L}_{R_\lambda}\eta(t))<0,~\forall t\in \R/\Z.\label{eq:transversal-forte-positiva-exemplo}
	\end{align}	
	where $\Psi$ is any global symplectic trivialization of $\xi$. 
	The proof of \eqref{eq:wind=1-exemplo} is completely analogous to the proof of Lemma \ref{le:wind=1} 
	and the proof of \eqref{eq:transversal-forte-positiva-exemplo} is similar to the proof of \eqref{eq:dlambda-negativo}. 
	
	%
	Suppose, by contradiction,  that $\psi:S^2\to S^3$ is a $C^1$ embedding such that $\psi({S^1\times \{0\}})=P_2$ and each hemisphere is a strong transverse section.
	Using \eqref{eq:relative-position-example}, we conclude that one of the closed hemispheres of $\Psi(S^2)$, that we denote by $H\subset S^2$, intersects $\mathcal{R}_1^1$. Since $P_2$ is not contractible in $\mathcal{R}_1^1$, the image of $H$ can not be contained in $\mathcal{R}_1^1$ and we conclude that $\psi(S^2)\cap W^\pm(P_2)\neq \emptyset$. 
	
	We claim that $\psi(S^2)\cap W^\pm(P_2)$ consists of a disjoint union of embedded circles. To prove this claim, note that by \eqref{eq:relative-position-example}, there exists an open neighborhood $U$ of $P_2$ in $W^\pm(P_2)$ such that $U\cap \psi(S^2)=P_2$. Thus, $F:=W^\pm(P_2)\setminus U$ is a closed subset of $ S$ such that $\psi(S^2)\cap (W^\pm(P_2)\setminus P_2)\subset F$. 
	Moreover, $\psi|_{S^2\setminus (S^1\times \{0\})}$ is transverse to $W^\pm(P_2)\setminus P_2$.
	This implies that $\psi^{-1}(F)=\psi^{-1}(\psi(S^2)\cap (W^\pm(P_2)\setminus P_2))$ is a one dimensional submanifold of $S^2$ that is a closed subset of $S^2$, proving our claim. 

	At least one of the connected components of $\psi(S^2)\cap W^\pm(P_2)$ is homologous to $P_2$ both in $\mathcal{R}_1^1$ and in $\mathcal{T}_1$. Let $L\subset S^2$ be such that $\psi(L)$ is one of these components. 
	Then $\psi(L)$ and $P_2$ divide $T_1$ into two closed regions. Let $A$ be one of these regions, with orientation induced by $P_2$. 
	Then
	\begin{equation}\label{eq:contradicao-int-0-exemplo}
		0=\int_Ad\lambda=\int_{P_2}\lambda-\int_{\psi(L)}\lambda.
	\end{equation}
	Consider $H$ with the orientation induced by the orientation of $S^1\times \{0\}$. It follows that 
	\begin{equation}
		0<T_2=\int_{\R/\Z}x_2(T_2\cdot)^*\lambda=\int_{H}\psi^*d\lambda
	\end{equation}
	Since $\psi|_{S^2\setminus (S^1\times\{0\})}$ is transverse to the Reeb vector field, we have $\psi^*d\lambda>0$ in $H\setminus(S^1\times \{0\})$. Let $B\subset H$ be the region bounded by $S^1\times \{0\}$ and $L$. Then
	\begin{equation}
		0<\int_B\psi^*d\lambda=\int_{P_2}\lambda-\int_{\psi(L)}\lambda,
	\end{equation}
	contradicting \eqref{eq:contradicao-int-0-exemplo}. This proves the lemma.
\end{proof}

\section*{Acknowledgments}
This work originated from the author’s Ph.D. thesis, written under the supervision
of Prof. Pedro Salomão at the University of São Paulo. 
The author wishes to express her gratitude to Pedro Salomão for suggesting the problem, for many stimulating conversations, and for his support during the preparation of this paper. The author would like to thank Alexsandro Schneider, Ana Kelly de Oliveira, Naiara de Paulo, and Seongchan Kim for pointing out mistakes in previous versions of the paper and the reviewer for the helpful feedback.

\bibliographystyle{amsplain}

\bibliography{biblografia_artigo_3_2_1_abreviada}

\providecommand{\bysame}{\leavevmode\hbox to3em{\hrulefill}\thinspace}
\providecommand{\MR}{\relax\ifhmode\unskip\space\fi MR }
\providecommand{\MRhref}[2]{%
  \href{http://www.ams.org/mathscinet-getitem?mr=#1}{#2}
}
\providecommand{\href}[2]{#2}
\begin{thebibliography}{10}

\bibitem{bennequin-entrelacements}
D.~Bennequin, \emph{Entrelacements et equations de pfaff}, Ast{\'e}risque
  \textbf{107-108} (1983), 87--161.

\bibitem{bourgeois2003compactness}
F.~Bourgeois, Y.~Eliashberg, H.~Hofer, K.~Wysocki, and E.~Zehnder,
  \emph{Compactness results in symplectic field theory}, Geom. Topol.
  \textbf{7} (2003), no.~2, 799--888.

\bibitem{bramham2015periodic}
B.~Bramham, \emph{Periodic approximations of irrational pseudo-rotations using
  pseudoholomorphic curves}, Ann. of Math. (2) (2015), 1033--1086.

\bibitem{colin2020existence}
V.~Colin, P.~Dehornoy, and A.~Rechtman, \emph{On the existence of supporting
  broken book decompositions for contact forms in dimension 3}, preprint
  arXiv:2001.01448 (2020).

\bibitem{deoliveira2020}
C.~L. de~Oliveira, \emph{3-2-1 foliations for reeb flows on ${S}^3$}, Ph.D.
  thesis, Universidade de São Paulo, 2020.

\bibitem{depaulo2022}
N.~de~Paulo, U.~Hryniewicz, S.~Kim, and P.~Salom\~ao, \emph{Genus zero
  transverse foliations for weakly convex {R}eeb flows on the tight
  $3$-sphere}, Preprint arxiv 2206.12856v1 (2022).

\bibitem{dePS2013}
N.~de~Paulo and P.~Salom\~ao, \emph{Systems of transversal sections near
  critical energy levels of hamiltonian systems in $\mathbb{R}^4$}, Mem. Amer.
  Math. Soc. \textbf{252} (2018), no.~1202.

\bibitem{depaulo2019}
\bysame, \emph{On the multiplicity of periodic orbits and homoclinics near
  critical energy levels of hamiltonian systems in $\mathbb{R}^4$}, Trans.
  Amer. Math. Soc. \textbf{372} (2019), no.~2, 859--887.

\bibitem{dragnev2004fredholm}
D.~Dragnev, \emph{Fredholm theory and transversality for noncompact
  pseudoholomorphic maps in symplectizations}, Comm. Pure Appl. Math.
  \textbf{57} (2004), no.~6, 726--763.

\bibitem{eliashberg1992contact}
Y.~Eliashberg, \emph{Contact 3-manifolds twenty years since j. martinet’s
  work}, Ann. Inst. Fourier (Grenoble) \textbf{42} (1992), no.~1-2, 165--192.

\bibitem{fish2018connected}
J.~Fish and R.~Siefring, \emph{Connected sums and finite energy foliations i:
  Contact connected sums}, J. Symplectic Geom. \textbf{16} (2018), no.~6,
  1639--1748.

\bibitem{gromov85}
M.~Gromov, \emph{Pseudo holomorphic curves in symplectic manifolds}, Invent.
  Math. \textbf{82} (1985), 307--347.

\bibitem{guillemin2010differential}
V.~Guillemin and A.~Pollack, \emph{Differential topology}, vol. 370, American
  Mathematical Soc., 2010.

\bibitem{hofer1993pseudoholomorphic}
H.~Hofer, \emph{Pseudoholomorphic curves in symplectizations with applications
  to the weinstein conjecture in dimension three}, Invent. Math. \textbf{114}
  (1993), no.~1, 515--563.

\bibitem{hofer1992}
H.~Hofer and C.~Viterbo, \emph{The weinstein conjecture in the presence of
  holomorphic spheres}, Comm. Pure Appl. Math. \textbf{45} (1992), no.~5,
  583--622.

\bibitem{hofer1995characterization}
H.~Hofer, K.~Wysocki, and E.~Zehnder, \emph{A characterization of the tight
  3-sphere}, Duke Math. J. \textbf{81} (1995), no.~1, 159--226.

\bibitem{hofer1995properties2}
\bysame, \emph{Properties of pseudo-holomorphic curves in symplectisations
  {II}: Embedding controls and algebriac invariants}, Geom. Funct. Anal.
  \textbf{5} (1995), no.~2, 270--328.

\bibitem{hofer1996properties1}
\bysame, \emph{Properties of pseudoholomorphic curves in symplectisations {I}:
  Asymptotics}, Ann. Inst. H. Poincar\'e Anal. Non Lin\'eaire \textbf{13}
  (1996), no.~3, 337--379.

\bibitem{hwz1998}
H.~Hofer, K.~Wysocki, and E.~Zehnder, \emph{The dynamics on three-dimensional
  strictly convex energy surfaces}, Ann. of Math. (2) \textbf{148} (1998),
  no.~1, 197--289.

\bibitem{hofer1999properties3}
H.~Hofer, K.~Wysocki, and E.~Zehnder, \emph{Properties of pseudoholomorphic
  curves in symplectizations {III}: Fredholm theory}, Topics in nonlinear
  analysis, Progr. Nonlinear Differential Equations Appl., vol.~35, Springer,
  1999, pp.~381--475.

\bibitem{hwz2003}
H.~Hofer, K.~Wysocki, and E.~Zehnder, \emph{Finite energy foliations of tight
  three-spheres and hamiltonian dynamics}, Ann. of Math. (2) \textbf{157}
  (2003), no.~1, 125--255.

\bibitem{hryniewicz2012fast}
U.~Hryniewicz, \emph{Fast finite-energy planes in symplectizations and
  applications}, Trans. Amer. Math. Soc. \textbf{364} (2012), no.~4,
  1859--1931.

\bibitem{hryniewicz2014systems}
\bysame, \emph{Systems of global surfaces of section for dynamically convex
  reeb flows on the 3-sphere}, J. Symplectic Geom. \textbf{12} (2014), no.~4,
  791--862.

\bibitem{hls2015}
U.~Hryniewicz, J.~Licata, and P.~A.~S. Salom\~ao, \emph{A dynamical
  characterization of universally tight lens spaces}, Proc. Lond. Math. Soc.(3)
  \textbf{110} (2015), no.~1, 213--269.

\bibitem{hs2011}
U.~Hryniewicz and P.~A.~S. Salom\~ao, \emph{On the existence of disk-like
  global sections for reeb flows on the tight $3$ -sphere}, Duke Math. J.
  \textbf{160} (2011), no.~3, 415--465.

\bibitem{Hryniewicz2016}
\bysame, \emph{Elliptic bindings for dynamically convex reeb flows on the real
  projective three-space}, Calc. Var. Partial Differential Equations
  \textbf{55} (2016), no.~2, 43.

\bibitem{Hryniewicz2018}
\bysame, \emph{Global surfaces of section for reeb flows on dimension three and
  beyond}, Proceedings of the International Congress of Mathematicians 2018,
  vol.~2, 2018, pp.~959--986.

\bibitem{mcduff2004j}
D.~McDuff and D.~Salamon, J-holomorphic curves and symplectic topology, Amer.
  Math. Soc. Colloq. Publ., vol.~52, 2004.

\bibitem{nelson2013}
J.~Nelson, \emph{Applications of automatic transversality in contact homology},
  Ph.D. thesis, University of Wisconsin-Madison, 2013.

\bibitem{rolfsen2003knots}
D.~Rolfsen, \emph{Knots and links}, American Mathematical Soc., 2003.

\bibitem{siefring2011intersection}
R.~Siefring, \emph{Intersection theory of punctured pseudoholomorphic curves},
  Geom. Topol. \textbf{15} (2011), no.~4, 2351--2457.

\bibitem{wendl2005thesis}
C.~Wendl, \emph{Finite energy foliations and surgery on transverse links},
  Ph.D. thesis, New York University, 2005.

\bibitem{wendl2008finite}
\bysame, \emph{Finite energy foliations on overtwisted contact manifolds},
  Geom. Topol. \textbf{12} (2008), no.~1, 531--616.

\bibitem{wendl2016lectures}
\bysame, \emph{Lectures on symplectic field theory}, preprint arXiv:1612.01009
  (2016).

\end{thebibliography}
%
	
\end{document}